\crefname{enumi}{item}{items}
\NewDocumentCommand{\enum}{ O{;} m o }
 {
  \my_enum:nnn { #1 } { #2 } { #3 }
 }
\theoremstyle{plain}
\newtheorem{theorem}{Theorem}[section]
\newtheorem{lemma}[theorem]{Lemma}
\newtheorem{prop}[theorem]{Proposition}
\newtheorem{cor}[theorem]{Corollary}
\theoremstyle{definition}
\newtheorem{definition} [theorem]{Definition}
\newcommand{\E}{\mathbb{E}}
\renewcommand{\P}{\mathbb{P}}
\newcommand{\R}{\mathbb{R}}
\newcommand{\N}{\mathbb{N}}
\newcommand{\smallsum}{\textstyle\sum}
\newcommand{\pmat}[1]{\begin{pmatrix}#1\end{pmatrix}}
\newcommand{\is}{\leftarrow}
\newcommand{\eps}{\varepsilon}
\newcommand{\Borel}{\mathcal{B}}
\newcommand{\cB}{\mathcal{B}}
\newcommand{\cE}{\mathcal{E}}
\newcommand{\cM}{\mathcal{M}}
\newcommand{\bN}{\mathbf{N}}
\newcommand{\bP}{\mathbf{P}}
\newcommand{\cD}{\mathcal{D}}
\newcommand{\cL}{\mathcal{L}}
\newcommand{\cR}{\mathcal{R}}
\newcommand{\cW}{\mathcal{W}}
\newcommand{\idRelu}{\mathfrak I}
\newcommand{\MappingStructuralToVectorized}{\mathcal{T}}
\newcommand{\Exists}{\exists\,}
\newcommand{\Forall}{\forall\,}
\newcommand{\mc}[1]{\mathcal{#1}}
\newcommand{\mf}[1]{\mathfrak{#1}}
\newcommand{\abs}[1]{\lvert #1\rvert}
\newcommand{\babs}[1]{\bigl\lvert #1\bigr\rvert}
\newcommand{\norm}[1]{\lVert #1 \rVert}
\newcommand{\opnorm}{\@ifstar\@opnorms\@opnorm}
\newcommand{\@opnorms}[1]{%
  \left|\mkern-1.5mu\left|\mkern-1.5mu\left|
   #1
  \right|\mkern-1.5mu\right|\mkern-1.5mu\right|
}
\newcommand{\@opnorm}[2][]{%
  \mathopen{#1|\mkern-1.5mu#1|\mkern-1.5mu#1|}
  #2
  \mathclose{#1|\mkern-1.5mu#1|\mkern-1.5mu#1|}
}
\newcommand{\infnorm}[1]{\opnorm{#1}}
\newcommand{\asinfnorm}[1]{\opnorm*{#1}}
\newcommand{\Aff}{\mathcal{A}}
\newcommand{\RealV}{\mathcal{N}}
\newcommand{\ClippedRealV}[4]{\mathscr{N}^{#1,#2}_{#3,#4}}
\newcommand{\UnclippedRealV}[2]{\mathscr{N}^{#1,#2}_{-\infty,\infty}}
\newcommand{\CovNum}[1]{\mathcal C_{#1}}
\newcommand{\idMatrix}{\operatorname{I}}
\newcommand{\ANNs}{\mathbf{N}}
\newcommand{\MatrixANN}[1]{\mf N_{#1}}
\newcommand{\activation}{a}
\newcommand{\activationDim}[1]{\mathfrak{M}_{\activation,#1}}
\newcommand{\functionANN}[1]{\mathcal{R}_{#1}}
\newcommand{\paramANN}{\mathcal{P}}
\newcommand{\lengthANN}{\mathcal{L}}
\newcommand{\inDimANN}{\mathcal{I}}
\newcommand{\compANN}[2]{{#1 \bullet #2}}
\newcommand{\paraANN}[1]{\mathbf{P}_{#1}}
\newcommand{\outDimANN}{\mathcal{O}}
\newcommand{\longerANN}[1]{\mathcal{E}_{#1}}
\newcommand{\dims}{\mathcal{D}}
\newcommand{\hiddenLength}{\mathcal{H}}
\newcommand{\parallelizationSpecial}{\mathbf{P}}
\newcommand{\qandq}{\qquad\text{and}\qquad}
\newcommand{\andShort}{\text{ and }}
\newcommand{\pa}[1]{\left({#1}\right)}
\newcommand{\rect}{\mathfrak r}
\newcommand{\Rect}{\mathfrak R}
\newcommand{\multdim}{\mathfrak M}
\newcommand{\clip}[2]{\mf c_{#1,#2}}
\newcommand{\Clip}[3]{\mf C_{#1,#2,#3}}
\DeclareMathOperator{\id}{id}
\newcommand{\card}[1]{\abs{#1}}
\newcommand{\stdStartDim}{\delta}
\DeclareSymbolFont{largesymbolsA}{U}{jkpexa}{m}{n}
\DeclareMathSymbol{\bigtimes}{\mathop}{largesymbolsA}{16}
\begin{document}

\title{Full error analysis for the \\ training of deep  neural networks}

\author{Christian Beck$^1$, Arnulf Jentzen$^{2,3}$, and Benno Kuckuck$^{4,5}$\bigskip\\
\small{$^1$ Department of Mathematics, ETH Zurich, Z\"urich,}\\
\small{Switzerland, e-mail: christian.beck@math.ethz.ch}\\
\small{$^2$ Department of Mathematics, ETH Zurich, Z\"urich,}\\
\small{Switzerland, e-mail: arnulf.jentzen@sam.math.ethz.ch}\\
\small{$^3$ Faculty of Mathematics and Computer Science, University of M\"unster, }\\
\small{M\"unster, Germany, e-mail: ajentzen@uni-muenster.de}\\
\small{$^4$ Institute of Mathematics, University of D\"usseldorf, D\"usseldorf,}\\
\small{Germany, e-mail: kuckuck@math.uni-duesseldorf.de}\\
\small{$^5$ Faculty of Mathematics and Computer Science, University of M\"unster, }\\
\small{M\"unster, Germany, e-mail: bkuckuck@uni-muenster.de}
}

\maketitle

\begin{abstract}
Deep learning algorithms have been applied very successfully in recent years to a range of problems out of reach for classical solution paradigms. Nevertheless, there is no completely rigorous mathematical error and convergence analysis which explains the success of deep learning algorithms. The error of a deep learning algorithm can in many situations be decomposed into three parts, the approximation error, the generalization error, and the optimization error. In this work we estimate for a certain deep learning algorithm each of these three errors and combine these three error estimates to obtain an overall error analysis for the deep learning algorithm under consideration. In particular, we thereby establish convergence with a suitable convergence speed for the overall error of the deep learning algorithm under consideration. Our convergence speed analysis is far from optimal and the convergence speed that we establish is rather slow, increases exponentially in the dimensions, and, in particular, suffers from the curse of dimensionality. The main contribution of this work is, instead, to provide a full error analysis (i) which covers each of the three different sources of
errors usually emerging in deep learning algorithms and (ii) which merges these three sources of errors into one overall error estimate for the considered deep learning algorithm.
\end{abstract}

\tableofcontents

\section{Introduction}
\label{sec:intro}

% % % % % PARAGRAPH 1 
In problems like image recognition, text analysis, speech recognition, 
or playing various games, to name a few, it is very hard and seems at 
the moment entirely impossible to provide a function or to hard-code a 
computer program which attaches to the input -- be it a picture, a piece 
of text, an audio recording, or a certain game situation -- a meaning or a recommended action. 
Nevertheless deep learning has been applied very successfully in recent years to such and related problems. The success of deep learning in applications is even more surprising as, to this day, the reasons for its performance are not entirely rigorously understood. In particular, there is no rigorous mathematical error and convergence analysis which explains the success of deep learning algorithms. 

In contrast to traditional approaches, machine learning methods in 
general and deep learning methods in particular attempt to infer the unknown target function or at least a good enough approximation thereof from examples encountered during the training. % We distinguish between two kinds of deep learning: supervised learning and unsupervised (including reinforcement) learning.
Often a deep learning algorithm has three ingredients: (i) the \emph{hypothesis class}, a parametrizable class of functions in which we try to find a reasonable approximation of the unknown target function, (ii) a \emph{numerical approximation of the expected loss function} based on the training examples, and (iii) an \emph{optimization algorithm} which tries to approximately calculate an element of the hypothesis class which minimizes the numerical approximation of the expected loss function from (ii) given the training examples. Common approaches are to choose a set of suitable fully connected deep neural networks (DNNs) as hypothesis class in (i), empirical risks as approximations of the expected loss function in (ii), and stochastic gradient descent-type algorithms with random initializations as optimization algorithms in (iii). Each of these three ingredients contributes to the overall error of the considered approximation algorithm. The choice of the hypothesis class results in the so-called \emph{approximation error} (cf., e.g., \cite{barron1993universal,
barron1994approximation,
Cybenko1989,
hornik1991approximation,
hornik1989multilayer,
hornik1990universal} and the references mentioned at the beginning of \cref{sec:error_sources}), 
replacing the exact expected loss function by a numerical approximation leads 
to the so-called \emph{generalization error} 
(cf., e.g., \cite{Bartlett2005,
	BernerGrohsJentzen2018,CuckerSmale2002FoundationsLearning,
	GyoerfiKohlerKrzyzakWalk2002,
	Massart2007,
	shalev2014understanding,
	VanDeGeer2000} and the references mentioned therein), 
and the employed optimization algorithm introduces 
the \emph{optimization error} (cf., e.g.,~\cite{bach2013non,kolmogorov2018solving,bercu2011generic,chau2019stochastic,DereichMuellerGronbach2019,FehrmanGessJentzen2019convergence,JentzenKuckuckNeufeldWurstemberger2018,JentzenWurstemberger2018LowerBound} and the references mentioned therein). 

In this work we estimate the approximation error, the generalization 
error, as well as the optimization error and we also combine these three 
errors to establish convergence with a suitable convergence speed for 
the overall error of the deep learning algorithm under consideration. 
Our convergence speed analysis is far from optimal and the convergence 
speed that we establish is rather slow, increases exponentially in the dimensions, and, in particular, suffers from the curse of 
dimensionality (cf., e.g., 
Bellman~\cite{Bellman1957}, Novak \& Wo\'zniakowski~\cite[Chapter 1]{NovakWozniakowski2008}, and Novak \& Wo\'zniakowski~\cite[Chapter 9]{NovakWozniakowski2010}). The main contribution of this work is, instead, to 
provide a full error analysis (i) which covers each of the three different 
sources of errors usually emerging in deep learning algorithms and 
(ii) which merges these three sources of errors into one overall error 
estimate for the considered deep learning algorithm. In the next result, 
\cref{thm:intro}, we briefly illustrate the findings of this article in 
a special case and we refer to \cref{subsec:overall_analysis} below for 
the more general convergence results which we develop in this article.

\begin{theorem}\label{thm:intro}
	Let 
	$ d \in \N $, 
  $ L,a%,u 
  \in \R $, 
	$ b \in (a,\infty) $,
	$ R \in [\max\{2, L, |a|, |b|\}, \infty)  $,  
	% $ R \in [\max\{1, L, |a|, |b|, 2|u|, 2|v|\}, \infty)  $,  
	let 
	$ ( \Omega, \mathcal{F}, \P ) $ 
	be a probability space, 
	let 
	$ X_m \colon \Omega \to [a,b]^d $, 
	$ m \in \N $,
	be i.i.d.\ random variables, 
	let 
	$ \norm{\cdot}\colon \R^d \to [0,\infty) $ 
	be the standard norm on $ \R^d $, 
	let 
	$ \varphi \colon [a,b]^d \to [0,1] $ 
	satisfy for all 
	$ x, y \in [a,b]^d $ 
	that 
	$| \varphi(x) - \varphi(y) | \leq L\norm{x-y}$,  
	for every $\mf d,r,s \in \N$, $\stdStartDim \in \N_0$, 
	$ \theta = ( \theta_1,\theta_2, \dots, \theta_{\mf d} ) \in \R^{\mf d} $ with $\mf d \geq \stdStartDim + r s + r$ let
	$\Aff_{r,s}^{\theta, \stdStartDim}\colon \R^{s} \to \R^{r}$ satisfy
	for all $x = (x_1,x_2,\ldots, x_{s}) \in \R^{s}$ that
	\begin{equation} \label{intro:affine_transformations}
	\Aff_{r,s}^{\theta,\stdStartDim}( x ) 
	= 
	\left( \textstyle
	\left[ \sum\limits_{i = 1}^s x_i \theta_{\stdStartDim+i} \right] + \theta_{\stdStartDim + rs + 1} ,
	\left[ \sum\limits_{i = 1}^s  x_i\theta_{\stdStartDim+s+i} \right] + \theta_{\stdStartDim + rs + 2} , 
	\ldots, \textstyle
	\left[ \sum\limits_{i = 1}^s  x_i \theta_{\stdStartDim+(r-1)s+i} \right] + \theta_{\stdStartDim + rs + r}
	\right)\!
	, 
	\end{equation}
let $\mf c\colon \R \to [0,1]$ and $\mf R_{\tau} \colon \R^{\tau} \to \R^{\tau}$, $\tau\in\N$, satisfy for all 
	$ \tau \in \N $, 
	$ x=(x_1,x_2,\ldots,x_{\tau})\in\R^{\tau} $,
	$ y \in \R $ 
	that 
	$\mf c(y) = \min\{1,\max\{0,y\}\}$ 
	and 
	$\mf R_{\tau}(x) = 
	(\max\{x_1,0\},\max\{x_2,0\}, \ldots, \max\{x_{\tau},0\})$, 
for every $\mf d, \tau\in\{3,4,\ldots\}$, $\theta\in\R^{\mf d}$ 
with $\mf d \geq \tau(d+1) + (\tau-3)\tau(\tau+1)+\tau+1$ let $\mf N^{\theta,\tau}\colon \R^d \to \R$ satisfy for all 
	$x\in\R^d$
that 
\begin{equation} \label{intro:neural_net_dfn}
    \bigl( {\mf N}^{\theta, \tau} \bigr)(x)
	=  
	\bigl(
	\mf c 
	\circ \Aff_{1,\tau}^{\theta, \tau(d+1) + (\tau-3)\tau(\tau+1)}
	\circ \mf R_{\tau}
	\circ \Aff_{\tau,\tau}^{\theta, \tau(d+1) + (\tau-4)\tau(\tau + 1)}
	\circ  \mf R_{\tau}
	\circ 
%	\ldots  \\
	\ldots
%	\circ \Aff_{\tau,\tau}^{\theta, \tau(d+1)+\tau(\tau+1)}
%	\circ \mf R_{\tau}  
	\circ  \Aff_{\tau,\tau}^{\theta, \tau(d + 1)}
	\circ \mf R_{\tau}  
	\circ \Aff_{\tau,d}^{\theta,0}
	\bigr)(x), 
	\end{equation}
	let	$ \mathfrak{E}_{\mf d,M,\tau} \colon [-R,R]^{\mf d} \times \Omega \to [0,\infty) $, $ \mf d, M, \tau \in \N $, 
	satisfy for all 
	$ \mf d, M \in \N $, 
	$ \tau \in \{3,4,\ldots\} $, 
	$ \theta \in [-R,R]^{\mf d} $,  
	$ \omega \in \Omega $ 
with $\mf d \geq \tau(d+1) + (\tau-3)\tau(\tau+1)+\tau+1$
	that
	\begin{equation} \label{intro:risk}
	\mathfrak{E}_{ \mf d, M, \tau }( \theta, \omega )
	=
	\frac{ 1 }{ M }\!
	\left[
	\smallsum\limits_{ m = 1 }^M
	| \mf N^{\theta,\tau}( X_m( \omega ) ) - \varphi(X_m( \omega )) |^2
	\right]\!
	,
	\end{equation}
	for every $\mf d \in \N$ let 
	$\Theta_{\mf d,k}\colon\Omega\to [-R,R]^{\mf d}$, 
	$k\in\N$, 
	be i.i.d.~random variables, 
	assume for all $\mf d\in\N$ that 
	$\Theta_{\mf d,1}$ 
	is continuous uniformly distributed on 
	$[-R,R]^{\mf d}$, 
	% assume for all $\mf d\in\N$ that 
	% $(X_m)_{m\in\N}$ 
	% and 
	% $(\Theta_{\mf d,k})_{k\in\N}$ 
	% are independent, 
	and let 
	$\Xi_{\mf d,K,M,\tau}\colon\Omega\to [-R,R]^{\mf d}$, $\mf d,K,M,\tau\in\N$, 
	satisfy for all $ \mf d,K,M,\tau \in \N $ that
	$\Xi_{\mf d,K,M,\tau} = \Theta_{\mf d, \min\{k\in\{1,2,\ldots,K\}\colon\mf{E}_{\mf d,M,\tau}(\Theta_{\mf d,k}) = \min_{l\in\{1,2,\ldots,K\}} \mf{E}_{\mf d,M,\tau}(\Theta_{\mf d,l}) \}} 
	$. 
	Then there exists $c\in (0,\infty)$ such that for all 
	$\mathfrak{d}, K, M, \tau \in \N$, 
  $ \varepsilon \in (0,1] $  
  with \enum{
    $\tau\geq  2d(2dL\varepsilon^{-1}+2)^d $ ;
    $\mf d \geq \tau(d+1) + (\tau-3)\tau(\tau+1)+\tau+1$
  }
	it holds that
	\begin{equation} \label{intro:claim}
	\begin{split}
	&
	\P\!\left(
	%\left[ 
	\int_{[a,b]^d}
	| 
	\mf N^{ \Xi_{\mf d,K,M,\tau} , \tau }( x )
	- 
	\varphi( x ) 
	|%^2
	\,  
	\P_{ X_1 }( dx )
	%\right]^{\nicefrac12}
	> 
	\eps 
	\right)
%	\\
%	& 
	\leq 
	\exp\bigl(-K (c\tau)^{-\tau\mf d}\varepsilon^{2\mf{d}} \bigr)
	+
	2\exp\bigl(\mf d\ln\!\big( (c\tau)^{\tau}\eps^{-2} \bigr) - 
\tfrac{\varepsilon^4 M}{c} \big). 
	\end{split}
	\end{equation}
\end{theorem}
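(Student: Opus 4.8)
The plan is to bound everything through the squared $L^2$-risk $\mathcal{R}(\theta):=\int_{[a,b]^d}\babs{\mf N^{\theta,\tau}(x)-\varphi(x)}^2\,\P_{X_1}(dx)=\E\bigl[\babs{\mf N^{\theta,\tau}(X_1)-\varphi(X_1)}^2\bigr]$, defined for $\theta\in[-R,R]^{\mf d}$ whenever $\mf d\geq\tau(d+1)+(\tau-3)\tau(\tau+1)+\tau+1$. Since $\mf c$ and $\varphi$ take values in $[0,1]$ the integrand lies in $[0,1]$, so $\E[\mathfrak E_{\mf d,M,\tau}(\theta)]=\mathcal{R}(\theta)$ for each fixed $\theta$, and Jensen's inequality gives $\int_{[a,b]^d}\babs{\mf N^{\theta,\tau}(x)-\varphi(x)}\,\P_{X_1}(dx)\leq\sqrt{\mathcal{R}(\theta)}$. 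Hence it suffices to show that $\mathcal{R}(\Xi_{\mf d,K,M,\tau})\leq\varepsilon^2$ outside an event whose probability is bounded by the right-hand side of \eqref{intro:claim}; I would obtain this by controlling the approximation, the optimization/random-initialization, and the generalization errors separately and then chaining them.

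Two quantitative inputs are imported from the earlier parts of the article. First, an \emph{approximation} bound: the hypothesis $\tau\geq 2d(2dL\varepsilon^{-1}+2)^d$ is exactly what is needed to realize the $L$-Lipschitz function $\varphi$ on $[a,b]^d\subseteq[-R,R]^d$, up to sup-error at most $\varepsilon/2$, as a continuous piecewise linear interpolant of $\varphi$ on a grid of mesh of order $\varepsilon/(dL)$ composed with the clipping $\mf c$, by means of the clipped deep ReLU architecture in \eqref{intro:neural_net_dfn}; this produces a deterministic $\vartheta\in[-R,R]^{\mf d}$ with $\mathcal{R}(\vartheta)\leq\varepsilon^2/4$ (the case $L=0$, in which $\varphi$ is constant, being trivial). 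Second, a \emph{parameter-Lipschitz} estimate: unrolling the composition in \eqref{intro:neural_net_dfn} and using that the weights and biases lie in $[-R,R]$, that $[a,b]^d$ is bounded, and that $\mf R_\tau$ and $\mf c$ are $1$-Lipschitz, one gets a constant $c_1\in[1,\infty)$ depending only on $d,L,a,b,R$ with $\babs{\mf N^{\theta,\tau}(x)-\mf N^{\eta,\tau}(x)}\leq(c_1\tau)^{\tau}\norm{\theta-\eta}_{\infty}$ for all $\theta,\eta\in[-R,R]^{\mf d}$ and $x\in[a,b]^d$, the exponent $\tau$ reflecting the depth (which is of order $\tau$). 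It is crucial to measure parameters in $\norm{\cdot}_{\infty}$ here, so that this constant carries \emph{no} dependence on the (large and a priori unbounded) parameter dimension $\mf d$. Consequently both $\theta\mapsto\mathcal{R}(\theta)$ and $\theta\mapsto\mathfrak E_{\mf d,M,\tau}(\theta,\omega)$ are $\norm{\cdot}_{\infty}$-Lipschitz with constant at most $4(c_1\tau)^{\tau}$, since all relevant values lie in $[0,1]$.

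Next I would derive the two tail estimates. For the \emph{random-initialization} error put $r:=2R\varepsilon^2(c_2\tau)^{-\tau}$, with $c_2\in[c_1,\infty)$ chosen (in terms of $c_1$ and $R$ only) so large that $4(c_1\tau)^{\tau}r\leq\varepsilon^2/4$; then $r\leq 2R$, and since $\Theta_{\mf d,1}$ is uniformly distributed on $[-R,R]^{\mf d}$ and $\vartheta\in[-R,R]^{\mf d}$, a product-of-marginals estimate gives $\P(\norm{\Theta_{\mf d,1}-\vartheta}_{\infty}\leq r)\geq(r/(2R))^{\mf d}=(c_2\tau)^{-\tau\mf d}\varepsilon^{2\mf d}$, whence, by independence of the $\Theta_{\mf d,k}$ and $1-x\leq e^{-x}$, $\P(H^{c})\leq\exp(-K(c_2\tau)^{-\tau\mf d}\varepsilon^{2\mf d})$ for $H:=\bigcup_{k=1}^{K}\{\norm{\Theta_{\mf d,k}-\vartheta}_{\infty}\leq r\}$. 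For the \emph{generalization} error I would cover $[-R,R]^{\mf d}$ in $\norm{\cdot}_{\infty}$ by $N\leq(2R/\delta)^{\mf d}$ points with $\delta:=\varepsilon^2/(64(c_1\tau)^{\tau})$ (which is $\leq 1\leq R$); at each center the $M$ summands appearing in \eqref{intro:risk} are i.i.d.\ and $[0,1]$-valued with mean $\mathcal{R}$ at that center, so Hoeffding's inequality at threshold $\varepsilon^2/8$, a union bound over the $N$ centers, and the Lipschitz estimate (to pass from the centers to arbitrary $\theta$, at cost $8(c_1\tau)^{\tau}\delta=\varepsilon^2/8$) yield a constant $c_3$ depending only on $d,L,a,b,R$ with $\P(G^{c})\leq 2\exp\bigl(\mf d\ln((c_3\tau)^{\tau}\varepsilon^{-2})-\varepsilon^4M/c_3\bigr)$ for $G:=\{\sup_{\theta\in[-R,R]^{\mf d}}\babs{\mathfrak E_{\mf d,M,\tau}(\theta,\cdot)-\mathcal{R}(\theta)}\leq\varepsilon^2/4\}$.

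Finally I would assemble on $G\cap H$: picking a (random) $k^{\ast}\in\{1,\dots,K\}$ with $\norm{\Theta_{\mf d,k^{\ast}}-\vartheta}_{\infty}\leq r$ and using that $\Xi_{\mf d,K,M,\tau}$ is an empirical-risk minimizer over $\Theta_{\mf d,1},\dots,\Theta_{\mf d,K}$, the event $G$ (twice), the parameter-Lipschitz estimate, and $\mathcal{R}(\vartheta)\leq\varepsilon^2/4$,
\begin{equation*}
\begin{split}
  \mathcal{R}(\Xi_{\mf d,K,M,\tau})
  &\leq\mathfrak E_{\mf d,M,\tau}(\Xi_{\mf d,K,M,\tau})+\tfrac{\varepsilon^2}{4}
    \leq\mathfrak E_{\mf d,M,\tau}(\Theta_{\mf d,k^{\ast}})+\tfrac{\varepsilon^2}{4} \\
  &\leq\mathcal{R}(\Theta_{\mf d,k^{\ast}})+\tfrac{\varepsilon^2}{2}
    \leq\mathcal{R}(\vartheta)+4(c_1\tau)^{\tau}r+\tfrac{\varepsilon^2}{2}
    \leq\varepsilon^2 .
\end{split}
\end{equation*}
By Jensen's inequality this forces $\int_{[a,b]^d}\babs{\mf N^{\Xi_{\mf d,K,M,\tau},\tau}(x)-\varphi(x)}\,\P_{X_1}(dx)\leq\varepsilon$ on $G\cap H$, so the event on the left-hand side of \eqref{intro:claim} is contained in $G^{c}\cup H^{c}$, and the union bound together with the two tail estimates above gives \eqref{intro:claim} with $c:=\max\{c_2,c_3\}$ (so that $(c_2\tau)^{-\tau\mf d}\geq(c\tau)^{-\tau\mf d}$, $\ln((c_3\tau)^{\tau}\varepsilon^{-2})\leq\ln((c\tau)^{\tau}\varepsilon^{-2})$, and $\varepsilon^4M/c_3\geq\varepsilon^4M/c$); the required measurability of $\Xi_{\mf d,K,M,\tau}$, of the supremum defining $G$, and of the integral in \eqref{intro:claim} follows from continuity of $\theta\mapsto\mf N^{\theta,\tau}$ by standard measurable-selection arguments. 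The single deepest ingredient is the quantitative ReLU approximation of a Lipschitz function by the specific clipped architecture \eqref{intro:neural_net_dfn} with only of order $d(dL\varepsilon^{-1})^{d}$ neurons per layer, but that is already available from the preceding sections; the remaining difficulty is essentially bookkeeping — one must keep every estimate free of $\mf d$-dependent factors by working exclusively in the supremum norm on parameter space, and then tune the thresholds $\varepsilon/2$, $\varepsilon^2/4$, $\varepsilon^2/8$, the radius $r$, and the mesh $\delta$, together with the constants $c_1,c_2,c_3$, so that the two exponential tails reproduce the right-hand side of \eqref{intro:claim}.
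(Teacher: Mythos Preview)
Your proposal is correct and follows essentially the same route as the paper. The paper derives Theorem~\ref{thm:intro} from Corollary~\ref{cor:l1_norm} (Jensen to pass from $L^2$ to $L^1$), which in turn comes from Theorem~\ref{prop:error_decomposition}, whose proof uses exactly the three ingredients you identify: the ReLU approximation result for Lipschitz functions (Corollary~\ref{cor:existence_of_clipped_neural_net_approximation_vectorized_description2}), the parameter-Lipschitz bound in the $\infnorm{\cdot}$-norm (Corollary~\ref{cor:ClippedRealNNLipsch}), Hoeffding plus a covering-number argument for the generalization error (Lemma~\ref{lem:cov6}), and the uniform-initialization estimate for the optimization error (Lemma~\ref{lem:estimate_opt_error1}).

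The only organizational difference is in how the pieces are chained. The paper first invokes the bias--variance decomposition (Lemma~\ref{lem:error_decomposition_parametrization}) with $\vartheta$ chosen as the \emph{minimizer} of the true risk $\mathcal{E}$ over $B$ (whose existence is Lemma~\ref{lem:existence_of_optimal_network_weights}), bounds $\mathcal{R}(\vartheta)$ by comparison with the approximation parameter $\eta$, and then controls $\mathfrak{E}(\Xi)-\mathfrak{E}(\vartheta)$ via the Lipschitz constant of the \emph{empirical} risk. You instead work directly with the approximation parameter $\vartheta$, introduce the event $H$ that some $\Theta_{\mf d,k}$ lands near $\vartheta$, and on $G\cap H$ chain $\mathcal{R}(\Xi)\to\mathfrak{E}(\Xi)\to\mathfrak{E}(\Theta_{\mf d,k^\ast})\to\mathcal{R}(\Theta_{\mf d,k^\ast})\to\mathcal{R}(\vartheta)$ using the Lipschitz constant of the \emph{true} risk. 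Both routes yield the same exponents; yours is marginally more elementary in that it avoids appealing to the existence of a risk minimizer, while the paper's decomposition lemma is slightly more reusable.
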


\cref{thm:intro} is an immediate consequence of \cref{cor:l1_norm} in Section \ref{subsec:overall_analysis} below. \cref{cor:l1_norm} follows from  \cref{cor:generic_constants} which, in turn, is implied by \cref{prop:error_decomposition}, the main result of this article. In the following we add some comments and explanations regarding the mathematical objects which appear in \cref{thm:intro} above. For every $\mf{d}, \tau \in \{ 3, 4, ... \}, \theta \in \R^{\mf d}$ with $\mf{d} \geq \tau(d+1) + (\tau-3)\tau(\tau+1)+\tau+1$ the function $\mf{N}^{ \theta, \tau } \colon \R^{\mf d} \to \R$ in \eqref{intro:neural_net_dfn} above describes the realization of a fully connected deep neural network with $\tau$ layers (1 input layer with $d$ neurons [$d$ dimensions], $1$ output layer with $1$ neuron [$1$ dimension], as well as $\tau - 2$ hidden layers with $\tau$ neurons on each hidden layer [$\tau$ dimensions in each hidden layer]). 
The vector $\theta \in \R^{ \mf d }$ in \eqref{intro:neural_net_dfn} in \cref{thm:intro} above stores the real parameters (the weights and the biases) for the concrete considered neural network. In particular, the architecture of the deep neural network in \eqref{intro:neural_net_dfn} is chosen so that we have $\tau d + (\tau - 3)\tau^2 + \tau$ real parameters in the weight matrices and $ (\tau-2)\tau + 1$ real parameters in the bias vectors resulting in $ [\tau d + (\tau - 3)\tau^2 + \tau] + [(\tau-2)\tau + 1] =  \tau(d+1) + (\tau-3)\tau(\tau+1) + \tau + 1$ real parameters for the deep neural network overall. 
This explains why the dimension $\mathfrak{d}$ of the parameter vector $\theta \in \R^{ \mathfrak{d} }$ must be larger or equal than the number of real parameters used to describe the deep neural network in \eqref{intro:neural_net_dfn} in the sense that $\mathfrak{d} \geq  \tau(d+1) + (\tau-3)\tau(\tau+1) + \tau + 1$ (see above \eqref{intro:neural_net_dfn}). The affine linear transformations for the deep neural network, which appear just after the input layer and just after each hidden layer in \eqref{intro:neural_net_dfn}, are specified in \eqref{intro:affine_transformations} above. 
The functions ${\mf R}_{ \tau }\colon \R^{\tau} \to \R$, $\tau \in \N$, describe the multi-dimensional rectifier functions which are employed as activation functions in \eqref{intro:neural_net_dfn}. 
Realizations of the random variables $(X_m,Y_m):=(X_m,\varphi(X_m))$, $m\in\{1,2,\ldots,M\}$, act as training data and the neural network parameter vector $\theta\in\R^{\mf d}$ should be chosen so that the empirical risk in \eqref{intro:risk} gets minimized. In \cref{thm:intro} above, we use as an optimization algorithm just random initializations and perform no gradient descent steps. The inequality in \eqref{intro:claim} in \cref{thm:intro} above provides a quantitative error estimate for the probability that the $L^1$-distance between the trained deep neural network approximation $\mf N^{ \Xi_{\mf d,K,M,\tau} , \tau }( x )$, $x \in [a,b]^d$, and the function $\varphi(x)$, $x \in [a,b]^d$, which we actually want to learn, is larger than a possibly arbitrarily small real number $ \eps \in (0,1] $. 
In \eqref{intro:claim} in Theorem 1.1 above we measure the error between 
the deep neural 
network and the function $\varphi \colon [a,b]^d \to [0,1]$, which we intend to learn, in the $L^1$-distance instead of in the $L^2$-distance. However, in the more general results in Section~\ref{subsec:overall_analysis} below we measure the error in the %standard/usual/more common 
$L^2$-distance and, just to keep the statement in \cref{thm:intro} as easily accessible as possible, we restrict ourselves in \cref{thm:intro} above to the $L^1$-distance. Observe that for every $\eps \in (0,1]$ and every $\mathfrak{d}, \tau \in \{3,4,\ldots\}$ with $ \mf d \geq \tau (d+1) + (\tau - 3)\tau (\tau+1) + \tau + 1 $ we have that the right hand side of \eqref{intro:claim} converges to zero as $K$ and $M$ tend to infinity. 
The right hand side of \eqref{intro:claim} also specifies a concrete speed of convergence and in this sense \cref{thm:intro} provides a full error analysis for the deep learning algorithm under consideration. Our analysis is in parts inspired by Maggi~\cite{Maggi2012GMT}, Berner et al.~\cite{BernerGrohsJentzen2018}, Cucker \& Smale~\cite{CuckerSmale2002FoundationsLearning}, Beck et al.~\cite{kolmogorov2018solving}, and Fehrman et al.~\cite{FehrmanGessJentzen2019convergence}. 

The remainder of this article is organized as follows. 
In \cref{sec:dnns} we present two elementary approaches how DNNs can be described 
in a mathematical fashion. 
Both approaches will be used in our error analyses in the later parts of this article. 
In \cref{sec:error_sources} we separately analyze the approximation error, the generalization error, and the optimization error of the considered algorithm. 
In \cref{sec:overall_error_analysis} we combine the separate error analyses in \cref{sec:error_sources} to obtain an overall error analysis of the considered algorithm.  

\section{Deep neural networks (DNNs)}
\label{sec:dnns}

In this section we present two elementary approaches on how DNNs can be described in a mathematical fashion. 
More specifically, we present in \cref{subsec:vectorized_description} a vectorized description for DNNs and we 
present in \cref{subsec:structured_description} a structured description for DNNs. Both approaches will be used 
in our error analyses in the later parts of this article. 
\cref{subsec:vectorized_description,subsec:structured_description,subsec:lipschitz} are partially based on material 
in publications from the scientific literature such as 
Beck et al.~\cite{kolmogorov2018solving,beck2019machine}, 
Berner et al.~\cite{BernerGrohsJentzen2018},
Goodfellow et al.~\cite{Goodfellow2016DeepLearning}, 
and 
Grohs et al.~\cite{Grohs2019ANNCalculus,GrohsJentzenSalimova2019}. 
In particular, 
	\cref{def:affine} is inspired by, e.g., (25) in \cite{beck2019machine}, 
	\cref{def:FFNN} is inspired by, e.g., (26) in \cite{beck2019machine},
	\cref{def:multidim_version} is, e.g., \cite[Definition~2.2]{Grohs2019ANNCalculus}, 
	\cref{def:relu1,def:relu,def:clip1,def:clip,def:rectclippedFFANN} are inspired by, e.g., \cite[Setting~2.3]{BernerGrohsJentzen2018}, 
	\cref{def:ANN} is, e.g., \cite[Definition~2.1]{Grohs2019ANNCalculus}, 
	\cref{def:ANNrealization} is, e.g., \cite[Definition~2.3]{Grohs2019ANNCalculus}, 
  \cref{def:simpleParallelization} is, e.g., \cite[Definition~2.17]{Grohs2019ANNCalculus},
  \cref{def:matrixInputDNN} is, e.g., \cite[Definition~3.10]{GrohsJentzenSalimova2019}, 
  \cref{def:ReLu:identity} is, e.g., \cite[Definition~3.15]{GrohsJentzenSalimova2019},
  \cref{def:ANNcomposition} is, e.g., \cite[Definition~2.5]{Grohs2019ANNCalculus},
  \cref{def:iteratedANNcomposition} is, e.g., \cite[Definition~2.11]{Grohs2019ANNCalculus},
%	\cref{Lemma:elementaryPropertiesANN} is \cite[Lemma~2.4]{Grohs2019ANNCalculus}, 
  \cref{def:ANNenlargement} is, e.g., \cite[Definition~2.12]{Grohs2019ANNCalculus},
and 
  \cref{thm:RealNNLipsch} is a strengthened version of \cite[Theorem 4.2]{BernerGrohsJentzen2018}.
% The proof of \cref{thm:RealNNLipsch} is analogous to the proof of Theorem 4.2 in \cite{BernerGrohsJentzen2018}
% and is therefore omitted.

\subsection{Vectorized description of DNNs}
\label{subsec:vectorized_description} 

\subsubsection{Affine functions}

\begingroup
\newcommand{\s}{\stdStartDim}
\begin{definition}[Affine function]
  \label{def:affine}
  Let $d,r,s \in \N$, $\s \in \N_0$, 
  $ \theta = ( \theta_1, \theta_2, \dots, \theta_d ) \in \R^d $ 
  satisfy $d \geq \s + r s + r$.
  Then we denote by $\Aff_{r,s}^{\theta, \s}\colon \R^{s} \to \R^{r}$ the function which satisfies 
  for all $x = (x_1,x_2,\ldots, x_{s}) \in \R^{s}$ that
  \begin{equation}
  \begin{split}
     &\Aff_{r,s}^{\theta,\s}( x ) 
  = 
    \left(
      \begin{array}{cccc}
        \theta_{ \s + 1 }
      &
        \theta_{ \s + 2 }
      &
        \cdots
      &
        \theta_{ \s + s }
      \\
        \theta_{ \s + s + 1 }
      &
        \theta_{ \s + s + 2 }
      &
        \cdots
      &
        \theta_{ \s + 2 s }
      \\
        \theta_{ \s + 2 s + 1 }
      &
        \theta_{ \s + 2 s + 2 }
      &
        \cdots
      &
        \theta_{ \s + 3 s }
      \\
        \vdots
      &
        \vdots
      &
        \ddots
      &
        \vdots
      \\
        \theta_{ \s + ( r - 1 ) s + 1 }
      &
        \theta_{ \s + ( r - 1 ) s + 2 }
      &
        \cdots
      &
        \theta_{ \s + r s }
      \end{array}
    \right)
    \left(
      \begin{array}{c}
        x_1
      \\
        x_2
      \\
        x_3
      \\
        \vdots 
      \\
        x_{s}
      \end{array}
    \right)
    +
    \left(
      \begin{array}{c}
        \theta_{ \s + r s + 1 }
      \\
        \theta_{ \s + r s + 2 }
      \\
        \theta_{ \s + r s + 3 }
      \\
        \vdots 
      \\
        \theta_{ \s + r s + r }
      \end{array}
    \right) \\
  &=
    \Big( \textstyle
      \Big[ \sum_{k = 1}^s x_k \theta_{\s+k} \Big] + \theta_{\s + rs + 1} ,
      \Big[ \sum_{k = 1}^s  x_k\theta_{\s+s+k} \Big] + \theta_{\s + rs + 2} , 
      \ldots,
      \Big[ \sum_{k = 1}^s  x_k \theta_{\s+(r-1)s+k} \Big] + \theta_{\s + rs + r}
	\Big)
	.
  \end{split}
  \end{equation}
  %and we call $\Aff_{r,s}^{\theta,v}$ the affine function from $\R^{s}$ to $\R^{r}$ associated to $(\theta,v)$.
\end{definition}
\endgroup

\subsubsection{Vectorized description of DNNs}

\begingroup
\newcommand{\s}{\stdStartDim}
\begin{definition}
  \label{def:FFNN}
  Let $d,L \in \N$, $l_0,l_1,\ldots, l_L \in \N$, $\s \in \N_0$, $\theta \in \R^d$ satisfy
  \begin{equation}
    d \geq \s + \sum_{k=1}^{L} l_k(l_{k-1} + 1)
  \end{equation}
  and let $\Psi_k \colon \R^{l_k} \to \R^{l_k}$, $k \in \{1,2,\ldots,L\}$, be functions.
  Then we denote by 
    $\RealV^{\theta,\delta,l_0}_{\Psi_1,\Psi_2,\ldots,\Psi_L} \colon\R^{l_0} \to \R^{l_L}$ 
  the function which satisfies for all $x \in \R^{l_0}$ that
  \begin{multline}
  \label{eq:FFNN}
    \bigl( \RealV^{\theta,\delta,l_0}_{ \Psi_1, \Psi_2, \ldots, \Psi_L } \bigr)(x)
  =  
    \bigl(
      \Psi_L
      \circ \Aff_{l_L,l_{L-1}}^{\theta,\s + \sum_{k = 1}^{L-1} l_k (l_{k-1} + 1)}
      \circ \Psi_{L-1} 
      \circ \Aff_{l_{L-1},l_{L-2}}^{\theta,\s + \sum_{k = 1}^{L-2} l_k (l_{k-1} + 1)}
      %\circ  \Psi_{L-2} 
      \circ 
      \ldots  \\
      \ldots
      %\circ \Aff_{l_3,l_2}^{\theta,\s + \sum_{i = 1}^2 l_i(l_{i-1} + 1)}
      \circ \Psi_{2}  
      \circ  \Aff_{l_2,l_1}^{\theta,\s + l_1(l_0 + 1)}
      \circ \Psi_{1}  
      \circ \Aff_{l_1,l_0}^{\theta,\s}
    \bigr)(x)
  \end{multline}
  (cf.\ Definition~\ref{def:affine}).
%   and we call 
%   $
%     \RealV^{\theta,v,l_0}_{ \Psi_1, \Psi_2, \ldots, \Psi_L } 
%   $ 
%   the realization of 
%   the fully connected feedforward artificial neural network 
%   associated to $ ( \theta, v ) $
%   with $ L + 1 $ layers with dimensions $ ( l_0, l_1, \ldots, l_L ) $ 
%   and
%   activation functions $ ( \Psi_1, \Psi_2, \ldots, \Psi_L ) $.
  % (we call 
  % $
  %   \mathcal{F}^{ \theta, v, l_0 }_{ \Psi_1, \Psi_2, \ldots, \Psi_L }
  % $ 
  % the fully connected feedforward artificial neural network 
  % associated to $ ( \theta, v ) $
  % with $ L + 1 $ layers with dimensions $ ( l_0, l_1, \ldots, l_L ) $ 
  % and
  % activation functions $ ( \Psi_1, \Psi_2, \ldots, \Psi_L ) $,
  % we call 
  % $
  %   \mathcal{N}^{ \theta, v, l_0 }_{ \Psi_1, \Psi_2, \ldots, \Psi_L }
  % $ 
  % the artificial neural network 
  % associated to $ ( \theta, v ) $
  % with $ L + 1 $ layers with dimensions $ ( l_0, l_1, \ldots, l_L ) $ 
  % and
  % activation functions $ ( \Psi_1, \Psi_2, \ldots, \Psi_L ) $,
  % % 
  % % 
  % we call $\mathcal{N}^{ \theta, v,l_0}_{\Psi_1,\Psi_2,\ldots,\Psi_L}$ the fully connected feedforward artificial neural network 
  % associated to $(\theta,v)$
  % with 
  % $L-1$ hidden layers with dimensions $(l_1,l_2,\ldots, l_{L-1})$ and activation functions $(\Psi_1, \Psi_2, \ldots, \Psi_{L-1})$,
  % input dimension $l_0$,
  % output dimension $l_L$, and 
  % output activation function $\Psi_L$).
\end{definition}
\endgroup

\subsubsection{Activation functions}

\begin{definition}[Multidimensional version]
  \label{def:multidim_version}
  Let $d \in \N$ and let $\psi \colon \R \to \R$ be a function.
  Then we denote by $\multdim_{\psi, d} \colon \R^d \to \R^d$ the function which satisfies for all 
  $ x = ( x_1, x_2, \dots, x_{d} ) \in \R^{d} $ that
  \begin{equation}
  \label{multidim_version:Equation}
    \multdim_{\psi, d}( x ) 
  =
    \left(
      \psi(x_1)
      ,
      \psi(x_2)
      ,
      \ldots
      ,
      \psi(x_d)
    \right).
  \end{equation}
%   and we call $\multdim_{\psi, d} $ the $d$-dimensional version of $\psi$.
  \end{definition}

\begin{definition}[Rectifier function]
  \label{def:relu1}
  We denote by $ \rect \colon \R \to \R $ the function which satisfies 
  for all $ x \in \R $ that 
  \begin{equation}
    \rect (x) = \max\{ x, 0 \}.
  \end{equation}
%   and we call $\rect$ the rectifier function.
\end{definition}

\begin{definition}[Multidimensional rectifier function]
  \label{def:relu}
  Let $d \in \N$. Then we denote by $ \Rect_{d} \colon \R^{d} \to \R^{d} $ the function given by
  \begin{equation}
  \label{eq:relu}
    \Rect_d
  =
    \multdim_{\rect, d}
  \end{equation}
  (cf.\ \cref{def:multidim_version,def:relu1}).
%   and we call $\Rect_{d}$ the $d$-dimensional rectifier function.
\end{definition}

\begin{definition}[Clipping function]
  \label{def:clip1}
  Let $u\in[-\infty,\infty)$, $v\in(u,\infty]$.
  Then we denote by $ \clip uv \colon \R \to \R $ the function which satisfies 
  for all $ x \in \R $ that 
  \begin{equation}
    \clip uv (x) = \max\{u,\min\{x,v\}\}.
  \end{equation}
%   and we call $\clip ab$ the $(a,b)$-clipping function.
\end{definition}

\begin{definition}[Multidimensional clipping function]
  \label{def:clip}
  Let 
    $d \in \N$,
	$u\in [-\infty,\infty)$,
	$v\in (u,\infty]$.
  Then we denote by $ \Clip uvd \colon \R^{d} \to \R^{d} $ the function given by
  \begin{equation}
    \Clip uvd
  =
    \multdim_{\clip uv, d}
  \end{equation}
  (cf.\ \cref{def:multidim_version,def:clip1}).
%   and we call $\Clip{r}{d}$ the $d$-dimensional $r$-clipping function.
\end{definition}

\subsubsection{Rectified DNNs}

% \begin{definition}
%   \label{def:rectFFANN}
%   Let 
%     $L\in\{2,3,\dots\}$,
%     $d\in\N$,
%     $\mathbf l=(l_0,l_1,\dots,l_L)\in\N^{L+1}$,
%     $\theta\in\R^d$
%   satisfy that
%   \begin{equation}
%     d\geq \sum_{k=1}^Ll_k(l_{k-1}+1).
%   \end{equation}
%   Then we denote by 
%     $\RectRealV^{\theta}_{\mathbf l}\colon \R^{l_0}\to\R^{l_L}$
%   the function which satisfies for all 
%     $x\in\R^{l_0}$
%   that
%   \begin{equation}
% 	\RectRealV^{\theta}_{\mathbf l}(x)
% 	=
% 	\bigl(
% 	  \RealV^{\theta,0,l_0}_{\Rect_{l_1},\Rect_{l_2},\dots,\Rect_{l_{L-1}},\id_{\R^{l_L}}}
% 	\bigr)(x)
%   \end{equation}
%   (cf.\ Definition~\ref{def:FFNN} and Definition~\ref{def:relu}).
% %    and we call $\RectRealV^{\theta}_{\mathbf l}$ the realization
% %   of the rectified fully connected feedforward artificial neural network associated to $\theta$ with $L+1$
% %   layers with dimensions $(l_0,l_1,\dots,l_L)$.
% \end{definition}

\begin{definition}[Rectified clipped DNN]
  \label{def:rectclippedFFANN}
  Let 
    $L,d\in\N$,
	$u\in[-\infty,\infty)$,
	$v\in(u,\infty]$,
    $\mathbf l=(l_0,l_1,\dots,l_L)\in\N^{L+1}$,
    $\theta\in\R^d$
  satisfy
  \begin{equation}
    d\geq \sum_{k=1}^Ll_k(l_{k-1}+1).
  \end{equation}
  Then we denote by 
    $\ClippedRealV{\theta}{\mathbf l} uv\colon \R^{l_0}\to\R^{l_L}$
  the function which satisfies for all 
    $x\in\R^{l_0}$
  that
  \begin{equation}
    \label{eq:rectclippedFFANN}
    \ClippedRealV{\theta}{\mathbf l}uv(x)
    =
    \begin{cases}
      \bigl( 
        \RealV^{ \theta, 0, l_0 }_{ \Clip uv{l_L} }
      \bigr)(x)
    &
      \colon 
      L = 1
    \\
      \bigl(\RealV^{\theta,0,l_0}_{\Rect_{l_1},\Rect_{l_2},\dots,\Rect_{l_{L-1}},\Clip uv{l_L}}\bigr)(x)
    &
      \colon 
      L > 1
    \end{cases}
    % =
    % \Clip ab{l_L}(\RectRealV^\theta_{\mathbf l}(x))
  \end{equation}
  (cf.\ \cref{def:clip,def:relu,def:FFNN}).
%   and we call $\ClippedRealV{\theta}{\mathbf l}{r}$ the realization
%   of the $r$-clipped rectified fully connected neural network associated to $\theta$ with $L+1$
%   layers with dimensions $(l_0,l_1,\dots,l_L)$.
%   \TODO{Wir k\"onnten ungeclippte neural networks als Spezialfall von geclippten
%   definieren, indem wir hier (und in Definition~\ref{def:clip}) $r=\infty$ zulassen}
\end{definition}

% \begin{definition}
%   \label{def:ClassRectANN}
%   % \TODO{Do we ever use this?}
%   Let
%     $L\in\{2,3,\dots\}$,
%     $R,r\in(0,\infty)$,
%     $u\in\R$,
%     $v\in(u,\infty)$,
%     $\mathbf l=(l_0,l_1,\dots,l_L)\in\N^{L+1}$.
%   Then we denote by $\classRectANN u v {\mathbf l} R r$ the set
%   \begin{equation}
%     \classRectANN u v {\mathbf l} R {r}
%     =
%     \left\{
%       \bigl([u,v]^{l_0}\ni x\mapsto \ClippedRealV{\theta}{\mathbf l}{r}(x)\in\R^{l_L}\bigr)
%       \colon
%       \theta\in\R^{\sum_{k=1}^L l_k(l_{k-1}+1)},
%       \infnorm{\theta}\leq R
%     \right\}
%   \end{equation}
%   (cf.\ Definition~\ref{def:clip}, Definition~\ref{def:rectFFANN}, and Definition~\ref{def:infnorm}) and we call 
%   $\classRectANN u v {\mathbf l} R r$ the class of
%   $r$-clipped neural networks with $L+1$ layers with dimensions $(l_0,l_1,\dots,l_L)$
%   associated to $(u,v,R)$.
% \end{definition}

\subsection{Structured description of DNNs}
\label{subsec:structured_description}

\subsubsection{Structured description of DNNs}
\label{subsubsec:structured_description_of_DNNs}

\begin{definition}
  \label{def:ANN}
  We denote by $\ANNs$ the set given by 
  \begin{equation}
    \label{eq:defANN}
  \begin{split}
    \textstyle
    \ANNs
  & \textstyle
    =
    \bigcup_{L \in \N}
    \bigcup_{ (l_0,l_1,\ldots, l_L) \in \N^{L+1} }
    \left(
      \bigtimes_{k = 1}^L (\R^{l_k \times l_{k-1}} \times \R^{l_k})
    \right)
  \end{split}
  \end{equation}
  and we denote by 	
  $
    \paramANN, 
    \lengthANN,  
    \inDimANN, 
    \outDimANN \colon \ANNs \to \N
  $, 
  $
    \hiddenLength \colon \ANNs \to \N_0
  $, 
  and
  $
    \dims \colon \ANNs \to \bigl(\bigcup_{ L = 2 }^\infty \N^{L}\bigr)
  $
  the functions which satisfy
  for all 
  $ 
    L\in\N$, $l_0,l_1,\ldots, l_L \in \N$, 
  $
    \Phi 
    \in  \allowbreak
    \bigl( \bigtimes_{k = 1}^L\allowbreak(\R^{l_k \times l_{k-1}} \times \R^{l_k})\bigr)$
    that
  $\paramANN(\Phi)
    =
    \sum_{k = 1}^L l_k(l_{k-1} + 1) 
  $, $\lengthANN(\Phi)=L$,  $\inDimANN(\Phi)=l_0$,  $\outDimANN(\Phi)=l_L$, $\hiddenLength(\Phi)=L-1$, and $\dims(\Phi)= (l_0,l_1,\ldots, l_L)$.
\end{definition}

\subsubsection{Realizations of DNNs}
\label{subsubsec:realizations_of_dnns}

\begin{definition}[Realization associated to a DNN]
  \label{def:ANNrealization}
  Let $a\in C(\R,\R)$.
  Then we denote by 
  $
    \functionANN{a} \colon \ANNs \to \bigl(\bigcup_{k,l\in\N}\,C(\R^k,\R^l)\bigr)
  $
  the function which satisfies
  for all  $ L\in\N$, $l_0,l_1,\ldots, l_L \in \N$, 
  $
  \Phi 
  =
  ((W_1, B_1),(W_2, B_2),\allowbreak \ldots, (W_L,\allowbreak B_L))
  \in  \allowbreak
  \bigl( \bigtimes_{k = 1}^L\allowbreak(\R^{l_k \times l_{k-1}} \times \R^{l_k})\bigr)
  $,
  $x_0 \in \R^{l_0}, x_1 \in \R^{l_1}, \ldots, x_{L-1} \in \R^{l_{L-1}}$ 
  with $\forall \, k \in \N \cap (0,L) \colon x_k =\activationDim{l_k}(W_k x_{k-1} + B_k)$  
  that
  \begin{equation}
  \label{setting_NN:ass2}
  \functionANN{a}(\Phi) \in C(\R^{l_0},\R^{l_L})\qandq
  ( \functionANN{a}(\Phi) ) (x_0) = W_L x_{L-1} + B_L
  \end{equation}
  (cf.\ \cref{def:ANN,def:multidim_version}).
\end{definition}

\subsubsection{On the connection to the vectorized description of DNNs}

\begin{definition} \label{def:TranslateStructuredIntoVectorizedDescription} 
  We denote by 
  $\MappingStructuralToVectorized\colon \ANNs \to \bigl(\bigcup_{d\in\N} \R^d \bigr)$
  the function which satisfies for all 
  $ L,d \in \N $, 
  $ l_0, l_1, \ldots, l_L \in \N $, 
  $ \Phi = ( (W_1, B_1), (W_2, B_2), \ldots, (W_L, B_L) ) \in \bigl( \bigtimes_{m = 1}^L\allowbreak(\R^{l_{m} \times l_{m-1}} \times \R^{l_{m}})\bigr)$,
  $\theta=(\theta_1,\theta_2,\dots,\theta_d)\in\R^d$,
  $k\in\{1,2,\dots,L\}$
  with $\MappingStructuralToVectorized(\Phi)=\theta$
  that
  % \begin{enumerate}[(i)]
  % 	\item 
  % 	$ \begin{color}{red}\MappingStructuralToVectorized\end{color}(\Phi) = ( \theta_1, \ldots, \theta_{\paramANN(\Phi)}) \in \R^{\paramANN(\Phi)} $ 
  % 	and 
  % 	\item 
    % for all 
    % $k\in \{1,2,\ldots,L\}$ 
    % it holds that 
  \begin{equation}
  \label{eq:translate}
  \begin{split}
    d
    &=
    \paramANN(\Phi),
\qquad
    B_k
    =
    \begin{pmatrix}
      \theta_{(\sum_{i=1}^{k-1}l_i(l_{i-1}+1))+l_kl_{k-1}+1}\\
      \theta_{(\sum_{i=1}^{k-1}l_i(l_{i-1}+1))+l_kl_{k-1}+2}\\
      \theta_{(\sum_{i=1}^{k-1}l_i(l_{i-1}+1))+l_kl_{k-1}+3}\\
      \vdots\\
      \theta_{ (\sum_{i=1}^{k-1}l_i(l_{i-1}+1))+l_kl_{k-1}+l_k }
    \end{pmatrix}
    ,
 \qquad 
     \text{and}
    \\
    W_k
    &=
    \begin{pmatrix}
      \theta_{(\sum_{i=1}^{k-1}l_i(l_{i-1}+1))+1} & \theta_{(\sum_{i=1}^{k-1}l_i(l_{i-1}+1))+2} & \cdots & \theta_{(\sum_{i=1}^{k-1}l_i(l_{i-1}+1))+l_{k-1}} \\
      \theta_{(\sum_{i=1}^{k-1}l_i(l_{i-1}+1))+l_{k-1}+1} & \theta_{(\sum_{i=1}^{k-1}l_i(l_{i-1}+1))+l_{k-1}+2} & \cdots & \theta_{(\sum_{i=1}^{k-1}l_i(l_{i-1}+1))+2l_{k-1}} \\
      \theta_{(\sum_{i=1}^{k-1}l_i(l_{i-1}+1))+2l_{k-1}+1} & \theta_{(\sum_{i=1}^{k-1}l_i(l_{i-1}+1))+2l_{k-1}+2} & \cdots & \theta_{(\sum_{i=1}^{k-1}l_i(l_{i-1}+1))+3l_{k-1}} \\
      \vdots & \vdots & \ddots & \vdots \\
      \theta_{(\sum_{i=1}^{k-1}l_i(l_{i-1}+1))+(l_k-1)l_{k-1}+1} & \theta_{(\sum_{i=1}^{k-1}l_i(l_{i-1}+1))+(l_k-1)l_{k-1}+2} & \cdots & \theta_{(\sum_{i=1}^{k-1}l_i(l_{i-1}+1))+l_kl_{k-1}}
    \end{pmatrix},
  \end{split}
  \end{equation}
  (cf.\ \cref{def:ANN}).
\end{definition}

\begin{lemma}
\label{lem:structtovectB}
	Let 
	  $ a, b \in\N$, 
	$ 
	  W
	  = ( W_{ i, j } )_{ (i,j) \in \{1,2,\dots,a\} \times \{1,2,\dots,b\}}
	  \in\R^{ a \times b }
	$, 
	$ B = ( B_i )_{ i \in \{ 1, 2, \dots, a \} } \in \R^{a} $. 
	Then
  \begin{equation}
    \label{eq:structtovect2B}
    \MappingStructuralToVectorized\bigl( ( (W,B) ) \bigr)
    =
    \bigl( W_{1,1}, W_{1,2}, \dots, W_{1,b},
    W_{2,1}, W_{2,2}, \dots, W_{2,b}, \dots, 
    W_{a,1}, W_{a,2}, \dots, W_{a,b}, 
    B_1, B_2, \dots, B_a \bigr)
  \end{equation}
(cf.\ \cref{def:TranslateStructuredIntoVectorizedDescription}).
\end{lemma}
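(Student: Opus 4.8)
The plan is to obtain this as a direct specialization of Definition~\ref{def:TranslateStructuredIntoVectorizedDescription} to the one-layer case. Concretely, I would take $L = 1$, $l_0 = b$, and $l_1 = a$, so that $\Phi = ((W,B)) \in (\R^{l_1 \times l_0} \times \R^{l_1})$, and note via Definition~\ref{def:ANN} that $\paramANN(\Phi) = l_1(l_0 + 1) = a(b+1) = ab + a$. Hence the vector $\theta$ with $\MappingStructuralToVectorized(\Phi) = \theta$ lives in $\R^{ab+a}$, which already matches the length of the tuple on the right-hand side of \eqref{eq:structtovect2B}.

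The next step is to evaluate the index expressions in \eqref{eq:translate} for $k = 1$. The sum $\sum_{i=1}^{k-1} l_i(l_{i-1}+1)$ is empty when $k = 1$ and therefore equals $0$, so the $W_k$-formula says that the $i$-th row of $W = W_1$ is $(\theta_{(i-1)b+1}, \theta_{(i-1)b+2}, \ldots, \theta_{ib})$ for each $i \in \{1,2,\ldots,a\}$, i.e.\ $W_{i,j} = \theta_{(i-1)b+j}$ for all $(i,j) \in \{1,\ldots,a\}\times\{1,\ldots,b\}$, and the $B_k$-formula says that $B = B_1 = (\theta_{ab+1}, \theta_{ab+2}, \ldots, \theta_{ab+a})$, i.e.\ $B_i = \theta_{ab+i}$ for all $i \in \{1,\ldots,a\}$.

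Finally I would observe that the explicit tuple on the right-hand side of \eqref{eq:structtovect2B} is precisely the vector in $\R^{ab+a}$ whose $((i-1)b+j)$-th entry is $W_{i,j}$ and whose $(ab+i)$-th entry is $B_i$; thus it satisfies exactly the relations extracted in the previous step. Since $\MappingStructuralToVectorized$ is a function and those relations determine all $ab+a$ components of $\MappingStructuralToVectorized(\Phi)$ uniquely, this forces $\MappingStructuralToVectorized(((W,B)))$ to equal that tuple, which is the assertion. I do not anticipate a real obstacle: the argument is a substitution into the definition, and the only points requiring a little care are the vanishing of the empty sum for $k=1$ and keeping the row-major indexing of $W$ straight.
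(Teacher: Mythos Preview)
Your proposal is correct and follows essentially the same approach as the paper: the paper's proof consists of the single sentence that \eqref{eq:translate} establishes \eqref{eq:structtovect2B}, and you have simply spelled out that specialization to $L=1$, $l_0=b$, $l_1=a$ in detail.
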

\begin{proof}[Proof of \cref{lem:structtovectB}]
Observe that \eqref{eq:translate} 
establishes \eqref{eq:structtovect2B}.
The proof of \cref{lem:structtovectB} is thus completed.
\end{proof}

\begin{lemma}
  \label{lem:structtovect}
	Let 
	  $L\in\N$, 
	  $l_0,l_1,\dots,l_L\in\N$, 
	let $W_k=(W_{k,i,j})_{(i,j)\in\{1,2,\dots,l_k\}\times\{1,2,\dots,l_{k-1}\}}\in\R^{l_k\times l_{k-1}}$, $k\in\{1,2,\dots,L\}$,
	and let $B_k=(B_{k,i})_{i\in\{1,2,\dots,l_k\}}\in\R^{l_k}$, $k\in\{1,2,\dots,L\}$.
	Then
\begin{enumerate}[label=(\roman{*})]
\item 
\label{item:b}
it holds for all $k\in\{1,2,\dots,L\}$ that
  % \begin{equation}
  %   \begin{split}
  %   \MappingStructuralToVectorized\bigl(((W_k,B_k))\bigr)
  %   =
  %   (&W_{k,1,1},W_{k,1,2},\dots,W_{k,1,l_{k-1}},\\
  %   &W_{k,2,1},W_{k,2,2},\dots,W_{k,2,l_{k-1}},\\
  %   &\dots,\dots,\\
  %   &W_{k,l_k,1},W_{k,l_k,2},\dots,W_{k,l_k,l_{k-1}},\\
  %   &B_{k,1},B_{k,2},\dots,B_{k,l_k}).
  %   \end{split}
  % \end{equation}
  % \begin{equation}
  %   \begin{split}
  %   \MappingStructuralToVectorized\bigl(((W_k,B_k))\bigr)
  %   =
  %   (&W_{k,1,1},W_{k,1,2},\dots,W_{k,1,l_{k-1}},\;
  %   W_{k,2,1},W_{k,2,2},\dots,W_{k,2,l_{k-1}},\\
  %   &\dots,\dots,\;W_{k,l_k,1},W_{k,l_k,2},\dots,W_{k,l_k,l_{k-1}},\;B_{k,1},B_{k,2},\dots,B_{k,l_k}).
  %   \end{split}
  % \end{equation}
  \begin{multline}
    \label{eq:structtovect2}
  \MappingStructuralToVectorized\bigl(((W_k,B_k))\bigr)
    =
    \bigl(
      W_{k,1,1},W_{k,1,2},\dots,W_{k,1,l_{k-1}}, 
      W_{k,2,1},W_{k,2,2},\dots,W_{k,2,l_{k-1}}, \dots ,
      \\
%     \dots,\dots,\;
      W_{k,l_k,1},W_{k,l_k,2},\dots,W_{k,l_k,l_{k-1}},
      B_{k,1},B_{k,2},\dots,B_{k,l_k}
    \bigr)
  \end{multline}
and
\item 
\label{item:a}
it holds that
  \begin{equation}
    \label{eq:structtovect1}
  \begin{split}
  &
    \MappingStructuralToVectorized\Bigl(
      \bigl(
        (W_1,B_1),(W_2,B_2),\dots,(W_L,B_L)
      \bigr)
    \Bigr)
  \\ &
    =
    \Bigl(
      W_{1,1,1},W_{1,1,2},\dots,W_{1,1,l_0}, 
      \dots ,
      W_{1,l_1,1},W_{1,l_1,2},\dots,W_{1,l_1,l_0},
      B_{1,1},B_{1,2},\dots,B_{1,l_1},
    \\
    &
    \qquad
      W_{2,1,1},W_{2,1,2},\dots,W_{2,1,l_1}, 
      \dots ,
      W_{2,l_2,1},W_{2,l_2,2},\dots,W_{2,l_2,l_1},
      B_{2,1},B_{2,2},\dots,B_{2,l_2},
    \\
    &
    \qquad
      \dots ,
    \\
    &
    \qquad 
      W_{L,1,1},W_{L,1,2},\dots,W_{L,1,l_{L-1}}, 
      \dots 
      W_{L,l_L,1},W_{L,l_L,2},\dots,W_{L,l_L,l_{L-1}},
      B_{L,1},B_{L,2},\dots,B_{L,l_L}
    \Bigr)
%   \\ &
%     =
%     \Bigl(
%       \MappingStructuralToVectorized\bigl(((W_1,B_1))\bigr)
%       ,
%       \MappingStructuralToVectorized\bigl(((W_2,B_2))\bigr)
%       ,
%       \ldots
%       ,
%       \MappingStructuralToVectorized\bigl(((W_L,B_L))\bigr)
%     \Bigr)
  \end{split}
  \end{equation}
\end{enumerate}
(cf.\ \cref{def:TranslateStructuredIntoVectorizedDescription}).
\end{lemma}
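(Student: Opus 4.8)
The plan is to read off both identities directly from \cref{def:TranslateStructuredIntoVectorizedDescription}, invoking \cref{lem:structtovectB} for the one-layer statement in \cref{item:b}.

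For \cref{item:b}, fix $k\in\{1,2,\dots,L\}$ and apply \cref{lem:structtovectB} with the quantities $a,b,W,B$ there replaced by $l_k,l_{k-1},W_k,B_k$; the conclusion \eqref{eq:structtovect2B} is then word for word \eqref{eq:structtovect2}. (Alternatively, \cref{item:b} is the special case $L=1$ of \cref{item:a}.)

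For \cref{item:a}, abbreviate $\Phi=((W_1,B_1),(W_2,B_2),\dots,(W_L,B_L))$ and, for $k\in\{0,1,\dots,L\}$, set $n_k=\sum_{i=1}^{k}l_i(l_{i-1}+1)$, so that $n_0=0$, $n_k-n_{k-1}=l_k(l_{k-1}+1)=l_kl_{k-1}+l_k$, and $n_L=\paramANN(\Phi)$. I would then take $\theta=(\theta_1,\theta_2,\dots,\theta_{n_L})\in\R^{n_L}$ to be the vector obtained by concatenating, for $k=1,2,\dots,L$ in order, first the $l_kl_{k-1}$ entries of $W_k$ listed row by row ($W_{k,1,1},\dots,W_{k,1,l_{k-1}},W_{k,2,1},\dots,W_{k,l_k,l_{k-1}}$) and then the entries $B_{k,1},\dots,B_{k,l_k}$ of $B_k$; this is precisely the right-hand side of \eqref{eq:structtovect1}. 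It then remains to verify that this $\theta$ satisfies, for every $k\in\{1,2,\dots,L\}$, the two displayed identities for $B_k$ and $W_k$ in \eqref{eq:translate} (observing that $\sum_{i=1}^{k-1}l_i(l_{i-1}+1)=n_{k-1}$): the coordinates of $\theta$ with indices in $\{n_{k-1}+1,\dots,n_{k-1}+l_kl_{k-1}\}$ are, by construction, the rows of $W_k$ in the order prescribed by the matrix in \eqref{eq:translate}, and the following $l_k$ coordinates, with indices in $\{n_{k-1}+l_kl_{k-1}+1,\dots,n_k\}$, are $B_{k,1},\dots,B_{k,l_k}$, as required. Since the equations in \eqref{eq:translate} determine $\MappingStructuralToVectorized(\Phi)$ uniquely, this gives $\MappingStructuralToVectorized(\Phi)=\theta$ and hence \eqref{eq:structtovect1}.

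The only point requiring genuine attention is the index bookkeeping: one must check that the offsets $n_k$ telescope so that the weight-block and bias-block of layer $k$ occupy consecutive stretches of coordinates with neither gap nor overlap, and that layer $L$ terminates exactly at coordinate $\paramANN(\Phi)$. Beyond that there is no real difficulty, since \cref{def:TranslateStructuredIntoVectorizedDescription} already supplies the per-layer placement formula and \cref{item:a} merely spells it out with the summation indices made explicit.
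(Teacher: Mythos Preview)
Your proposal is correct and follows the same approach as the paper: \cref{item:b} via \cref{lem:structtovectB}, and \cref{item:a} directly from the defining equations~\eqref{eq:translate}. The paper's own proof is the two-line version of what you wrote---it simply asserts that \cref{lem:structtovectB} gives \cref{item:b} and that \eqref{eq:translate} gives \cref{item:a}---whereas you have spelled out the index-offset bookkeeping explicitly, which is a reasonable thing to do but not strictly required.
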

\begin{proof}[Proof of \cref{lem:structtovect}]
Note that \cref{lem:structtovectB} proves
\cref{item:b}. Moreover, observe 
that \eqref{eq:translate} 
% and \cref{item:b} 
establishes \cref{item:a}.
The proof of \cref{lem:structtovect} is thus completed.
\end{proof}

% \begin{lemma}
%   \label{lem:structtovect}
% 	Let $L\in\N$,
% 	$ l_0, l_1, \ldots, l_L \in \N $
% 	and let
% 	\begin{equation}
% 	\begin{split}
% 		\Phi = \Biggl( 
% 			&\biggl(
% 				(W_{1,i,j})_{\substack{i\in\{1,2,\dots,l_1\},\\j\in\{1,2,\dots,l_0\}}}, 
% 				(B_{1,i})_{i\in\{1,2,\dots,l_1\}}
% 			\biggr),\\
% 			&\left(
% 				(W_{2,i,j})_{\substack{i\in\{1,2,\dots,l_2\},\\j\in\{1,2,\dots,l_1\}}}, 
% 				(B_{2,i})_{i\in\{1,2,\dots,l_2\}}
% 			\right), 
% 			\ldots, \\
% 			&\biggl(
% 				(W_{L,i,j})_{\substack{i\in\{1,2,\dots,l_L\},\\j\in\{1,2,\dots,l_{L-1}\}}}, 
% 				(B_{L,i})_{i\in\{1,2,\dots,l_L\}}
% 			\biggr) 
% 		\Biggr) 
%     \in \left(\bigtimes_{k = 1}^L\allowbreak(\R^{l_{k} \times l_{k-1}} \times \R^{l_{k}})\right)
%     .
% 	\end{split}
% 	\end{equation}
% 	Then
% 	\begin{equation}
% 	\begin{split}
% 		\MappingStructuralToVectorized(\Phi&)
% 		=\\
% 		\bigl(
% 			&W_{1,1,1},W_{1,1,2},\dots,W_{1,1,l_0},
% 			W_{1,2,1},W_{1,2,2},\dots,W_{1,2,l_0},
% 			\dots,\dots,
% 			W_{1,l_1,1},W_{1,l_1,2},\dots,W_{1,l_1,l_0},\\
% 			&B_{1,1},B_{1,2},\dots,B_{1,l_1},\\
% 			%
% 			&W_{2,1,1},W_{2,1,2},\dots,W_{2,1,l_1},
% 			W_{2,2,1},W_{2,2,2},\dots,W_{2,2,l_1},
% 			\dots,\dots,
% 			W_{2,l_2,1},W_{2,l_2,2},\dots,W_{2,l_2,l_1},\\
% 			&B_{2,1},B_{2,2},\dots,B_{2,l_2},\\
%       &\dots,\\
%       &\dots,\\
% 			&W_{L,1,1},W_{L,1,2},\dots,W_{L,1,l_{L-1}},
% 			W_{L,2,1},W_{L,2,2},\dots,W_{L,2,l_{L-1}},
% 			\dots,\dots,
% 			W_{L,l_L,1},W_{1,l_L,2},\dots,W_{L,l_L,l_{L-1}},\\
% 			&B_{L,1},B_{L,2},\dots,B_{L,l_L}
%     \bigr)
%     .
% 	\end{split}
% 	\end{equation}
% \end{lemma}

\begin{lemma}
  \label{lem:structvsvectgen}
  Let $a\in C(\R,\R)$,
  $\Phi\in\ANNs$,
  $L\in\N$,
  $l_0,l_1,\dots,l_L\in\N$
  satisfy
  $\dims(\Phi)=(l_0,l_1,\dots,l_L)$
  (cf.\ \cref{def:ANN}).
  Then it holds for all 
    $x\in\R^{l_0}$ 
  that
  \begin{equation}
  \label{eq:structvsvectgen}
    (\functionANN{a}(\Phi))(x)
    =
    \begin{cases}
      \bigl(\RealV^{\MappingStructuralToVectorized(\Phi),0,l_0}_{\id_{\R^{l_L}}}\bigr)(x)&\colon L=1\\[0.2cm]
      \bigl(\RealV^{\MappingStructuralToVectorized(\Phi),0,l_0}_{\multdim_{a,l_1},\multdim_{a,l_2},\ldots,\multdim_{a,l_{L-1}},\id_{\R^{l_L}}}\bigr)(x) & \colon L>1
    \end{cases}
  \end{equation}
  % Let 
  %   $L\in\{2,3,\dots\}$,
  %   $l_0,l_1,\dots,l_L,d\in\N$,
  %   $\theta\in\R^d$
  % satisfy that
  % \begin{equation}
  %   d\geq \sum_{k=1}^L l_k(l_{k-1}+1)
  % \end{equation}
  % and let 
  %   $\Phi=((W_1,B_1),(W_2,B_2),\dots,(W_L,B_L))\in(\bigtimes_{k=1}^L (\R^{l_k\times l_{k-1}}\times \R^{l_k}))$
  % satisfy for all 
  %   $k\in\{1,2,\dots,L\}$
  % that
  % Then it holds for all 
  %   $x\in\R^{l_0}$ 
  % that
  % \begin{equation}
  %   \RectRealV^{\theta}_{(l_0,l_1,\dots,l_L)}(x)
  %   =
  %   (\functionANN{\rect}(\Phi))(x)
  % \end{equation}
  (cf.\ \cref{def:ANNrealization,def:TranslateStructuredIntoVectorizedDescription,def:multidim_version,def:FFNN}).
\end{lemma}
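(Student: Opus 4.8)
The plan is to reduce the claim to a single building-block identity: that, after translating $\Phi$ into a parameter vector via $\MappingStructuralToVectorized$, the affine maps appearing in the vectorized realization \eqref{eq:FFNN} coincide one by one with the affine maps $y\mapsto W_k y+B_k$ that implicitly build up the structured realization in \cref{def:ANNrealization}. Concretely, I would first write $\Phi=((W_1,B_1),(W_2,B_2),\dots,(W_L,B_L))$ with $W_k\in\R^{l_k\times l_{k-1}}$, $B_k\in\R^{l_k}$, put $\theta=\MappingStructuralToVectorized(\Phi)$ and $s_k=\sum_{i=1}^{k-1}l_i(l_{i-1}+1)$, and observe that $\paramANN(\Phi)=\sum_{i=1}^L l_i(l_{i-1}+1)\geq s_k+l_kl_{k-1}+l_k$ for every $k\in\{1,2,\dots,L\}$, so that each $\Aff_{l_k,l_{k-1}}^{\theta,s_k}$ is well defined (cf.\ \cref{def:affine}).

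The key step is to prove that $\Aff_{l_k,l_{k-1}}^{\theta,s_k}(y)=W_k y+B_k$ for all $k\in\{1,2,\dots,L\}$ and all $y\in\R^{l_{k-1}}$. This is a direct comparison between the row-major unpacking prescribed in \cref{def:affine} — where the $(i,j)$ entry of the weight matrix of $\Aff_{r,s}^{\theta,\delta}$ is $\theta_{\delta+(i-1)s+j}$ and the $i$-th entry of the bias is $\theta_{\delta+rs+i}$ — and the block of coordinates of $\theta=\MappingStructuralToVectorized(\Phi)$ assigned to the $k$-th layer in \eqref{eq:translate} (equivalently, the listing in \cref{lem:structtovect}); the two prescriptions agree term by term once one sets $r=l_k$, $s=l_{k-1}$, $\delta=s_k$. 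Along the way I would also record that the offsets $\delta+\sum_{k=1}^{j-1}l_k(l_{k-1}+1)$ appearing in \eqref{eq:FFNN} are exactly $s_j$ here, since $\delta=0$.

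Given this identity, both cases follow immediately. For $L=1$, \cref{def:ANNrealization} yields $(\functionANN{a}(\Phi))(x)=W_1 x+B_1$, whereas \cref{def:FFNN} yields $(\RealV^{\theta,0,l_0}_{\id_{\R^{l_L}}})(x)=(\id_{\R^{l_1}}\circ\Aff_{l_1,l_0}^{\theta,0})(x)=\Aff_{l_1,l_0}^{\theta,0}(x)$, and these coincide by the key step. For $L>1$, set $x_0=x$ and $x_k=\activationDim{l_k}(W_k x_{k-1}+B_k)$ for $k\in\{1,2,\dots,L-1\}$ as in \cref{def:ANNrealization}, so that $(\functionANN{a}(\Phi))(x)=W_L x_{L-1}+B_L$. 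Unwinding the composition in \eqref{eq:FFNN} for $\RealV^{\theta,0,l_0}_{\multdim_{a,l_1},\dots,\multdim_{a,l_{L-1}},\id_{\R^{l_L}}}$ from right to left, and using $\Aff_{l_k,l_{k-1}}^{\theta,s_k}=(y\mapsto W_k y+B_k)$ together with $\activationDim{l_k}=\multdim_{a,l_k}$, a one-line induction on $j\in\{1,2,\dots,L-1\}$ shows that the partial composition up to and including the $j$-th activation, applied to $x$, equals $x_j$; applying the final affine map $\Aff_{l_L,l_{L-1}}^{\theta,s_L}$ and then $\id_{\R^{l_L}}$ produces $W_L x_{L-1}+B_L$, i.e.\ exactly $(\functionANN{a}(\Phi))(x)$.

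I expect no conceptual obstacle; the only delicate point is the index bookkeeping — checking that the cumulative offsets $\sum_k l_k(l_{k-1}+1)$ hard-coded into $\RealV$ (with $\delta=0$) line up precisely with the per-layer coordinate blocks defined by $\MappingStructuralToVectorized$ in \eqref{eq:translate}, and that the enumeration order (weights row by row, then biases) is the same in \cref{def:affine} and \cref{def:TranslateStructuredIntoVectorizedDescription}. Since \cref{lem:structtovect} already packages exactly this comparison, invoking it keeps the argument short.
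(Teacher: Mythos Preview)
Your proposal is correct and follows essentially the same route as the paper's proof: both hinge on the identity $\Aff_{l_k,l_{k-1}}^{\MappingStructuralToVectorized(\Phi),\,\sum_{i=1}^{k-1}l_i(l_{i-1}+1)}(y)=W_ky+B_k$ (the paper's \eqref{eq:asaff}, derived from \eqref{eq:translate}), and then unwind the recursive definition of $x_k$ in \cref{def:ANNrealization} to match the composition in \eqref{eq:FFNN}. The only cosmetic difference is that you split the argument into explicit $L=1$ and $L>1$ cases with a stated induction, whereas the paper handles both at once via a case distinction inside a single displayed formula.
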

\begin{proof}[Proof of Lemma~\ref{lem:structvsvectgen}]
  Throughout this proof
  let 
    $W_1\in\R^{l_1\times l_0}$,
    $B_1\in\R^{l_1}$,
    $W_2\in\R^{l_2\times l_1}$,
    $B_2\in\R^{l_2}$,
    $\dots$,
    $W_L\in\R^{l_L\times l_{L-1}}$,
    $B_L\in\R^{l_L}$
  satisfy
    $\Phi=((W_1,B_1),(W_2,B_2),\dots,(W_L,B_L))$.
  Note that
    %\cref{def:TranslateStructuredIntoVectorizedDescription}
    \eqref{eq:translate}
  shows that for all
    $k\in\{1,2,\dots,L\}$,
    $x\in\R^{l_{k-1}}$ 
  it holds that
  \begin{equation}
  \label{eq:asaff}
    W_kx+B_k = \bigl(\Aff_{l_k,l_{k-1}}^{\MappingStructuralToVectorized(\Phi),\sum_{i=1}^{k-1}l_i(l_{i-1}+1)}\bigr)( x )
  \end{equation}
  (cf.\ \cref{def:TranslateStructuredIntoVectorizedDescription,def:affine}).
  This demonstrates that for all
    $x_0\in\R^{l_0},\,x_1\in\R^{l_1},\dots,\,x_{L-1}\in\R^{l_{L-1}}$
    with $\forall\, k\in\N\cap(0,L)\colon x_k=\multdim_{a,l_k}(W_kx_{k-1}+B_k)$
  it holds that
  \begin{align}
    &x_{L-1}=\\
    \nonumber
    &\begin{cases}
      x_0&\colon L=1\\
    \bigl(
      \multdim_{a,l_{L-1}}
      \circ
      \Aff_{l_{L-1},l_{L-2}}^{\MappingStructuralToVectorized(\Phi),\sum_{i=1}^{L-2}l_i(l_{i-1}+1)}
      \circ
      \multdim_{a,l_{L-2}}
      \circ
      \Aff_{l_{L-2},l_{L-3}}^{\MappingStructuralToVectorized(\Phi),\sum_{i=1}^{L-3}l_i(l_{i-1}+1)}
      \circ
      \ldots
      \circ
      \multdim_{a,l_1}
      \circ
      \Aff_{l_1,l_{0}}^{\MappingStructuralToVectorized(\Phi),0}
    \bigr)(x_0)&\colon L>1
    \end{cases}
  \end{align}
  (cf.\ \cref{def:multidim_version}).
  Combining \enum{
    this ;
    \eqref{eq:asaff}
  } with \enum{
    \eqref{eq:FFNN} ;
    \eqref{setting_NN:ass2}
  } proves that for all 
    $x_0\in\R^{l_0},\,x_1\in\R^{l_1},\dots,\,x_{L-1}\in\R^{l_{L-1}}$
    with $\forall\, k\in\N\cap(0,L)\colon x_k=\multdim_{a,l_k}(W_kx_{k-1}+B_k)$
  it holds that
  \begin{equation}
  \begin{split}
    \bigl(\functionANN{a}(\Phi)\bigr)(x_0)
    &=
    W_Lx_{L-1}+B_L 
    = 
    \bigl(\Aff_{l_{L},l_{L-1}}^{\MappingStructuralToVectorized(\Phi),\sum_{i=1}^{L-1}l_i(l_{i-1}+1)}\bigr)(x_{L-1})
    \\&=
    \begin{cases}
      \bigl(\RealV^{\MappingStructuralToVectorized(\Phi),0,l_0}_{\id_{\R^{l_L}}} \bigr)(x_0) & \colon L=1 \\[0.2cm]
      \bigl(\RealV^{\MappingStructuralToVectorized(\Phi),0,l_0}_{\multdim_{a,l_1},\multdim_{a,l_2},\ldots,\multdim_{a,l_{L-1}},\id_{\R^{l_L}}} \bigr)(x_0) & \colon L>1
    \end{cases}
  \end{split}
  \end{equation}
  (cf.\ \cref{def:ANNrealization,def:FFNN}).
  % %
  %   This
  %   and the fact that $\Clip{-\infty}\infty{l_L}=\id_{\R^{l_L}}$
  % establish
  % \eqref{eq:structvsvectgen}
  % (cf.\ \cref{def:clip}).
  The proof of Lemma~\ref{lem:structvsvectgen} is thus completed.
\end{proof}

\begin{cor}
  \label{cor:structvsvect}
  Let $\Phi\in\ANNs
%   \bigcup_{L \in \{2,3,\dots\}}
%   \bigcup_{ (l_0,l_1,\ldots, l_L) \in \N^{L+1} }
%   \left(
%   \bigtimes_{k = 1}^L (\R^{l_k \times l_{k-1}} \times \R^{l_k})
%   \right)
$ (cf.\ \cref{def:ANN}).
  Then it holds for all 
    $x\in\R^{\inDimANN(\Phi)}$ 
  that
  \begin{equation}
  \label{eq:structvsvect}
    \bigl(\UnclippedRealV{\MappingStructuralToVectorized(\Phi)}{\dims(\Phi)}\bigr)(x)
    =
    (\functionANN{\rect}(\Phi))(x)
  \end{equation}
  % Let 
  %   $L\in\{2,3,\dots\}$,
  %   $l_0,l_1,\dots,l_L,d\in\N$,
  %   $\theta\in\R^d$
  % satisfy that
  % \begin{equation}
  %   d\geq \sum_{k=1}^L l_k(l_{k-1}+1)
  % \end{equation}
  % and let 
  %   $\Phi=((W_1,B_1),(W_2,B_2),\dots,(W_L,B_L))\in(\bigtimes_{k=1}^L (\R^{l_k\times l_{k-1}}\times \R^{l_k}))$
  % satisfy for all 
  %   $k\in\{1,2,\dots,L\}$
  % that
  % Then it holds for all 
  %   $x\in\R^{l_0}$ 
  % that
  % \begin{equation}
  %   \RectRealV^{\theta}_{(l_0,l_1,\dots,l_L)}(x)
  %   =
  %   (\functionANN{\rect}(\Phi))(x)
  % \end{equation}
  (cf.\ \cref{def:TranslateStructuredIntoVectorizedDescription,def:rectclippedFFANN,def:relu1,def:ANNrealization}).
\end{cor}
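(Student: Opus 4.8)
The plan is to derive the claim directly from \cref{lem:structvsvectgen} specialized to the activation function $\rect$, after observing that the ``unclipped'' clipping function is simply the identity. First I would record the elementary facts that $\rect \in C(\R,\R)$ (cf.\ \cref{def:relu1}) and that, by \cref{def:ANN}, for every $\Phi \in \ANNs$ with $\dims(\Phi) = (l_0, l_1, \ldots, l_L)$ we have $\inDimANN(\Phi) = l_0$, $\outDimANN(\Phi) = l_L$, and $\paramANN(\Phi) = \sum_{k=1}^{L} l_k(l_{k-1}+1)$; in particular $\MappingStructuralToVectorized(\Phi)$ lies in $\R^{\paramANN(\Phi)}$ and the dimension requirement in \cref{def:rectclippedFFANN} for $\UnclippedRealV{\MappingStructuralToVectorized(\Phi)}{\dims(\Phi)}$ to be well defined is met (indeed with equality).

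Next I would unwind the definitions on the left-hand side. Since $\clip{-\infty}{\infty}(y) = \max\{-\infty, \min\{y, \infty\}\} = y$ for every $y \in \R$ (cf.\ \cref{def:clip1}), it follows that $\Clip{-\infty}{\infty}{d} = \multdim_{\clip{-\infty}{\infty}, d} = \id_{\R^d}$ for every $d \in \N$ (cf.\ \cref{def:multidim_version,def:clip}). Together with $\Rect_d = \multdim_{\rect, d}$ (cf.\ \cref{def:relu}), \cref{def:rectclippedFFANN} then yields, for every $\Phi \in \ANNs$ with $\dims(\Phi) = (l_0, \ldots, l_L)$ and every $x \in \R^{l_0}$, that
\begin{equation}
\bigl(\UnclippedRealV{\MappingStructuralToVectorized(\Phi)}{\dims(\Phi)}\bigr)(x)
=
\begin{cases}
\bigl(\RealV^{\MappingStructuralToVectorized(\Phi),0,l_0}_{\id_{\R^{l_L}}}\bigr)(x) & \colon L = 1, \\[0.2cm]
\bigl(\RealV^{\MappingStructuralToVectorized(\Phi),0,l_0}_{\multdim_{\rect,l_1},\ldots,\multdim_{\rect,l_{L-1}},\id_{\R^{l_L}}}\bigr)(x) & \colon L > 1.
\end{cases}
\end{equation}
Applying \cref{lem:structvsvectgen} with $a = \rect$ then shows that $(\functionANN{\rect}(\Phi))(x)$ equals precisely the right-hand side of this identity, which is \eqref{eq:structvsvect}.

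The argument is essentially bookkeeping, so I do not expect a genuine obstacle; the only points needing attention are that both \cref{def:rectclippedFFANN,lem:structvsvectgen} split into the same $L = 1$ versus $L > 1$ cases (so the case distinction matches up automatically) and that one should confirm $\rect$ is continuous so that \cref{lem:structvsvectgen} is applicable. Alternatively, \cref{cor:structvsvect} could be proved by direct substitution into \eqref{eq:FFNN}, but routing through \cref{lem:structvsvectgen} avoids repeating that computation.
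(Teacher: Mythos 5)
Your proof is correct and follows essentially the same route as the paper's: apply \cref{lem:structvsvectgen} with $a = \rect$, then use \eqref{eq:rectclippedFFANN}, \eqref{eq:relu}, and the fact that $\Clip{-\infty}{\infty}{d} = \id_{\R^d}$ to match the two sides. You have simply unwound the bookkeeping in more detail than the paper chose to.
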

\begin{proof}[Proof of \cref{cor:structvsvect}]
  Note that
    \cref{lem:structvsvectgen},
    \eqref{eq:rectclippedFFANN},
    \eqref{eq:relu},
    and the fact that for all
      $d\in\N$
    it holds that 
      $\Clip{-\infty}\infty d=\id_{\R^d}$
  establish
    \eqref{eq:structvsvect}
  (cf.\ \cref{def:clip}).
  The proof of \cref{cor:structvsvect} is thus completed.
\end{proof}

\subsubsection{Parallelizations of DNNs}
\label{subsubsec:parallelizations_of_dnns}

\begin{samepage}
\begin{definition}[Parallelization of DNNs]
	\label{def:simpleParallelization}
	Let $n\in\N$. Then we denote by 
%	$\parallelizationSpecial_{n}\colon \{(\Phi_1,\Phi_2,\dots, \Phi_n)\in\ANNs^n\colon \lengthANN(\Phi_1)= \lengthANN(\Phi_2)=\dots =\lengthANN(\Phi_n) \}\to \ANNs$ 
	\begin{equation}
		\parallelizationSpecial_{n}\colon \big\{(\Phi_1,\Phi_2,\dots, \Phi_n)\in\ANNs^n\colon \lengthANN(\Phi_1)= \lengthANN(\Phi_2)=\ldots =\lengthANN(\Phi_n) \big\}\to \ANNs
	\end{equation}
	the function which satisfies for all  $L\in\N$,
	$(l_{1,0},l_{1,1},\dots, l_{1,L}), (l_{2,0},l_{2,1},\dots, l_{2,L}),\dots,\allowbreak (l_{n,0},\allowbreak l_{n,1},\allowbreak\dots, l_{n,L})\in\N^{L+1}$, 
	$\Phi_1=((W_{1,1}, B_{1,1}),(W_{1,2}, B_{1,2}),\allowbreak \ldots, (W_{1,L},\allowbreak B_{1,L}))\in \bigl( \bigtimes_{k = 1}^L\allowbreak(\R^{l_{1,k} \times l_{1,k-1}} \times \R^{l_{1,k}})\bigr)$, 
		$\Phi_2\allowbreak=\allowbreak((W_{2,1}, B_{2,1}),\allowbreak(W_{2,2}, B_{2,2}),\allowbreak \ldots, (W_{2,L},\allowbreak B_{2,L}))\in \bigl( \bigtimes_{k = 1}^L\allowbreak(\R^{l_{2,k} \times l_{2,k-1}} \times \R^{l_{2,k}})\bigr)$,
%	$\Phi_2=((W_1^2, B_1^2),\allowbreak(W_2^2, B_2^2),\allowbreak \ldots, (W_L^2,\allowbreak B_L^2))\in ( \times_{k = 1}^L\allowbreak(\R^{l_k^2 \times l_{k-1}^2} \times \R^{l_k^2}))$, 
\dots,  
		$\Phi_n=((W_{n,1}, B_{n,1}),(W_{n,2}, B_{n,2}),\allowbreak \ldots, \allowbreak(W_{n,L},\allowbreak B_{n,L}))\in \bigl( \bigtimes_{k = 1}^L\allowbreak(\R^{l_{n,k} \times l_{n,k-1}} \times \R^{l_{n,k}})\bigr)$
%	
%	
%	$(\Phi_1,\Phi_2,\dots,\Phi_n)=\big(((W_1^k, B_1^k),(W_2^k, B_2^k),\allowbreak \ldots, (W_L^k,\allowbreak B_L^k))\big)_{k\in \{0,1,\dots, n\}}\allowbreak\in \{\Phi\in\ANNs\colon \lengthANN(\Phi)= L \}^n$
	that
	%\begin{equation}\label{parallelisationSameLengthDef}
	%\begin{split}
	%&\parallelizationSpecial_{n,L}(\Phi_1,\Phi_2,\dots,\Phi_n)=
	%\left[
	%%\pa{\begin{pmatrix}W^1_1\\W^2_1\\\vdots\\ W^n_1\end{pmatrix},\begin{pmatrix}B^1_1\\B^2_1\\\vdots\\ B^n_1\end{pmatrix}},
	%\pa{\begin{pmatrix}W^1_1&&&\\& W^2_1 &&\\&&
	%	\ddots&\\&&&W^n_1\end{pmatrix},\begin{pmatrix}B^1_1\\B^2_1\\\vdots\\ B^n_1\end{pmatrix}},\right.
	%\\&
	%\left. \pa{\begin{pmatrix}W^1_2&&&\\& W^2_2 &&\\&&
	%	\ddots&\\&&&W^n_2\end{pmatrix},\begin{pmatrix}B^1_2\\B^2_2\\\vdots\\ B^n_2\end{pmatrix}},\dots,
	%\pa{\begin{pmatrix}W^1_L&&&\\& W^2_L &&\\&&\ddots&\\&&&W^n_L\end{pmatrix},\begin{pmatrix}B^1_L\\B^2_L\\\vdots\\ B^n_L\end{pmatrix}}\right],
	%\end{split}
	%\end{equation}	
	\begin{equation}\label{parallelisationSameLengthDef}
	\begin{split}
	\parallelizationSpecial_{n}(\Phi_1,\Phi_2,\dots,\Phi_n)&=
	\left(
	%\pa{\begin{pmatrix}W^1_1\\W^2_1\\\vdots\\ W^n_1\end{pmatrix},\begin{pmatrix}B^1_1\\B^2_1\\\vdots\\ B^n_1\end{pmatrix}},
	\pa{\begin{pmatrix}
		W_{1,1}& 0& 0& \cdots& 0\\
		0& W_{2,1}& 0&\cdots& 0\\
		0& 0& W_{3,1}&\cdots& 0\\
		\vdots& \vdots&\vdots& \ddots& \vdots\\
		0& 0& 0&\cdots& W_{n,1}
		\end{pmatrix} ,\begin{pmatrix}B_{1,1}\\B_{2,1}\\B_{3,1}\\\vdots\\ B_{n,1}\end{pmatrix}},\right.
	\\&\quad
		\pa{\begin{pmatrix}
		W_{1,2}& 0& 0& \cdots& 0\\
		0& W_{2,2}& 0&\cdots& 0\\
		0& 0& W_{3,2}&\cdots& 0\\
		\vdots& \vdots&\vdots& \ddots& \vdots\\
		0& 0& 0&\cdots& W_{n,2}
		\end{pmatrix} ,\begin{pmatrix}B_{1,2}\\B_{2,2}\\B_{3,2}\\\vdots\\ B_{n,2}\end{pmatrix}}
	,\dots,
	\\&\quad\left.
		\pa{\begin{pmatrix}
	W_{1,L}& 0& 0& \cdots& 0\\
	0& W_{2,L}& 0&\cdots& 0\\
	0& 0& W_{3,L}&\cdots& 0\\
	\vdots& \vdots&\vdots& \ddots& \vdots\\
	0& 0& 0&\cdots& W_{n,L}
	\end{pmatrix} ,\begin{pmatrix}B_{1,L}\\B_{2,L}\\B_{3,L}\\\vdots\\ B_{n,L}\end{pmatrix}}\right)
	\end{split}
	\end{equation}
	(cf.\ \cref{def:ANN}).
\end{definition}
\end{samepage}

% \begin{lemma}\label{Lemma:ParallelizationElementary}
% 	Let $n,L\in\N$,
% 	$(l_{1,0},l_{1,1},\dots, l_{1,L}), (l_{2,0},l_{2,1},\dots,\allowbreak l_{2,L}),\allowbreak\dots,\allowbreak (l_{n,0},\allowbreak l_{n,1},\allowbreak\dots, l_{n,L})\in\N^{L+1}$,
% 	$\Phi_1=((W_{1,1}, B_{1,1}),(W_{1,2}, B_{1,2}),\allowbreak \ldots, (W_{1,L},\allowbreak B_{1,L}))\in ( \bigtimes_{k = 1}^L\allowbreak(\R^{l_{1,k} \times l_{1,k-1}}\allowbreak \times \R^{l_{1,k}}))$, 
% 	$\Phi_2=((W_{2,1}, B_{2,1}),(W_{2,2}, B_{2,2}),\allowbreak \ldots, (W_{2,L},\allowbreak B_{2,L}))\in ( \bigtimes_{k = 1}^L\allowbreak(\R^{l_{2,k} \times l_{2,k-1}} \times \R^{l_{2,k}}))$,
% 	\dots,  
% 	$\Phi_n=((W_{n,1}, B_{n,1}),(W_{n,2}, B_{n,2}),\allowbreak \ldots, (W_{n,L},\allowbreak B_{n,L}))\in ( \bigtimes_{k = 1}^L\allowbreak(\R^{l_{n,k} \times l_{n,k-1}} \allowbreak\times \R^{l_{n,k}}))$. 
% 	Then it holds that 
%   \begin{equation}\label{ParallelizationElementary:Display}
%     \textstyle
% 	\parallelizationSpecial_{n}(\Phi_1,\Phi_2,\dots,\Phi_n)\in \Big(\!\!\bigtimes_{k = 1}^L\allowbreak\big(\R^{(\sum_{j=1}^nl_{j,k}) \times (\sum_{j=1}^nl_{j,k-1})} \times \R^{(\sum_{j=1}^nl_{j,k})}\big)\Big)
% 	\end{equation}
% 	(cf.\ Definition~\ref{def:simpleParallelization}).
% \end{lemma}

% \begin{proof}[Proof of Lemma~\ref{Lemma:ParallelizationElementary}]	
% 	Note that \eqref{parallelisationSameLengthDef} establishes  \eqref{ParallelizationElementary:Display}.
% 	The proof of Lemma~\ref{Lemma:ParallelizationElementary} is thus completed.
% \end{proof}

\subsubsection{Basic examples for DNNs}
\label{subsubsec:basic_examples_for_dnns}

\begin{definition}[Linear transformations as DNNs] 
	\label{def:matrixInputDNN}
  Let $m,n\in\N$, $W\in\R^{m\times n}$. Then we denote by
  $\MatrixANN{W}\in\R^{m\times n}\times\R^m$ the pair given by
  $\MatrixANN W=(W,0)$.
\end{definition}

\begin{definition}
  \label{def:ReLu:identity}
  We denote by $\idRelu = (\idRelu_d)_{d \in \N} \colon \N \to \ANNs$ the function which satisfies for all $d \in \N$ that 
  \begin{equation}
  \label{eq:def:id:1}
  \idRelu_1 = \left( \left(\begin{pmatrix}
  1\\
  -1
  \end{pmatrix}, \begin{pmatrix}
  0\\
  0
  \end{pmatrix} \right),
  \Big( \begin{pmatrix}
  1& -1
  \end{pmatrix}, 
  0 \Big)
   \right)  \in \big((\R^{2 \times 1} \times \R^{2}) \times (\R^{1 \times 2} \times \R^1) \big)
  \end{equation} 
  and 
  \begin{equation}
    \label{eq:def:id:2}
  \idRelu_d = \paraANN{d} (\idRelu_1, \idRelu_1, \ldots, \idRelu_1)
  \end{equation}
  (cf.\ \cref{def:ANN,def:simpleParallelization}).
\end{definition}

\subsubsection{Compositions of DNNs}
\label{subsubsec:compositions_of_dnns}

\begin{samepage}
\begin{definition}[Composition of DNNs]
	\label{def:ANNcomposition}
	We denote by $\compANN{(\cdot)}{(\cdot)}\colon\allowbreak \{(\Phi_1,\Phi_2)\allowbreak\in\ANNs\times \ANNs\colon \inDimANN(\Phi_1)=\outDimANN(\Phi_2)\}\allowbreak\to\ANNs$ the function which satisfies for all 
	$ L,\mathfrak{L}\in\N$, $l_0,l_1,\ldots, l_L, \mathfrak{l}_0,\mathfrak{l}_1,\ldots, \mathfrak{l}_\mathfrak{L} \in \N$, 
	$
	\Phi_1
	=
	((W_1, B_1),(W_2, B_2),\allowbreak \ldots, (W_L,\allowbreak B_L))
	\in  \allowbreak
	\bigl( \bigtimes_{k = 1}^L\allowbreak(\R^{l_k \times l_{k-1}} \times \R^{l_k})\bigr)
	$,
	$
	\Phi_2
	=
	((\mathfrak{W}_1, \mathfrak{B}_1),\allowbreak(\mathfrak{W}_2, \mathfrak{B}_2),\allowbreak \ldots, (\mathfrak{W}_\mathfrak{L},\allowbreak \mathfrak{B}_\mathfrak{L}))
	\in  \allowbreak
	\bigl( \bigtimes_{k = 1}^\mathfrak{L}\allowbreak(\R^{\mathfrak{l}_k \times \mathfrak{l}_{k-1}} \times \R^{\mathfrak{l}_k})\bigr)
	$ 
	with $l_0=\inDimANN(\Phi_1)=\outDimANN(\Phi_2)=\mathfrak{l}_{\mathfrak{L}}$
	%with $\inDimANN(N)=\outDimANN(\Phi_2)$
	that
	\begin{equation}\label{ANNoperations:Composition}
	\begin{split}
	&\compANN{\Phi_1}{\Phi_2}=\\&
	\begin{cases} 
			\begin{array}{r}
			\big((\mathfrak{W}_1, \mathfrak{B}_1),(\mathfrak{W}_2, \mathfrak{B}_2),\ldots, (\mathfrak{W}_{\mathfrak{L}-1},\allowbreak \mathfrak{B}_{\mathfrak{L}-1}),
			(W_1 \mathfrak{W}_{\mathfrak{L}}, W_1 \mathfrak{B}_{\mathfrak{L}}+B_{1}),\\ (W_2, B_2), (W_3, B_3),\ldots,(W_{L},\allowbreak B_{L})\big)
			\end{array}
	&: L>1<\mathfrak{L} \\[3ex]
	\big( (W_1 \mathfrak{W}_{1}, W_1 \mathfrak{B}_1+B_{1}), (W_2, B_2), (W_3, B_3),\ldots,(W_{L},\allowbreak B_{L}) \big)
	&: L>1=\mathfrak{L}\\[1ex]
	\big((\mathfrak{W}_1, \mathfrak{B}_1),(\mathfrak{W}_2, \mathfrak{B}_2),\allowbreak \ldots, (\mathfrak{W}_{\mathfrak{L}-1},\allowbreak \mathfrak{B}_{\mathfrak{L}-1}),(W_1 \mathfrak{W}_{\mathfrak{L}}, W_1 \mathfrak{B}_{\mathfrak{L}}+B_{1}) \big)
	&: L=1<\mathfrak{L}  \\[1ex]
	\bigl((W_1 \mathfrak{W}_{1}, W_1 \mathfrak{B}_1+B_{1})\bigr)
	&: L=1=\mathfrak{L} 
	\end{cases}
	\end{split}
	\end{equation}
	(cf.\ \cref{def:ANN}).
\end{definition}
\end{samepage}

\begin{definition}[Maximum norm]
  \label{def:infnorm}
  We denote by $\infnorm\cdot\colon\bigl(\bigcup_{d=1}^\infty \R^d\bigr)\to[0,\infty)$ the function
  which satisfies for all $d\in\N$, $\theta=(\theta_1,\theta_2,\dots,\theta_d)\in\R^d$ that
  \begin{equation}
    \infnorm{\theta}
    =
    %\left[
    \max_{i\in\{1,2,\dots,d\}}\abs{\theta_i}
    %\right]
    .
  \end{equation}
\end{definition}

\begin{lemma}
  \label{lem:composition_infnorm}
  Let
  $ L,\mathfrak{L}\in\N$, $l_0,l_1,\ldots, l_L, \mathfrak{l}_0,\mathfrak{l}_1,\ldots, \mathfrak{l}_\mathfrak{L} \in \N$, 
	$
	\Phi_1
	=
	((W_1, B_1),(W_2, B_2),\allowbreak \ldots, (W_L,\allowbreak B_L))
	\in  \allowbreak
	\bigl( \bigtimes_{k = 1}^L\allowbreak(\R^{l_k \times l_{k-1}} \times \R^{l_k})\bigr)
	$,
	$
	\Phi_2
	=
	((\mathfrak{W}_1, \mathfrak{B}_1),\allowbreak(\mathfrak{W}_2, \mathfrak{B}_2),\allowbreak \ldots, (\mathfrak{W}_\mathfrak{L},\allowbreak \mathfrak{B}_\mathfrak{L}))
	\in  \allowbreak
	\bigl( \bigtimes_{k = 1}^\mathfrak{L}\allowbreak(\R^{\mathfrak{l}_k \times \mathfrak{l}_{k-1}} \times \R^{\mathfrak{l}_k})\bigr)
	$.
  Then
  \begin{equation}
    \label{eq:composition_infnorm}
    \infnorm{\MappingStructuralToVectorized(\compANN{\Phi_1}{\Phi_2})}
    \leq
    \max\bigl\{
      \infnorm{\MappingStructuralToVectorized(\Phi_1)},
      \infnorm{\MappingStructuralToVectorized(\Phi_2)},
      \asinfnorm{\MappingStructuralToVectorized\bigl(((W_1\mf W_{\mf L},W_1\mf B_{\mf L}+B_1))\bigr)}
    \bigr\}
  \end{equation}
  (cf.\ \cref{def:ANNcomposition,def:TranslateStructuredIntoVectorizedDescription,def:infnorm}).
\end{lemma}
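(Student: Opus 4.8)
The plan is to reduce the claim to the elementary observation that the map $\MappingStructuralToVectorized$ merely enumerates, in a fixed order, all entries of the weight matrices and bias vectors of a DNN, so that $\infnorm{\MappingStructuralToVectorized(\cdot)}$ is nothing other than the largest absolute value occurring among those entries; the asserted bound \eqref{eq:composition_infnorm} then follows at once because, by \cref{def:ANNcomposition}, every entry of a weight matrix or bias vector of $\compANN{\Phi_1}{\Phi_2}$ already appears among the entries of $\Phi_1$, among the entries of $\Phi_2$, or among the entries of the single-layer DNN $((W_1\mf W_{\mf L},W_1\mf B_{\mf L}+B_1))$.

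First I would record, using \cref{def:TranslateStructuredIntoVectorizedDescription} together with \eqref{eq:structtovect1} in \cref{lem:structtovect}, that for every DNN $\Psi\in\ANNs$ the tuple $\MappingStructuralToVectorized(\Psi)$ is obtained by listing, layer by layer, first all the entries of the weight matrix and then all the entries of the bias vector. Combining this with \cref{def:infnorm} would then show that $\infnorm{\MappingStructuralToVectorized(\Psi)}$ equals the maximum of the absolute values of all entries of all weight matrices and all bias vectors of $\Psi$.

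Next I would go through the four cases in \eqref{ANNoperations:Composition} in \cref{def:ANNcomposition}. In each case the list of layers of $\compANN{\Phi_1}{\Phi_2}$ is a concatenation of three (possibly empty) blocks: the layers $(\mf W_1,\mf B_1),\ldots,(\mf W_{\mf L-1},\mf B_{\mf L-1})$ of $\Phi_2$ (this block being empty when $\mf L=1$), the single junction layer $(W_1\mf W_{\mf L},W_1\mf B_{\mf L}+B_1)$, and the layers $(W_2,B_2),\ldots,(W_L,B_L)$ of $\Phi_1$ (this block being empty when $L=1$). Hence, by the first step, every entry of a weight matrix or bias vector of $\compANN{\Phi_1}{\Phi_2}$ has absolute value bounded either by $\infnorm{\MappingStructuralToVectorized(\Phi_2)}$, or by $\asinfnorm{\MappingStructuralToVectorized(((W_1\mf W_{\mf L},W_1\mf B_{\mf L}+B_1)))}$, or by $\infnorm{\MappingStructuralToVectorized(\Phi_1)}$, according to which block the entry comes from; taking the maximum of these bounds over all entries of $\compANN{\Phi_1}{\Phi_2}$ then yields \eqref{eq:composition_infnorm}.

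I do not anticipate any genuine difficulty; the whole argument is combinatorial bookkeeping. The one place that requires a little care is the case distinction itself, in particular the degenerate situations $L=1$, $\mf L=1$, and $L=1=\mf L$, but in every branch the layer list of $\compANN{\Phi_1}{\Phi_2}$ remains a sub-concatenation of the same three blocks, so no entry ever occurs that is not already controlled by one of the three terms on the right-hand side of \eqref{eq:composition_infnorm}.
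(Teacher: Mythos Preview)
Your proposal is correct and follows the same approach as the paper: the paper's proof simply cites \eqref{ANNoperations:Composition} and \cref{lem:structtovect} and declares the result established, which is exactly the bookkeeping you spell out in detail. Your version is more explicit about the three-block decomposition and the degenerate cases, but the underlying argument is identical.
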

\begin{proof}[Proof of \cref{lem:composition_infnorm}]
  Note that \enum{
    \eqref{ANNoperations:Composition} ;
    \cref{lem:structtovect}
  }[establish] \eqref{eq:composition_infnorm}.
  The proof of \cref{lem:composition_infnorm} is thus completed.
\end{proof}

\subsubsection{Powers and extensions of DNNs}

\begin{definition}\label{def:identityMatrix}
	Let $d\in\N$. Then we denote by $\idMatrix_{d}\in \R^{d\times d}$ the identity matrix in $\R^{d\times d}$.
\end{definition}

\begin{definition}\label{def:iteratedANNcomposition}
	We denote by $(\cdot)^{\bullet n}\colon \{\Phi\in \ANNs\colon \inDimANN(\Phi)=\outDimANN(\Phi)\}\allowbreak\to\ANNs$, $n\in\N_0$, the functions
%	 which satisfy for all $n\in\N$, $\Phi\in\ANNs$ with $\inDimANN(\Phi)=\outDimANN(\Phi)$ that 
%		\begin{equation}
%	\begin{split}
%	\Phi^{\bullet 0}=\big(\idMatrix_{\outDimANN(\Phi)},(0,0,\dots, 0)\big)\in\R^{\outDimANN(\Phi)\times \outDimANN(\Phi)}\times \R^{\outDimANN(\Phi)}
%	\qandq
%	\Phi^{\bullet n}=\compANN{\Phi}{\Phi^{\bullet (n-1)}}
%	\end{split}
%	\end{equation}
		 which satisfy for all $n\in\N_0$, $\Phi\in\ANNs$ with $\inDimANN(\Phi)=\outDimANN(\Phi)$ that 
	\begin{equation}\label{iteratedANNcomposition:equation}
		\begin{split}
		\Phi^{\bullet n}=
		\begin{cases} \big(\idMatrix_{\outDimANN(\Phi)},(0,0,\dots, 0)\big)\in\R^{\outDimANN(\Phi)\times \outDimANN(\Phi)}\times \R^{\outDimANN(\Phi)}
		&: n=0 \\
		\,\compANN{\Phi}{(\Phi^{\bullet (n-1)})} &: n\in\N
		\end{cases}
		\end{split}
	\end{equation}	
	(cf.\ \cref{def:ANN,def:identityMatrix,def:ANNcomposition}).
\end{definition}

\begin{definition}[Extension of DNNs]\label{def:ANNenlargement}
	Let $L\in\N$, $\Psi\in \ANNs$ satisfy $\inDimANN(\Psi)=\outDimANN(\Psi)$.
	Then
	we denote by $\longerANN{L,\Psi}\colon \{\Phi\in\ANNs\colon (\lengthANN(\Phi)\le L \andShort \outDimANN(\Phi)=\inDimANN(\Psi)) \}\to \ANNs$ the function which satisfies for all $\Phi\in\ANNs$ with $\lengthANN(\Phi)\le L$ and $\outDimANN(\Phi)=\inDimANN(\Psi)$ that
%	\begin{equation}
%	\longerANN{L,\Psi}(\Phi)=	 \compANN{\Psi^{\bullet (L-\lengthANN(\Phi))}}{\Phi}
%	\end{equation}
		\begin{equation}\label{ANNenlargement:Equation}
	\longerANN{L,\Psi}(\Phi)=	 \compANN{\bigl(\Psi^{\bullet (L-\lengthANN(\Phi))}\bigr)}{\Phi}
	\end{equation}
	%	\begin{equation}
	%	\longerANN{L}(\Phi)=	\begin{cases} \compANN{\idANN{\outDimANN(\Phi)}{L-\lengthANN(\Phi)+1}}{\Phi}
	%	&: \lengthANN(\Phi)< L \\
	%	\Phi &: \lengthANN(\Phi)= L
	%	\end{cases}
	%	\end{equation}
	(cf.\ \cref{def:ANN,def:iteratedANNcomposition,def:ANNcomposition}).
\end{definition}

\begin{lemma}
  \label{lem:extension_dims}
  Let $d,\mf i,L,\mf L\in\N$,
  $l_0,l_1,\dots,l_{L-1}\in\N$,
  $\Phi,\Psi\in\ANNs$
  satisfy
    $\mf L\geq L$,
    $\dims(\Phi)=(l_0,l_1,\dots,l_{L-1},d)$
    and $\dims(\Psi)=(d,\mf i,d)$
  (cf.\ \cref{def:ANN}).
  Then it holds that
    $\dims(\longerANN{\mf L,\Psi}(\Phi))\in\N^{\mf L+1}$
    and
    \begin{equation}
      \label{eq:extension_dims_claim}
      \dims(\longerANN{\mf L,\Psi}(\Phi))
      =
      \begin{cases}
        (l_0,l_1,\dots,l_{L-1},d) & \colon \mf L=L \\
        (l_0,l_1,\dots,l_{L-1},\mf i,\mf i,\dots,\mf i,d) & \colon \mf L>L
      \end{cases}
    \end{equation}
  (cf.\ \cref{def:ANNenlargement}).
\end{lemma}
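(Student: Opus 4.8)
The plan is to unfold $\longerANN{\mf L,\Psi}(\Phi)$ by means of \eqref{ANNenlargement:Equation} and then to track matrix shapes through the composition formula \eqref{ANNoperations:Composition} and the iterated composition formula \eqref{iteratedANNcomposition:equation}; note first that $\dims(\Phi)=(l_0,\dots,l_{L-1},d)$ has $L+1$ entries so that $\lengthANN(\Phi)=L\leq\mf L$, and that $\inDimANN(\Psi)=d=\outDimANN(\Psi)=\outDimANN(\Phi)$, whence $\longerANN{\mf L,\Psi}(\Phi)$ is well defined. The first step I would carry out is to record an elementary bookkeeping fact about \cref{def:ANNcomposition}: for all $\Psi_1,\Psi_2\in\ANNs$ with $\inDimANN(\Psi_1)=\outDimANN(\Psi_2)$, $\dims(\Psi_1)=(n_0,n_1,\dots,n_p)$, and $\dims(\Psi_2)=(m_0,m_1,\dots,m_q)$ it holds that $\dims(\compANN{\Psi_1}{\Psi_2})=(m_0,m_1,\dots,m_{q-1},n_1,n_2,\dots,n_p)$ and, in particular, $\lengthANN(\compANN{\Psi_1}{\Psi_2})=p+q-1$. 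This is verified by going through the four cases of \eqref{ANNoperations:Composition} and observing that in each case every layer of $\compANN{\Psi_1}{\Psi_2}$ is inherited with unchanged shape from $\Psi_1$ or from $\Psi_2$, except for the single merged layer, whose weight matrix --- the product of the first weight matrix of $\Psi_1$ with the last weight matrix of $\Psi_2$ --- has size $n_1\times m_{q-1}$.

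Second, I would compute $\dims(\Psi^{\bullet n})$ for $n\in\N_0$ by induction on $n$ from \eqref{iteratedANNcomposition:equation} and the fact just recorded: since $\dims(\Psi)=(d,\mf i,d)$, the case $n=0$ gives $\dims(\Psi^{\bullet 0})=(d,d)$ and the induction step gives, for every $n\in\N$, that $\dims(\Psi^{\bullet n})=(d,\mf i,\mf i,\dots,\mf i,d)$ is a tuple of $n+2$ entries exactly $n$ of which equal $\mf i$. Since $\lengthANN(\Phi)=L$, formula \eqref{ANNenlargement:Equation} then yields $\longerANN{\mf L,\Psi}(\Phi)=\compANN{\bigl(\Psi^{\bullet(\mf L-L)}\bigr)}{\Phi}$, and I would distinguish the cases $\mf L=L$ and $\mf L>L$. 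If $\mf L=L$, then $\Psi^{\bullet 0}=\bigl(\idMatrix_d,(0,0,\dots,0)\bigr)$ has dimensions $(d,d)$, so the bookkeeping fact gives $\dims(\longerANN{\mf L,\Psi}(\Phi))=(l_0,\dots,l_{L-1},d)\in\N^{L+1}=\N^{\mf L+1}$. If $\mf L>L$, put $n=\mf L-L\in\N$ and apply the bookkeeping fact with $\Psi_1=\Psi^{\bullet n}$ (dimensions $(d,\mf i,\dots,\mf i,d)$ with $n$ copies of $\mf i$) and $\Psi_2=\Phi$ (dimensions $(l_0,\dots,l_{L-1},d)$); this gives $\dims(\longerANN{\mf L,\Psi}(\Phi))=(l_0,l_1,\dots,l_{L-1},\mf i,\mf i,\dots,\mf i,d)$ with $n$ copies of $\mf i$, a tuple of $L+n+1=\mf L+1$ entries, which is exactly the right-hand side of \eqref{eq:extension_dims_claim} and in particular an element of $\N^{\mf L+1}$.

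I expect the only real work to be bookkeeping: one has to make sure the degenerate branches of \eqref{ANNoperations:Composition} and \eqref{iteratedANNcomposition:equation} (the subcases $L=1$, $\mf L=1$, and $n=0$) are invoked correctly and that the number of hidden layers of width $\mf i$ in the case $\mf L>L$ comes out to be exactly $\mf L-L$ and not off by one; there is no conceptual obstacle beyond carefully tracking matrix shapes through these formulas.
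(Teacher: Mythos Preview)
Your proposal is correct and follows essentially the same route as the paper's proof. The paper invokes item~(i) of \cite[Lemma~2.13]{Grohs2019ANNCalculus} for the dimensions of $\Psi^{\bullet(\mf L-L)}$ and \cite[Proposition~2.6]{Grohs2019ANNCalculus} for the dimensions of the composition, whereas you derive both of these bookkeeping facts directly from \eqref{ANNoperations:Composition} and \eqref{iteratedANNcomposition:equation}; the underlying argument is identical, yours is just self-contained rather than relying on the external references.
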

\begin{proof}[Proof of \cref{lem:extension_dims}]
  Observe that \enum{
    item (i) in \cite[Lemma~2.13]{Grohs2019ANNCalculus}
  }[ensure] that 
    $\lengthANN(\Psi^{\bullet(\mf L-L)})=\mf L-L+1$,
    $\dims(\Psi^{\bullet(\mf L-L)})\in\N^{\mf L-L+2}$,
    and
    \begin{equation}
      \dims(\Psi^{\bullet(\mf L-L)})
      =
      \begin{cases}
        (d,d) & \colon \mf L=L \\
        (d,\mf i,\mf i,\dots,\mf i,d) & \colon \mf L>L
      \end{cases}
    \end{equation}
  (cf.\ \cref{def:iteratedANNcomposition}).
  Combining this with \enum{
    \cite[Proposition~2.6]{Grohs2019ANNCalculus}
  } shows that
  $\lengthANN(\compANN{(\Psi^{\bullet(\mf L-L)})}{\Phi})
  =
  \lengthANN(\Psi^{\bullet(\mf L-L)})+\lengthANN(\Phi)-1
  =
  \mf L$,
  $\dims(\compANN{(\Psi^{\bullet(\mf L-L)})}{\Phi})\in\N^{\mf L+1}$,
  and
  \begin{equation}
    \dims(\compANN{(\Psi^{\bullet(\mf L-L)})}{\Phi})
    =
    \begin{cases}
      (l_0,l_1,\dots,l_{L-1},d) & \colon \mf L=L \\
      (l_0,l_1,\dots,l_{L-1},\mf i,\mf i,\dots,\mf i,d) & \colon \mf L>L.
    \end{cases}
  \end{equation}
  \enum{
    This ;
    \eqref{ANNenlargement:Equation}
  }[establish]
    \eqref{eq:extension_dims_claim}.
  The proof of \cref{lem:extension_dims} is thus completed.
\end{proof}

\begin{lemma}
  \label{lem:extension}
  Let $d,L\in\N$,
  $\Phi\in\ANNs$
  satisfy
    $L\geq\lengthANN(\Phi)$
    and $d=\outDimANN(\Phi)$
  (cf.\ \cref{def:ANN}).
  Then
  % \begin{enumerate}[label=(\roman *)]
    % \item \label{it:extension:1}
    %   it holds for all
    %     $x\in\R^{\inDimANN(\Phi)}$
    %   that 
    %   \begin{equation}
    %     \functionANN\rect(\longerANN{\idRelu_d,L}(\Phi))\in C(\R^{\inDimANN(\Phi)},\R^{\outDimANN(\Phi)})
    %     \qandq
    %     \bigl(\functionANN\rect(\longerANN{\idRelu_d,L}(\Phi))\bigr)(x)=\bigl(\functionANN{\rect}(\Phi)\bigr)(x)
    %   \end{equation}
    %   and
    % \item \label{it:extension:2}
  \begin{equation}
    \infnorm{\MappingStructuralToVectorized(\longerANN{L,\idRelu_d}(\Phi))}
    \leq
    \max\{1,\infnorm{\MappingStructuralToVectorized(\Phi)}\}
  \end{equation}
  % \end{enumerate}
  (cf.\ \cref{def:ReLu:identity,def:ANNenlargement,def:TranslateStructuredIntoVectorizedDescription,def:infnorm}).
\end{lemma}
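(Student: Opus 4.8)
Throughout write $\ell := \lengthANN(\Phi)\le L$ and $n := L-\ell\in\N_0$, so that by \eqref{ANNenlargement:Equation} it holds that $\longerANN{L,\idRelu_d}(\Phi)=\compANN{(\idRelu_d^{\bullet n})}{\Phi}$ (cf.\ \cref{def:ANNenlargement,def:iteratedANNcomposition}). The plan is to estimate the right-hand side by means of \cref{lem:composition_infnorm} applied with $\Phi_1=\idRelu_d^{\bullet n}$ and $\Phi_2=\Phi$: if $W_1,B_1$ denote the first weight matrix and the first bias vector of $\idRelu_d^{\bullet n}$ and $\mf W,\mf B$ denote the last weight matrix and the last bias vector of $\Phi$, then \cref{lem:composition_infnorm} yields
\[
\infnorm{\MappingStructuralToVectorized(\longerANN{L,\idRelu_d}(\Phi))}
\le
\max\bigl\{
\infnorm{\MappingStructuralToVectorized(\idRelu_d^{\bullet n})},\,
\infnorm{\MappingStructuralToVectorized(\Phi)},\,
\asinfnorm{\MappingStructuralToVectorized\bigl(((W_1\mf W,\,W_1\mf B+B_1))\bigr)}
\bigr\},
\]
and it therefore suffices to bound the first and the third term on the right-hand side by $\max\{1,\infnorm{\MappingStructuralToVectorized(\Phi)}\}$.

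To this end I would first establish, by induction on $n$, the following structural description of $\idRelu_d^{\bullet n}$: for every $n\in\N_0$ all bias vectors of $\idRelu_d^{\bullet n}$ vanish, all weight matrices of $\idRelu_d^{\bullet n}$ have entries in $\{-1,0,1\}$, and the first weight matrix $W_1$ of $\idRelu_d^{\bullet n}$ has in each of its rows exactly one nonzero entry, which has absolute value $1$ (concretely, $W_1=\idMatrix_d$ if $n=0$, and, if $n\ge1$, $W_1$ is the block-diagonal $2d\times d$ matrix with $d$ diagonal blocks each equal to the first weight matrix of $\idRelu_1$ from \eqref{eq:def:id:1}). The base cases $n=0$ and $n=1$ follow directly from \eqref{iteratedANNcomposition:equation}, \eqref{eq:def:id:1}--\eqref{eq:def:id:2}, and \eqref{parallelisationSameLengthDef}; the inductive step follows from $\idRelu_d^{\bullet(n+1)}=\compANN{\idRelu_d}{(\idRelu_d^{\bullet n})}$ together with the case $\lengthANN(\Phi_1)>1<\lengthANN(\Phi_2)$ in \eqref{ANNoperations:Composition}, where one also uses that the last weight matrix of $\idRelu_d^{\bullet n}$ is block-diagonal with blocks equal to the last weight matrix of $\idRelu_1$, so that the new ``seam'' weight matrix $W_1^{\idRelu_d}W_2^{\idRelu_d}$ is again a $\{-1,0,1\}$-matrix. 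In particular, \cref{lem:structtovect} then gives $\infnorm{\MappingStructuralToVectorized(\idRelu_d^{\bullet n})}\le1$, which bounds the first term.

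For the third term, observe that --- since each row of $W_1$ contains exactly one nonzero entry and this entry has absolute value $1$, and since $B_1=0$ --- each entry of $W_1\mf W$ equals $\pm1$ times some entry of $\mf W$ and each entry of $W_1\mf B+B_1=W_1\mf B$ equals $\pm1$ times some entry of $\mf B$. Hence, by \cref{lem:structtovectB}, every component of $\MappingStructuralToVectorized(((W_1\mf W,W_1\mf B+B_1)))$ is $\pm1$ times an entry of $\mf W$ or of $\mf B$, and since \cref{lem:structtovect} shows that all entries of $\mf W$ and of $\mf B$ occur among the components of $\MappingStructuralToVectorized(\Phi)$, this implies $\asinfnorm{\MappingStructuralToVectorized(((W_1\mf W,W_1\mf B+B_1)))}\le\infnorm{\MappingStructuralToVectorized(\Phi)}$. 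Plugging these two estimates into the displayed inequality gives $\infnorm{\MappingStructuralToVectorized(\longerANN{L,\idRelu_d}(\Phi))}\le\max\{1,\infnorm{\MappingStructuralToVectorized(\Phi)}\}$, which completes the plan. The only step requiring genuine care is the inductive structural description of $\idRelu_d^{\bullet n}$: it is conceptually elementary but entails carefully matching the block-diagonal form produced by \eqref{parallelisationSameLengthDef} against the several cases of the composition rule \eqref{ANNoperations:Composition}; alternatively, one could invoke an analogous statement from \cite{Grohs2019ANNCalculus} or \cite{GrohsJentzenSalimova2019}.
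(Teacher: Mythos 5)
Your proposal is correct and follows essentially the same route as the paper's proof: both reduce to \cref{lem:composition_infnorm}, then bound the two new terms by analyzing the structure of $\idRelu_d^{\bullet n}$ (all biases vanish, entries lie in $\{-1,0,1\}$, and each row of the first weight matrix has exactly one nonzero entry of absolute value $1$), with the latter fact used to show the ``seam'' pair $(W_1\mf W,\,W_1\mf B+B_1)$ has the same $\infnorm\cdot$ as $(\mf W,\mf B)$. The only difference is cosmetic: you package the structural facts about $\idRelu_d^{\bullet n}$ as an induction on $n$, whereas the paper writes out the weight matrices $W_1$, $W_k$, $W_{L-\lengthANN(\Phi)+1}$ explicitly (cf.\ \eqref{eq:extension1}--\eqref{eq:extension5}) after invoking \cite[Lemma~3.16]{GrohsJentzenSalimova2019} and \cite[Lemma~2.13]{Grohs2019ANNCalculus} for the architecture.
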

\begin{proof}[Proof of \cref{lem:extension}]
  Throughout this proof 
  assume w.l.o.g.\ that $L>\lengthANN(\Phi)$
  and let $l_0,l_1,\dots,l_{L-\lengthANN(\Phi)+1}\in\N$
  satisfy $(l_0,l_1,\dots,l_{L-\lengthANN(\Phi)+1})=(d,2d,2d,\dots,2d,d)$.
  % and let
  % $l_0,l_1,\ldots, l_{\lengthANN(\Phi)} \in \N$, 
	% $
	% ((W_1, B_1),(W_2, B_2),\allowbreak \ldots, (W_{\lengthANN(\Phi)},\allowbreak B_{\lengthANN(\Phi)}))
	% \in  \allowbreak
	% \bigl( \bigtimes_{k = 1}^{\lengthANN(\Phi)}\allowbreak(\R^{l_k \times l_{k-1}} \times \R^{l_k})\bigr)
  % $
  % satisfy $\Phi=((W_1, B_1),(W_2, B_2),\allowbreak \ldots, (W_{\lengthANN(\Phi)},\allowbreak B_{\lengthANN(\Phi)}))$.
  % Note that \enum{
  %   item (ii) in \cite[Lemma 2.14]{Grohs2019ANNCalculus}
  % }[establish] \cref{it:extension:1}.
  Note that \enum{
    \cite[Lemma~3.16]{GrohsJentzenSalimova2019}
  }[ensure] that $\dims(\idRelu_d)=(d,2d,d)\in\N^3$
  (cf.\ \cref{def:ReLu:identity}).
  \enum{
    Item (i) in \cite[Lemma 2.13]{Grohs2019ANNCalculus}
  } hence establishes that
  \begin{equation}
    \lengthANN((\idRelu_d)^{\bullet(L-\lengthANN(\Phi)})=L-\lengthANN(\Phi)+1
    \qandq
    \dims((\idRelu_d)^{\bullet(L-\lengthANN(\Phi)})=(l_0,l_1,\dots,l_{L-\lengthANN(\Phi)+1})\in\N^{L-\lengthANN(\Phi)+2}
  \end{equation}
  (cf.\ \cref{def:iteratedANNcomposition}).
  \enum{
    This
  }[show] that there exist
    $W_k\in\R^{l_k\times l_{k-1}}$, $k\in\{1,2,\dots,L-\lengthANN(\Phi)+1\}$,
    and $B_k\in\R^{l_k}$, $k\in\{1,2,\dots,L-\lengthANN(\Phi)+1\}$,
    which satisfy
    %$((W_1,B_1),(W_2,B_2),\dots,(W_{L-\lengthANN(\Phi)+1},B_{L-\lengthANN(\Phi)+1}))\in\ANNs$
    %\bigl(\bigtimes_{i=1}^{L-\lengthANN(\Phi)} (\R^{l_i\times l_{i-1}}\times\R^{l_i})\bigr)$
  % such that
  %   $\mf W_1\in\R^{2d\times d}$,
  %   $\mf W_{L-\lengthANN(\Phi)+1}\in\R^{d\times 2d}$,
  %   $\mf B_1,\mf B_2,\dots,\mf B_{L-\lengthANN(\Phi)}\in\R^{2d}$,
  %   $\mf B_{L-\lengthANN(\Phi)}\in\R^d$
  \begin{equation}
    (\idRelu_d)^{\bullet ( L-\lengthANN(\Phi))}=((W_1,B_1),(W_2,B_2),\dots,(W_{L-\lengthANN(\Phi)+1},B_{L-\lengthANN(\Phi)+1})).
  \end{equation}
  Next observe that \enum{
    \eqref{parallelisationSameLengthDef} ;
    \eqref{eq:def:id:1} ;
    \eqref{eq:def:id:2} ;
    \eqref{ANNoperations:Composition} ;
    \eqref{iteratedANNcomposition:equation} ;
  }[demonstrate] that
  \begin{equation}
    \label{eq:extension1}
    \begin{split}
    W_1
    &=
    \pmat{
      1&0&\cdots&0\\
      -1&0&\cdots&0\\
      0&1&\cdots&0\\
      0&-1&\cdots&0\\
      \vdots&\vdots&\ddots&\vdots\\
      0&0&\cdots&1\\
      0&0&\cdots&-1
    }\in\R^{(2d)\times d}
    \\\text{and}\qquad
    W_{L-\lengthANN(\Phi)+1}
    &=
    \pmat{
      1&-1&0&0&\cdots&0&0\\
      0&0&1&-1&\cdots&0&0\\
      \vdots&\vdots&\vdots&\vdots&\ddots&\vdots&\vdots\\
      0&0&0&0&\cdots&1&-1
    }\in\R^{d\times (2d)}
    .
    \end{split}
  \end{equation}
  Moreover, note that \enum{
    \eqref{parallelisationSameLengthDef} ;
    \eqref{eq:def:id:1} ;
    \eqref{eq:def:id:2} ;
    \eqref{ANNoperations:Composition} ;
    \eqref{iteratedANNcomposition:equation} ;
  }[prove] that for all
    $k\in\N\cap(1,L-\lengthANN(\Phi)+1)$
  it holds that
  \begin{equation}
    \label{eq:extension4}
    \begin{split}
    W_k
    &=
    \underbrace{
    \pmat{
      1&-1&0&0&\cdots&0&0\\
      0&0&1&-1&\cdots&0&0\\
      \vdots&\vdots&\vdots&\vdots&\ddots&\vdots&\vdots\\
      0&0&0&0&\cdots&1&-1
    }
    }_{\in\R^{d\times(2d)}}
    \underbrace{
    \pmat{
      1&0&\cdots&0\\
      -1&0&\cdots&0\\
      0&1&\cdots&0\\
      0&-1&\cdots&0\\
      \vdots&\vdots&\ddots&\vdots\\
      0&0&\cdots&1\\
      0&0&\cdots&-1
    }
    }_{\in\R^{(2d)\times d}}
    \\&=
    \pmat{
      1&-1&0&0&\cdots&0&0\\
      -1&1&0&0&\cdots&0&0\\
      0&0&1&-1&\cdots&0&0\\
      0&0&-1&1&\cdots&0&0\\
      \vdots&\vdots&\vdots&\vdots&\ddots&\vdots&\vdots\\
      0&0&0&0&\cdots&1&-1\\
      0&0&0&0&\cdots&-1&1
    }
    \in\R^{(2d)\times (2d)}
    .
    \end{split}
  \end{equation}
  In addition, observe that \enum{
    \eqref{eq:def:id:1} ;
    \eqref{eq:def:id:2} ;
    \eqref{parallelisationSameLengthDef} ;
    \eqref{iteratedANNcomposition:equation} ;
    \eqref{ANNoperations:Composition} ;
  }[show] that for all
    $k\in\N\cap[1,L-\lengthANN(\Phi)]$
  it holds that
  \begin{equation}
    \label{eq:extension5}
    B_k=0\in\R^{2d}
    \qandq
    B_{L-\lengthANN(\Phi)+1}=0\in\R^d
    .
  \end{equation}
  Combining \enum{
    this ;
    \eqref{eq:extension1} ;
    \eqref{eq:extension4}
  } establishes that
  \begin{equation}
    \label{eq:extension3}
    \asinfnorm{\MappingStructuralToVectorized\bigl((\idRelu_d)^{\bullet ( L-\lengthANN(\Phi))}\bigr)}
    =
    1
  \end{equation}
  (cf.\ \cref{def:TranslateStructuredIntoVectorizedDescription,def:infnorm}).
  Furthermore, note that 
    \eqref{eq:extension1}
  demonstrates that for all
    $k\in\N$,
    $\mf W=(w_{i,j})_{(i,j)\in\{1,2,\dots,d\}\times\{1,2,\dots,k\}}\in\R^{d\times k}$
  it holds that
  \begin{equation}
    \label{eq:extension2}
    W_1\mf W
    =
    \pmat{
      w_{1,1} & w_{1,2} & \cdots & w_{1,k}\\
      -w_{1,1} & -w_{1,2} & \cdots & -w_{1,k}\\
      w_{2,1} & w_{2,2} & \cdots & w_{2,k}\\
      -w_{2,1} & -w_{2,2} & \cdots & -w_{2,k}\\
      \vdots & \vdots & \ddots &\vdots\\
      w_{d,1} & w_{d,2} & \cdots & w_{d,k}\\
      -w_{d,1} & -w_{d,2} & \cdots & -w_{d,k}
    }\in\R^{(2d)\times k}
    .
  \end{equation}
  Next observe that
  \enum{
    \eqref{eq:extension1} ;
    \eqref{eq:extension5} ;
  }[show] that for all
    $\mf B=(b_1,b_2,\dots,b_d)\in\R^{d}$
  it holds that
  \begin{equation}
    W_1 \mf B+B_1
    =
    \pmat{
      1&0&\cdots&0\\
      -1&0&\cdots&0\\
      0&1&\cdots&0\\
      0&-1&\cdots&0\\
      \vdots&\vdots&\ddots&\vdots\\
      0&0&\cdots&1\\
      0&0&\cdots&-1
    }\pmat{b_1\\b_2\\\vdots\\b_d}
    =
    \pmat{b_1\\-b_1\\b_2\\-b_2\\\vdots\\b_d\\-b_d}
    \in\R^{2d}
    .
  \end{equation}
  Combining
    this
  with
    \eqref{eq:extension2}
  proves that for all
    $k\in\N$,
    $\mf W\in\R^{d\times k}$,
    $\mf B\in\R^{d}$
  it holds that
  \begin{equation}
    \asinfnorm{\MappingStructuralToVectorized\bigl(((W_1\mf W,W_1\mf B+B_1))\bigr)}
    =
    \asinfnorm{\MappingStructuralToVectorized\bigl(((\mf W,\mf B))\bigr)}.
  \end{equation}
  \enum{
    This ;
    \cref{lem:composition_infnorm} ;
    \eqref{eq:extension3} ;
  }[establish] that
  \begin{equation}
    \begin{split}
    &\infnorm{\MappingStructuralToVectorized(\longerANN{L,\idRelu_d}(\Phi))}
    =
    \asinfnorm{\MappingStructuralToVectorized\bigl(\compANN{((\idRelu_d)^{\bullet(L-\lengthANN(\Phi))})}{\Phi}\bigr)}
    \\&\leq
    \max\bigl\{
      \asinfnorm{\MappingStructuralToVectorized\bigl((\idRelu_d)^{\bullet(L-\lengthANN(\Phi))}\bigr)},
      \asinfnorm{\MappingStructuralToVectorized(\Phi)}
    \bigr\}
    =
    \max\{
      1,
      \infnorm{\MappingStructuralToVectorized(\Phi)}
    \}
    \end{split}
  \end{equation}
  (cf.\ \cref{def:ANNenlargement}).
  The proof of \cref{lem:extension} is thus completed.
\end{proof}

\subsubsection{Embedding DNNs in larger architectures}

% \begin{lemma}
%   \label{lem:embednet}
%   Let 
%     $a\in C(\R,\R)$,
%     $L\in\N$, 
%     $l_0,l_1,\dots,l_L,\mf l_0,\mf l_1,\dots,\mf l_L\in\N$,
%     $\Phi=((W_1,B_1),(W_2,B_2),\allowbreak \ldots,\allowbreak (W_L, B_L))\in  \allowbreak\bigl( \bigtimes_{k = 1}^L\allowbreak(\R^{l_k \times l_{k-1}} \times \R^{l_k})\bigr)$,
%     $\Psi=((\mf W_1,\mf B_1),(\mf W_2,\mf B_2),\allowbreak \ldots, (\mf W_L, \mf B_L))\in  \allowbreak\bigl( \bigtimes_{k = 1}^L\allowbreak(\R^{\mf l_k \times l_{k-1}} \times \R^{\mf l_k})\bigr)$,
%   satisfy for all
%     $k\in\{1,2,\dots,L\}$
%   that
%     $\mf l_0=l_0$,
%     $\mf l_L=l_L$,
%     $\mf l_k\geq l_k$,
%   and
%   \begin{equation}
%     \mf W_k
%     =
%     \pmat{
%       W_k & 0 \\
%       0 & 0
%     }
%     \qquad
%     \mf B_k = (B_k,0)
%     .
%   \end{equation}
%   Then it holds that 
%   \begin{equation}
%     \functionANN{a}(\Phi)=\functionANN{a}(\Psi).
%   \end{equation}
% \end{lemma}

\begin{lemma}
\label{lem:embednet}
  Let 
    $a\in C(\R,\R)$,
    $L\in\N$, 
    $l_0,l_1,\dots,l_L,\mf l_0,\mf l_1,\dots,\mf l_L\in\N$
  satisfy for all 
    $k\in\{1,2,\dots,L\}$
  that
    $\mf l_0=l_0$,
    $\mf l_L=l_L$,
    and $\mf l_k\geq l_k$,
  for every $k\in\{1,2,\dots,L\}$ let
    $W_k=(W_{k,i,j})_{(i,j)\in\{1,2,\dots,l_k\}\times\{1,2,\dots,l_{k-1}\}}\in\R^{l_k\times l_{k-1}}$,
    $\mf W_k=(\mf W_{k,i,j})_{(i,j)\in\{1,2,\dots,\mf l_k\}\times\{1,2,\dots,\mf l_{k-1}\}}\in\R^{\mf l_k\times \mf l_{k-1}}$,
    $B_k=(B_{k,i})_{i\in\{1,2,\dots,l_k\}}\in\R^{l_k}$,
    $\mf B_k=(\mf B_{k,i})_{i\in\{1,2,\dots,\mf l_k\}}\in\R^{\mf l_k}$,
  assume for all
    $k\in\{1,2,\dots,L\}$,
    $i\in\{1,2,\dots,l_k\}$,
    $j\in\N\cap(0,l_{k-1}]$
  that 
    $\mf W_{k,i,j}=W_{k,i,j}$ and $\mf B_{k,i}=B_{k,i}$,
  and assume for all 
    $k\in\{1,2,\dots,L\}$,
    $i\in\{1,2,\dots,l_k\}$,
    $j\in\N\cap(l_{k-1},\mf l_{k-1}+1)$
  that
    $\mf W_{k,i,j}=0$.
  Then
  \begin{equation}
    \functionANN{a}\bigl(((W_1,B_1),(W_2,B_2),\dots,(W_L,B_L))\bigr)
    =
    \functionANN{a}\bigl(((\mf W_1,\mf B_1),(\mf W_2,\mf B_2),\dots,(\mf W_L,\mf B_L))\bigr)
  \end{equation}
  (cf.\ \cref{def:ANNrealization}).
\end{lemma}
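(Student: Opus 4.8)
The plan is to feed the same input $x_0\in\R^{l_0}=\R^{\mf l_0}$ through both networks and to track the hidden activations, showing by induction that at every hidden layer the wide network's activation vector agrees, on its first $l_k$ coordinates, with the narrow network's activation vector. The extra coordinates produced by the wide network are irrelevant because the padded column entries $\mf W_{k,i,j}=0$ (for $i\in\{1,\dots,l_k\}$ and $j\in\N\cap(l_{k-1},\mf l_{k-1}]$) prevent them from influencing the coordinates that matter in the following layer.

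Concretely, following \cref{def:ANNrealization,def:multidim_version}, I would introduce $x_0,x_1,\dots,x_{L-1}$ with $x_0$ the given input and $x_k=\multdim_{a,l_k}(W_kx_{k-1}+B_k)$ for $k\in\N\cap(0,L)$, and likewise $\tilde x_0=x_0$, $\tilde x_1,\dots,\tilde x_{L-1}$ with $\tilde x_k=\multdim_{a,\mf l_k}(\mf W_k\tilde x_{k-1}+\mf B_k)$ for $k\in\N\cap(0,L)$. The key step is the claim that for every $k\in\{0,1,\dots,L-1\}$ and every $i\in\{1,2,\dots,l_k\}$ it holds that $(\tilde x_k)_i=(x_k)_i$. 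This is proved by induction on $k$: the base case $k=0$ is immediate since $\mf l_0=l_0$ and $\tilde x_0=x_0$, and for the step (with $k\in\N\cap(0,L)$) one uses $\mf W_{k,i,j}=0$ for $j\in\N\cap(l_{k-1},\mf l_{k-1}]$, $\mf W_{k,i,j}=W_{k,i,j}$ and $\mf B_{k,i}=B_{k,i}$ for $j\in\{1,\dots,l_{k-1}\}$, together with the induction hypothesis, to compute for $i\in\{1,\dots,l_k\}$ that
\[
(\tilde x_k)_i
= a\Bigl(\textstyle\sum_{j=1}^{\mf l_{k-1}}\mf W_{k,i,j}(\tilde x_{k-1})_j+\mf B_{k,i}\Bigr)
= a\Bigl(\textstyle\sum_{j=1}^{l_{k-1}}W_{k,i,j}(x_{k-1})_j+B_{k,i}\Bigr)
= (x_k)_i .
\]

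Finally I would apply the same column-killing computation to the (un-activated) output layer: since $\mf l_L=l_L$, for every $i\in\{1,\dots,l_L\}$ the above argument, now invoking the claim at $k=L-1$, yields $\bigl(\mf W_L\tilde x_{L-1}+\mf B_L\bigr)_i=\bigl(W_Lx_{L-1}+B_L\bigr)_i$, hence $\mf W_L\tilde x_{L-1}+\mf B_L=W_Lx_{L-1}+B_L$. By \eqref{setting_NN:ass2} this says precisely that $\bigl(\functionANN{a}(((\mf W_1,\mf B_1),\dots,(\mf W_L,\mf B_L)))\bigr)(x_0)=\bigl(\functionANN{a}(((W_1,B_1),\dots,(W_L,B_L)))\bigr)(x_0)$, and since $x_0\in\R^{l_0}$ was arbitrary the lemma follows. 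There is no real obstacle here: the only thing requiring care is the index bookkeeping that separates the shared rows and columns from the padded ones, and noting that the degenerate case $L=1$ is automatically covered, the induction range $\{0,\dots,L-1\}=\{0\}$ then contributing only the base case while the output computation does the rest.
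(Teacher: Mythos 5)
Your proposal is correct and follows essentially the same route as the paper's proof: the paper introduces projections $\pi_k\colon\R^{\mf l_k}\to\R^{l_k}$ onto the first $l_k$ coordinates and shows $\pi_k(\mf W_kx+\mf B_k)=W_k\pi_{k-1}(x)+B_k$ and $\pi_k\circ\multdim_{a,\mf l_k}=\multdim_{a,l_k}\circ\pi_k$, which is precisely your coordinate-wise induction that the first $l_k$ entries of $\tilde x_k$ agree with $x_k$, followed by the same use of $\mf l_L=l_L$ to close the argument at the output layer.
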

\begin{proof}[Proof of \cref{lem:embednet}]
  Throughout this proof let
    $\pi_k\colon\R^{\mf l_k}\to\R^{l_k}$, $k\in\{0,1,\dots,L\}$,
  satisfy for all
    $k\in\{0,1,\dots,L\}$,
    $x=(x_1,x_2,\dots,x_{\mf l_k})$
  that
  \begin{equation}
    \label{eq:embednet.defpik}
    \pi_k(x)
    =
    (x_1,x_2,\dots,x_{l_k})
    .
  \end{equation}
  Observe that
    the hypothesis that $\mf l_0=l_0$ and $\mf l_L=l_L$
  shows that
  \begin{equation}
    \label{eq:embednet2}
    \functionANN{a}\bigl(((W_1,B_1),(W_2,B_2),\dots,(W_L,B_L))\bigr)\in C(\R^{\mf l_0},\R^{\mf l_L})
  \end{equation}
  (cf.\ \cref{def:ANNrealization}).
  Furthermore, note that
    the hypothesis that for all
      $k\in\{1,2,\dots,L\}$,
      $i\in\{1,2,\dots,l_k\}$,
      $j\in\N\cap(l_{k-1},\mf l_{k-1}+1)$
    it holds that
      $\mf W_{k,i,j}=0$
  ensures that for all 
    $k\in\{1,2,\dots,L\}$,
    $x=(x_1,x_2,\dots,x_{\mf l_{k-1}})\in\R^{\mf l_{k-1}}$
    % $y=(y_1,y_2,\dots,y_{\mf l_k})\in\R^{\mf l_k}$
    % with $y=\mf W_kx+\mf B_k$
  it holds that
  \begin{equation}
  \begin{split}
    % (y_1,y_2,\dots,y_{l_k})
    \pi_k(\mf W_k x+\mf B_k)
    &=
    \left(
      \left[\sum_{i=1}^{\mf l_{k-1}} \mf W_{k,1,i}x_{i}\right]+\mf B_{k,1},
      \left[\sum_{i=1}^{\mf l_{k-1}} \mf W_{k,2,i}x_{i}\right]+\mf B_{k,2},
      \dots,
      \left[\sum_{i=1}^{\mf l_{k-1}} \mf W_{k,l_k,i}x_{i}\right]+\mf B_{k,l_k}
    \right)
    \\&=
    \left(
      \left[\sum_{i=1}^{l_{k-1}} \mf W_{k,1,i}x_{i}\right]+\mf B_{k,1},
      \left[\sum_{i=1}^{l_{k-1}} \mf W_{k,2,i}x_{i}\right]+\mf B_{k,2},
      \dots,
      \left[\sum_{i=1}^{l_{k-1}} \mf W_{k,l_k,i}x_{i}\right]+\mf B_{k,l_k}
    \right)
    \!.
  \end{split}
  \end{equation}
  Combining 
    this
  with 
    the hypothesis that for all
      $k\in\{1,2,\dots,L\}$,
      $i\in\{1,2,\dots,l_k\}$,
      $j\in\N\cap(0,l_{k-1}]$
    it holds that 
      $\mf W_{k,i,j}=W_{k,i,j}$ and $\mf B_{k,i}=B_{k,i}$
  shows that for all
    $k\in\{1,2,\dots,L\}$,
    $x=(x_1,x_2,\dots,x_{\mf l_{k-1}})\in\R^{\mf l_{k-1}}$
    % $y=(y_1,y_2,\dots,y_{\mf l_k})\in\R^{\mf l_k}$
    % with $y=\mf W_kx+\mf B_k$
  it holds that
  \begin{equation}
    \label{eq:embednet1}
  \begin{split}
    % (y_1,y_2,\dots,y_{l_k})
    \pi_k(\mf W_kx+\mf B_k)
    &=
    \Biggl(
      \left[\sum_{i=1}^{l_{k-1}} W_{k,1,i}x_{i}\right]+B_{k,1},
      \left[\sum_{i=1}^{l_{k-1}} W_{k,2,i}x_{i}\right]+B_{k,2},
      \dots,
      \left[\sum_{i=1}^{l_{k-1}} W_{k,l_k,i}x_{i}\right]+B_{k,l_k}
    \Biggr)
    \\&=
    W_k(\pi_{k-1}(x))+B_k
    .
  \end{split}
  \end{equation}
  % %
  %   Hence, 
  % we obtain that for all
  %   $x_0=(x_{0,1},x_{0,2},\dots,x_{0,\mf l_0}) \in \R^{\mf l_0},\, 
  %   x_1=(x_{1,1},x_{1,2},\dots,x_{1,\mf l_1}) \in \R^{\mf l_1}, 
  %   \ldots,\, 
  %   x_{L-1}=(x_{L-1,1},x_{L-1,2},\dots,x_{L-1,\mf l_{L-1}}) \in \R^{\mf l_{L-1}}$
  %   with $\forall \, k \in \N \cap (0,L) \colon x_k =\activationDim{\mf l_k}(\mf W_k x_{k-1} + \mf B_k)$ 
  % it holds for all 
  %   $k\in \N\cap(0,L)$
  % that
  % \begin{equation}
  %   \pmat{x_{k,1}\\x_{k,2}\\\vdots\\x_{k,l_k}}
  %   =
  %   \activationDim{l_k}\!\left(W_k \pmat{x_{k-1,1}\\x_{k-1,2}\\\vdots\\x_{k-1,l_{k-1}}} + B_k\right)
  %   \!.
  % \end{equation}
  %
  Moreover, observe that \enum{
    \eqref{eq:embednet.defpik} ;
    \eqref{multidim_version:Equation}
  }[ensure] that for all
    $k\in\{0,1,\dots,L\}$,
    $x=(x_1,x_2,\dots,x_{\mf l_k})\in\R^{\mf l_k}$
  it holds that
  \begin{equation}
    \pi_k(\activationDim{\mf l_k}(x))
    =
    \pi_k(a(x_1),a(x_2),\dots,a(x_{\mf l_k}))
    =
    (a(x_1),a(x_2),\dots,a(x_{l_k}))
    =
    \activationDim{l_k}(\pi_k(x))
    .  
  \end{equation}
  Combining \enum{
    this ;
    \eqref{eq:embednet1}
  } demonstrates that for all
    $x_0%=(x_{0,1},x_{0,2},\dots,x_{0,\mf l_0}) 
    \in \R^{\mf l_0}
    ,\, 
    x_1%=(x_{1,1},x_{1,2},\dots,x_{1,\mf l_1}) 
    \in \R^{\mf l_1}
    , 
    \ldots,\, 
    x_{L-1}%=(x_{L-1,1},x_{L-1,2},\dots,x_{L-1,\mf l_{L-1}}) 
    \in \R^{\mf l_{L-1}}
    $,
    $k\in\N\cap(0,L)$
    with $\forall \, m \in \N \cap (0,L) \colon x_m =\activationDim{\mf l_m}(\mf W_m x_{m-1} + \mf B_m)$ 
  it holds that
  \begin{equation}
    \pi_k(x_k)
    =
    \pi_k(\activationDim{\mf l_k}(\mf W_k x_{k-1} + \mf B_k))
    =
    \activationDim{l_k}(\pi_k(\mf W_kx_{k-1}+\mf B_k))
    =
    \activationDim{l_k}(W_k\pi_{k-1}(x_{k-1})+B_k)
    % \pmat{x_{k,1}\\x_{k,2}\\\vdots\\x_{k,l_k}}
    % =
    % \activationDim{l_k}\!\left(W_k \pmat{x_{k-1,1}\\x_{k-1,2}\\\vdots\\x_{k-1,l_{k-1}}} + B_k\right)
    % \!.
  \end{equation}
  (cf.\ \cref{def:multidim_version}).
    The hypothesis that $l_0=\mf l_0$ and $l_L=\mf l_L$
    and \eqref{eq:embednet1}
    therefore
  prove that for all
    $x_0\in\R^{\mf l_0},\, 
    x_1\in\R^{\mf l_1}, 
    \ldots,\, 
    x_{L-1}\in \R^{\mf l_{L-1}}$
    with $\forall \, k \in \N \cap (0,L) \colon x_k =\activationDim{\mf l_k}(\mf W_k x_{k-1} + \mf B_k)$ 
  it holds that
  \begin{equation}
  \begin{split}
    \bigl(\functionANN{a}\bigl(((W_1,B_1),(W_2,B_2),\dots,(W_L,B_L))\bigr)\bigr)(x_0)
    &=
    \bigl(\functionANN{a}\bigl(((W_1,B_1),(W_2,B_2),\dots,(W_L,B_L))\bigr)\bigr)(\pi_0(x_0))
    \\&=
    W_L\pi_{L-1}(x_{L-1})+B_L
    \\&=
    \pi_L(\mf W_Lx_{L-1}+\mf B_L)
    =
    \mf W_Lx_{L-1}+\mf B_L
    % &=
    % W_L\pmat{x_{L-1,1}\\x_{L-1,2}\\\vdots\\x_{L-1,l_{L-1}}}+B_L
    % =
    % \mf W_lx_{L-1}+\mf B_L
    \\&=
    \bigl(\functionANN{a}\bigl(((\mf W_1,\mf B_1),(\mf W_2,\mf B_2),\dots,(\mf W_L,\mf B_L))\bigr)\bigr)(x_0)
  \end{split}
  \end{equation}
  (cf.\ \cref{def:ANNrealization}).
  The proof of Lemma~\ref{lem:embednet} is thus completed.
\end{proof}

\begin{lemma}
  \label{lem:embednet_exist}
  Let $a\in C(\R,\R)$,
  $L\in\N$, 
  $l_0,l_1,\dots,l_L,\mf l_0,\mf l_1,\dots,\mf l_L\in\N$
  satisfy for all 
    $k\in\{1,2,\dots,L\}$
  that
    $\mf l_0=l_0$,
    $\mf l_L=l_L$,
    and $\mf l_k\geq l_k$
  and let $\Phi\in\ANNs$ satisfy
    $\dims(\Phi)=(l_0,l_1,\dots,l_L)$
  (cf. \cref{def:ANN}).
  Then there exists $\Psi\in\ANNs$ such that
  \begin{equation}
    \dims(\Psi)=(\mf l_0,\mf l_1,\dots,\mf l_L),
    \qquad
    \infnorm{\MappingStructuralToVectorized(\Psi)}
    =
    \infnorm{\MappingStructuralToVectorized(\Phi)},
    \qandq
    \functionANN{a}(\Psi)
    =
    \functionANN{a}(\Phi)
  \end{equation}
  (cf.\ \cref{def:TranslateStructuredIntoVectorizedDescription,def:infnorm,def:ANNrealization}).
\end{lemma}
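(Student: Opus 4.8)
The plan is to build $\Psi$ by zero-padding the weight matrices and bias vectors of $\Phi$ and then to invoke \cref{lem:embednet} to conclude that this padding does not alter the realization. Concretely, write $\Phi=((W_1,B_1),(W_2,B_2),\dots,(W_L,B_L))$ with $W_k=(W_{k,i,j})_{(i,j)\in\{1,\dots,l_k\}\times\{1,\dots,l_{k-1}\}}\in\R^{l_k\times l_{k-1}}$ and $B_k=(B_{k,i})_{i\in\{1,\dots,l_k\}}\in\R^{l_k}$ for $k\in\{1,\dots,L\}$. For each such $k$ I would define $\mf W_k=(\mf W_{k,i,j})_{(i,j)\in\{1,\dots,\mf l_k\}\times\{1,\dots,\mf l_{k-1}\}}\in\R^{\mf l_k\times\mf l_{k-1}}$ and $\mf B_k=(\mf B_{k,i})_{i\in\{1,\dots,\mf l_k\}}\in\R^{\mf l_k}$ by setting $\mf W_{k,i,j}=W_{k,i,j}$ and $\mf B_{k,i}=B_{k,i}$ whenever $i\leq l_k$ and $j\leq l_{k-1}$ and setting every remaining entry equal to $0$, and then take $\Psi=((\mf W_1,\mf B_1),(\mf W_2,\mf B_2),\dots,(\mf W_L,\mf B_L))\in\ANNs$.

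Since $\mf l_0=l_0$, $\mf l_L=l_L$, and $\mf W_k\in\R^{\mf l_k\times\mf l_{k-1}}$, $\mf B_k\in\R^{\mf l_k}$ by construction, we immediately obtain $\dims(\Psi)=(\mf l_0,\mf l_1,\dots,\mf l_L)$. For the norm I would apply \cref{lem:structtovect}: the coordinates of $\MappingStructuralToVectorized(\Psi)$ are exactly the entries of the $\mf W_k$ and $\mf B_k$, and those of $\MappingStructuralToVectorized(\Phi)$ exactly the entries of the $W_k$ and $B_k$; every entry of the latter vector reappears among the coordinates of the former, and all the additional coordinates of $\MappingStructuralToVectorized(\Psi)$ equal $0$, so — using that $\MappingStructuralToVectorized(\Phi)$ is a nonempty real vector, whence its maximum absolute value is nonnegative — taking the maximum of absolute values over all coordinates yields $\infnorm{\MappingStructuralToVectorized(\Psi)}=\infnorm{\MappingStructuralToVectorized(\Phi)}$.

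Finally, to obtain $\functionANN{a}(\Psi)=\functionANN{a}(\Phi)$ I would verify that the arrays $((W_k,B_k))_{k\in\{1,\dots,L\}}$ and $((\mf W_k,\mf B_k))_{k\in\{1,\dots,L\}}$ meet the hypotheses of \cref{lem:embednet}. By construction $\mf W_{k,i,j}=W_{k,i,j}$ and $\mf B_{k,i}=B_{k,i}$ for all $k\in\{1,\dots,L\}$, $i\in\{1,\dots,l_k\}$, $j\in\{1,\dots,l_{k-1}\}=\N\cap(0,l_{k-1}]$, and for $k\in\{1,\dots,L\}$, $i\in\{1,\dots,l_k\}$, and $j\in\N\cap(l_{k-1},\mf l_{k-1}+1)$ (that is, $l_{k-1}<j\leq\mf l_{k-1}$) the entry $\mf W_{k,i,j}$ is one of the padding entries and hence vanishes; \cref{lem:embednet} then gives $\functionANN{a}(\Phi)=\functionANN{a}(\Psi)$, which completes the proof. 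I do not expect a genuine obstacle here; the only points requiring attention are matching the index ranges of the zero-padding with the precise half-open interval $\N\cap(l_{k-1},\mf l_{k-1}+1)$ appearing in \cref{lem:embednet} and the elementary remark that appending zeros to a nonempty real vector leaves its maximum norm unchanged, both of which are pure bookkeeping.
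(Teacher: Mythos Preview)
Your proposal is correct and follows essentially the same approach as the paper: construct $\Psi$ by zero-padding the weights and biases of $\Phi$, read off $\dims(\Psi)$ directly, use \cref{lem:structtovect} to conclude $\infnorm{\MappingStructuralToVectorized(\Psi)}=\infnorm{\MappingStructuralToVectorized(\Phi)}$, and invoke \cref{lem:embednet} for the equality of realizations. The paper's proof is virtually identical, so there is nothing to add.
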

\begin{proof}[Proof of \cref{lem:embednet_exist}]
  Throughout this proof let
    $B_k=(B_{k,i})_{i\in\{1,2,\dots,l_k\}}\in\R^{l_k}$, $k\in\{1,2,\dots,L\}$,
    and $W_k=(W_{k,i,j})_{(i,j)\in\{1,2,\dots,l_k\}\times\{1,2,\dots,l_{k-1}\}}\in\R^{l_k\times l_{k-1}}$, $k\in\{1,2,\dots,L\}$,
  satisfy
	  $\Phi=((W_1,B_1),(W_2,B_2),\dots,(W_L,\allowbreak B_L))$
  and let
    $\mf W_k=(\mf W_{k,i,j})_{(i,j)\in\{1,2,\dots,\mf l_k\}\times\{1,2,\dots,\mf l_{k-1}\}}\in\R^{\mf l_k\times \mf l_{k-1}}$, $k\in\{1,2,\dots,L\}$,
    and $\mf B_k=(\mf B_{k,i})_{i\in\{1,2,\dots,\mf l_k\}}\in\R^{\mf l_k}$, $k\in\{1,2,\dots,L\}$,
  satisfy for all
    $k\in\{1,2,\dots,L\}$,
    $i\in\{1,2,\dots,\mf l_k\}$,
    $j\in\{1,2,\dots,\mf l_{k-1}\}$
  that
  \begin{equation}
  \label{eq:embednet2.WB}
    \mf W_{k,i,j}
    =
    \begin{cases}
      W_{k,i,j}&\colon (i\leq l_k)\land(j\leq l_{k-1})\\
      0 & \colon (i>l_k)\lor(j>l_{k-1})
    \end{cases}
    \qquad\text{and}\qquad
    \mf B_{k,i}
    =
    \begin{cases}
      B_{k,i} &\colon i\leq l_k\\
      0 & \colon i>l_k.
    \end{cases}
  \end{equation}
  Note that
    \eqref{eq:defANN}
  ensures that
  $
    ((\mf W_1,\mf B_1),(\mf W_2,\mf B_2),\dots,(\mf W_L,\mf B_L))
    \in
    \bigl(\textstyle\bigtimes_{i=1}^{L}(\R^{\mf l_i\times \mf l_{i-1}}\times\R^{\mf l_i})\bigr)
    \subseteq
    \ANNs
  $ and
  \begin{equation}
    \dims\bigl(((\mf W_1,\mf B_1),(\mf W_2,\mf B_2),\dots,(\mf W_L,\mf B_L))\bigr)
    =
    (\mf l_0,\mf l_1,\dots,\mf l_{L}).
  \end{equation}
  Furthermore, observe that \enum{
    \cref{lem:structtovect} ;
    \eqref{eq:embednet2.WB}
  }[show] that
  \begin{equation}
    \infnorm{\MappingStructuralToVectorized\bigl(((\mf W_1,\mf B_1),(\mf W_2,\mf B_2),\dots,(\mf W_L,\mf B_L))\bigr)}
    =
    \infnorm{\MappingStructuralToVectorized(\Phi)}
  \end{equation}
  (cf.\ \cref{def:TranslateStructuredIntoVectorizedDescription,def:infnorm}).
  In addition, note that
    \cref{lem:embednet}
  establishes that
  \begin{equation}
    \functionANN{a}(\Phi)
    =
    \functionANN{a}\bigl(((W_1,B_1),(W_2,B_2),\dots,(W_L,\allowbreak B_L))\bigr)
    =
    \functionANN{a}\bigl(((\mf W_1,\mf B_1),(\mf W_2,\mf B_2),\dots,(\mf W_L,\mf B_L))\bigr)
  \end{equation}
  (cf.\ \cref{def:ANNrealization}).
  The proof of \cref{lem:embednet_exist} is thus completed.
\end{proof}

\begin{lemma}
  \label{lem:embednet3}
  Let $L,\mf L\in\N$, 
  $l_0,l_1,\dots,l_L,\mf l_0,\mf l_1,\dots,\mf l_{\mf L}\in\N$
  satisfy for all
    $i\in\N\cap[0,L)$
  that
    $\mf L\geq L$
    $\mf l_0=l_0$,
    $\mf l_{\mf L}=l_L$,
    and $\mf l_i\geq l_i$,
  assume for all 
    $i\in \N\cap(L-1,\mf L)$
  that
    $\mf l_i\geq 2l_L$,
  and let $\Phi\in\ANNs$
  satisfy $\dims(\Phi)=(l_0,l_1,\dots,l_L)$
  (cf.\ \cref{def:ANN}).
  Then there exists
    $\Psi\in\ANNs$
  such that
  \begin{equation}
    \label{eq:embednet3.claim}
    \dims(\Psi)=(\mf l_0,\mf l_1,\dots,\mf l_{\mf L}),
    \qquad
    \infnorm{\MappingStructuralToVectorized(\Psi)}
    \leq
    \max\{
      1,\infnorm{\MappingStructuralToVectorized(\Phi)}
    \},
    \qandq
    \functionANN{\rect}(\Psi)=\functionANN{\rect}(\Phi)
  \end{equation}
  (cf.\ \cref{def:TranslateStructuredIntoVectorizedDescription,def:infnorm,def:relu1,def:ANNrealization}).
\end{lemma}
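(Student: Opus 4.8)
The plan is to build $\Psi$ by composing two constructions that are already available: first deepen $\Phi$ to depth $\mf L$ by means of the extension operation $\longerANN{\mf L,\idRelu_{l_L}}$, and then widen the resulting network to the prescribed layer dimensions $(\mf l_0,\mf l_1,\dots,\mf l_{\mf L})$ by means of \cref{lem:embednet_exist}. Both operations preserve the rectified realization; \cref{lem:extension} controls the effect of the first on the parameter max-norm, and \cref{lem:embednet_exist} controls (indeed, does not change) the effect of the second.

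For the deepening step, observe that $\dims(\Phi)=(l_0,l_1,\dots,l_L)$ entails $\lengthANN(\Phi)=L\le\mf L$ and $\outDimANN(\Phi)=l_L=\inDimANN(\idRelu_{l_L})$, so that $\Upsilon:=\longerANN{\mf L,\idRelu_{l_L}}(\Phi)=\compANN{((\idRelu_{l_L})^{\bullet(\mf L-L)})}{\Phi}$ is well defined. Using $\dims(\idRelu_{l_L})=(l_L,2l_L,l_L)$ (as recorded in the proof of \cref{lem:extension}), \cref{lem:extension_dims} shows $\dims(\Upsilon)\in\N^{\mf L+1}$ with $\dims(\Upsilon)=(l_0,\dots,l_{L-1},l_L)$ when $\mf L=L$ and $\dims(\Upsilon)=(l_0,\dots,l_{L-1},2l_L,\dots,2l_L,l_L)$ when $\mf L>L$; \cref{lem:extension} gives $\infnorm{\MappingStructuralToVectorized(\Upsilon)}\le\max\{1,\infnorm{\MappingStructuralToVectorized(\Phi)}\}$; and, since $\functionANN{\rect}(\idRelu_{l_L})=\id_{\R^{l_L}}$ and $\functionANN{\rect}$ is multiplicative under composition (cf.\ \cite{Grohs2019ANNCalculus,GrohsJentzenSalimova2019}), one obtains $\functionANN{\rect}((\idRelu_{l_L})^{\bullet(\mf L-L)})=\id_{\R^{l_L}}$ and hence $\functionANN{\rect}(\Upsilon)=\functionANN{\rect}(\Phi)$.

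For the widening step, write $\dims(\Upsilon)=(\hat l_0,\hat l_1,\dots,\hat l_{\mf L})$. By the above, $\hat l_0=l_0=\mf l_0$ and $\hat l_{\mf L}=l_L=\mf l_{\mf L}$, while $\hat l_k=l_k\le\mf l_k$ for $k\in\N\cap[1,L)$ (using $\mf l_i\ge l_i$ for $i\in\N\cap[0,L)$) and $\hat l_k=2l_L\le\mf l_k$ for $k\in\N\cap[L,\mf L)$ (using $\mf l_i\ge 2l_L$ for $i\in\N\cap(L-1,\mf L)$, the latter index set being empty when $\mf L=L$). Hence the hypotheses of \cref{lem:embednet_exist} are satisfied with $\Upsilon$ in the role of $\Phi$, $\mf L$ in the role of $L$, $(\hat l_0,\dots,\hat l_{\mf L})$ in the role of $(l_0,\dots,l_L)$, and $(\mf l_0,\dots,\mf l_{\mf L})$ as the enlarged dimensions, so \cref{lem:embednet_exist} furnishes $\Psi\in\ANNs$ with $\dims(\Psi)=(\mf l_0,\dots,\mf l_{\mf L})$, $\infnorm{\MappingStructuralToVectorized(\Psi)}=\infnorm{\MappingStructuralToVectorized(\Upsilon)}$, and $\functionANN{\rect}(\Psi)=\functionANN{\rect}(\Upsilon)$. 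Chaining these with the estimates from the deepening step yields $\infnorm{\MappingStructuralToVectorized(\Psi)}\le\max\{1,\infnorm{\MappingStructuralToVectorized(\Phi)}\}$ and $\functionANN{\rect}(\Psi)=\functionANN{\rect}(\Phi)$, which is \eqref{eq:embednet3.claim}.

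No step is genuinely hard; the argument is essentially the verification that the two already-established operations compose cleanly. The only place where care is needed is the matching of the intermediate widths: the deepening is carried out with the rectified identity network $\idRelu_{l_L}$, whose hidden layers have width $2l_L$ (reflecting the identity $x=\rect(x)-\rect(-x)$), which is exactly why the hypothesis requires $\mf l_i\ge 2l_L$ for $i\in\N\cap(L-1,\mf L)$ rather than merely $\mf l_i\ge l_L$; once this is noted, \cref{lem:embednet_exist} applies without modification.
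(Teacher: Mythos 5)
Your proof is correct and follows essentially the same route as the paper: deepen $\Phi$ via $\longerANN{\mf L,\idRelu_{l_L}}$, invoke \cref{lem:extension_dims} and \cref{lem:extension} for the dimensions and the parameter-norm bound, use that $\idRelu_{l_L}$ realizes the identity to preserve $\functionANN{\rect}$, and then widen to the prescribed architecture via \cref{lem:embednet_exist}. The only cosmetic difference is that the paper names the intermediate network $\Xi$ and cites \cite[Lemma~2.14]{Grohs2019ANNCalculus} directly for realization-invariance of the extension operation, where you unpack that into the identity-realization of $\idRelu$ and compositionality of $\functionANN{\rect}$.
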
  
\begin{proof}[Proof of \cref{lem:embednet3}]
  Throughout this proof
  let $\Xi\in\ANNs$ satisfy
    $\Xi=\longerANN{\mf L,\idRelu_{l_L}}(\Phi)$
  (cf.\ \cref{def:ReLu:identity,def:ANNenlargement}).
  Note that \enum{
    item (i) in \cite[Lemma~3.16]{GrohsJentzenSalimova2019}
  }[demonstrate] that
  $\dims(\idRelu_{l_L})=(l_L,2l_L,l_L)\in\N^3$.
  Combining
    this
  with
    \cref{lem:extension_dims}
  shows that
    $\dims(\Xi)\in\N^{\mf L+1}$
    and
    \begin{equation}
      \label{eq:embednet3.dimsXi}
      \dims(\Xi)
      =
      \begin{cases}
        (l_0,l_1,\dots,l_L) & \colon \mf L=L \\
        (l_0,l_1,\dots,l_{L-1},2l_L,2l_L,\dots,2l_L,l_L) & \colon \mf L>L.
      \end{cases}
    \end{equation}
  Moreover, observe that
    \cref{lem:extension}
    (with
      $d\is l_L$,
      $L\is\mf L$,
      $\Phi\is\Phi$
    in the notation of \cref{lem:extension})
  establishes that
  \begin{equation}
    \label{eq:embednet3.norm}
    \infnorm{\MappingStructuralToVectorized(\Xi)}
    \leq
    \max\{
      1,
      \infnorm{\MappingStructuralToVectorized(\Phi)}
    \}
  \end{equation}
  (cf.\ \cref{def:TranslateStructuredIntoVectorizedDescription,def:infnorm}).
  In addition, note that \enum{
    item (iii) in \cite[Lemma 3.16]{GrohsJentzenSalimova2019}
  }[ensure] that for all
    $x\in\R^{l_L}$
  it holds that
  \begin{equation}
    (\functionANN{\rect}(\idRelu_{l_L}))(x)=x
  \end{equation}
  (cf.\ \cref{def:relu1,def:ANNrealization}).
  \enum{
    This ;
    item (ii) in \cite[Lemma 2.14]{Grohs2019ANNCalculus}
  }[prove] that
  \begin{equation}
    \label{eq:embednet3.real}
    \functionANN{\rect}(\Xi)
    =
    \functionANN{\rect}(\Phi).
  \end{equation}
  In the next step, we observe that
  \enum{
    \eqref{eq:embednet3.dimsXi} 
    ;
    the hypothesis that for all 
      $i\in[0,L)$
    it holds that 
      $\mf l_0=l_0$,
      $\mf l_{\mf L}=l_L$,
      and $\mf l_i\leq l_i$
    ;
    the hypothesis that for all 
      $i\in\N\cap(L-1,\mf L)$
    it holds that
      $\mf l_i\geq 2l_L$
    ;
    \cref{lem:embednet_exist}
    (with
      $a\is\rect$,
      $L\is\mf L$,
      $(l_0,l_1,\dots,l_L)\is\dims(\Xi)$,
      $(\mf l_0,\mf l_1,\dots,\mf l_{\mf L})\is(\mf l_0,\mf l_1,\dots,\mf l_{\mf L})$,
      $\Phi\is\Xi$
    in the notation of \cref{lem:embednet_exist})
  }[ensure] that there exists
    $\Psi\in\ANNs$
  such that
  \begin{equation}
    \dims(\Psi)
    =
    (\mf l_0,\mf l_1,\dots,\mf l_{\mf L}),
    \qquad
    \infnorm{\MappingStructuralToVectorized(\Psi)}
    =
    \infnorm{\MappingStructuralToVectorized(\Xi)},
    \qandq
    \functionANN{\rect}(\Psi)
    =
    \functionANN{\rect}(\Xi)
    .
  \end{equation}
  Combining
    this
  with \enum{
    \eqref{eq:embednet3.norm} ;
    \eqref{eq:embednet3.real}
  } establishes \eqref{eq:embednet3.claim}.
  The proof of \cref{lem:embednet3} is thus completed.
\end{proof}

\begin{lemma}
  \label{lem:embednet_vectorized}
  Let 
  $u\in[-\infty,\infty)$,
  $v\in(u,\infty]$,
  $L,\mf L,d,\mf d\in\N$, 
  $\theta\in\R^d$,
  $l_0,l_1,\dots,l_L,\mf l_0,\mf l_1,\dots,\mf l_{\mf L}\in\N$
  satisfy for all
    $i\in\N\cap[0,L)$
  that
    $d\geq\sum_{i=1}^L l_i(l_{i-1}+1)$,
    $\mf d\geq\sum_{i=1}^{\mf L} \mf l_i(\mf l_{i-1}+1)$,
    $\mf L\geq L$,
    $\mf l_0=l_0$,
    $\mf l_{\mf L}=l_L$,
    and $\mf l_i\geq l_i$
  and assume for all 
    $i\in \N\cap(L-1,\mf L)$
  that
    $\mf l_i\geq 2l_L$.
  Then there exists $\vartheta\in\R^{\mf d}$ such that
  \begin{equation}
    \label{eq:embednet_vectorized:claim}
    \infnorm{\vartheta}\leq\max\{1,\infnorm{\theta}\}
    \qandq
    \ClippedRealV{\vartheta}{(\mf l_0,\mf l_1,\dots,\mf l_{\mf L})}uv
    =
    \ClippedRealV{\theta}{(l_0,l_1,\dots,l_L)}uv
  \end{equation}
  (cf.\ \cref{def:infnorm,def:rectclippedFFANN}).
\end{lemma}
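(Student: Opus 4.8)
The plan is to reduce \eqref{eq:embednet_vectorized:claim} to its structured-description counterpart, \cref{lem:embednet3}, by passing through the translation \cref{cor:structvsvect} between vectorized and structured realizations. The key preliminary observation is that the clipped realization factors through the unclipped one: in \eqref{eq:rectclippedFFANN} the maps $\ClippedRealV{\cdot}{\cdot}uv$ and $\UnclippedRealV{\cdot}{\cdot}$ are built from the same affine transformations and the same hidden-layer rectifiers and differ only in that the outermost function is $\Clip uv{\,\cdot\,}$ in the former and $\Clip{-\infty}{\infty}{\,\cdot\,}=\id$ in the latter, so that $\ClippedRealV{\theta}{\mathbf l}uv = \Clip uv{l_L}\circ\UnclippedRealV{\theta}{\mathbf l}$ for every admissible $\theta$ and every $\mathbf l=(l_0,\dots,l_L)$; moreover the realizations $\RealV^{\cdot,0,\cdot}_{\cdot}$ underlying these objects only read off the first $\sum_{i=1}^{L}l_i(l_{i-1}+1)$ coordinates of their parameter vector, so appending further coordinates to $\theta$ changes nothing.

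Next I would introduce $\Phi\in\ANNs$ as the unique DNN with $\dims(\Phi)=(l_0,l_1,\dots,l_L)$ and $\MappingStructuralToVectorized(\Phi)=(\theta_1,\theta_2,\dots,\theta_{\paramANN(\Phi)})$ — well defined by \cref{lem:structtovect}, and legitimate since $\paramANN(\Phi)=\sum_{i=1}^{L}l_i(l_{i-1}+1)\le d$ — so that $\infnorm{\MappingStructuralToVectorized(\Phi)}=\max_{i\le\paramANN(\Phi)}\abs{\theta_i}\le\infnorm{\theta}$ and, by \cref{cor:structvsvect} together with the preceding paragraph, $\ClippedRealV{\theta}{(l_0,\dots,l_L)}uv(x)=\Clip uv{l_L}((\functionANN{\rect}(\Phi))(x))$ for all $x\in\R^{l_0}$. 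The hypotheses on $\mf L,\mf l_0,\dots,\mf l_{\mf L}$ in the present lemma are precisely those required by \cref{lem:embednet3}, so that lemma supplies some $\Psi\in\ANNs$ with $\dims(\Psi)=(\mf l_0,\mf l_1,\dots,\mf l_{\mf L})$, $\infnorm{\MappingStructuralToVectorized(\Psi)}\le\max\{1,\infnorm{\MappingStructuralToVectorized(\Phi)}\}\le\max\{1,\infnorm{\theta}\}$, and $\functionANN{\rect}(\Psi)=\functionANN{\rect}(\Phi)$.

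Finally I would set $\vartheta\in\R^{\mf d}$ with $(\vartheta_1,\dots,\vartheta_{\paramANN(\Psi)})=\MappingStructuralToVectorized(\Psi)$ and $\vartheta_j=0$ for $j\in\{\paramANN(\Psi)+1,\dots,\mf d\}$ — admissible because $\paramANN(\Psi)=\sum_{i=1}^{\mf L}\mf l_i(\mf l_{i-1}+1)\le\mf d$ — which gives $\infnorm{\vartheta}=\infnorm{\MappingStructuralToVectorized(\Psi)}\le\max\{1,\infnorm{\theta}\}$, and then, using \cref{cor:structvsvect} for $\Psi$ together with the first paragraph and $\mf l_0=l_0$, $\mf l_{\mf L}=l_L$, deduce for all $x\in\R^{\mf l_0}$ that
\begin{equation*}
  \ClippedRealV{\vartheta}{(\mf l_0,\mf l_1,\dots,\mf l_{\mf L})}uv(x)
  =
  \Clip uv{\mf l_{\mf L}}\bigl((\functionANN{\rect}(\Psi))(x)\bigr)
  =
  \Clip uv{l_L}\bigl((\functionANN{\rect}(\Phi))(x)\bigr)
  =
  \ClippedRealV{\theta}{(l_0,l_1,\dots,l_L)}uv(x) ,
\end{equation*}
which is \eqref{eq:embednet_vectorized:claim}. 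I expect the only mildly delicate point to be the bookkeeping in the first paragraph — checking that the clipped realization genuinely factors as $\Clip uv{\,\cdot\,}$ composed with the unclipped realization and that zero-padding the parameter vectors affects neither the realization nor (upward) the maximum norm; everything else is a direct invocation of \cref{lem:embednet3}, \cref{lem:structtovect}, and \cref{cor:structvsvect}.
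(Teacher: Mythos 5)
Your proposal is correct and mirrors the paper's own proof almost verbatim: both introduce the structured network $\Phi$ with $\MappingStructuralToVectorized(\Phi)$ equal to the leading coordinates of $\theta$, invoke \cref{lem:embednet3} to obtain $\Psi$, zero-pad $\MappingStructuralToVectorized(\Psi)$ to get $\vartheta\in\R^{\mf d}$, and finish by composing with $\Clip uv{\cdot}$ after equating the unclipped realizations via \cref{cor:structvsvect}. The only cosmetic difference is that you make the factorization $\ClippedRealV{\cdot}{\cdot}uv=\Clip uv{\cdot}\circ\UnclippedRealV{\cdot}{\cdot}$ explicit up front, whereas the paper records it as the final step.
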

\begin{proof}[Proof of \cref{lem:embednet_vectorized}]
  Throughout this proof let
    $\eta_1,\eta_2,\dots,\eta_d\in\R$
  satisfy
  \begin{equation}
    \label{eq:embednet_vectorized.4}
    \theta=(\eta_1,\eta_2,\dots,\eta_d)
  \end{equation}
  and let
    $\Phi\in\bigl(\bigtimes_{i=1}^L\R^{l_i\times l_{i-1}}\times\R^{l_i}\bigr)$
  satisfy
  \begin{equation}
    \label{eq:embednet_vectorized.3}
    \MappingStructuralToVectorized(\Phi)=(\eta_1,\eta_2,\dots,\eta_{\paramANN(\Phi)})
  \end{equation}
  %   $\eta\in\R^{\sum_{i=1}^L l_i(l_{i-1}+1)}$
  % satisfy for all 
  %   $\zeta_1,\zeta_2,\dots,\zeta_d\in\R$
  %   with $\theta=(\zeta_1,\zeta_2,\dots,\zeta_d)$
  % that
  % \begin{equation}
  %   \eta=(\zeta_1,\zeta_2,\dots,\zeta_{\sum_{i=1}^L l_i(l_{i-1}+1)})
  % \end{equation}
  % and let
  %   $\Phi\in\bigl(\bigtimes_{i=1}^L\R^{l_i\times l_{i-1}}\times\R^{l_i}\bigr)$
  % satisfy
  %   $\MappingStructuralToVectorized(\Phi)=\eta$
  (cf.\ \cref{def:TranslateStructuredIntoVectorizedDescription}).
  Note that \enum{
    \cref{lem:embednet3}
  }[establish] that there exists $\Psi\in\ANNs$ which satisfies
  \begin{equation}
    \label{eq:embednet_vectorized.2}
    \dims(\Psi)=(\mf l_0,\mf l_1,\dots,\mf l_{\mf L}),
    \qquad
    \infnorm{\MappingStructuralToVectorized(\Psi)}
    \leq
    \max\{
      1,\infnorm{\MappingStructuralToVectorized(\Phi)}
    \},
    \qandq
    \functionANN{\rect}(\Psi)=\functionANN{\rect}(\Phi)
  \end{equation}
  (cf.\ \cref{def:ANN,def:infnorm,def:relu1,def:ANNrealization}).
  Next let
    $\vartheta=(\vartheta_1,\vartheta_2,\dots,\vartheta_{\mf d})\in\R^{\mf d}$
  satisfy 
  \begin{equation}
    \label{eq:embednet_vectorized.1}
    (\vartheta_1,\vartheta_2,\dots,\vartheta_{\paramANN(\Psi)})
    =
    \MappingStructuralToVectorized(\Psi)
    \qandq
    \Forall i\in\N\cap(\paramANN(\Psi),\mf d+1)\colon \vartheta_i=0
    .
  \end{equation}
  % for all
  %   $i\in\{1,2,\dots,\mf d\}$,
  %   $\zeta_1,\zeta_2,\dots,\zeta_{\paramANN(\Psi)}\in\R$
  %   with
  %     $(\zeta_1,\zeta_2,\dots,\zeta_{\paramANN(\Psi)})=\MappingStructuralToVectorized(\Psi)$
  % that
  % \begin{equation}
  %   \label{eq:embednet_vectorized.1}
  %   \vartheta_i
  %   =
  %   \begin{cases}
  %     \zeta_i & \colon i\leq \sum_{i=1}^{\mf L} \mf l_i(\mf l_{i-1}+1)\\
  %     0 & \colon i>\sum_{i=1}^{\mf L} \mf l_i(\mf l_{i-1}+1).
  %   \end{cases}
  % \end{equation}
  Note that \enum{
    \eqref{eq:embednet_vectorized.4} ;
    \eqref{eq:embednet_vectorized.3} ;
    \eqref{eq:embednet_vectorized.2} ;
    \eqref{eq:embednet_vectorized.1} ;
  }[show] that
  \begin{equation}
    \label{eq:embednet_vectorized:normclaim}
    \infnorm{\vartheta}
    =
    \infnorm{\MappingStructuralToVectorized(\Psi)}
    \leq
    \max\{1,\infnorm{\MappingStructuralToVectorized(\Phi)}\}
    \leq
    \max\{1,\infnorm\theta\}
    .
  \end{equation}
  Next observe that \enum{
    \cref{cor:structvsvect} ;
    \eqref{eq:embednet_vectorized.3} ;
  }[establish] that for all
    $x\in\R^{l_0}$
  it holds that
  \begin{equation}
    \label{eq:embednet_vectorized.realPhi}
    \bigl(\UnclippedRealV{\theta}{(l_0,l_1,\dots,l_L)}\bigr)(x)
    =
    \bigl(\UnclippedRealV{\MappingStructuralToVectorized(\Phi)}{\dims(\Phi)}\bigr)(x)
    =
    (\functionANN{\rect}(\Phi))(x)
    .
  \end{equation}
  In addition, observe that \enum{
    \cref{cor:structvsvect} ;
    \eqref{eq:embednet_vectorized.2} ;
    \eqref{eq:embednet_vectorized.1} ;
  }[prove] that for all
    $x\in\R^{\mf l_0}$
  it holds that
  \begin{equation}
    \bigl(\UnclippedRealV{\vartheta}{(\mf l_0,\mf l_1,\dots,\mf l_{\mf L})}\bigr)(x)
    =
    \bigl(\UnclippedRealV{\MappingStructuralToVectorized(\Psi)}{\dims(\Psi)}\bigr)(x)
    =
    (\functionANN{\rect}(\Psi))(x)
    .
  \end{equation}
  Combining \enum{
    this ;
    \eqref{eq:embednet_vectorized.realPhi}
  } with \enum{
    \eqref{eq:embednet_vectorized.2} ;
    the hypothesis that $\mf l_0=l_0$ and $\mf l_{\mf L}=l_L$
  } demonstrates that
  \begin{equation}
    \UnclippedRealV{\theta}{(l_0,l_1,\dots,l_L)}
    =
    \UnclippedRealV{\vartheta}{(\mf l_0,\mf l_1,\dots,\mf l_{\mf L})}
    .
  \end{equation}
  Hence, we obtain that
  \begin{equation}
    \ClippedRealV{\theta}{(l_0,l_1,\dots,l_L)}uv
    =
    \Clip uv{l_L}\circ\UnclippedRealV{\theta}{(l_0,l_1,\dots,l_L)}
    =
    \Clip uv{\mf l_{\mf L}}\circ\UnclippedRealV{\vartheta}{(\mf l_0,\mf l_1,\dots,\mf l_{\mf L})}
    =
    \ClippedRealV{\vartheta}{(\mf l_0,\mf l_1,\dots,\mf l_{\mf L})}uv
  \end{equation}
  (cf.\ \cref{def:clip}).
  \enum{
    This ;
    \eqref{eq:embednet_vectorized:normclaim}
  }[establish] \eqref{eq:embednet_vectorized:claim}.
  The proof of \cref{lem:embednet_vectorized} is thus completed.
\end{proof}

\subsection{Local Lipschitz continuity of the parametrization function}
\label{subsec:lipschitz}

\begin{lemma}
  \label{lem:max_estimate}
  Let $ a, x, y \in \R $. Then 
  \begin{equation}
    \left| \max\{ x, a \} - \max\{ y, a \} \right|
  \leq
    \max\{ x, y \} - \min\{ x, y \}
  =
    | x - y |
    .
  \end{equation}
  \end{lemma}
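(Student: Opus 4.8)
The plan is to first exploit symmetry: both the quantity $|\max\{x,a\}-\max\{y,a\}|$ and the quantity $\max\{x,y\}-\min\{x,y\}=|x-y|$ are unchanged when $x$ and $y$ are interchanged, so I would assume without loss of generality that $x\geq y$. Under this assumption the right-hand equality $\max\{x,y\}-\min\{x,y\}=x-y=|x-y|$ is immediate, and it remains only to establish that $|\max\{x,a\}-\max\{y,a\}|\leq x-y$.

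Next I would use the monotonicity of the map $t\mapsto\max\{t,a\}$ to observe that $x\geq y$ implies $\max\{x,a\}\geq\max\{y,a\}$, so that the absolute value on the left equals $\max\{x,a\}-\max\{y,a\}$. The remaining inequality $\max\{x,a\}-\max\{y,a\}\leq x-y$ can then be rearranged to $\max\{x,a\}-x\leq\max\{y,a\}-y$, i.e.\ to $\max\{0,a-x\}\leq\max\{0,a-y\}$, which follows at once from $a-x\leq a-y$ (a consequence of $x\geq y$) together with the monotonicity of $s\mapsto\max\{0,s\}$. Alternatively, and probably cleanest to write out, one can finish by a three-way case distinction on the position of $a$: if $a\leq y$ then $\max\{x,a\}-\max\{y,a\}=x-y$; if $a\geq x$ then $\max\{x,a\}-\max\{y,a\}=0$; and if $y\leq a\leq x$ then $\max\{x,a\}-\max\{y,a\}=x-a\leq x-y$. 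In all three cases the value lies in $[0,x-y]$, which is exactly the claim.

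There is no real obstacle here; the statement is elementary. The only point requiring a little care in the write-up is organizing the reduction to $x\geq y$ so that the argument is not needlessly duplicated for the case $x<y$, and making explicit that the chain of (in)equalities in the displayed line is read from left to right with the equality on the right being definitional once $x\geq y$ is assumed.
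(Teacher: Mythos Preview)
Your proposal is correct. The reduction by symmetry to the case $x\geq y$ is exactly what the paper does implicitly by rewriting the left-hand side as $\max\{\max\{x,y\},a\}-\max\{\min\{x,y\},a\}$. The finishing step differs: the paper uses the identity $\max\{u,v\}-w=\max\{u-w,\,v-w\}$ to expand $\max\{\max\{x,y\},a\}-\max\{\min\{x,y\},a\}$ and then bounds each branch, whereas you either rearrange to the monotonicity statement $\max\{0,a-x\}\leq\max\{0,a-y\}$ or do a three-way case split on the position of $a$. All three finishes are equally elementary; your rearrangement version is arguably the tidiest to write, while the paper's version avoids any explicit case distinction at the cost of a slightly longer chain of equalities.
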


  \begin{proof}[Proof of \cref{lem:max_estimate}]
  Observe that 
  \begin{equation}
  \begin{split}
  &
    \left| \max\{ x, a \} - \max\{ y, a \} \right|
    =
    \left| 
      \max\{ \max\{ x, y \}, a \} - \max\{ \min\{ x, y \}, a \} 
    \right|
  \\ &
  =
    \max\bigl\{ \max\{ x, y \}, a \bigr\} 
    - 
    \max\bigl\{ \min\{ x, y \}, a \bigr\} 
  \\ &
  =
    \max\Bigl\{ 
      \max\{ x, y \}
      - 
      \max\bigl\{ \min\{ x, y \}, a \bigr\} 
      , 
      a 
      - 
      \max\bigl\{ \min\{ x, y \}, a \bigr\} 
    \Bigr\} 
  \\ &
  \leq
    \max\Bigl\{ 
      \max\{ x, y \}
      - 
      \max\bigl\{ \min\{ x, y \}, a \bigr\} 
      , 
      a 
      - 
      a 
    \Bigr\} 
  \\ & 
  =
    \max\Bigl\{ 
      \max\{ x, y \}
      - 
      \max\bigl\{ \min\{ x, y \}, a \bigr\} 
      , 
      0
    \Bigr\} 
  \leq
    \max\Bigl\{ 
      \max\{ x, y \}
      - 
      \min\{ x, y \}
      , 
      0
    \Bigr\} 
  \\ &
  =
    \max\{ x, y \}
    - 
    \min\{ x, y \}
  =
    \left|
      \max\{ x, y \}
      - 
      \min\{ x, y \}
    \right|
  =
    \left| x - y \right|
    .
  \end{split}
  \end{equation}
  The proof of \cref{lem:max_estimate} is thus completed.
  \end{proof}

  \begin{cor}
  \label{cor:min_estimate}
  Let $ a, x, y \in \R $. Then 
  \begin{equation}
    \left| \min\{ x, a \} - \min\{ y, a \} \right|
  \leq
    \max\{ x, y \} - \min\{ x, y \}
  =
    | x - y |
    .
  \end{equation}
  \end{cor}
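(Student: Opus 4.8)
The plan is to deduce \cref{cor:min_estimate} directly from \cref{lem:max_estimate} by means of the elementary identity $\min\{r,s\} = -\max\{-r,-s\}$, which holds for all $r,s\in\R$. First I would use this identity to write $\min\{x,a\}=-\max\{-x,-a\}$ and $\min\{y,a\}=-\max\{-y,-a\}$, so that
\[
  \left|\min\{x,a\}-\min\{y,a\}\right|
  =
  \left|\max\{-x,-a\}-\max\{-y,-a\}\right|.
\]

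Next I would invoke \cref{lem:max_estimate} (with $a\is -a$, $x\is -x$, $y\is -y$ in the notation of \cref{lem:max_estimate}) to bound the right-hand side above by $\max\{-x,-y\}-\min\{-x,-y\}$. Finally, using the identities $\max\{-x,-y\}=-\min\{x,y\}$ and $\min\{-x,-y\}=-\max\{x,y\}$, this bound equals $\max\{x,y\}-\min\{x,y\}$, which in turn equals $|x-y|$ (for instance by the equality already established in \cref{lem:max_estimate}).

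There is essentially no obstacle here: the whole statement is a one-line corollary of \cref{lem:max_estimate}, and the only point requiring a modicum of care is correctly tracking the sign changes when converting between $\min$ and $\max$ via negation. No further estimates or case distinctions are needed.
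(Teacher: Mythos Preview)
Your proposal is correct and follows essentially the same approach as the paper: both use the identity $\min\{r,s\}=-\max\{-r,-s\}$ to reduce to \cref{lem:max_estimate} applied at $(-a,-x,-y)$. The only cosmetic difference is that the paper jumps directly to $|(-x)-(-y)|=|x-y|$ rather than passing through $\max\{-x,-y\}-\min\{-x,-y\}$ as you do.
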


  \begin{proof}[Proof of \cref{cor:min_estimate}]
  Note that \cref{lem:max_estimate} ensures that
  \begin{equation}
  \begin{split}
    \left| \min\{ x, a \} - \min\{ y, a \} \right|
  & =
    \left| - \left( \min\{ x, a \} - \min\{ y, a \} \right) \right|
  =
    \left| \max\{ - x, - a \} - \max\{ - y, - a \} \right|
  \\ & 
  \leq
    \left| ( - x ) - ( - y ) \right|
  =
    \left| x - y \right|
    .
  \end{split}
  \end{equation}
  The proof of \cref{cor:min_estimate} is thus completed.
  \end{proof}
  
  \begin{lemma}
    \label{result:Clip_contraction}
    \label{lem:clipcontr}
    Let $d\in\N$, 
      $u\in[-\infty,\infty)$, 
      $v\in(u,\infty]$.
    Then it holds for all $x,y\in\R^d$ that
    \begin{equation}
      \infnorm{
        \Clip uvd(x)
        -
        \Clip uvd(y)
      }
      \leq
      \infnorm{x-y}
    \end{equation}
    (cf.\ \cref{def:clip,def:infnorm}).
  \end{lemma}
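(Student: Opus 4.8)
The plan is to reduce this multidimensional estimate to the corresponding scalar statement and then invoke \cref{lem:max_estimate} together with \cref{cor:min_estimate}. First I would recall that, by \cref{def:clip,def:clip1,def:multidim_version}, the map $\Clip uvd$ acts coordinatewise through the scalar clipping function $\clip uv \colon \R \to \R$, $\clip uv(t) = \max\{u,\min\{t,v\}\}$, so that for all $x = (x_1,x_2,\dots,x_d), y = (y_1,y_2,\dots,y_d) \in \R^d$ it holds, by \cref{def:infnorm}, that
\[
  \infnorm{\Clip uvd(x) - \Clip uvd(y)}
  =
  \max_{i\in\{1,2,\dots,d\}} \abs{\clip uv(x_i) - \clip uv(y_i)}
  .
\]
Hence it suffices to establish the one-dimensional bound $\abs{\clip uv(s) - \clip uv(t)} \leq \abs{s-t}$ for all $s,t\in\R$.

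For the scalar bound I would distinguish according to which of the endpoints $u,v$ are real. If $u,v\in\R$, then \cref{lem:max_estimate} (with $a\is u$) applied to $\max\{\min\{s,v\},u\}$ and $\max\{\min\{t,v\},u\}$, followed by \cref{cor:min_estimate} (with $a\is v$) applied to $\min\{s,v\}$ and $\min\{t,v\}$, yields
\[
  \abs{\clip uv(s) - \clip uv(t)}
  =
  \babs{\max\{\min\{s,v\},u\} - \max\{\min\{t,v\},u\}}
  \leq
  \babs{\min\{s,v\} - \min\{t,v\}}
  \leq
  \abs{s-t}
  .
\]
If $u = -\infty$ (so that $\clip uv(t) = \min\{t,v\}$), the estimate follows directly from \cref{cor:min_estimate} when $v\in\R$ and is trivial when $v = \infty$; symmetrically, if $v = \infty$ (so that $\clip uv(t) = \max\{t,u\}$), it follows from \cref{lem:max_estimate} when $u\in\R$ and is trivial when $u = -\infty$. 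Combining the cases proves the scalar bound, and substituting it into the displayed identity above completes the argument.

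The reasoning is essentially bookkeeping: the only point requiring a modicum of care is that the endpoints $u$ and $v$ are allowed to be $-\infty$ and $+\infty$, respectively, but in each such degenerate case one of the inner $\max$/$\min$ operations collapses to the identity and the remaining one-sided contraction estimate from \cref{lem:max_estimate} or \cref{cor:min_estimate} still applies, so I do not expect any genuine obstacle.
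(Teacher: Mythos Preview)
Your proposal is correct and follows essentially the same route as the paper: reduce to the scalar clipping map via the coordinatewise action and \cref{def:infnorm}, then chain \cref{lem:max_estimate} and \cref{cor:min_estimate} to obtain $\abs{\clip uv(s)-\clip uv(t)}\leq\abs{s-t}$. The only cosmetic difference is that the paper handles the extended endpoints in one stroke by invoking the identity $\max\{-\infty,x\}=x=\min\{x,\infty\}$ before applying the two estimates, whereas you spell out the degenerate cases $u=-\infty$ and $v=\infty$ separately; both treatments amount to the same observation.
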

  \begin{proof}[Proof of \cref{result:Clip_contraction}]
    Note that
      \cref{lem:max_estimate},
      \cref{cor:min_estimate},
      and the fact that for all
        $x\in\R$
      it holds that
        $\max\{-\infty,x\}=x=\min\{x,\infty\}$
    show that for all
      $x,y\in\R$
    it holds that
    \begin{equation}
      \abs{\clip uv(x)-\clip uv(y)}
      =
      \abs{
        \max\{u,\min\{x,v\}\}
        -
        \max\{u,\min\{y,v\}\}
      }
      \leq
      \abs{
        \min\{x,v\}
        -
        \min\{y,v\}
      }
      \leq
      \abs{x-y}
    \end{equation}
    (cf.\ \cref{def:clip1}).
    Hence, we obtain that for all
      $x=(x_1,x_2,\dots,x_d),y=(y_1,y_2,\dots,y_d)\in\R^d$
    it holds that
    \begin{equation}
      \infnorm{\Clip uvd(x)-\Clip uvd(y)}
      =
      \max_{i\in\{1,2,\dots,d\}} \abs{\clip uv(x_i)-\clip uv(y_i)}
      \leq
      \max_{i\in\{1,2,\dots,d\}} \abs{x_i-y_i}
      =
      \infnorm{x-y}
    \end{equation}
    (cf.\ \cref{def:clip,def:infnorm}).
    The proof of \cref{result:Clip_contraction} is thus completed.
  \end{proof}

  \begin{lemma}
    \label{result:ReLU_contraction}
    Let $ d \in \N $. Then it holds for all $ x, y \in \R^d $ that
    \begin{equation}
    \label{eq:inf_norm}
      \infnorm{
        \Rect_d( x ) 
        -
        \Rect_d( y )
      }
      \leq 
      \infnorm{ x - y }
    \end{equation}
    (cf.\ \cref{def:relu,def:infnorm}).
  \end{lemma}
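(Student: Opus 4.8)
The plan is to mirror the argument used for \cref{result:Clip_contraction}, exploiting that $\Rect_d$ is the componentwise application of $\rect = \max\{\cdot,0\}$ together with the scalar estimate provided by \cref{lem:max_estimate}.

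First I would recall from \eqref{eq:relu} and \cref{def:relu} that $\Rect_d = \multdim_{\rect,d}$, so that for every $x = (x_1,x_2,\dots,x_d) \in \R^d$ it holds that $\Rect_d(x) = (\rect(x_1),\rect(x_2),\dots,\rect(x_d)) = (\max\{x_1,0\},\max\{x_2,0\},\dots,\max\{x_d,0\})$ (cf.\ \cref{def:relu1,def:multidim_version}). Next I would invoke \cref{lem:max_estimate} with the choice $a = 0$ to obtain that for all $r,s \in \R$ it holds that $\abs{\rect(r) - \rect(s)} = \abs{\max\{r,0\} - \max\{s,0\}} \leq \abs{r-s}$.

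Combining these two observations with \cref{def:infnorm}, I would then conclude for all $x = (x_1,x_2,\dots,x_d), y = (y_1,y_2,\dots,y_d) \in \R^d$ that
\begin{equation}
  \infnorm{\Rect_d(x) - \Rect_d(y)}
  =
  \max_{i \in \{1,2,\dots,d\}} \abs{\rect(x_i) - \rect(y_i)}
  \leq
  \max_{i \in \{1,2,\dots,d\}} \abs{x_i - y_i}
  =
  \infnorm{x - y},
\end{equation}
which establishes \eqref{eq:inf_norm} and completes the proof.

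There is essentially no obstacle here: the scalar Lipschitz bound is already packaged in \cref{lem:max_estimate}, and passing to the maximum norm is immediate since the maximum of termwise-dominated nonnegative quantities is dominated. The only minor care needed is to apply \cref{lem:max_estimate} with the constant $a$ specialised to $0$ and to keep the indexing over the $d$ coordinates consistent; this is the same bookkeeping that appears in the proof of \cref{result:Clip_contraction}.
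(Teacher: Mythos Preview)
Your argument is correct. The paper, however, takes a shorter route: it simply observes that $\Rect_d = \Clip{0}{\infty}{d}$ and then invokes \cref{result:Clip_contraction} directly, obtaining \eqref{eq:inf_norm} in one line. What you do instead is reprove the special case by hand, applying \cref{lem:max_estimate} componentwise---exactly the computation already carried out inside the proof of \cref{result:Clip_contraction}. Both are valid; the paper's version avoids duplicating that work, while yours is self-contained and does not rely on the identification $\rect = \clip{0}{\infty}$.
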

  \begin{proof}[Proof of \cref{result:ReLU_contraction}]
    Note that 
      \cref{result:Clip_contraction}
      and the fact that
        $\Rect_d=\Clip 0{\infty}d$
    establish
    \eqref{eq:inf_norm}.
    The proof of \cref{result:ReLU_contraction} is thus completed.
  \end{proof}

\begin{lemma}%[Row sum norm, operator norm induced by the maximum norm]
  \label{result:row_sum_norm}
  Let $ a, b \in \N $, 
  $ M = ( M_{ i, j } )_{ (i,j) \in \{ 1, 2, \dots, a \} \times \{ 1, 2, \dots, b \} } \in \R^{ a \times b } $.
  Then 
  \begin{equation}
    \sup_{ v \in \R^b \backslash \{ 0 \} }
    \left[
      \frac{ 
        \infnorm{ M v }
      }{ 
        \infnorm{ v } 
      }
    \right]
  =
    \max_{ i \in \{ 1, 2, \dots, a \} }
    \left[
      \textstyle
      \sum\limits_{ j = 1 }^b
      \displaystyle
      \left|
        M_{ i, j } 
      \right|
    \right]
  \leq 
    b 
    \left[
      \max_{ i \in \{ 1, 2, \dots, a \} }
      \max_{ j \in \{ 1, 2, \dots, b \} }
      \left|
        M_{ i, j } 
      \right|
    \right]
  \end{equation}
  (cf.\ \cref{def:infnorm}).
  \end{lemma}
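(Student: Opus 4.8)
The plan is to establish the two relations in the displayed equation separately. For the operator-norm identity I would prove the inequality ``$\le$'' and the inequality ``$\ge$''. For ``$\le$'': for every $v = (v_1, v_2, \dots, v_b) \in \R^b \backslash \{0\}$ and every $i \in \{1, 2, \dots, a\}$ the triangle inequality gives $\abs{(Mv)_i} = \abs{\sum_{j=1}^b M_{i,j} v_j} \le \sum_{j=1}^b \abs{M_{i,j}}\,\abs{v_j} \le (\sum_{j=1}^b \abs{M_{i,j}})\,\infnorm{v}$; taking the maximum over $i$ shows $\infnorm{Mv} \le (\max_{i \in \{1,\dots,a\}} \sum_{j=1}^b \abs{M_{i,j}})\,\infnorm{v}$, and dividing by $\infnorm{v}$ and passing to the supremum over $v$ yields ``$\le$''. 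For ``$\ge$'': if $M$ is the zero matrix then both sides of the first equation equal $0$; otherwise fix $i_0 \in \{1, 2, \dots, a\}$ with $\sum_{j=1}^b \abs{M_{i_0,j}} = \max_{i \in \{1,\dots,a\}} \sum_{j=1}^b \abs{M_{i,j}}$ and set $v = (\sgn(M_{i_0,1}), \sgn(M_{i_0,2}), \dots, \sgn(M_{i_0,b})) \in \R^b$. Then $v \ne 0$, $\infnorm{v} = 1$, and the $i_0$-th coordinate of $Mv$ equals $\sum_{j=1}^b M_{i_0,j}\sgn(M_{i_0,j}) = \sum_{j=1}^b \abs{M_{i_0,j}}$, so that $\infnorm{Mv}\,(\infnorm{v})^{-1} \ge \sum_{j=1}^b \abs{M_{i_0,j}} = \max_{i \in \{1,\dots,a\}} \sum_{j=1}^b \abs{M_{i,j}}$, which gives ``$\ge$''.

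For the remaining inequality I would bound, for every $i \in \{1, \dots, a\}$, $\sum_{j=1}^b \abs{M_{i,j}} \le \sum_{j=1}^b (\max_{i' \in \{1,\dots,a\}} \max_{j' \in \{1,\dots,b\}} \abs{M_{i',j'}}) = b\,(\max_{i' \in \{1,\dots,a\}} \max_{j' \in \{1,\dots,b\}} \abs{M_{i',j'}})$ and then take the maximum over $i \in \{1,\dots,a\}$.

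The argument is essentially routine; the one subtlety worth noting is the degenerate case $M = 0$ in the proof of ``$\ge$'', where the ``sign vector'' of a maximizing row would be the forbidden vector $0 \in \R^b \backslash \{0\}$ and must therefore be excluded, which is handled by treating that case separately. I do not anticipate any real obstacle.
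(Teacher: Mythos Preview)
Your proposal is correct and is essentially the same standard argument as the paper's; the paper just packages it as a single chain of equalities (reducing to $\sup_{v\in[-1,1]^b}\infnorm{Mv}$, swapping $\sup$ and $\max$, and evaluating the inner supremum) rather than splitting into ``$\le$'' and ``$\ge$''. The explicit case split for $M=0$ that you need is an artifact of constructing the maximizer directly, whereas the paper's formulation sidesteps it, but the underlying content is identical.
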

  \begin{proof}[Proof of \cref{result:row_sum_norm}]
  Observe that 
  \begin{equation}
  \begin{split}
    \sup_{ v \in \R^b }
    \left[
      \frac{ 
        \infnorm{ M v }
      }{ 
        \infnorm{ v } 
      }
    \right]
  & =
    \sup_{ 
      v \in \R^b , \, \infnorm{ v } \leq 1
    }
    \infnorm{ M v }
  =
    \sup_{ 
      v = ( v_1, v_2, \dots, v_b ) \in [-1,1]^b 
    }
    \infnorm{ M v }
  \\ & 
  =
    \sup_{ 
      v = ( v_1, v_2, \dots, v_b ) \in [-1,1]^b 
    }
    \left(
      \max_{ i \in \{ 1, 2, \dots, a \} }
      \left|
      \textstyle
        \sum\limits_{ j = 1 }^b
      \displaystyle
        M_{ i, j } v_j
      \right|
    \right)
  \\ & 
  =
    \max_{ i \in \{ 1, 2, \dots, a \} }
    \left(
    \sup_{ 
      v = ( v_1, v_2, \dots, v_b ) \in [-1,1]^b 
    }
    \left|
      \textstyle
      \sum\limits_{ j = 1 }^b
      \displaystyle
      M_{ i, j } v_j
    \right|
    \right)
  =
    \max_{ i \in \{ 1, 2, \dots, a \} }
    \left(
      \textstyle
      \sum\limits_{ j = 1 }^b
      \displaystyle
    \left|
      M_{ i, j } 
    \right|
    \right)
  \end{split}
  \end{equation}
  (cf.\ \cref{def:infnorm}).
  The proof of \cref{result:row_sum_norm} is thus completed.
  \end{proof}

  \begin{theorem}
    \label{thm:RealNNLipsch}
    Let 
    $ a \in \R $,
    $ b \in [a,\infty) $,
    $ d, L \in \N$,
    $ l = (l_0,l_1,\dots,l_L) \in \N^{ L + 1 } $
    satisfy
    \begin{equation}
      d \geq 
      \sum_{ k =1 }^L 
      l_k ( l_{ k - 1 } + 1 ) 
      .
    \end{equation}
    Then it holds for all
    $ \theta, \vartheta \in \R^d $
    that
    \begin{equation}
    \begin{split}
    &
      \sup_{ x \in [a,b]^{ l_0 } }
      \infnorm{ \UnclippedRealV{\theta}{ l }(x)-\UnclippedRealV\vartheta{ l }(x) }
    \\ 
    & \leq
      \max\{ 1, \abs{a}, \abs{b} \} 
      \infnorm{ \theta - \vartheta }
      \left[
        \prod_{ m = 0 }^{ L - 1 }
        ( l_m + 1 )
      \right]
      \left[ 
        \sum_{ n = 0 }^{ L - 1 }
        \left(
          \max\{1,\infnorm{ \theta }^n\}
          \,
          \infnorm{ \vartheta }^{ L - 1 - n }
        \right)
      \right]
    \\ 
    & \leq 
      L
      \max\{ 1, \abs{a}, \abs{b} \} 
      \left( 
        \max\{ 1, \infnorm{ \theta }, \infnorm{ \vartheta } \}
      \right)^{ L - 1 }
      \left[
        \prod_{ m = 0 }^{ L - 1 }
        (
          l_m + 1 
        )
      \right]
      \infnorm{ \theta - \vartheta }
    \\
    & \leq 
      L 
      \max\{ 1, \abs{a}, \abs{b} \}
      \, 
      ( \infnorm{ l } + 1 )^L 
      \,
      ( \max\{ 1, \infnorm\theta,\infnorm\vartheta\} )^{ L - 1 }
      \,
      \infnorm{ \theta - \vartheta }
    \end{split}
    \end{equation}
    (cf.\ \cref{def:rectclippedFFANN,def:infnorm}).
    \end{theorem}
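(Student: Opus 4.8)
The argument is a layer-by-layer telescoping comparison of the two realizations. First I would unfold the definitions: by \cref{def:rectclippedFFANN} and \cref{def:FFNN}, together with the fact that $\Clip{-\infty}\infty{l_L}=\id_{\R^{l_L}}$, for every $x\in\R^{l_0}$ one has $\UnclippedRealV\theta l(x)=x_L$, where $x_0=x$ and, for $k\in\{1,2,\dots,L\}$, $x_k=\Rect_{l_k}(W_kx_{k-1}+B_k)$ if $k<L$ and $x_L=W_Lx_{L-1}+B_L$, with $W_k\in\R^{l_k\times l_{k-1}}$ and $B_k\in\R^{l_k}$ the matrix and the vector determined via $\Aff_{l_k,l_{k-1}}^{\theta,\sum_{i=1}^{k-1}l_i(l_{i-1}+1)}(z)=W_kz+B_k$ (cf.\ \cref{def:affine}); I would define $\mathfrak W_k,\mathfrak B_k,y_k$ analogously from $\vartheta$, so that $\UnclippedRealV\vartheta l(x)=y_L$. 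Since every entry of each $W_k$ and $B_k$ is a coordinate of $\theta$, \cref{def:affine} immediately yields $\max_{i,j}\abs{W_{k,i,j}}\le\infnorm\theta$, $\infnorm{B_k}\le\infnorm\theta$, and likewise $\max_{i,j}\abs{\mathfrak W_{k,i,j}}\le\infnorm\vartheta$, $\max_{i,j}\abs{W_{k,i,j}-\mathfrak W_{k,i,j}}\le\infnorm{\theta-\vartheta}$, and $\infnorm{B_k-\mathfrak B_k}\le\infnorm{\theta-\vartheta}$.

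Second, I would control the forward pass of the $\theta$-network. Fix $x\in[a,b]^{l_0}$ and set $N_k=\max\{1,\infnorm{x_k}\}$. Then $N_0\le\max\{1,\abs a,\abs b\}$, and for $k\in\{1,\dots,L-1\}$ the estimate $\infnorm{\Rect_{l_k}(z)}\le\infnorm z$ (the case $y=0$ of \cref{result:ReLU_contraction}) together with \cref{result:row_sum_norm} (which bounds the operator norm of $W_k$ induced by $\infnorm{\cdot}$ by $l_{k-1}\max_{i,j}\abs{W_{k,i,j}}\le l_{k-1}\infnorm\theta$) gives $\infnorm{x_k}\le l_{k-1}\infnorm\theta\,\infnorm{x_{k-1}}+\infnorm\theta$, hence $N_k\le(l_{k-1}+1)\max\{1,\infnorm\theta\}N_{k-1}$. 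Iterating, for all $k\in\{0,1,\dots,L-1\}$,
\[
  N_k\le\max\{1,\abs a,\abs b\}\Bigl(\textstyle\prod_{j=1}^k(l_{j-1}+1)\Bigr)\bigl(\max\{1,\infnorm\theta\}\bigr)^{k}.
\]

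Third, the core telescoping: writing $e_k=\infnorm{x_k-y_k}$ (so $e_0=0$) and using the algebraic identity $W_kx_{k-1}+B_k-(\mathfrak W_ky_{k-1}+\mathfrak B_k)=\mathfrak W_k(x_{k-1}-y_{k-1})+(W_k-\mathfrak W_k)x_{k-1}+(B_k-\mathfrak B_k)$, the triangle inequality, \cref{result:row_sum_norm}, and \cref{result:ReLU_contraction} (to discard the rectifier when $k<L$), I would obtain for all $k\in\{1,2,\dots,L\}$
\[
  e_k\le(l_{k-1}+1)\infnorm\vartheta\,e_{k-1}+(l_{k-1}+1)\infnorm{\theta-\vartheta}\,N_{k-1}.
\]
Solving this linear recursion (e.g.\ by induction on $k$) gives $e_L\le\infnorm{\theta-\vartheta}\sum_{k=1}^{L}\bigl(\prod_{j=k+1}^{L}(l_{j-1}+1)\infnorm\vartheta\bigr)(l_{k-1}+1)N_{k-1}$; substituting the bound on $N_{k-1}$, the products over $j$ of the factors $(l_{j-1}+1)$ collapse to $\prod_{m=0}^{L-1}(l_m+1)$, the norm factors combine to $\bigl(\max\{1,\infnorm\theta\}\bigr)^{k-1}\infnorm\vartheta^{L-k}$, and after re-indexing $n=k-1$ and using $\bigl(\max\{1,\infnorm\theta\}\bigr)^{n}=\max\{1,\infnorm\theta^{n}\}$ one arrives, after taking $\sup_{x\in[a,b]^{l_0}}$ (the bound being independent of $x$), at precisely the first claimed inequality. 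The second inequality then follows from $\max\{1,\infnorm\theta^{n}\}\infnorm\vartheta^{L-1-n}\le\bigl(\max\{1,\infnorm\theta,\infnorm\vartheta\}\bigr)^{L-1}$ for each of the $L$ summands, and the third from $\prod_{m=0}^{L-1}(l_m+1)\le(\infnorm l+1)^{L}$.

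The only genuine obstacle is the index bookkeeping needed to collapse the telescoped product-and-sum into the exact stated shape; in particular one must carry the accumulated error forward through $\mathfrak W_k$ (i.e.\ through $\vartheta$) and absorb the perturbation against the $\theta$-forward-pass $x_{k-1}$ -- the symmetric alternative would place the $\max\{1,\cdot\}$ on $\vartheta$ rather than on $\theta$. Beyond that, everything reduces to the contractivity of the rectifier, the row-sum bound on the operator norm, and elementary geometric-type sum manipulations.
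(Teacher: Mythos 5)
Your argument is correct and follows essentially the same route as the paper's proof: both control the forward pass magnitudes via a multiplicative recursion with the row-sum operator-norm bound (\cref{result:row_sum_norm}), both telescope the layer-wise error with the add-and-subtract decomposition $\mathfrak W_k(x_{k-1}-y_{k-1})+(W_k-\mathfrak W_k)x_{k-1}+(B_k-\mathfrak B_k)$, discard the rectifier via its $1$-Lipschitz property (\cref{result:ReLU_contraction}), solve the resulting linear recursion for the error, and then absorb $l_m\le l_m+1$ so the products collapse to $\prod_{m=0}^{L-1}(l_m+1)$. The paper keeps the choice of "which network carries the propagated error vs.\ which carries the perturbation" as a free index $j$ throughout the recursion and only specializes at the end, whereas you fix the symmetric convention from the start; this changes nothing in the final bound. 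The only tiny imprecision is that your stated recursion coefficient $(l_{k-1}+1)\infnorm\vartheta$ on $e_{k-1}$ is really obtained as $l_{k-1}\infnorm\vartheta$ and then weakened by $l_{k-1}\le l_{k-1}+1$ — a valid upper bound which, as you correctly exploit, is exactly what makes the product-of-$(l_m+1)$ factors close up cleanly.
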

    \begin{proof}[Proof of \cref{thm:RealNNLipsch}]
    Throughout this proof let 
    $ \theta_j = ( \theta_{j, 1}, \theta_{j, 2}, \dots, \theta_{j, d} ) \in \R^d $, 
    $ j \in \{ 1, 2 \} $,
    let $ \mathfrak{d} \in \N $ satisfy that
    \begin{equation}
      \mathfrak{d} = 
      \sum_{ k =1 }^L 
      l_k ( l_{ k - 1 } + 1 ) 
      ,
    \end{equation} 
    let 
    $ W_{ j, k } \in \R^{ l_k \times l_{ k - 1 } } $, 
    $ k \in \{ 1, 2, \dots, L \} $, 
    $ j \in \{ 1, 2 \} $, 
    and 
    $ B_{ j, k } \in \R^{ l_k } $, $ k \in \{ 1, 2, \dots, L \} $, $ j \in \{ 1, 2 \} $,
    satisfy 
    for all $ j \in \{ 1, 2 \} $,
    $ k \in \{ 1, 2, \dots, L \} $
    that
    \begin{equation}
      \MappingStructuralToVectorized\big( 
        \big( 
          ( W_{j, 1}, B_{j, 1} ), 
          ( W_{j, 2}, B_{j, 2} ), 
          \dots, 
          ( W_{j, L}, B_{j, L} ) 
        \big)
      \big) = 
      ( \theta_{j, 1}, \theta_{j, 2}, \dots, \theta_{ {j, \mathfrak{d}} } )
      ,
    \end{equation}
    let $ \phi_{ j, k } \in \ANNs $, $ k \in \{ 1, 2, \dots, L \} $, $ j \in \{ 1, 2 \} $, 
    satisfy for all $ j \in \{ 1, 2 \} $,
    $ k \in \{ 1, 2, \dots, L \} $ that
    \begin{equation}
      \phi_{ j, k } 
      =
      \big( 
        ( W_{j, 1}, B_{j, 1} ), 
        ( W_{j, 2}, B_{j, 2} ), 
        \dots, 
        ( W_{j, k}, B_{j, k} ) 
      \big)
      \in
      \left[
        \textstyle
          \bigtimes_{ i = 1 }^k
          \left(
            \R^{ l_i \times l_{ i - 1 } } \times \R^{ l_i }
          \right)
        \displaystyle
      \right]
      ,
    \end{equation}
    let 
    $ 
      D = [a,b]^{ l_0 }
    $,
    let $ \mathfrak{m}_{ j, k } \in [0,\infty) $, 
    $ j \in \{ 1, 2 \} $,
    $ k \in \{ 0, 1, \dots, L \} $,
    satisfy for all 
    $ j \in \{ 1, 2 \} $, 
    $ k \in \{ 0, 1, \dots, L \} $ that 
    \begin{equation}
      \mathfrak{m}_{ j, k } 
      = 
      \begin{cases}
        \max\{ 1, | a | , | b | \}
      &
        \colon
        k = 0
      \\ 
        \max\!\left\{ 
          1 
          , 
          \sup\nolimits_{ x \in D } 
          \,
          \infnorm{ 
            ( \functionANN{\rect}( \phi_{ j, k } ) )( x ) 
          }
        \right\} 
      &
        \colon 
        k > 0
        ,
      \end{cases}
    \end{equation}
    and let $ \mathfrak{e}_k \in [0,\infty) $, $ k \in \{ 0, 1, \dots, L \} $, 
    satisfy for all $ k \in \{ 0, 1, \dots, L \} $ that
    \begin{equation}
      \mathfrak{e}_k 
      =
      \begin{cases}
        0
      &
        \colon
        k = 0
      \\
        \sup_{ x \in D }
        \,
        \infnorm{
          ( \functionANN{\rect}( \phi_{ 1, k } ) )( x ) 
          -
          ( \functionANN{\rect}( \phi_{ 2, k } ) )( x ) 
        }
      &
        \colon
        k > 0
      \end{cases}
    \end{equation}
    (cf.\ \cref{def:TranslateStructuredIntoVectorizedDescription,def:relu1,def:ANNrealization,def:infnorm}).
    Note that \cref{result:row_sum_norm} demonstrates that
    \begin{equation}
    \label{eq:e0_estimate_DNN}
    \begin{split}
      \mathfrak{e}_1
    & 
      =
      \sup_{ x \in D }
      \,
      \infnorm{
        ( \functionANN{\rect}( \phi_{ 1, 1 } ) )( x ) 
        -
        ( \functionANN{\rect}( \phi_{ 2, 1 } ) )( x ) 
      }
    =
      \sup_{ x \in D }
      \,
      \infnorm{
        \left(
          W_{ 1, 1 } x + B_{ 1, 1 }
        \right)
        -
        \left(
          W_{ 2, 1 } x + B_{ 2, 1 }
        \right)
      }
    \\ &
    \leq
      \left[
        \sup_{ x \in D }
        \,
        \infnorm{
          (
            W_{ 1, 1 } - W_{ 2, 1 }
          )
          x
        }
      \right]
      +
      \infnorm{
        B_{ 1, 1 }
        -
        B_{ 2, 1 }
      }
    \\ & 
    \leq 
      \left[
        \sup_{ v \in \R^{ l_0 } \backslash \{ 0 \} }
        \left(
          \frac{
            \infnorm{
              (
                W_{ 1, 1 } - W_{ 2, 1 }
              )
              v
            }
          }{
            \infnorm{ v }
          }
        \right)
      \right]
      \left[ 
        \sup_{ x \in D }
        \,
        \infnorm{ x }
      \right]
      +
      \infnorm{
        B_{ 1, 1 }
        -
        B_{ 2, 1 }
      }
    \\ & 
    \leq 
      l_0
      \,
      \infnorm{ \theta_1 - \theta_2 }
      \max\{ | a | , | b | \}
      +
      \infnorm{
        B_{ 1, 1 }
        -
        B_{ 2, 1 }
      }
    \leq 
      l_0
      \,
      \infnorm{ \theta_1 - \theta_2 }
      \max\{ | a | , | b | \}
      +
      \infnorm{ \theta_1 - \theta_2 }
    \\ &
    =
      \infnorm{ \theta_1 - \theta_2 }
      \left(
        l_0
        \max\{ | a | , | b | \}
        +
        1
      \right)
    \leq 
      \mathfrak{m}_{ 1, 0 }
      \,
      \infnorm{ \theta_1 - \theta_2 }
      \left(
        l_0
        +
        1
      \right)
    % \leq 
    %   \mathfrak{m}_0
    %   \,
    %   \infnorm{ \theta_1 - \theta_2 }
    %   \left(
    %     \infnorm{l}
    %     +
    %     1
    %   \right)
      .
    \end{split}
    \end{equation}
    Moreover, observe that the triangle inequality 
    assures that 
    for all $ k \in \{ 1, 2, \dots, L \} \cap (1,\infty) $
    it holds that
    \begin{equation}
    \begin{split}
      \mathfrak{e}_k
    & 
      =
      \sup_{ x \in D }
      \,
      \infnorm{
        ( \functionANN{\rect}( \phi_{ 1, k } ) )( x ) 
        -
        ( \functionANN{\rect}( \phi_{ 2, k } ) )( x ) 
      }
    \\ &
    =
      \sup_{ x \in D }
      \,
      \asinfnorm{
        \left[
          W_{ 1, k }\Big(
            \Rect_{ l_{ k - 1 } }\big(
              ( \functionANN{\rect}( \phi_{ 1, k - 1 } ) )( x ) 
            \big)
          \Big)
          +
          B_{ 1, k }
        \right]
        -
        \left[
          W_{ 2, k }\Big(
            \Rect_{ l_{ k - 1 } }\big(
              ( \functionANN{\rect}( \phi_{ 2, k - 1 } ) )( x ) 
            \big)
          \Big)
          +
          B_{ 2, k }
        \right]
      }
    \\ &
    \leq
      \left[
        \sup_{ x \in D }
        \asinfnorm{
          W_{ 1, k }\Big(
            \Rect_{ l_{ k - 1 } }\big(
              ( \functionANN{\rect}( \phi_{ 1, k - 1 } ) )( x ) 
            \big)
          \Big)
          -
          W_{ 2, k }\Big(
            \Rect_{ l_{ k - 1 } }\big(
              ( \functionANN{\rect}( \phi_{ 2, k - 1 } ) )( x ) 
            \big)
          \Big)
        }
      \right]
      +
      \infnorm{ \theta_1 - \theta_2 }
      .
    \end{split}
    \end{equation}
    The triangle inequality hence implies that 
    for all 
    $ j \in \{ 1, 2 \} $, $ k \in \{ 1, 2, \dots, L \} \cap (1,\infty) $
    it holds that
    \begin{equation}
    \begin{split}
      \mathfrak{e}_k
    & 
    \leq
      \left[
        \sup_{ x \in D }
        \asinfnorm{
          \big(
            W_{ 1, k } - W_{ 2, k }
          \big)\big(
            \Rect_{ l_{ k - 1 } }\big(
              ( \functionANN{\rect}( \phi_{ j, k - 1 } ) )( x ) 
            \big)
          \big)
        }
      \right]
    \\
    &
    +
      \left[
        \sup_{ x \in D }
        \asinfnorm{
          W_{ 3 - j, k }\Big(
            \Rect_{ l_{ k - 1 } }\big(
              ( \functionANN{\rect}( \phi_{ 1, k - 1 } ) )( x ) 
            \big)
            -
            \Rect_{ l_{ k - 1 } }\big(
              ( \functionANN{\rect}( \phi_{ 2, k - 1 } ) )( x ) 
            \big)
          \Big)
        }
      \right]
      +
      \infnorm{ \theta_1 - \theta_2 }
    \\ & 
    \leq
      \left[ 
        \sup_{ v \in \R^{ l_{ k - 1 } } \backslash \{ 0 \} }
        \left(
        \frac{
          \infnorm{ ( W_{ 1, k } - W_{ 2, k } ) v }
        }{
          \infnorm{ v }
        }
        \right)
      \right]
      \left[
        \sup_{ x \in D }
        \asinfnorm{
          \Rect_{ l_{ k - 1 } }\big(
            ( \functionANN{\rect}( \phi_{ j, k - 1 } ) )( x ) 
          \big)
        }
      \right]
      +
      \infnorm{ \theta_1 - \theta_2 }
    \\
    &
    +
      \left[ 
        \sup_{ v \in \R^{ l_{ k - 1 } } \backslash \{ 0 \} }
        \left(
        \frac{
          \infnorm{ W_{ 3 - j, k } v }
        }{
          \infnorm{ v }
        }
        \right)
      \right]
      \left[
        \sup_{ x \in D }
        \asinfnorm{
            \Rect_{ l_{ k - 1 } }\big(
              ( \functionANN{\rect}( \phi_{ 1, k - 1 } ) )( x ) 
            \big)
            -
            \Rect_{ l_{ k - 1 } }\big(
              ( \functionANN{\rect}( \phi_{ 2, k - 1 } ) )( x ) 
            \big)
        }
      \right]
      .
    \end{split}
    \end{equation}
    \cref{result:row_sum_norm} and \cref{result:ReLU_contraction} therefore show that 
    for all $ j \in \{ 1, 2 \} $, $ k \in \{ 1, 2, \dots, L \} \cap (1,\infty) $
    it holds that
    \begin{equation}
    \begin{split}
      \mathfrak{e}_k
    & 
    \leq
      l_{ k - 1 }
      \, 
      \infnorm{ 
        \theta_1 - \theta_2
      }
      \left[
        \sup_{ x \in D }
        \asinfnorm{
          \Rect_{ l_{ k - 1 } }\big(
            ( \functionANN{\rect}( \phi_{ j, k - 1 } ) )( x ) 
          \big)
        }
      \right]
      +
      \infnorm{ \theta_1 - \theta_2 }
    \\
    &
    +
      l_{ k - 1 }
      \,
      \infnorm{ \theta_{ 3 - j } }
      \left[
        \sup_{ x \in D }
        \asinfnorm{
            \Rect_{ l_{ k - 1 } }\big(
              ( \functionANN{\rect}( \phi_{ 1, k - 1 } ) )( x ) 
            \big)
            -
            \Rect_{ l_{ k - 1 } }\big(
              ( \functionANN{\rect}( \phi_{ 2, k - 1 } ) )( x ) 
            \big)
        }
      \right]
    \\ & \leq
      l_{ k - 1 }
      \, 
      \infnorm{ 
        \theta_1 - \theta_2
      }
      \left[
        \sup_{ x \in D }
        \asinfnorm{
          ( \functionANN{\rect}( \phi_{ j, k - 1 } ) )( x ) 
        }
      \right]
      +
      \infnorm{ \theta_1 - \theta_2 }
    \\
    &
    +
      l_{ k - 1 }
      \,
      \infnorm{ \theta_{ 3 - j } }
      \left[
        \sup_{ x \in D }
        \asinfnorm{
          ( \functionANN{\rect}( \phi_{ 1, k - 1 } ) )( x ) 
          -
          ( \functionANN{\rect}( \phi_{ 2, k - 1 } ) )( x ) 
        }
      \right]
    \\ &
    \leq 
      \infnorm{ \theta_1 - \theta_2 }
      \left(
        l_{ k - 1 } \, \mathfrak{m}_{ j, k - 1 } + 1 
      \right)
      + 
      l_{ k - 1 } 
      \, \infnorm{ \theta_{ 3 - j } }
      \, \mathfrak{e}_{ k - 1 }
      .
    \end{split}
    \end{equation}
    Hence, we obtain that
    for all $ j \in \{ 1, 2 \} $, $ k \in \{ 1, 2, \dots, L \} \cap (1,\infty) $
    it holds that
    \begin{equation}
    \begin{split}
      \mathfrak{e}_k
    & 
    \leq 
      \mathfrak{m}_{ j, k - 1 } 
      \,
      \infnorm{ \theta_1 - \theta_2 }
      \left(
        l_{ k - 1 } + 1 
      \right)
      + 
      l_{ k - 1 } 
      \, \infnorm{ \theta_{ 3 - j } }
      \, \mathfrak{e}_{ k - 1 }
      .
    \end{split}
    \end{equation}
    Combining this with \eqref{eq:e0_estimate_DNN}, 
    the fact that $ \mathfrak{e}_0 = 0 $, 
    and the fact that $ \mathfrak{m}_{ 1, 0 } = \mathfrak{m}_{ 2, 0 } $
    demonstrates that 
    for all $ j \in \{ 1, 2 \} $, 
    $ k \in \{ 1, 2, \dots, L \} $
    it holds that
    \begin{equation}
    \begin{split}
      \mathfrak{e}_k
    & 
    \leq 
      \mathfrak{m}_{ j, k - 1 } 
      \left(
        l_{ k - 1 } + 1 
      \right)
      \infnorm{ \theta_1 - \theta_2 }
      + 
      l_{ k - 1 } 
      \, \infnorm{ \theta_{ 3 - j } }
      \, \mathfrak{e}_{ k - 1 }
      .
    \end{split}
    \end{equation}
    This shows that for all 
    $
      j = ( j_n )_{ n \in \{ 0, 1, \dots, L \} } \colon \{ 0, 1, \dots, L \} \to \{ 1, 2 \} 
    $
    and all
    $ k \in \{ 1, 2, \dots, L \} $ 
    it holds that
    \begin{equation}
    \begin{split}
      \mathfrak{e}_k
    & 
    \leq 
      \mathfrak{m}_{ j_{ k - 1 }, k - 1 } 
      \left(
        l_{ k - 1 } + 1 
      \right)
      \infnorm{ \theta_1 - \theta_2 }
      + 
      l_{ k - 1 } 
      \, \infnorm{ \theta_{ 3 - j_{ k - 1 } } }
      \, \mathfrak{e}_{ k - 1 }
      .
    \end{split}
    \end{equation}
    Therefore, we obtain that for all 
    $
      j = ( j_n )_{ n \in \{ 0, 1, \dots, L \} } \colon \{ 0, 1, \dots, L \} \to \{ 1, 2 \} 
    $
    and all
    $ k \in \{ 1, 2, \dots, L \} $ 
    it holds that
    \begin{equation}
    \label{eq:error_difference_estimate_DNN}
    \begin{split}
      \mathfrak{e}_k
    & \leq
      \sum_{ n = 0 }^{ k - 1 }
      \left(
        \left[
          \prod_{ m = n + 1 }^{ k - 1 }
          \big(
            l_m
            \, \infnorm{ \theta_{ 3 - j_m } }
          \big)
        \right]
        \mathfrak{m}_{ j_n, n }
        \left(
          l_n + 1 
        \right)
        \infnorm{ \theta_1 - \theta_2 }
      \right)
    \\ & 
    = 
      \infnorm{ \theta_1 - \theta_2 }
      \left[ 
        \sum_{ n = 0 }^{ k - 1 }
        \left(
          \left[
            \prod_{ m = n + 1 }^{ k - 1 }
            \big(
              l_m
              \, \infnorm{ \theta_{ 3 - j_m } }
            \big)
          \right]
          \mathfrak{m}_{ j_n, n }
          \left(
            l_n + 1 
          \right)
        \right)
      \right]
      .
    \end{split}
    \end{equation}
    Next observe that \cref{result:row_sum_norm} ensures that
    for all $ j \in \{ 1, 2 \} $, $ k \in \{ 1, 2, \dots, L \} \cap (1, \infty) $, $ x \in D $
    it holds that
    \begin{equation}
    \begin{split}
      \infnorm{ 
        ( \functionANN{\rect}( \phi_{ j, k } ) )( x ) 
      }
    &
      =
      \asinfnorm{
        W_{ j, k }
        \Big(
          \Rect_{ l_{ k - 1 } }\big(
            ( \functionANN{\rect}( \phi_{ j, k - 1 } ) )( x ) 
          \big)
        \Big)
        +
        B_{ j, k }
      }
    \\ &
      \leq 
      \left[
        \sup_{ 
          v \in \R^{ l_{ k - 1 } } \backslash \{ 0 \}
        }
        \frac{ 
          \infnorm{
            W_{ j, k }
            v
          }
        }{
          \infnorm{ v }
        }
      \right]
      \asinfnorm{
        \Rect_{ l_{ k - 1 } }\big(
          ( \functionANN{\rect}( \phi_{ j, k - 1 } ) )( x ) 
        \big)
      }
      +
      \infnorm{ B_{ j, k } }
    \\ & \leq
      l_{ k - 1 }
      \,
      \infnorm{ \theta_j }
      \asinfnorm{
        \Rect_{ l_{ k - 1 } }\big(
          ( \functionANN{\rect}( \phi_{ j, k - 1 } ) )( x ) 
        \big)
      }
      +
      \infnorm{ \theta_j }
    \\ & \leq
      l_{ k - 1 }
      \,
      \infnorm{ \theta_j }
      \asinfnorm{
        ( \functionANN{\rect}( \phi_{ j, k - 1 } ) )( x ) 
      }
      +
      \infnorm{ \theta_j }
    \\ & =
      \left(
        l_{ k - 1 }
        \asinfnorm{
          ( \functionANN{\rect}( \phi_{ j, k - 1 } ) )( x ) 
        }
        +
        1
      \right)
      \infnorm{ \theta_j }
    \\ & 
    \leq
      \left(
        l_{ k - 1 }
        \mathfrak{m}_{ j, k - 1 }
        +
        1
      \right)
      \infnorm{ \theta_j }
    \leq
      \mathfrak{m}_{ j, k - 1 }
      \left(
        l_{ k - 1 }
        +
        1
      \right)
      \infnorm{ \theta_j }
      .
    \end{split}
    \end{equation}
    Hence, we obtain for all 
    $ j \in \{ 1, 2 \} $,
    $ k \in \{ 1, 2, \dots, L \} \cap (1,\infty) $ that
    \begin{equation}
    \label{eq:mk_recursion}
      \mathfrak{m}_{ j, k }
    \leq
      \max\{ 1,
      \mathfrak{m}_{ j, k - 1 }
      \left(
        l_{ k - 1 }
        +
        1
      \right)
      \infnorm{ \theta_j }
      \}
    %   \leq
    %   \mathfrak{m}_{ k - 1 }
    %   \left(
    %     \infnorm{ l }
    %     +
    %     1
    %   \right)
    %   \left[
    %     \max\nolimits_{ j \in \{ 1, 2 \} }
    %     \infnorm{ \theta_j }
    %   \right]
      .
    \end{equation}
    Furthermore, note that 
    \cref{result:row_sum_norm} assures that
    for all $ j \in \{ 1, 2 \} $, $ x \in D $
    it holds that
    \begin{equation}
    \begin{split}
      \infnorm{ 
        ( \functionANN{\rect}( \phi_{ j, 1 } ) )( x ) 
      }
    &
      =
      \asinfnorm{
        W_{ j, 1 } x
        +
        B_{ j, 1 }
      }
    \\ &
      \leq 
      \left[
        \sup_{ 
          v \in \R^{ l_0 } \backslash \{ 0 \}
        }
        \frac{ 
          \infnorm{
            W_{ j, 1 }
            v
          }
        }{
          \infnorm{ v }
        }
      \right]
      \infnorm{
        x
      }
      +
      \infnorm{ B_{ j, 1 } }
    \\ & \leq
      l_0
      \,
      \infnorm{ \theta_j }
      \,
      \infnorm{
        x
      }
      +
      \infnorm{ \theta_j }
    \leq
      l_0
      \,
      \infnorm{ \theta_j }
      \max\{ | a | , | b | \}
      +
      \infnorm{ \theta_j }
    \\ &
    =
      \left(
        l_0
        \max\{ | a | , | b | \}
        +
        1
      \right)
      \infnorm{ \theta_j }
      \leq 
      \mathfrak{m}_{ 1, 0 }
      \left(
        l_0
        +
        1
      \right)
      \infnorm{ \theta_j }
      .
    \end{split}
    \end{equation}
    Therefore, we obtain that
    for all $ j \in \{ 1, 2 \} $ 
    it holds that
    \begin{equation}
      \mathfrak{m}_{ j, 1 } 
      \leq
      \max\{1,
      \mathfrak{m}_{ j, 0 }
      \left(
        l_0
        +
        1
      \right)
      \infnorm{ \theta_j }
      \}
    %   \leq
    %   \mathfrak{m}_0
    %   \left(
    %     \infnorm{ l }
    %     +
    %     1
    %   \right)
    %   \left[
    %     \max\nolimits_{ j \in \{ 1, 2 \} }
    %     \infnorm{ \theta_j }
    %   \right]
      .
    \end{equation}
    Combining this with \eqref{eq:mk_recursion} demonstrates that
    for all $ j \in \{ 1, 2 \} $, 
    $ k \in \{ 1, 2, \dots, L \} $ it holds that
    \begin{equation}
      \mathfrak{m}_{ j, k }
    \leq
      \max\{1,
      \mathfrak{m}_{ j, k - 1 }
      \left(
        l_{ k - 1 }
        +
        1
      \right)
      \infnorm{ \theta_j }
      \}
      .
    \end{equation}
    Hence, we obtain that
    for all $ j \in \{ 1, 2 \} $, $ k \in \{ 0, 1, \dots, L \} $ it holds that
    \begin{equation}
      \mathfrak{m}_{ j, k }
    \leq
      \mathfrak{m}_{ j, 0 }
      \left[
        \prod_{ n = 0 }^{ k - 1 }
        \left(
          l_n
          +
          1
        \right)
      \right]
      \bigl[\max\{1, \infnorm{ \theta_j }\}\bigr]^k
      .
    \end{equation}
    Combining this with \eqref{eq:error_difference_estimate_DNN} proves that for all
    $
      j = ( j_n )_{ n \in \{ 0, 1, \dots, L \} } \colon \{ 0, 1, \dots, L \} \to \{ 1, 2 \} 
    $
    and all
    $ k \in \{ 1, 2, \dots, L \} $ 
    it holds that
    \begin{equation}
    \begin{split}
      \mathfrak{e}_k
    & 
    \leq
      \infnorm{ \theta_1 - \theta_2 }
      \left[ 
        \sum_{ n = 0 }^{ k - 1 }
        \left(
          \left[
            \prod_{ m = n + 1 }^{ k - 1 }
            \big(
              l_m
              \, \infnorm{ \theta_{ 3 - j_m } }
            \big)
          \right]
          \left(
            \mathfrak{m}_{ j_n, 0 }
            \left[
            \prod_{ v = 0 }^{ n - 1 }
            ( l_v + 1 )
            \right]
            \max\{1,\infnorm{ \theta_{ j_n } }^n\}
            \left(
              l_n + 1 
            \right)
          \right)
        \right)
      \right]
    \\ &
    =
      \mathfrak{m}_{ 1, 0 }
      \,
      \infnorm{ \theta_1 - \theta_2 }
      \left[ 
        \sum_{ n = 0 }^{ k - 1 }
        \left(
          \left[
            \prod_{ m = n + 1 }^{ k - 1 }
            \big(
              l_m
              \, \infnorm{ \theta_{ 3 - j_m } }
            \big)
          \right]
          \left(
            \left[
            \prod_{ v = 0 }^n
            ( l_v + 1 )
            \right]
            \max\{1,\infnorm{ \theta_{ j_n } }^n\}
          \right)
        \right)
      \right]
    \\ &
    \leq
      \mathfrak{m}_{ 1, 0 }
      \,
      \infnorm{ \theta_1 - \theta_2 }
      \left[ 
        \sum_{ n = 0 }^{ k - 1 }
        \left(
          \left[
            \prod_{ m = n + 1 }^{ k - 1 }
            \infnorm{ \theta_{ 3 - j_m } }
          \right]
          \left[
            \prod_{ v = 0 }^{ k - 1 }
            ( l_v + 1 )
          \right]
          \max\{1,\infnorm{ \theta_{ j_n } }^n\}
        \right)
      \right]
    \\ &
    =
      \mathfrak{m}_{ 1, 0 }
      \,
      \infnorm{ \theta_1 - \theta_2 }
      \left[
        \prod_{ n = 0 }^{ k - 1 }
        ( l_n + 1 )
      \right]
      \left[ 
        \sum_{ n = 0 }^{ k - 1 }
        \left(
          \left[
            \prod_{ m = n + 1 }^{ k - 1 }
            \infnorm{ \theta_{ 3 - j_m } }
          \right]
          \max\{1,\infnorm{ \theta_{ j_n } }^n\}
        \right)
      \right]
      .
    \end{split}
    \end{equation}
    Therefore, we obtain that for all 
    $
      j \in \{ 1, 2 \}
    $, 
    $ k \in \{ 1, 2, \dots, L \} $ 
    it holds that
    \begin{equation}
    \begin{split}
      \mathfrak{e}_k
    & 
    \leq
      \mathfrak{m}_{ 1, 0 }
      \,
      \infnorm{ \theta_1 - \theta_2 }
      \left[
        \prod_{ n = 0 }^{ k - 1 }
        ( l_n + 1 )
      \right]
      \left[ 
        \sum_{ n = 0 }^{ k - 1 }
        \left(
          \left[
            \prod_{ m = n + 1 }^{ k - 1 }
            \infnorm{ \theta_{ 3 - j } }
          \right]
          \max\{1,\infnorm{ \theta_{ j } }^n\}
        \right)
      \right]
    \\
    & 
    =
      \mathfrak{m}_{ 1, 0 }
      \,
      \infnorm{ \theta_1 - \theta_2 }
      \left[
        \prod_{ n = 0 }^{ k - 1 }
        ( l_n + 1 )
      \right]
      \left[ 
        \sum_{ n = 0 }^{ k - 1 }
        \left(
        \max\{1,\infnorm{ \theta_{ j } }^n\}
          \,
          \infnorm{ \theta_{ 3 - j } }^{ k - 1 - n }
        \right)
      \right]
    \\
    & 
    \leq
      k \,
      \mathfrak{m}_{ 1, 0 }
      \,
      \infnorm{ \theta_1 - \theta_2 }
      \left( 
        \max\{ 1, \infnorm{ \theta_1 }, \infnorm{ \theta_2 } \}
      \right)^{ k - 1 }
          \left[
            \prod_{ m = 0 }^{ k - 1 }
            \big(
              l_m + 1 
            \big)
          \right]
      .
    \end{split}
    \end{equation}
    The proof of \cref{thm:RealNNLipsch} is thus completed.
    \end{proof}

\begin{cor}
  \label{cor:ClippedRealNNLipsch}
  Let 
    $a\in\R$,
    $b\in[a,\infty)$,
	$u\in[-\infty,\infty)$,
	$v\in(u,\infty]$,
    $d,L\in\N$,
    $l=(l_0,l_1,\dots,l_L)\in\N^{L+1}$
  satisfy
  \begin{equation}
    d\geq \sum_{k=1}^Ll_k(l_{k-1}+1).
  \end{equation}
  Then it holds for all
    $\theta,\vartheta\in\R^d$
  that
  \begin{equation}
    \sup_{x\in[a,b]^{l_0}}\infnorm{\ClippedRealV{\theta}{l}uv(x)-\ClippedRealV\vartheta{l}uv(x)}
    \leq
    L 
    \max\{ 1, \abs{a}, \abs{b} \}
    \, 
    ( \infnorm{ l } + 1 )^L 
    \,
    ( \max\{ 1, \infnorm\theta,\infnorm\vartheta\} )^{ L - 1 }
    \,
    \infnorm{ \theta - \vartheta }
  \end{equation}
  (cf. \cref{def:rectclippedFFANN,def:infnorm}).
\end{cor}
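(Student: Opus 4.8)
The plan is to reduce \cref{cor:ClippedRealNNLipsch} to \cref{thm:RealNNLipsch}. The key observation is that a rectified clipped DNN differs from the corresponding rectified unclipped DNN only through a final post-composition with the multidimensional clipping function $\Clip uv{l_L}$, and that $\Clip uv{l_L}$ is a contraction with respect to $\infnorm{\cdot}$ by \cref{result:Clip_contraction}.

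First I would observe that \eqref{eq:rectclippedFFANN}, \eqref{eq:FFNN}, \eqref{eq:relu}, and the elementary identity $\Clip{-\infty}\infty{l_L}=\id_{\R^{l_L}}$ together imply that for every $\theta\in\R^d$ it holds that $\ClippedRealV{\theta}{l}uv=\Clip uv{l_L}\circ\UnclippedRealV{\theta}{l}$ (this decomposition has, in effect, already been used in the proof of \cref{lem:embednet_vectorized}). Consequently, for all $\theta,\vartheta\in\R^d$ and all $x\in[a,b]^{l_0}$ one has
\begin{equation*}
  \ClippedRealV{\theta}{l}uv(x)-\ClippedRealV{\vartheta}{l}uv(x)
  =
  \Clip uv{l_L}\bigl(\UnclippedRealV{\theta}{l}(x)\bigr)
  -
  \Clip uv{l_L}\bigl(\UnclippedRealV{\vartheta}{l}(x)\bigr).
\end{equation*}

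Next I would apply \cref{result:Clip_contraction} to deduce that $\infnorm{\ClippedRealV{\theta}{l}uv(x)-\ClippedRealV{\vartheta}{l}uv(x)}\leq\infnorm{\UnclippedRealV{\theta}{l}(x)-\UnclippedRealV{\vartheta}{l}(x)}$ for all $\theta,\vartheta\in\R^d$ and all $x\in[a,b]^{l_0}$, take the supremum over $x\in[a,b]^{l_0}$, and then invoke the final inequality in the chain of estimates of \cref{thm:RealNNLipsch}; this yields precisely the asserted bound. I do not expect any real obstacle here: the only point requiring a modicum of care is the clean derivation of the decomposition $\ClippedRealV{\theta}{l}uv=\Clip uv{l_L}\circ\UnclippedRealV{\theta}{l}$ from the definitions, since all of the quantitative work has already been carried out in \cref{thm:RealNNLipsch}.
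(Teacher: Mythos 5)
Your proposal is correct and follows exactly the same route as the paper's own proof: decompose $\ClippedRealV{\theta}{l}uv=\Clip uv{l_L}\circ\UnclippedRealV{\theta}{l}$, apply the $\infnorm\cdot$-contraction property of $\Clip uv{l_L}$ from \cref{result:Clip_contraction}, then invoke the final bound of \cref{thm:RealNNLipsch}.
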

\begin{proof}[Proof of Corollary~\ref{cor:ClippedRealNNLipsch}]
  Observe that
    Theorem~\ref{thm:RealNNLipsch}
    and \cref{lem:clipcontr}
  demonstrate that for all
    $\theta,\vartheta\in\R^d$
  it holds that
  \begin{equation}
  \begin{split}
    \sup_{x\in[a,b]^{l_0}}\infnorm{\ClippedRealV{\theta}{l} uv(x)-\ClippedRealV\vartheta{l} uv(x)}
    &=
    \sup_{x\in[a,b]^{l_0}}\asinfnorm{\Clip{u}{v}{l_L}\bigl(\smash{(\UnclippedRealV\theta{l})(x)}\bigr)-\Clip uv{l_L}\bigl(\smash{(\UnclippedRealV\vartheta{l})(x)}\bigr)}
    \\&\leq
    \sup_{x\in[a,b]^{l_0}}\infnorm{(\UnclippedRealV\theta{l})(x)-(\UnclippedRealV\vartheta{l})(x)}
    \\&\leq
    L 
    \max\{ 1, \abs{a}, \abs{b} \}
    \, 
    ( \infnorm{ l } + 1 )^L 
    \,
    ( \max\{ 1, \infnorm\theta,\infnorm\vartheta\} )^{ L - 1 }
    \,
    \infnorm{ \theta - \vartheta }
  \end{split}
  \end{equation}
  (cf. \cref{def:rectclippedFFANN,def:infnorm,def:clip}).
  This completes the proof of Corollary~\ref{cor:ClippedRealNNLipsch}.
\end{proof}

\section{Separate analyses of the error sources}
\label{sec:error_sources}

In this section we study separately the approximation error 
(see \cref{subsec:approximation_error} below), 
the generalization error (see \cref{subsec:generalization_error} below), 
and the optimization error (see \cref{subsec:optimization_error} below). 

In particular, the main result in \cref{subsec:approximation_error}, 
\cref{prop:neural_net_approximation} below, establishes an upper bound 
for the error in the approximation of a Lipschitz continuous function by DNNs. 
This approximation result is obtained by combining the essentially 
well-known approximation result in \cref{lem:lipschitz_extension} 
with the DNN calculus in \cref{subsec:structured_description} above 
(cf., e.g., Grohs et al.~\cite{Grohs2019ANNCalculus,GrohsJentzenSalimova2019}). 
Some of the results in Section~\ref{subsec:approximation_error} 
are partially based 
on material in publications from the scientific literature. 
In particular, 
the elementary result in \cref{lem:max_net} is basically 
well-known in the scientific literature. 
For further approximation results for DNNs we refer, e.g., to  \cite{bach2017breaking,
	barron1993universal,
	barron1994approximation,
	blum1991approximation,
	BolcskeiGrohsKutyniokPetersen2019OptimalApproximation,
	BurgerNeubauer2001,
	candes1998ridgelets,
	chen1995approximation,
	ChuiMhaskar1994,
	Cybenko1989,
	DeVore1997,
	wang2018exponential,
	ElbraechterSchwab2018,
	eldan2016power, 
	Ellacott1994,
	funahashi1989approximate,
	gribonval2019approximation,
	GrohsWurstemberger2018,
	Grohs2019ANNCalculus,
	grohs2019deep,
	guhring2019error,
	hartman1990layered,
	hornik1991approximation,
	hornik1993some,
	hornik1989multilayer,
	hornik1990universal,
	HutzenthalerJentzenKruse2019,
	JentzenSalimovaWelti2018,
	KutyniokPetersen2019,
	leshno1993multilayer,
	mhaskar1996neural,
	MhaskarMicchelli1995,
	MhaskarPoggio2016,
	nguyen1999approximation,
	park1991universal,
	perekrestenko2018universal,
	petersen2018topological,
	petersen2018equivalence,
	petersen2018optimal,
	Pinkus1999,
	ReisingerZhang2019,
	schmitt2000lower,
	SchwabZech2019,	
	ShahamCloningerCoifman2018,
	shen2019deep,
	shen2019nonlinear,
	voigtlaender2019approximation,
	yarotsky2017error,
	yarotsky2018universal} 
 and the references mentioned therein. 

In \cref{lem:cov5,lem:cov6} in \cref{subsec:generalization_error} below 
we study the generalization error. 
Our analysis in \cref{subsec:generalization_error} is in parts inspired 
by Berner et al.~\cite{BernerGrohsJentzen2018} 
and Cucker \& Smale~\cite{CuckerSmale2002FoundationsLearning}. 
\Cref{prop:Hoeffding2} in \cref{subsec:hoeffding} is known as Hoeffding's inequality 
in the scientific literature and \cref{prop:Hoeffding2} is, e.g., 
proved as Theorem~2 in Hoeffding~\cite{Hoeffding}.
The proof of \cref{prop:covnum} can be found, e.g., in
Cucker \& Smale~\cite[Proposition~5]{CuckerSmale2002FoundationsLearning}
(cf.\ also Berner et al.~\cite[Proposition~4.3]{BernerGrohsJentzen2018}).
For further results on the generalization error we refer, e.g., to \cite{Bartlett2005,
	GyoerfiKohlerKrzyzakWalk2002,
	Massart2007,
	shalev2014understanding,
	VanDeGeer2000} and the references mentioned therein. 

In the two elementary results in \cref{subsec:optimization_error}, 
\cref{lem:estimate_opt_error0,lem:estimate_opt_error1}, we study the 
optimization error of the minimum Monte Carlo algorithm. 
A related result can be found, e.g., in \cite[Lemma 3.5]{kolmogorov2018solving}. For further results on the optimization error we refer, e.g., to \cite{bach2013non,bercu2011generic,chau2019stochastic,DereichMuellerGronbach2019,FehrmanGessJentzen2019convergence,JentzenKuckuckNeufeldWurstemberger2018,JentzenWurstemberger2018LowerBound,Karimietal2019,Leietal2019} 
and the references mentioned therein. 

%(\cite{BernerGrohsJentzen2018,ElbraechterSchwab2018,GrohsWurstemberger2018,Grohs2019ANNCalculus,HanLong2018convergence,HutzenthalerJentzenKruse2019,JentzenSalimovaWelti2018,KutyniokPetersen2019,ReisingerZhang2019,Sirignano2018dgm}

\subsection{Analysis of the approximation error}
\label{subsec:approximation_error}

\subsubsection{Approximations for Lipschitz continuous functions}

\begin{lemma} \label{lem:lipschitz_extension}
		Let 
		$(E,\delta)$ 
    be a metric space, 
    let
		$\cM \subseteq E$
		satisfy
		$\cM\neq\emptyset$, 
		let 
		$L\in [0,\infty)$, 
		let 
		$f\colon E \to \R$ 
		satisfy for all 
		$x\in E$,
		$y\in\cM$
		that 
		$|f(x)-f(y)|\leq L\delta(x,y)$, 
		and let 
		$F\colon E \to \R\cup\{\infty\}$ 
		satisfy for all 
		$x\in E$ 
		that 	
		\begin{equation} \label{lipschitz_extension:definition}
		F(x) 
		= 
		\sup_{y\in\cM} \left[
		f(y) - L \delta(x,y)
		\right]\!.  
		\end{equation} 	
		Then 
		\begin{enumerate}[label=(\roman{*})]
			\item \label{lipschitz_extension:item1}
			it holds for all 
			$x\in E$ 
			that 
			$F(x) \leq f(x)$, 
			\item \label{lipschitz_extension:item2}
			it holds for all 
			$x\in\cM$ 
			that 
			$F(x) = f(x)$, 
			\item \label{lipschitz_extension:item3}
			it holds for all 
			$x,y\in E$ 
			that 
			$|F(x)-F(y)|\leq L \delta(x,y)$,    
			and 
			\item  \label{lipschitz_extension:item4}	
			it holds for all 
			$x\in E$ 
			that 
			\begin{equation} 
			|F(x)-f(x)| 
			\leq 
			2L\! \left[ \inf_{y\in\cM} \delta(x,y) \right]\!. 
			\end{equation} 	
		\end{enumerate}
	\end{lemma}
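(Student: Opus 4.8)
The plan is to verify assertions \cref{lipschitz_extension:item1,lipschitz_extension:item2,lipschitz_extension:item3,lipschitz_extension:item4} one after another, each being a direct consequence of the definition \eqref{lipschitz_extension:definition}, the hypothesis $|f(x)-f(y)|\le L\delta(x,y)$ for $x\in E$, $y\in\cM$, and the triangle inequality for $\delta$. First I would prove \cref{lipschitz_extension:item1}: for every $x\in E$ and every $y\in\cM$ the assumed estimate yields $f(y)-L\delta(x,y)\le f(x)$, and taking the supremum over $y\in\cM$ gives $F(x)\le f(x)$. Since $\cM\neq\emptyset$ this also shows that the supremum in \eqref{lipschitz_extension:definition} is over a nonempty set that is bounded above by $f(x)<\infty$, so $F(x)\in\R$ for all $x\in E$ and in particular $F$ never takes the value $\infty$.

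For \cref{lipschitz_extension:item2} I would observe that if $x\in\cM$ then $y=x$ is admissible in the supremum in \eqref{lipschitz_extension:definition} and contributes $f(x)-L\delta(x,x)=f(x)$, whence $F(x)\ge f(x)$; combining this with \cref{lipschitz_extension:item1} gives $F(x)=f(x)$. For \cref{lipschitz_extension:item3} I would fix $x,y\in E$ and $z\in\cM$ and use $\delta(x,z)\ge\delta(y,z)-\delta(x,y)$ to obtain $f(z)-L\delta(x,z)\le[f(z)-L\delta(y,z)]+L\delta(x,y)\le F(y)+L\delta(x,y)$; taking the supremum over $z\in\cM$ yields $F(x)\le F(y)+L\delta(x,y)$, and interchanging the roles of $x$ and $y$ gives $|F(x)-F(y)|\le L\delta(x,y)$.

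Finally, for \cref{lipschitz_extension:item4} I would combine the previous three items. By \cref{lipschitz_extension:item1} it suffices to bound $f(x)-F(x)$ from above. For an arbitrary $y\in\cM$, \cref{lipschitz_extension:item3} together with \cref{lipschitz_extension:item2} gives $F(x)\ge F(y)-L\delta(x,y)=f(y)-L\delta(x,y)$, while the hypothesis on $f$ gives $f(x)\le f(y)+L\delta(x,y)$, so that $f(x)-F(x)\le 2L\delta(x,y)$; taking the infimum over $y\in\cM$ completes the proof. I do not expect any genuine obstacle: the argument is entirely elementary, and the only points requiring a little care are confirming that $F$ stays real-valued (which follows once \cref{lipschitz_extension:item1} is established) and using the Lipschitz hypothesis only in the precise form in which it is stated, namely with one argument ranging over $\cM$.
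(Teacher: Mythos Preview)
Your proposal is correct and follows essentially the same approach as the paper: both verify the four items in order using the Lipschitz hypothesis, the definition of $F$, and the triangle inequality, with the same care taken to note that $F$ is real-valued before manipulating $F(x)-F(y)$. The only cosmetic difference is in \cref{lipschitz_extension:item4}, where the paper writes $|F(x)-f(x)|=\inf_{y\in\cM}|F(x)-F(y)+f(y)-f(x)|$ and splits via the triangle inequality, whereas you first reduce to bounding $f(x)-F(x)$ using \cref{lipschitz_extension:item1}; the underlying estimates are identical.
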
 
  \begin{proof}[Proof of \cref{lem:lipschitz_extension}]
		First, observe that the hypothesis that for  all 
		$x\in E$,
		$y\in\cM$
		it holds that 
		$|f(x)-f(y)|\leq L \delta(x,y)$ 
		ensures that for all 
		$x\in E$, 
		$y\in\cM$ 
		it holds that 
		\begin{equation} 
		f(x) \geq f(y) - L \delta(x,y). 	\end{equation} 
		Hence, we obtain that for all 
		$x\in E$ 
		it holds that 
		\begin{equation} 
		f(x) \geq \sup_{y\in\cM} \left[ f(y) - L \delta(x,y) \right] 
		= 
		F(x). 
		\end{equation} 
		This establishes item~\ref{lipschitz_extension:item1}. 
		Next observe that \eqref{lipschitz_extension:definition} implies that for all 
		$x\in\cM$ 
		it holds that 
		\begin{equation} 
		F(x) \geq f(x) - L \delta(x,x) = f(x). 
		\end{equation} 
		Combining this with item~\ref{lipschitz_extension:item1} establishes item~\ref{lipschitz_extension:item2}. 
    In the next step we note that 
      \eqref{lipschitz_extension:definition}
      and the fact that for all $x\in E$ it holds that
        $F(x)\leq f(x)<\infty$
    show that
    for all 
		$x,y\in E$
		it holds that 
		\begin{equation} 
		\begin{split}
		F(x)-F(y) 
		& = 
		\biggl[
		\sup_{v\in\cM} ( 
		f(v) - L\delta(x,v)
		)
		\biggr]
		- 
		\biggl[
		\sup_{w\in\cM} 
		(
		f(w) - L\delta(y,w)
		)
		\biggr]
		\\
		& = 
		\sup_{v\in\cM} 
		\left[ 
		f(v) - L \delta(x,v) 
		- 
		\sup_{w\in\cM} 
		( f(w) - L \delta(y,w) )
		\right] 
		\\
		& \leq 
		\sup_{v\in\cM} 
		\bigl[
		f(v) - L \delta(x,v) 
		- ( f(v) - L\delta(y,v) )
		\bigr] 
		\\ & = 
		L \biggl[
		\sup_{v\in\cM} 
		(\delta(y,v) - \delta(x,v) )
		\biggr]
    \\&\leq 
    L \biggl[
    \sup_{v\in\cM} 
    (\delta(y,x) +\delta(x,v) - \delta(x,v) )
    \biggr]
    =
    L \delta(x,y)
    . 
		\end{split}
		\end{equation} 
		Combining this with the fact that for all 
		$x,y\in E$ 
		it holds that 
		$\delta(x,y)=\delta(y,x)$
		establishes item~\ref{lipschitz_extension:item3}. 
		Observe that
		item~\ref{lipschitz_extension:item2}, 
		the triangle inequality, 
		item~\ref{lipschitz_extension:item3}, 
		and the hypothesis that for all 
		$x\in E$,
		$y\in\cM$
		it holds that 
		$|f(x)-f(y)|\leq L\delta(x,y)$ 
		ensure that for all 
		$x\in E$ 
		it holds that 
		\begin{equation} 
		\begin{split}
		|F(x)-f(x)| 
		% & 
		% = 
		% \inf_{z\in\cM} 
		% |F(x)-F(z)+F(z)-f(z)+f(z)-f(x)| 
		% \\
		& = 
		\inf_{y\in\cM} 
    |F(x)-F(y)+f(y)-f(x)| 
    \\ & \leq 
		\inf_{y\in\cM} 
		\left( |F(x)-F(y)| + |f(y)-f(x)| \right) 
		\\
		& \leq 
		\inf_{y\in\cM} 
		(2L \delta(x,y)) 
		= 
    2L\biggl[\inf_{y\in\cM} \delta(x,y)\biggr]
    .
		\end{split}
		\end{equation} 
		This establishes item~\ref{lipschitz_extension:item4}. The proof of \cref{lem:lipschitz_extension} is thus completed. 
	\end{proof} 

	\subsubsection{DNN representations for maxima}

\begin{lemma} \label{lem:max_net}
% 		Let 
% 		$ \mathfrak{I}_k \in \ANNs $, $ k \in \N $, 
% 		satisfy for all 
% 		$ k \in \{2,3,\ldots\} $ 
% 		that 
% 		\begin{equation} 
% 		\mathfrak{I}_1
% 		= 
% 		\Biggl( 
% 		\left(
% 		\begin{pmatrix} 
% 		1 \\ -1 
% 		\end{pmatrix}, 
% 		\begin{pmatrix} 
% 		0 \\ 0 
% 		\end{pmatrix} 
% 		\right), 
% 		\left(
% 		\begin{pmatrix} 
% 		1 & -1
% 		\end{pmatrix} 
% 		, 
% 		0
% 		\right)
% 		\Biggr) 
% 		\qandq 
% 		\mathfrak{I}_k 
% 		= 
%     \bP_k(\mathfrak{I}_1,\mathfrak{I}_1,\ldots,\mathfrak{I}_1)
%     \end{equation}
%     and let 
    Let $\Phi\in\ANNs$ satisfy
    \begin{equation}
      \label{eq:defPhi}
      \Phi
      =
      \left( \rule{0cm}{1cm}
        \left(
          \begin{pmatrix} 
            1 & -1 \\
            0 & 1 \\
            0 & -1 
          \end{pmatrix}, 
          \begin{pmatrix} 
            0 \\ 0 \\ 0 	
          \end{pmatrix} 
          \right), 
          \left(
          \begin{pmatrix} 
            1 & 1 & -1
          \end{pmatrix}, 
          0  
        \right) 
      \right)
      \in
      \bigl((\R^{3\times 2}\times\R^{3})\times(\R^{1\times 3}\times\R)\bigr)
    \end{equation}
    (cf.\ \cref{def:ANN}).
    Then
    \begin{enumerate}[label=(\roman{*})]
      \item \label{max_net:item00} 
        it holds for all $k\in\N$ that $\lengthANN(\mf I_k)=2$,
      \item \label{max_net:item0} 
        there exist unique $\phi_k\in\ANNs$, $k\in\{2,3,\ldots\}$, 
        which satisfy for all
          $k\in\{2,3,\ldots\}$
        that
          $\phi_2 = \Phi$, 
          $\inDimANN(\phi_k)=\outDimANN(\bP_2(\Phi,\mf I_{k-1}))$, and
          \begin{equation}
            \label{eq:phikdef}
            \phi_{k+1}=\phi_k\bullet\bigl(\bP_2(\Phi,\mf I_{k-1})\bigr),
          \end{equation}
      \item \label{max_net:item1} 
        it holds for all 
          $k\in\{2,3,\ldots\}$  
        that 
          $\lengthANN(\phi_k) = k$,
        and  
      \item \label{max_net:item2} 
        it holds for all 
          $k\in\{2,3,\ldots\}$ 
        that 
          $\dims(\phi_k) = (k, 2k-1, 2k-3, \ldots, 3, 1) \in\N^{k+1}$,
        and
      \item \label{max_net:item3}
        it holds for all
          $k\in\{2,3,\ldots\}$,
          $x=(x_1,x_2,\dots,x_k)\in\R^k$
        that
        \begin{equation}
          \bigl(\functionANN{\rect}(\phi_k)\bigr)(x)
          =
          \max\{x_1,x_2,\dots,x_k\}
        \end{equation}
    \end{enumerate}
    % and let 
		% $\phi_k\in\bN$, $k\in\{2,3,\ldots\}$, 
		% satisfy for all 
		% $k\in\{2,3,\ldots\}$ 
		% that 
		% $\phi_{k+1} = \phi_{k} \bullet \bP_{2}(\phi_2, \mathfrak{I}_{k-1})$
		% and 
		% (cf.\ \cref{def:ANNcomposition} and \cref{def:simpleParallelization}). 
		% Then 
		% \begin{enumerate}[label=(\roman{*})]
		% 	%	\item \label{max_net:item2a} it holds for all 
		% 	%		$k\in\{2,3,\ldots,k\}$ 
		% 	%	that 
		% 	%		$
		% 	%		\paramANN(\phi_k) 
		% 	%		= 
		% 	%		\frac13 \left( 4k^3 + 3k^2 - 4k + 3 \right) \leq 2k^3 
		% 	%		$, 
    % \end{enumerate}
    (cf.\ \cref{def:ReLu:identity,def:simpleParallelization,def:ANNcomposition,def:relu1,def:ANNrealization}).
	\end{lemma}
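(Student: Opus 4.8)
The plan is to build the networks $\phi_k$ by recursion and verify all five claims simultaneously by induction on $k$. The base case $k=2$ is immediate: by hypothesis $\phi_2=\Phi$, and from the explicit matrices in \eqref{eq:defPhi} one reads off $\lengthANN(\Phi)=2$ and $\dims(\Phi)=(2,3,1)=(2,2\cdot2-1,1)$, so items \ref{max_net:item1} and \ref{max_net:item2} hold for $k=2$; item \ref{max_net:item3} for $k=2$ amounts to checking that the ReLU realization of $\Phi$ computes $(x_1,x_2)\mapsto\max\{x_1,x_2\}$, which follows from $\max\{x_1,x_2\}=\rect(x_1-x_2)+\rect(x_2)-\rect(-x_2)$ together with \cref{def:ANNrealization}. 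Item \ref{max_net:item00} is a standalone fact: $\idRelu_k$ is a parallelization of $k$ copies of $\idRelu_1$, and $\idRelu_1$ has length $2$ by \eqref{eq:def:id:1}, so \cref{def:simpleParallelization} gives $\lengthANN(\idRelu_k)=2$.

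Next I would set up item \ref{max_net:item0}, the existence and uniqueness of the $\phi_k$. Given $\phi_k$ with $\inDimANN(\phi_k)=$ (something I will track inductively as $2k-3$ for $k\ge3$, and $1$ for $k=2$ — actually $\outDimANN(\phi_{k-1})$), the recursion \eqref{eq:phikdef} defines $\phi_{k+1}=\phi_k\bullet(\bP_2(\Phi,\mf I_{k-1}))$. For this composition to be well-defined I must check $\inDimANN(\phi_k)=\outDimANN(\bP_2(\Phi,\mf I_{k-1}))$. Since $\lengthANN(\Phi)=2=\lengthANN(\idRelu_{k-1})$ by item \ref{max_net:item00}, the parallelization $\bP_2(\Phi,\mf I_{k-1})$ is defined, has length $2$, and by \eqref{parallelisationSameLengthDef} its input dimension is $\inDimANN(\Phi)+\inDimANN(\idRelu_{k-1})=2+(k-1)=k+1$ and its output dimension is $\outDimANN(\Phi)+\outDimANN(\idRelu_{k-1})=1+(k-1)=k$; the uniqueness is then forced by the recursion once $\phi_2$ is fixed, and the condition $\inDimANN(\phi_k)=\outDimANN(\bP_2(\Phi,\mf I_{k-1}))=k$ is exactly what the induction on \ref{max_net:item2} supplies (the leading entry of $\dims(\phi_k)$ is $k$).

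Then I would push the induction through items \ref{max_net:item1}--\ref{max_net:item3}. For \ref{max_net:item1}: by \cite[Proposition~2.6]{Grohs2019ANNCalculus} the length of a composition is $\lengthANN(\phi_k)+\lengthANN(\bP_2(\Phi,\mf I_{k-1}))-1=k+2-1=k+1$, as desired. For \ref{max_net:item2}: using the dimension formula for parallelizations ($\dims(\bP_2(\Phi,\mf I_{k-1}))=(k+1,3+2(k-1),1+(k-1))=(k+1,2k+1,k)$) and the dimension formula for compositions, the hidden-layer widths of $\phi_{k+1}$ are obtained by prepending the new hidden layer of the parallelized block and merging the joining layer: I expect this to produce exactly $(k+1,2k+1,2k-1,2k-3,\dots,3,1)$, matching the claim with $k$ replaced by $k+1$. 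For \ref{max_net:item3}: the realization of the parallelized block sends $(x_1,\dots,x_{k+1})$ to $(\max\{x_1,x_2\},x_3,x_4,\dots,x_{k+1})\in\R^k$ (the $\Phi$-part computes the max of the first two coordinates, the $\idRelu_{k-1}$-part is the identity on the rest), using \cite[Proposition~2.19]{Grohs2019ANNCalculus}-type compatibility of realization with parallelization and item \ref{max_net:item3} for $k=2$; composing with $\functionANN{\rect}(\phi_k)$ and invoking the induction hypothesis for $k$ gives $\max\{\max\{x_1,x_2\},x_3,\dots,x_{k+1}\}=\max\{x_1,\dots,x_{k+1}\}$, which is the claim for $k+1$.

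The main obstacle I anticipate is the bookkeeping in item \ref{max_net:item2}: one must carefully combine the parallelization dimension formula with the composition dimension formula (which deletes one of the two adjacent layers at the junction and multiplies the weight matrices there) to confirm that the widths telescope precisely as $(k,2k-1,2k-3,\dots,3,1)$ rather than, say, picking up an off-by-one error in the hidden widths. A secondary subtlety is that the realization identity in \ref{max_net:item3} relies on $\functionANN{\rect}$ commuting appropriately with both $\bP_2$ and $\bullet$; this should follow from the cited propositions in \cite{Grohs2019ANNCalculus} (realization of a composition is the composition of realizations, realization of a parallelization acts blockwise), but one must make sure the activation function in the ``interior'' matches — here it does, since both building blocks are ReLU networks and the recursion only ever composes ReLU networks.
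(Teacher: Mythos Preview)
Your proposal is correct and follows essentially the same approach as the paper: establish the base network $\Phi$ computes the two-argument maximum, compute $\dims(\bP_2(\Phi,\mf I_{k-1}))=(k+1,2k+1,k)$ via the parallelization formula, and then run inductions on $k$ using the length, dimension, and realization formulas for compositions from \cite{Grohs2019ANNCalculus}. The only structural difference is that the paper separates the well-definedness of the recursion in item \ref{max_net:item0} from the later inductions (deriving $\inDimANN(\phi_{k+1})=\outDimANN(\bP_2(\Phi,\mf I_k))$ directly from the composition input-dimension formula rather than from the full item \ref{max_net:item2}), whereas you fold these together; both organizations work.
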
 
\begin{proof}[Proof of \cref{lem:max_net}] 
  First, note that, \enum{
    e.g., item (i) in \cite[Lemma 3.16]{GrohsJentzenSalimova2019}
  }[show] that for all
  %   $\dims(\mf I_1)=(1,2,1)$ (cf.\ \cref{def:ReLu:identity}).
  % %
  %   Item (i) in \cite[Proposition~2.20]{Grohs2019ANNCalculus}
  %   hence
    $k\in\N$
  it holds that
  \begin{equation}
    \label{eq:dimsIk}
    \dims(\mf I_k)
    =
    (k,2k,k)
    .
  \end{equation}
  This establishes item~\ref{max_net:item00}.
  Next note that \enum{
    \eqref{eq:defPhi}
  }[demonstrate] that
  \begin{equation}
    \dims(\Phi)
    =
    (2,3,1)
    .
  \end{equation}
  Combining
    this
    and \eqref{eq:dimsIk}
  with
    item (i) in \cite[Proposition~2.20]{Grohs2019ANNCalculus}
  shows that for all
    $k\in\N$
  it holds that
  \begin{equation}
    \label{eq:dimsP2}
    \dims(\parallelizationSpecial_2(\Phi,\mf I_k))
    =
    (k+2,2k+3,k+1)
  \end{equation}
  (cf.\ \cref{def:simpleParallelization}).
  Hence, we obtain that for all
    $k\in\{2,3,\ldots\}$
  it holds that
  \begin{equation}
    \label{eq:dimsP2.2}
    \dims(\parallelizationSpecial_2(\Phi,\mf I_{k-1}))
    =
    (k+1,2k+1,k)
    .
  \end{equation}
  Combining
    this
  with
    \eqref{eq:dimsP2}
  ensures that for all
    $k\in\{2,3,\ldots\}$
  it holds that
  \begin{equation}
    \label{eq:dimsP2.3}
    \outDimANN(\parallelizationSpecial_2(\Phi,\mf I_k))
    =
    k+1
    =
    \inDimANN(\parallelizationSpecial_2(\Phi,\mf I_{k-1}))
    .
  \end{equation}
  % Next let $\psi_k\in\ANNs$, $k\in\{3,4,\dots\}$, satisfy
  % \begin{equation}
  %   \psi_2=\Phi
  %   \qquad\text{and}\qquad
  %   \psi_k
  %   =
  %   \Phi\bullet\bP_2(\Phi,\mf I_1)\bullet\bP_2(\Phi,\mf I_2)\bullet\dots\bullet\bP_2(\Phi,\mf I_{k-2})
  %   .
  % \end{equation}
  %
  Moreover, note that
    \eqref{eq:defPhi}
    and \eqref{eq:dimsP2}
  assure that
  \begin{equation}
    \label{eq:phibase}
    \inDimANN(\Phi)
    =
    2
    =
    \outDimANN(\parallelizationSpecial_2(\Phi,\mf I_1))
    .
  \end{equation}
  Furthermore, observe that \enum{
    item (i) in \cite[Proposition~2.6]{Grohs2019ANNCalculus} ;
    \eqref{eq:dimsP2.3}
  }[show] that for all
    $k\in\{2,3,\ldots\}$,
    $\psi\in\ANNs$
    with 
      $\inDimANN(\psi)=\outDimANN(\bP_2(\Phi,\mf I_{k-1}))$
  it holds that
  \begin{equation}
    \inDimANN\bigl(\psi\bullet \bigl(\bP_2(\Phi,\mf I_{k-1})\bigr)\bigr)
    =
    \inDimANN(\parallelizationSpecial_2(\Phi,\mf I_{k-1}))
    % =
    % k+1
    =
    \outDimANN(\parallelizationSpecial_2(\Phi,\mf I_k))
  \end{equation}
  (cf.\ \cref{def:ANNcomposition}).
  Combining
    this
  and 
    \eqref{eq:phibase}
  with
    induction
  establishes item~\ref{max_net:item0}.
% First, note that \eqref{max_net:definition_max_net} and item (i) in \cite[Proposition~2.20]{Grohs2019ANNCalculus}	ensure that	for all 
% 	$k\in\{2,3,\ldots\}$ 
% it holds that
% 	$\lengthANN(\phi_2) = 2$ 
% and 
% 	$\lengthANN(\parallelizationSpecial_{2}(\phi_2,\mathfrak{I}_{k-1})) = 2 $. 
In the next step we note that
  \eqref{eq:phikdef}
  and item (ii) in \cite[Proposition~2.6]{Grohs2019ANNCalculus}
imply that for all
		$ k \in \{2,3,\ldots\} $ 
it holds that 
		\begin{equation} 
		\lengthANN(\phi_{k+1}) 
		= 
		\lengthANN(\phi_{k}) 
		+ 
		\lengthANN(\parallelizationSpecial_{2}(\phi_2,\mathfrak{I}_{k-1})) 
		- 1
		=
		\lengthANN(\phi_{k}) 
		+ 
		1.  
		\end{equation} 
Combining 
  this 
  and the fact that $\lengthANN(\phi_2)=2$ 
with
  induction 
establishes item~\ref{max_net:item1}.
Furthermore, observe that \enum{
  \eqref{eq:phikdef} ;
  \eqref{eq:dimsP2.2} ;
  item (i) in~\cite[Proposition~2.6]{Grohs2019ANNCalculus}
}[demonstrate] that for all 
    $k\in\{2,3,\ldots\}$,
    $l_0,l_1,\dots,l_k\in\N$
with 
	$\dims(\phi_k)=(l_0,l_1,\dots,l_k)$
it holds that  
\begin{equation} 
	\dims(\phi_{k+1}) 
  = 
  \dims\bigl(\phi_k\bullet\bigl(\bP_2(\Phi,\mf I_{k-1})\bigr)\bigr)
	= 
	(k+1, 2k+1, l_1, l_2,\dots,l_k). 
\end{equation} 
    This, 
    item (iii),
    the fact that $\dims(\phi_2) = (2,3,1)$, 
    and induction
  establish 
    item~\ref{max_net:item2}.
  Moreover, note that \enum{
    \eqref{eq:defPhi}
  }[ensure] that for all
    $(x_1,x_2)\in\R^2$
  it holds that
  \begin{equation}
    \label{eq:realPhi}
  \begin{split}
    \bigl(\functionANN{\rect}(\Phi)\bigr)(x_1,x_2)
    &=
    \pmat{1&1&-1}\left(\multdim_{\rect,3}\!\left(\pmat{1&-1\\0&1\\0&-1}\pmat{x_1\\x_2}+\pmat{0\\0\\0}\right)\right)+0
    \\&=
    \pmat{1&1&-1}\pmat{\max\{x_1-x_2,0\}\\\max\{x_2,0\}\\\max\{-x_2,0\}}
    \\&=
    \max\{x_1-x_2,0\}+\max\{x_2,0\}-\max\{-x_2,0\}
    \\&=
    \max\{x_1-x_2,0\}+x_2
    =
    \max\{x_1,x_2\}
  \end{split}
  \end{equation}
  (cf.\ \cref{def:relu1,def:ANNrealization,def:multidim_version}).
  Combining \enum{
    this ;
    item (iii) in \cite[Lemma 3.16]{GrohsJentzenSalimova2019}
  } with
    \cite[Proposition~2.19]{Grohs2019ANNCalculus}
  proves that for all
    $k\in\{2,3,\ldots\}$,
    $x=(x_1,x_2,\dots,\allowbreak x_{k+1})\in\R^{k+1}$
  it holds that
  \begin{equation}
  \begin{split}
    \bigl(\functionANN{\rect}(\parallelizationSpecial_2(\Phi,\mf I_{k-1}))\bigr)(x)
    &=
    \bigl(\bigl(\functionANN{\rect}(\Phi)\bigr)(x_1,x_2),\bigl(\functionANN{\rect}(\mf I_{k-1})\bigr)(x_3,x_4,\dots,x_{k+1})\bigr)
    \\&=
    (\max\{x_1,x_2\},x_3,x_4,\dots,x_{k+1})
    \in\R^k
    .
  \end{split}
  \end{equation}
  \enum{
    Item (v) in \cite[Proposition~2.6]{Grohs2019ANNCalculus} ;
    \eqref{eq:phikdef}
  } hence
  show that for all
    $k\in\{2,3,\ldots\}$,
    $x=(x_1,x_2,\dots,x_{k+1})\in\R^{k+1}$
  it holds that
  \begin{equation}
  \begin{split}
    \bigl(\functionANN{\rect}(\phi_{k+1})\bigr)(x)
    &=
    \Bigl(\functionANN{\rect}\bigl(\phi_k\bullet\bigl(\parallelizationSpecial_2(\Phi,\mf I_{k-1})\bigr)\bigr)\Bigr)(x)
    =
    \bigl(\bigl[\functionANN{\rect}(\phi_k)\bigr]\circ\bigl[\functionANN{\rect}(\parallelizationSpecial_2(\Phi,\mf I_{k-1}))\bigr]\bigr)(x)
    \\&=
    \bigl(\functionANN{\rect}(\phi_k)\bigr)(\max\{x_1,x_2\},x_3,x_4,\dots,x_{k+1})
    .
  \end{split}
  \end{equation}
    This,
    the fact that $\phi_2=\Phi$,
    \eqref{eq:realPhi},
    and induction
  establish
    item~\ref{max_net:item3}.
  The proof of \cref{lem:max_net} is thus completed. 
\end{proof} 
	
	\begin{lemma} \label{lem:max_net2}
% 		Let 
% 		$ \mathfrak{I}_k \in \ANNs $, $ k \in \N $, 
% 		satisfy for all 
% 		$ k \in \{2,3,\ldots\} $ 
% 		that 
% 		\begin{equation} 
% 			\mathfrak{I}_1
% 			= 
% 			\Biggl( 
% 			\left(
% 			\begin{pmatrix} 
% 			1 \\ -1 
% 			\end{pmatrix}, 
% 			\begin{pmatrix} 
% 			0 \\ 0 
% 			\end{pmatrix} 
% 			\right), 
% 			\left(
% 			\begin{pmatrix} 
% 			1 & -1
% 			\end{pmatrix} 
% 			, 
% 			0
% 			\right)
% 			\Biggr) 
% 			\qandq 
% 			\mathfrak{I}_k 
% 			= 
% 			\bP_k(\mathfrak{I}_1,\mathfrak{I}_1,\ldots,\mathfrak{I}_1),
% 		\end{equation} 
% 		let 
    Let
    $ A_k \in \R^{ (2k-1) \times k } $, 
    $ k \in \{2,3,\ldots\}$,  
		and 
		$ C_k \in \R^{ (k-1) \times (2k-1) } $, $ k \in \{2,3,\ldots\} $, 
		satisfy for all 
		$ k \in \{2,3,\ldots\} $ 
		that 
		\begin{equation} \label{max_net2:definition_of_Ak_and_Ck}
		A_k = 
		\begin{pmatrix} 
		1 		& -1 	 &  0 	  & \cdots & 	0   \\
		0 		&  1 	 &  0 	  & \cdots & 	0   \\
		0 		& -1 	 &  0 	  & \cdots & 	0   \\
		0 		&  0 	 &  1 	  & \cdots & 	0   \\
		0 		&  0 	 & -1 	  & \cdots & 	0   \\
		\vdots & \vdots & \vdots & \ddots & \vdots \\ 
		0 	 	&  0 	 &  0 	  & \cdots & 	1   \\
		0 		&  0 	 &  0 	  & \cdots &   -1
		\end{pmatrix} 
		\qquad\text{and}\qquad 
		C_k = 
		\begin{pmatrix} 
		1 & 1 & -1	& 0 &  0 & \cdots & 0 & 0 \\
		0 & 0 & 0  & 1 & -1 & \cdots & 0 & 0 \\
		\vdots & \vdots & \vdots & \vdots & \vdots & \ddots & \vdots & \vdots \\
		0 & 0 & 0  & 0 &  0 & \cdots & 1 & -1
		\end{pmatrix}
		\end{equation} 
		and let 
		$\phi_k=( (W_{k,1}, B_{k,1}), (W_{k,2}, B_{k,2}), \ldots, (W_{k,k}, B_{k,k}) )\in\bN$, $k\in\{2,3,\ldots\}$, 
		satisfy for all 
		$k\in\{2,3,\ldots\}$ 
    that
    $\inDimANN(\phi_k)=\outDimANN(\bP_2(\phi_2,\mf I_{k-1}))$,
		$\phi_{k+1} = \phi_{k} \bullet (\bP_{2}(\phi_2, \mathfrak{I}_{k-1}))$,
		and 
		\begin{equation} \label{max_net2:definition_max_net}
		\phi_2 
		= 
		\left( \rule{0cm}{1cm}
		\left(
		\begin{pmatrix} 
		1 & -1 \\
		0 & 1 \\
		0 & -1 
		\end{pmatrix}, 
		\begin{pmatrix} 
		0 \\ 0 \\ 0 	
		\end{pmatrix} 
		\right), 
		\left(
		\begin{pmatrix} 
		1 & 1 & -1
		\end{pmatrix}, 
		0  
		\right) 
    \right)
    \in
    \bigl((\R^{3\times 2}\times\R^{3})\times(\R^{1\times 3}\times\R)\bigr)
		\end{equation} 
		(cf.\ \cref{def:ANN,def:ReLu:identity,def:ANNcomposition,def:simpleParallelization} and \cref{lem:max_net}). 
		Then 
		\begin{enumerate}[label=(\roman{*})]
			\item \label{max_net2:item1} it holds for all 
			$k\in\{2,3,\ldots\}$ 
			that 
			$ W_{k,1} = A_{k} $, 
			%	\begin{equation} 
			%	W_{k,1} = 
			%	\begin{pmatrix} 
			%	1 		& -1 	 &  0 	  & \ldots &  0 	 \\
			%	0 		&  1 	 &  0 	  & \ldots &  0 	 \\
			%	0 		& -1 	 &  0 	  & \ldots &  0 	 \\
			%	0 		&  0 	 &  1 	  & \ldots &  0 	 \\
			%	0 		&  0 	 & -1 	  & \ldots &  0 	 \\
			%	\vdots & \vdots & \vdots & \ddots &  \vdots \\
			%	0 		&  0 	 & 0 	  & \ldots &  1 	 \\
			%	0 		&  0 	 & 0 	  & \ldots & -1
			%	\end{pmatrix} 
			%	\in \R^{(2k-1)\times k},  
			%	\end{equation} 
			\item \label{max_net2:item2} 
			it holds for all 
			$k\in\{2,3,\ldots\}$, 
			$l\in\{1,2,\ldots,k\}$ 
			that 
			$B_{k,l} = 0 \in \R^{2(k-l)+1}$,  
			\item \label{max_net2:item3} 
			it holds for all 
			$k\in\{2,3,\ldots\}$, 
			$l\in\{3,4,\ldots,k+1\}$ 
			that 
			%\begin{equation} 
			$(W_{k+1,l}, B_{k+1,l}) = (W_{k,l-1}, B_{k,l-1})$, 	
			%\end{equation} 
			\item \label{max_net2:item4} 
			it holds for all 
			$k\in\{2,3,\ldots\}$ 
			that
			$ W_{k+1,2} = W_{k,1} C_{k+1} $,    
      \item \label{max_net2:itemn5}
      it holds for all $k\in\{2,3,\ldots\}$
      that
      $(0,0,\dots,0)\neq\MappingStructuralToVectorized(\phi_k)\in\bigl(\{-1,0,1\}^{\paramANN(\phi_k)}\bigr)$,
      and
			\item \label{max_net2:item5} 
			it holds for all 
			$k\in\{2,3,\ldots\}$ 
			that 
      $ \infnorm{\MappingStructuralToVectorized(\phi_k)} = 1$
    \end{enumerate} 
    (cf.\ \cref{def:TranslateStructuredIntoVectorizedDescription,def:infnorm}).
	\end{lemma}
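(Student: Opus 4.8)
The plan is to establish all six items by unfolding the recursion $\phi_{k+1}=\compANN{\phi_k}{(\bP_2(\phi_2,\mathfrak I_{k-1}))}$ and reading off the layers from the composition formula~\eqref{ANNoperations:Composition}. First I would pin down the ingredient network $\bP_2(\phi_2,\mathfrak I_{k-1})$: since $\mathfrak I_{k-1}=\bP_{k-1}(\mathfrak I_1,\dots,\mathfrak I_1)$ by \cref{def:ReLu:identity} and $\lengthANN(\phi_2)=\lengthANN(\mathfrak I_1)=2$, the parallelization formula~\eqref{parallelisationSameLengthDef} shows that $\bP_2(\phi_2,\mathfrak I_{k-1})$ has exactly two layers $((\mathfrak W_1,\mathfrak B_1),(\mathfrak W_2,\mathfrak B_2))$ with $\mathfrak B_1=0$ and $\mathfrak B_2=0$ (the biases of $\phi_2$ and of $\mathfrak I_1$ all vanish), with $\mathfrak W_1$ equal to the block-diagonal matrix whose blocks are $\pmat{1&-1\\0&1\\0&-1}$ and $k-1$ copies of $\pmat{1\\-1}$, and with $\mathfrak W_2$ equal to the block-diagonal matrix whose blocks are $\pmat{1&1&-1}$ and $k-1$ copies of $\pmat{1&-1}$. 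A direct entrywise comparison with~\eqref{max_net2:definition_of_Ak_and_Ck} then gives $\mathfrak W_1=A_{k+1}$ and $\mathfrak W_2=C_{k+1}$.

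Next I would apply the composition formula. By \cref{lem:max_net} we have $\lengthANN(\phi_k)=k\geq2>1$ and $\lengthANN(\bP_2(\phi_2,\mathfrak I_{k-1}))=2>1$, so the case $L>1<\mathfrak L$ of~\eqref{ANNoperations:Composition} applies and produces
\begin{equation}
  \phi_{k+1}
  =
  \bigl((\mathfrak W_1,\mathfrak B_1),\,(W_{k,1}\mathfrak W_2,\,W_{k,1}\mathfrak B_2+B_{k,1}),\,(W_{k,2},B_{k,2}),\,\dots,\,(W_{k,k},B_{k,k})\bigr).
\end{equation}
Reading off the first slot gives $W_{k+1,1}=\mathfrak W_1=A_{k+1}$, which together with the direct check $W_{2,1}=A_2$ from~\eqref{max_net2:definition_max_net} yields item~\ref{max_net2:item1}; the second slot gives $W_{k+1,2}=W_{k,1}\mathfrak W_2=W_{k,1}C_{k+1}$, which is item~\ref{max_net2:item4}; and the remaining slots give $(W_{k+1,l},B_{k+1,l})=(W_{k,l-1},B_{k,l-1})$ for all $l\in\{3,4,\dots,k+1\}$, which is item~\ref{max_net2:item3}. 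For item~\ref{max_net2:item2} I observe $B_{k+1,1}=\mathfrak B_1=0$ and $B_{k+1,2}=W_{k,1}\mathfrak B_2+B_{k,1}=B_{k,1}$, so an induction on $k$ with base case $B_{2,1}=0\in\R^3$, $B_{2,2}=0\in\R^1$, combined with item~\ref{max_net2:item3}, shows that every $B_{k,l}$ is the zero vector; the claimed dimension $2(k-l)+1$ is precisely the dimension of the $l$-th layer of $\phi_k$, which by \cref{lem:max_net} equals the $l$-th entry of $\dims(\phi_k)=(k,2k-1,2k-3,\dots,3,1)$.

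Finally, for items~\ref{max_net2:itemn5} and~\ref{max_net2:item5} I would run one more induction on $k$, using items~\ref{max_net2:item1}--\ref{max_net2:item4}. The base case $k=2$ is read off directly from~\eqref{max_net2:definition_max_net}. For the inductive step: the layer-$1$ weight of $\phi_{k+1}$ is $A_{k+1}$, all of whose entries lie in $\{-1,0,1\}$ and which contains the entry $1$; the layer-$2$ weight is $W_{k,1}C_{k+1}=A_kC_{k+1}$ (using item~\ref{max_net2:item1}), and since every row of $A_k$ is either $e_1-e_2$ or $\pm e_j$ (with $e_j$ the standard basis vectors) while the rows of $C_{k+1}$ have pairwise disjoint supports with entries in $\{-1,0,1\}$, the product $A_kC_{k+1}$ again has all entries in $\{-1,0,1\}$ and contains the entry $1$; the layers $3,\dots,k+1$ are, by item~\ref{max_net2:item3}, copies of layers $2,\dots,k$ of $\phi_k$, which lie in $\{-1,0,1\}$ by the induction hypothesis; and all biases vanish by item~\ref{max_net2:item2}. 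Hence $\MappingStructuralToVectorized(\phi_{k+1})\in\{-1,0,1\}^{\paramANN(\phi_{k+1})}\setminus\{(0,0,\dots,0)\}$ and $\infnorm{\MappingStructuralToVectorized(\phi_{k+1})}=1$. The only genuinely computational points --- and the main places where care is needed --- are identifying the block-diagonal matrices $\mathfrak W_1,\mathfrak W_2$ with $A_{k+1},C_{k+1}$ and checking that $A_kC_{k+1}$ stays a $0/\pm1$ matrix; everything else is bookkeeping with the composition and parallelization definitions.
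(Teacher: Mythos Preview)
Your proposal is correct and follows essentially the same approach as the paper: identify $\bP_2(\phi_2,\mathfrak I_{k-1})=((A_{k+1},0),(C_{k+1},0))$, apply the composition formula~\eqref{ANNoperations:Composition} to obtain the layer recursion, and read off items~\ref{max_net2:item1}--\ref{max_net2:item4}. The only stylistic difference is that for item~\ref{max_net2:itemn5} the paper writes out the product $A_kC_{k+1}$ explicitly as a matrix, whereas you give a structural argument based on the row pattern of $A_k$ and the disjoint supports of the rows of $C_{k+1}$; both arguments are valid and yield the same conclusion.
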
 
	\begin{proof}[Proof of \cref{lem:max_net2}] 
		%Next note that item~\eqref{max_net:item2} and \cref{def:ANN} imply that for all 
		%	$k\in\{2,3,\ldots\}$ 
		%it holds that 
		%	\begin{equation}
		%	\begin{split} 
		%	\paramANN(\phi_k) 
		%	& 
		%	= 
		%	k (2k-1)
		%	+ 
		%	\sum_{l=1}^{k-1}
		%	(2l-1)(2l+1) 
		%	+ 
		%	\sum_{l=1}^{k} 
		%	(2l-1)
		%	= 
		%	2k^2 - k + \sum_{l=1}^{k-1} (4l^2-1) 
		%	+ 
		%	k^2 
		%	\\
		%	& = 
		%	3k^2 - (2k-1) + 4 \sum_{l=1}^{k-1} l^2
		%	=
		%	3k^2 - (2k-1) + \frac23 (k-1)k(2k-1) 
		%	\\
		%	& =  
		%	\frac13 \left( 4k^3 + 3k^2 - 4k + 3 \right) 
		%	\leq 
		%	\frac{4k^3 + 3k^2 - k}{3} 
		%	= 
		%	\frac{6k^3 - (k-1)(2k-1)}{3} 
		%	\leq 2k^3
		%	\end{split}
		%	\end{equation} 
		%This establishes item~\eqref{max_net:item2a}. 
		First, note that \enum{
      \eqref{eq:def:id:1} ;
      \eqref{eq:def:id:2} ;
      \eqref{max_net2:definition_of_Ak_and_Ck} ;
      \eqref{max_net2:definition_max_net}
    }[ensure] that for all 
		$k\in\{2,3,\ldots\}$ 
		it holds that 
		\begin{equation}
		\parallelizationSpecial_2(\phi_2,\mathfrak{I}_{k-1}) 
		= 
    ( \MatrixANN{A_{k+1}}, \MatrixANN{C_{k+1}} )
    \end{equation}
    (cf.\ \cref{def:matrixInputDNN}).
    This
    and \eqref{max_net2:definition_max_net}
		imply that for all 
		$ k \in \{2,3,\ldots\} $ 
		it holds that 
		\begin{equation} \label{max_net2:structure_recursion}
		\begin{split}
		\phi_{ k + 1 } 
		& = 
		\phi_{ k } \bullet \bigl( \parallelizationSpecial_2(\phi_2, \mathfrak{I}_{k-1}) \bigr)
		\\
		& = ( (W_{k,1}, B_{k,1}), (W_{k,2}, B_{k,2}), \ldots, (W_{k,k}, B_{k,k}) ) \bullet (\MatrixANN{A_{k+1}}, \MatrixANN{C_{k+1}})
		\\
		& = 
		( \MatrixANN{A_{k+1}}, (W_{k,1}C_{k+1}, B_{k,1}), (W_{k,2}, B_{k,2}), \ldots, (W_{k,k}, B_{k,k}) ). 
		\end{split}
		\end{equation} 
		%Next note that for all 
		%	$ k \in \{2,3,\ldots\} $ 
		%	it holds that 
		%	\begin{equation} 
		%	\begin{split}
		%	& 
		%	\parallelizationSpecial_2(\phi_2,\mathfrak{I}_{k-1}) 
		%	\\
		%	& = 
		%	\left(  
		%	\left[ 
		%	\begin{pmatrix} 
		%	1 		& -1 	 &  0 	  & \ldots &  0 	 \\
		%	0 		&  1 	 &  0 	  & \ldots &  0 	 \\
		%	0 		& -1 	 &  0 	  & \ldots &  0 	 \\
		%	0 		&  0 	 &  1 	  & \ldots &  0 	 \\
		%	0 		&  0 	 & -1 	  & \ldots &  0 	 \\
		%	\vdots & \vdots & \vdots & \ddots &  \vdots \\
		%	0 		&  0 	 & 0 	  & \ldots &  1 	 \\
		%	0 		&  0 	 & 0 	  & \ldots & -1
		%	\end{pmatrix}, 
		%	\begin{pmatrix} 
		%	0 \\ 0 \\ 0 \\ 0 \\ 0 \\ \vdots \\ 0 \\ 0 
		%	\end{pmatrix} 
		%	\right], 
		%	\left[ 
		%	\begin{pmatrix} 
		%	1	   & 	 1&	   -1&     0& 	  0&\ldots& 	0& 	   0\\
		%	0	   & 	 0&   	0& 	   1& 	 -1&\ldots& 	0& 	   0\\
		%	\vdots&\vdots&\vdots&\vdots&\vdots&\ddots&\vdots&\vdots\\
		%	0 	   & 	 0& 	0& 	   0&	  0&\ldots&		1&	  -1
		%	\end{pmatrix}, 
		%	\begin{pmatrix} 
		%	0 \\ 0 \\ \vdots \\ 0
		%	\end{pmatrix}   
		%	\right] 
		%	\right) 
		%	\\
		%	& \in ( \R^{ (2k+1) \times (k+1) } \times \R^{ (2k+1) } ) \times ( \R^{ k \times (2k+1) } \times \R^{ k } ).  
		%	\end{split}
		%	\end{equation} 
    %
      This, 
      \eqref{max_net2:definition_of_Ak_and_Ck}, 
      and \eqref{max_net2:definition_max_net} 
    establish 
      item~\ref{max_net2:item1}. 
    Next observe that \enum{
      \eqref{max_net2:definition_max_net} ;
      \eqref{max_net2:structure_recursion} ;
      \cref{max_net:item2} in \cref{lem:max_net} ;
      induction
    }[prove]
      item~\ref{max_net2:item2}. 
    Moreover, note that 
      \eqref{max_net2:structure_recursion} 
    establishes
      \cref{max_net2:item3,max_net2:item4}. 
    %Next we prove item~\ref{max_net2:item5}. 
  %   In the next step we note that 
  %     \eqref{max_net2:structure_recursion}
  %     and \cref{lem:structtovect}
  %   ensure that for all 
	% 	$k\in\{2,3,\ldots\}$ 
	% 	it holds that 
	% 	\begin{equation} 
	% 	\MappingStructuralToVectorized( \phi_{k+1} ) 
	% 	= 
	% 	\bigl(
	% 	\MappingStructuralToVectorized\bigl( (\MatrixANN{A_{k+1}}) \bigr) 
	% 	,
	% 	\MappingStructuralToVectorized\bigl( ((W_{k,1}C_{k+1}, B_{k,1})) \bigr)
	% 	,
	% 	\MappingStructuralToVectorized\bigl( ((W_{k,2}, B_{k,2})) \bigr)
	% 	,
	% 	\ldots
	% 	,
	% 	\MappingStructuralToVectorized\bigl( ((W_{k,k}, B_{k,k})) \bigr)
  %   \bigr)
	% 	\end{equation} 
	% (cf.\ \cref{def:TranslateStructuredIntoVectorizedDescription}).
		% This implies that for all 
		% $k\in\{2,3,\ldots\}$ 
		% it holds that 
		% \begin{equation} \label{max_net2:Theta_estimate}
		% \infnorm{\MappingStructuralToVectorized(\phi_{k+1})}  
		% \leq 
		% \max\bigl\{ 
		% \infnorm{\MappingStructuralToVectorized\bigl( (\MatrixANN{A_{k+1}}) \bigr)  } , 
		% \infnorm{\MappingStructuralToVectorized\bigl( ((W_{k,1}C_{k+1}, B_{k,1})) \bigr) }, 
		% \infnorm{\MappingStructuralToVectorized(\phi_k)}
		% \bigr\}
    % \end{equation} 
    % (cf.\ Definition~\ref{def:infnorm}).
		% Moreover, note that \eqref{max_net2:definition_of_Ak_and_Ck} demonstrates that for all 
		% $k\in\{2,3,\ldots\}$ 
		% it holds that 
		% $\infnorm{ \MappingStructuralToVectorized((A_{k+1}, 0_{\R^{(2k+1)}}))  } = 1$.  
		In addition, observe that item~\ref{max_net2:item1} proves that for all 
		$k\in\{2,3,\ldots\}$ 
		it holds that 
		\begin{equation} 
		\begin{split}
    W_{k,1}C_{k+1} 
    = A_kC_{k+1}
		& = 
		\begin{pmatrix} 
		1 		& -1 	 &  0 	  & \cdots & 	0   \\
		0 		&  1 	 &  0 	  & \cdots & 	0   \\
		0 		& -1 	 &  0 	  & \cdots & 	0   \\
		0 		&  0 	 &  1 	  & \cdots & 	0   \\
		0 		&  0 	 & -1 	  & \cdots & 	0   \\
		\vdots & \vdots & \vdots & \ddots & \vdots \\ 
		0 	 	&  0 	 &  0 	  & \cdots & 	1   \\
		0 		&  0 	 &  0 	  & \cdots &   -1
		\end{pmatrix} 
		\begin{pmatrix} 
		1 & 1 & -1	& 0 &  0 & \cdots & 0 & 0 \\
		0 & 0 & 0  & 1 & -1 & \cdots & 0 & 0 \\
		\vdots & \vdots & \vdots & \vdots & \vdots & \ddots & \vdots & \vdots \\
		0 & 0 & 0  & 0 &  0 & \cdots & 1 & -1
		\end{pmatrix}
		\\
		& = 
		\begin{pmatrix} 
		1 & 1 & -1 & -1 & 1 & 0 & 0 & \cdots & 0 & 0 \\
		0 & 0 &  0 &  1 &-1 & 0 & 0 & \cdots & 0 & 0 \\
		0 & 0 &  0 & -1 & 1 & 0 & 0 & \cdots & 0 & 0 \\ 
		0 & 0 &  0 &  0 & 0 & 1 &-1 & \cdots & 0 & 0 \\
		0 & 0 &  0 &  0 & 0 &-1 & 1 & \cdots & 0 & 0 \\ 
		\vdots & 	\vdots & 	\vdots & 	\vdots & 	\vdots & 	\vdots & 	\vdots & \ddots & 	\vdots &  	\vdots \\
		0 & 0 &  0 &  0 & 0 & 0 & 0 & \cdots & 1 & -1 \\
		0 & 0 &  0 &  0 & 0 & 0 & 0 & \cdots & -1 & 1
		\end{pmatrix}.  
		\end{split}
    \end{equation} 
  Combining \enum{
    this ;
    \eqref{max_net2:definition_max_net} ;
    \eqref{max_net2:structure_recursion} ;
  } with 
    induction
  proves \cref{max_net2:itemn5}.
  Next note that
    \cref{max_net2:itemn5}
  establishes
    \cref{max_net2:item5}.
  % Item~\ref{max_net2:item2} hence ensures that for all 
  % $k\in\{2,3,\ldots\}$ 
  % it holds that 
  % $\infnorm{\MappingStructuralToVectorized\bigl( ((W_{k,1}C_{k+1},B_{k,1}))\bigr)} = 1$. 
  % Combining
  %   this
  %   and \eqref{max_net2:definition_of_Ak_and_Ck}
  % with
  %   \eqref{max_net2:Theta_estimate}
  % shows that for all
  %   $k\in\{2,3,\ldots\}$
  % it holds that
  % \begin{equation}
  %   \infnorm{\MappingStructuralToVectorized(\phi_{k+1})}
  %   \leq
  %   \max\{1,\infnorm{\MappingStructuralToVectorized(\phi_k)}\}
  %   .
  % \end{equation}
  % %
  %   The fact that $\infnorm{\MappingStructuralToVectorized(\phi_2)}=1$
  %   and induction
  %   hence
  % establish item~\ref{max_net2:item5}. 
  The proof of \cref{lem:max_net2} is thus completed. 
\end{proof}

	\subsubsection{Interpolations through DNNs}
	
\newcommand{\bW}{\cW}%
\newcommand{\bB}{\cB}%
\newcommand{\sW}[1]{W}%
\newcommand{\sB}[1]{B_{#1}}%
\begin{lemma} \label{lem:neural_net_representation}
		Let 
% 		let 
% 		$\mathfrak{I}_k\in\ANNs$, $k\in\N$, 
% 		satisfy for all 
% 		$k\in\{2,3,\ldots\}$ 
% 		that
% 		\begin{equation} 
% 			\mathfrak{I}_1
% 			= 
% 			\Biggl( 
% 			\left(
% 			\begin{pmatrix} 
% 			1 \\ -1 
% 			\end{pmatrix}, 
% 			\begin{pmatrix} 
% 			0 \\ 0 
% 			\end{pmatrix} 
% 			\right), 
% 			\left(
% 			\begin{pmatrix} 
% 			1 & -1
% 			\end{pmatrix} 
% 			, 
% 			0
% 			\right)
% 			\Biggr) 
% 			\qandq 
% 			\mathfrak{I}_k 
% 			= 
% 			\bP_k(\mathfrak{I}_1,\mathfrak{I}_1,\ldots,\mathfrak{I}_1),
% 		\end{equation} 
		$\phi_k\in\bN$, $k\in\{2,3,\ldots\}$, 
		satisfy for all 
		$k\in\{2,3,\ldots\}$ 
    that 
    $\inDimANN(\phi_k)=\outDimANN(\bP_2(\phi_2,\mf I_{k-1}))$,
		$\phi_{k+1} = \phi_{k} \bullet (\bP_{2}(\phi_2, \mathfrak{I}_{k-1}))$,
		and 
		\begin{equation} \label{neural_net_representation:max_net}
		\phi_2 
		= 
		\left( \rule{0cm}{1cm}
		\left(
		\begin{pmatrix} 
		1 & -1 \\
		0 & 1 \\
		0 & -1 
		\end{pmatrix}, 
		\begin{pmatrix} 
		0 \\ 0 \\ 0 	
		\end{pmatrix} 
		\right), 
		\left(
		\begin{pmatrix} 
		1 & 1 & -1
		\end{pmatrix}, 
		0  
		\right) 
    \right)
    \in 
    \bigl((\R^{3\times 2}\times\R^{3})\times(\R^{1\times 3}\times\R)\bigr)
    ,
    \end{equation} 
    let
		$d\in\N$, 
		$L\in [0,\infty)$, 
		let 
		$ \cM \subseteq \R^d $
		satisfy 
		$ \card\cM \in \{2,3,\ldots\} $, 
		let 
		$ m \colon \{1,2,\ldots,\card{\cM}\} \to \cM $ 
		be bijective, 
		let 
    $ f \colon \cM \to \R $
    and
		$ F \colon \R^d \to \R $ 
		satisfy for all 
		$ x = (x_1,x_2,\ldots,x_d) \in \R^d $ 
		that 
		\begin{equation}  
		F(x) 
		= 
		\max_{y=(y_1,y_2\ldots,y_d)\in\cM} 
		\left[ 
		f(y) - L\!\left(\sum_{i=1}^{d} |x_i-y_i| \right)
		\right]\!, 
		\end{equation}
		let
		$ \sW{z}_1 \in \R^{(2d)\times d} $, 
		$ \sW{z}_2 \in \R^{1\times (2d)} $,
		and $ \sB{z}\in\R^{2d}$, $z\in\cM$, 
		satisfy for all $z=(z_1,z_2,\dots,z_d)\in\cM$ that 
		\begin{equation} \label{neural_net_representation:subnets}
		\sW{z}_1 = 
		\begin{pmatrix} 
		1 &  0 & \cdots &  0 \\
		-1 &  0 & \cdots &  0 \\
		0 &  1 & \cdots &  0 \\
		0 & -1 & \cdots &  0 \\
		\vdots  & \vdots &  \ddots & \vdots \\
		0 &  0 & \cdots &  1 \\ 
		0 &  0 & \cdots & -1
		\end{pmatrix}, 
		\qquad 
		\sB{z}
		= 
		\begin{pmatrix} 
		-z_1 \\
		z_1 \\
		-z_2 \\
		z_2 \\
		\vdots \\
		-z_d \\
		z_d
		\end{pmatrix}, 
		\qquad\text{and}\qquad
		\sW{z}_2 
		= 
		\begin{pmatrix} 
		-L &-L &\cdots &-L 
		\end{pmatrix}, 
		\end{equation} 
    let 
		$ \bW_1 \in \R^{(2d\card\cM)\times d}$, 
		$ \bB_1 \in \R^{2d\card\cM}$, 
		$ \bW_2 \in \R^{\card\cM \times (2d\card\cM)}$, 
		$ \bB_2 \in \R^{\card\cM}$ 
		satisfy 
		\begin{equation} 
		\label{neural_net_representation:weights_and_biases_Phi_net}
		\bW_1 = \begin{pmatrix} 
		\sW{m(1)}_1 \\
		\sW{m(2)}_1 \\
		\vdots \\
		\sW{m(\card\cM)}_1
		\end{pmatrix}, 
		\;\;
		\bB_1 = \begin{pmatrix} 
		\sB{m(1)} \\
		\sB{m(2)} \\
		\vdots \\
		\sB{m(\card\cM)}
		\end{pmatrix}, 
		\;\;
		\bW_2 = \begin{pmatrix} 
      \sW{m(1)}_2 & 0 & \cdots & 0 \\  
      0 & \sW{m(2)}_2 & \cdots & 0  \\
      \vdots & \vdots & \ddots & \vdots \\
      0 & 0 & \cdots &   
      \sW{m(\card\cM)}_2
      \end{pmatrix} 
		% \bW_2 = \begin{pmatrix} 
		% \sW{m(1)}_2 & 0_{\R^{1\times(2d)}} & \ldots & 0_{\R^{1\times(2d)}} \\  
		% 0_{\R^{1\times(2d)}} & \sW{m(2)}_2 & \ldots & 0_{\R^{1\times(2d)}}  \\
		% \vdots & \vdots & \ddots & \vdots \\
		% 0_{\R^{1\times(2d)}} & 0_{\R^{1\times(2d)}} & \ldots &   
		% \sW{m(\card\cM)}_2
		% \end{pmatrix}
		,\;\; \text{and}\;\;
		\bB_2 
		= 
		\begin{pmatrix} 
		f(m(1)) \\
		f(m(2)) \\
		\vdots \\
		f(m(\card\cM))
		\end{pmatrix},
		\end{equation}
		and let 
		$\Phi\in\bN$ 
		satisfy 
		$\Phi
		=\phi_{\card\cM}\bullet ( (\bW_1, \bB_1), (\bW_2, \bB_2) )$
		(cf.\ \cref{def:ANN,def:ReLu:identity,def:simpleParallelization,def:ANNcomposition} and \cref{lem:max_net}). 
		Then
		\begin{enumerate}[label=(\roman{*})]
			\item \label{neural_net_representation:item1}
			it holds that 
			$\cD(\Phi) = (d,2d\card\cM,2\card\cM-1,2\card\cM-3,\ldots,3,1)\in\N^{\card\cM+2}$, 
			\item 
			\label{neural_net_representation:item2}
			it holds that 
			$\cL(\Phi) = \card\cM+1$, 
% 			\item 
% 			\label{neural_net_representation:item3}
% 			it holds that 
% 			$\cP(\Phi) \leq \estimateForNumberOfNeurons$, 
			%	2d^2\card\cM 
			%	+ 
			%	5d\card\cM^2 
			%	+ 
			%	\frac43 \card\cM^3$, 
			\item 
			\label{neural_net_representation:item4}
			it holds that 
			$\infnorm{\MappingStructuralToVectorized(\Phi)} \leq \max\{1,L, \sup_{z\in\cM} \infnorm{z} , 2[\sup_{z\in\cM} |f(z)|] \}$, 	
			and 
			\item 
			\label{neural_net_representation:item5}
			it holds that 
			$F = \cR_{\mf r}(\Phi)$
		\end{enumerate}
    (cf.\ \cref{def:TranslateStructuredIntoVectorizedDescription,def:infnorm,def:relu1,def:ANNrealization}%
    %, Definition~\ref{def:relu1}, and Definition~\ref{def:ANNrealization}
    ).
	\end{lemma}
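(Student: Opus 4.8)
The plan is to write $\Phi=\phi_{\card\cM}\bullet\Psi$, where $\Psi:=((\bW_1,\bB_1),(\bW_2,\bB_2))\in\bN$ is the two-layer ``head'' assembled in \eqref{neural_net_representation:subnets}--\eqref{neural_net_representation:weights_and_biases_Phi_net}, to analyze $\Psi$ and $\phi_{\card\cM}$ separately, and then to glue them with the DNN composition calculus of \cref{subsec:structured_description}. First I would record that $\lengthANN(\Psi)=2$ and $\dims(\Psi)=(d,2d\card\cM,\card\cM)$, and compute the rectified realization of $\Psi$ straight from \cref{def:ANNrealization}: for $x=(x_1,\dots,x_d)\in\R^d$ the $j$-th length-$2d$ block of $\bW_1x+\bB_1$ equals $(x_1-m(j)_1,\,m(j)_1-x_1,\,\dots,\,x_d-m(j)_d,\,m(j)_d-x_d)$, so after applying $\Rect_{2d\card\cM}$ and then $\bW_2(\cdot)+\bB_2$ the $j$-th output becomes $f(m(j))-L\sum_{i=1}^d(\max\{x_i-m(j)_i,0\}+\max\{m(j)_i-x_i,0\})=f(m(j))-L\sum_{i=1}^d\abs{x_i-m(j)_i}$ (using $\max\{t,0\}+\max\{-t,0\}=\abs t$). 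Thus $(\functionANN{\rect}(\Psi))(x)=\bigl(f(m(j))-L\sum_{i=1}^d\abs{x_i-m(j)_i}\bigr)_{j\in\{1,\dots,\card\cM\}}$.

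Next, because $\card\cM\in\{2,3,\dots\}$, \cref{lem:max_net} supplies $\lengthANN(\phi_{\card\cM})=\card\cM$, $\dims(\phi_{\card\cM})=(\card\cM,2\card\cM-1,2\card\cM-3,\dots,3,1)$, and $(\functionANN{\rect}(\phi_{\card\cM}))(y)=\max\{y_1,\dots,y_{\card\cM}\}$ for $y\in\R^{\card\cM}$, while \cref{lem:max_net2} supplies that the first weight matrix of $\phi_{\card\cM}$ is $A_{\card\cM}$, its first bias is $0$, and $\infnorm{\MappingStructuralToVectorized(\phi_{\card\cM})}=1$. Since $\inDimANN(\phi_{\card\cM})=\card\cM=\outDimANN(\Psi)$, the composition $\Phi=\phi_{\card\cM}\bullet\Psi$ is defined; items (i) and (ii) of \cite[Proposition~2.6]{Grohs2019ANNCalculus} then give $\lengthANN(\Phi)=\lengthANN(\phi_{\card\cM})+\lengthANN(\Psi)-1=\card\cM+1$ and $\dims(\Phi)=(d,2d\card\cM,2\card\cM-1,2\card\cM-3,\dots,3,1)\in\N^{\card\cM+2}$, which are \ref{neural_net_representation:item1} and \ref{neural_net_representation:item2}. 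Item (v) of the same proposition gives $\functionANN{\rect}(\Phi)=\functionANN{\rect}(\phi_{\card\cM})\circ\functionANN{\rect}(\Psi)$, so combining the two realization formulas above with the bijectivity of $m$ yields $(\functionANN{\rect}(\Phi))(x)=\max_{y\in\cM}\bigl[f(y)-L\sum_{i=1}^d\abs{x_i-y_i}\bigr]=F(x)$, i.e.\ \ref{neural_net_representation:item5}.

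For the norm bound \ref{neural_net_representation:item4} I would apply \cref{lem:composition_infnorm} to $\Phi=\phi_{\card\cM}\bullet\Psi$; since the last layer of $\Psi$ is $(\bW_2,\bB_2)$ and the first layer of $\phi_{\card\cM}$ is $(A_{\card\cM},0)$, this bounds $\infnorm{\MappingStructuralToVectorized(\Phi)}$ by the maximum of the three quantities $\infnorm{\MappingStructuralToVectorized(\phi_{\card\cM})}=1$, $\infnorm{\MappingStructuralToVectorized(\Psi)}$, and $\asinfnorm{\MappingStructuralToVectorized(((A_{\card\cM}\bW_2,A_{\card\cM}\bB_2)))}$. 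Reading off \eqref{neural_net_representation:subnets}--\eqref{neural_net_representation:weights_and_biases_Phi_net}, the entries of $\bW_1$ lie in $\{-1,0,1\}$, those of $\bB_1$ are among the $\pm$-coordinates of the points of $\cM$, those of $\bW_2$ lie in $\{0,-L\}$, and those of $\bB_2$ are the values $f(m(j))$, so the second quantity is $\leq\max\{1,\sup_{z\in\cM}\infnorm z,L,\sup_{z\in\cM}\abs{f(z)}\}$. For the third, I would use that each row of $A_{\card\cM}$ has at most two nonzero entries, all lying in $\{-1,1\}$, while each column of $\bW_2$ has exactly one nonzero entry, equal to $-L$; hence every entry of $A_{\card\cM}\bW_2$ lies in $\{0,-L,L\}$ and every entry of $A_{\card\cM}\bB_2$ is a sum of at most two numbers of the form $\pm f(m(j))$, so the third quantity is $\leq\max\{L,2[\sup_{z\in\cM}\abs{f(z)}]\}$. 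Taking the maximum of the three bounds yields $\infnorm{\MappingStructuralToVectorized(\Phi)}\leq\max\{1,L,\sup_{z\in\cM}\infnorm z,2[\sup_{z\in\cM}\abs{f(z)}]\}$, i.e.\ \ref{neural_net_representation:item4}.

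The step I expect to require the most care is the entrywise estimate of $A_{\card\cM}\bW_2$ and $A_{\card\cM}\bB_2$: one has to combine the precise sparsity pattern of $A_{\card\cM}$ coming from \cref{lem:max_net2} with the special forms of $\bW_2$ and $\bB_2$ to see that the matrix products do not amplify entries, and the factor $2$ in \ref{neural_net_representation:item4} is exactly the ``sum of two coordinates of $\bB_2$'' effect produced by the top row of $A_{\card\cM}$. Everything else --- the two realization computations and the length/dimension bookkeeping --- is routine once the composition calculus of \cref{subsec:structured_description} is invoked.
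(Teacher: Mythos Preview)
Your proposal is correct and follows essentially the same route as the paper: decompose $\Phi=\phi_{\card\cM}\bullet\Psi$, read off $\dims(\Psi)$ and the realization of $\Psi$ directly, invoke \cref{lem:max_net} for the max-net side, and use \cite[Proposition~2.6]{Grohs2019ANNCalculus} for the composition bookkeeping. The only cosmetic difference is in \ref{neural_net_representation:item4}: you apply \cref{lem:composition_infnorm} as a black box and bound the entries of $A_{\card\cM}\bW_2$ and $A_{\card\cM}\bB_2$ structurally, whereas the paper writes out $\MappingStructuralToVectorized(\Phi)$ via \cref{lem:structtovect} and computes those two products explicitly as matrices; both arrive at the same three-term maximum and the same final bound.
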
 
	\begin{proof}[Proof of \cref{lem:neural_net_representation}]
		Throughout this proof let 
		$\Psi\in\bN$ 
		satisfy 
		$\Psi=((\bW_1,\bB_1),(\bW_2,\bB_2))$
		% for every $k\in\{2,3,\ldots\}$
		% let 
    % $ A_k \in \R^{ (2k-1) \times k } $
		% and 
		% $ C_k \in \R^{ (k-1) \times (2k-1) } $
		% satisfy 
		% \begin{equation} 
		% A_k = 
		% \begin{pmatrix} 
		% 1 		& -1 	 &  0 	  & \cdots & 	0   \\
		% 0 		&  1 	 &  0 	  & \cdots & 	0   \\
		% 0 		& -1 	 &  0 	  & \cdots & 	0   \\
		% 0 		&  0 	 &  1 	  & \cdots & 	0   \\
		% 0 		&  0 	 & -1 	  & \cdots & 	0   \\
		% \vdots & \vdots & \vdots & \ddots & \vdots \\ 
		% 0 	 	&  0 	 &  0 	  & \cdots & 	1   \\
		% 0 		&  0 	 &  0 	  & \cdots &   -1
		% \end{pmatrix} 
		% \qquad\text{and}\qquad 
		% C_k = 
		% \begin{pmatrix} 
		% 1 & 1 & -1	& 0 &  0 & \cdots & 0 & 0 \\
		% 0 & 0 & 0  & 1 & -1 & \cdots & 0 & 0 \\
		% \vdots & \vdots & \vdots & \vdots & \vdots & \ddots & \vdots & \vdots \\
		% 0 & 0 & 0  & 0 &  0 & \cdots & 1 & -1
		% \end{pmatrix},  
    % \end{equation}
    and let
    \newcommand{\lm}[2]{\mf m_{#1,#2}}%
    $\lm ij\in\R$, $i\in\{1,2,\dots,\card\cM\}$, $j\in\{1,2\dots,d\}$,
    satisfy for all $i\in\{1,2,\dots,\card\cM\}$, $j\in\{1,2\dots,d\}$
    that $m(i)=(\lm i1,\lm i2,\dots,\lm id)$.
		% and assume w.l.o.g.~for all 
		% $k\in\{2,3,\ldots\}$ 
    % that
    % \newcommand{\WW}{W}%
    % \newcommand{\BB}{B}%
		% $\phi_k=( (\WW_{k,1}, B_{k,1}), (\WW_{k,2}, B_{k,2}), \ldots, (\WW_{k,k}, B_{k,k}) )$. 
    Note that 
      \cref{lem:max_net}
    establishes that there exist
      \newcommand{\WW}{\mf W}%
      \newcommand{\BB}{\mf B}%
      $\WW_1\in\R^{(2\card\cM-1)\times \card\cM}$,
      $\BB_1\in\R^{2\card\cM-1}$,
      $\WW_2\in\R^{(2\card\cM-3)\times(2\card\cM-1)}$,
      $\BB_2\in\R^{2\card\cM-3}$,
      $\dots$,
      $\WW_{\card\cM-1}\in\R^{3\times 5}$,
      $\BB_{\card\cM-1}\in\R^3$,
      $\WW_{\card\cM}\in\R^{1\times 3}$,
      $\BB_{\card\cM}\in\R$
      %$( (\WW_{1}, \BB_{1}),\allowbreak (\WW_{2}, \BB_{2}), \allowbreak \ldots, (\WW_{\card\cM}, \BB_{\card\cM}) )\in\ANNs$
      such that
      \begin{equation}\label{eq:phiM}
        \phi_{\card\cM}=( (\WW_{1}, \BB_{1}), (\WW_{2}, \BB_{2}), \ldots, (\WW_{\card\cM}, \BB_{\card\cM}) ).
      \end{equation}
		Next observe that \enum{
      \eqref{neural_net_representation:weights_and_biases_Phi_net}
    }[establish] that
		$\lengthANN(\Psi)=2$ 
		and 
		\begin{equation} \label{neural_net_representation:architectureOfPsi}
		\dims(\Psi) = (d,2d\card\cM,\card\cM). 
		\end{equation} 
		Moreover, note that 
		item~\ref{max_net:item2} 
		in 
		\cref{lem:max_net}
		ensures that
		\begin{equation} 
    \cD(\phi_{\card{\mc M}}) = (\abs{\mc M}, 2\abs{\mc M}-1, 2\abs{\mc M}-3, \ldots, 3, 1)
    \in\N^{\card\cM+1}
    . 
		\end{equation} 
      This, the fact that 
      $\Phi=\phi_{\card\cM}\bullet\Psi$, 
      \eqref{neural_net_representation:architectureOfPsi}, 
      and item (i) in~\cite[Proposition~2.6]{Grohs2019ANNCalculus}
	  show that 
		$\lengthANN(\Phi) = \card\cM + 1$ 
		and 
		\begin{equation} 
		\dims(\Phi) 
		= 
    (d,2d\card\cM,2\card\cM-1,2\card\cM-3,\ldots,3,1)
    \in\N^{\card\cM+2}
    . 
		\end{equation} 
		This establishes items~\ref{neural_net_representation:item1} and \ref{neural_net_representation:item2}. 
		%Next we prove item~\ref{neural_net_representation:item3}. 
		%Next we prove item~\ref{neural_net_representation:item4}. 
		In the next step we note that 
		the hypothesis that 
		$\Phi=\phi_{\card\cM}\bullet((\bW_1,\bB_1),(\bW_2,\bB_2))$ 
		and 
		\eqref{eq:phiM}
		ensure that 
		\begin{equation} 
		\begin{split}
		\Phi 
		% & = 
		% \phi_{\card\cM} \bullet ( (\bW_1, \bB_1), (\bW_2, \bB_2) )
		% \\
		& = 
		( 
		(\WW_{1}, \BB_{1}), 
		(\WW_{2}, \BB_{2}), 
		\ldots, 
		(\WW_{\card\cM}, \BB_{\card\cM}) 
		) 
		\bullet 
		( (\bW_1, \bB_1), (\bW_2, \bB_2) )
		\\
		& = 
		( 
		( \cW_1, \cB_1), 
		( \WW_{1}\cW_2,   
		\WW_{1}\cB_2+\BB_{1} ), 
		( \WW_{2}, \BB_{2}), 
		\ldots, 
		( \WW_{\card\cM}, \BB_{\card\cM}) 
		). 
		\end{split}
		\end{equation}  
		\cref{lem:structtovect} hence implies that 
		\begin{equation} \label{neural_net_representation:Theta_vector}
		\MappingStructuralToVectorized(\Phi) 
		= 
		\bigl( 
		\MappingStructuralToVectorized\bigl(((\bW_1,\bB_1))\bigr)
		,
		\MappingStructuralToVectorized\bigl(((\WW_{1}\cW_2, \WW_{1}\cB_2+\BB_{1}))\bigr)
		,
		\MappingStructuralToVectorized\bigl(((\WW_{2},\BB_{2}))\bigr)
		,
		\ldots
		,
		\MappingStructuralToVectorized\bigl(((\WW_{\card\cM}, \BB_{\card\cM}))\bigr)
    \bigr)
		\end{equation} 
	(cf.\ \cref{def:TranslateStructuredIntoVectorizedDescription}).
    Moreover, note that \eqref{neural_net_representation:weights_and_biases_Phi_net} and
    item~\ref{max_net2:item1} in \cref{lem:max_net2} imply that 
		\begin{equation} \label{eq:W1W2}
		\begin{split}
		\WW_{1}\bW_2 
		= 
		\underbrace{
			\begin{pmatrix} 
			1 		& -1 	 &  0 	  & \cdots &  0 	 \\
			0 		&  1 	 &  0 	  & \cdots &  0 	 \\
			0 		& -1 	 &  0 	  & \cdots &  0 	 \\
			0 		&  0 	 &  1 	  & \cdots &  0 	 \\
			0 		&  0 	 & -1 	  & \cdots &  0 	 \\
			\vdots & \vdots & \vdots & \ddots &  \vdots \\
			0 		&  0 	 & 0 	  & \cdots &  1 	 \\
			0 		&  0 	 & 0 	  & \cdots & -1
			\end{pmatrix} }_{\in\R^{(2\card\cM-1)\times\card\cM}}
		\bW_2 
    = 
    \underbrace{
		\begin{pmatrix} 
      \sW{m(1)}_2 & -\sW{m(2)}_2 & 0 & \cdots & 0 \\
      0 & \sW{m(2)}_2 & 0 & \cdots & 0 \\
      0 & -\sW{m(2)}_2 & 0 & \cdots & 0 \\
      0 & 0 & \sW{m(3)}_2 & \cdots & 0 \\
      0 & 0 & -\sW{m(3)}_2 & \cdots & 0 \\
      \vdots & \vdots & \vdots & \ddots & \vdots \\
      0 & 0 & 0 & \cdots & \sW{m(\card\cM)}_2 \\
      0 & 0 & 0 & \cdots & -\sW{m(\card\cM)}_2
      \end{pmatrix}
    }_{\in\R^{(2\card\cM-1)\times(2d\card\cM)}}
    .
		% \begin{pmatrix} 
		% \sW{m(1)} & -\sW{m(2)} & 0_{\R^{1\times(2d)}} & \ldots & 0_{\R^{1\times(2d)}} \\
		% 0_{\R^{1\times(2d)}} & \sW{m(2)} & 0_{\R^{1\times(2d)}} & \ldots & 0_{\R^{1\times(2d)}} \\
		% 0_{\R^{1\times(2d)}} & -\sW{m(2)} & 0_{\R^{1\times(2d)}} & \ldots & 0_{\R^{1\times(2d)}} \\
		% 0_{\R^{1\times(2d)}} & 0_{\R^{1\times(2d)}} & \sW{m(3)} & \ldots & 0_{\R^{1\times(2d)}} \\
		% 0_{\R^{1\times(2d)}} & 0_{\R^{1\times(2d)}} & -\sW{m(3)} & \ldots & 0_{\R^{1\times(2d)}} \\
		% \vdots & \vdots & \vdots & \ddots & \vdots \\
		% 0_{\R^{1\times(2d)}} & 0_{\R^{1\times(2d)}} & 0_{\R^{1\times(2d)}} & \ldots & \sW{m(\card\cM)} \\
		% 0_{\R^{1\times(2d)}} & 0_{\R^{1\times(2d)}} & 0_{\R^{1\times(2d)}} & \ldots & -\sW{m(\card\cM)}
		% \end{pmatrix}
      \end{split}  
		\end{equation}
		In addition, observe that \eqref{neural_net_representation:weights_and_biases_Phi_net} and
    items~\ref{max_net2:item1} and \ref{max_net2:item2} in \cref{lem:max_net2} show that
		\begin{equation} 
		\begin{split}
		\WW_{1}\bB_2 + \BB_{1} 
    & = 
    \underbrace{
		\begin{pmatrix} 
		1 		& -1 	 &  0 	  & \cdots &  0 	 \\
		0 		&  1 	 &  0 	  & \cdots &  0 	 \\
		0 		& -1 	 &  0 	  & \cdots &  0 	 \\
		0 		&  0 	 &  1 	  & \cdots &  0 	 \\
		0 		&  0 	 & -1 	  & \cdots &  0 	 \\
		\vdots & \vdots & \vdots & \ddots &  \vdots \\
		0 		&  0 	 & 0 	  & \cdots &  1 	 \\
		0 		&  0 	 & 0 	  & \cdots & -1
    \end{pmatrix} 
    }_{\in\R^{(2\card\cM-1)\times\card\cM}}
		\bB_2 
    +
    \underbrace{ 
		\begin{pmatrix} 
		0 \\ 0 \\ 0 \\ 0 \\ 0 \\ \vdots \\ 0 \\ 0 
    \end{pmatrix} 
    }_{\in\R^{2\card\cM-1}}
		\\
    & = 
    \underbrace{
		\begin{pmatrix} 
		1 		& -1 	 &  0 	  & \cdots &  0 	 \\
		0 		&  1 	 &  0 	  & \cdots &  0 	 \\
		0 		& -1 	 &  0 	  & \cdots &  0 	 \\
		0 		&  0 	 &  1 	  & \cdots &  0 	 \\
		0 		&  0 	 & -1 	  & \cdots &  0 	 \\
		\vdots & \vdots & \vdots & \ddots &  \vdots \\
		0 		&  0 	 & 0 	  & \cdots &  1 	 \\
		0 		&  0 	 & 0 	  & \cdots & -1
    \end{pmatrix} 
    }_{\in\R^{(2\card\cM-1)\times\card\cM}}
    \underbrace{
		\begin{pmatrix} 
		f(m(1)) \\
		f(m(2)) \\
		\vdots \\
		f(m(\card\cM))
    \end{pmatrix}  
    }_{\in\R^{\card\cM}}
    = 
    \underbrace{
		\begin{pmatrix} 
		f(m(1)) - f(m(2)) \\
		f(m(2)) \\
		- f(m(2)) \\
		f(m(3)) \\
		- f(m(3)) \\
		\vdots \\
		f(m(\card\cM)) \\
		-f(m(\card\cM))
    \end{pmatrix}
    }_{\in\R^{2\card\cM-1}}
    .  
		\end{split}
		\end{equation} 
		This and \eqref{eq:W1W2} demonstrate that
		\begin{equation} 
		\begin{split} 
		& 
		\asinfnorm{\MappingStructuralToVectorized\bigl(((\WW_{1}\cW_2, \WW_{1}\bB_2+\BB_{1}))\bigr)}
		\\
		& = 
		\max\{ L, |f(m(1))-f(m(2))|, |f(m(2))|, |f(m(3))|, \ldots, |f(m(\card\cM))| \} 
		\leq 
		\max\biggl\{ L, 2\!\left[\sup_{z\in\cM} |f(z)| \right]\biggr\}
		\end{split}
    \end{equation} 
    (cf.\ \cref{def:infnorm}).
		Combining this,
    %\eqref{neural_net_representation:subnets},  
    \eqref{neural_net_representation:weights_and_biases_Phi_net},  
    and item~\ref{max_net2:item5} in \cref{lem:max_net2} 
    with \eqref{neural_net_representation:Theta_vector}
    proves that 
		\begin{equation} 
		\begin{split}
		\infnorm{\MappingStructuralToVectorized(\Phi)} 
		& 
		\leq 
    \max\bigl\{ \asinfnorm{\MappingStructuralToVectorized\bigl(((\bW_1,\bB_1))\bigr)}, 
    \asinfnorm{\MappingStructuralToVectorized\bigl(((\WW_{1}\bW_2, \WW_{1}\bB_2+\BB_{1}))\bigr) }, 
		\infnorm{\MappingStructuralToVectorized( \phi_{\card\cM})} \bigr\} 
		\\
		& \leq
		\max\biggl\{1,\sup_{z\in\cM} \infnorm{z}, L, 2\bigg[ \sup_{z\in\cM} |f(z)| \bigg]\biggr\}. 
		\end{split}
		\end{equation} 
		This establishes item~\ref{neural_net_representation:item4}. 
		%Next we prove item~\ref{neural_net_representation:item5}.  
		Observe that \eqref{neural_net_representation:subnets} ensures that for all 
		$x=(x_1,x_2,\ldots,x_d)\in\R^d$, 
		$z=(z_1,z_2,\ldots,z_d)\in\cM$ 
		it holds that 
		\begin{equation} 
		\sW{z}_1 x + \sB{z}
		= 
		\underbrace{
			\begin{pmatrix} 
			1 &  0 & \cdots &  0 \\
			-1 &  0 & \cdots &  0 \\
			0 &  1 & \cdots &  0 \\
			0 & -1 & \cdots &  0 \\
			\vdots  & \vdots &  \ddots & \vdots \\
			0 &  0 & \cdots &  1 \\ 
			0 &  0 & \cdots & -1
			\end{pmatrix}}_{\in\R^{(2d)\times d}}
		\begin{pmatrix} 
		x_1 \\
		x_2 \\
		\vdots \\
		x_d 
		\end{pmatrix}
		+ 
		\begin{pmatrix} 
		-z_1 \\
		z_1 \\
		-z_2 \\
		z_2 \\
		\vdots \\
		-z_d \\
		z_d 
		\end{pmatrix}  
		=
		\begin{pmatrix} 
		x_1 \\
		-x_1 \\
		x_2 \\
		-x_2 \\
		\vdots \\
		x_d \\
		-x_d  
		\end{pmatrix} 
		+ 
		\begin{pmatrix} 
		-z_1 \\
		z_1 \\
		-z_2 \\
		z_2 \\
		\vdots \\
		-z_d \\
		z_d 
		\end{pmatrix} 
		= 
		\begin{pmatrix} 
		x_1 - z_1 \\
		-(x_1 - z_1) \\
		x_2 - z_2 \\
		-(x_2 - z_2) \\
		\vdots \\
		x_d - z_d \\
		-(x_d - z_d) 
		\end{pmatrix}.  
		\end{equation} 
		% Hence, we obtain that for all 
		% $x=(x_1,x_2,\ldots,x_d)\in\R^d$,  
		% $z=(z_1,z_2,\ldots,z_d)\in\cM$ 
		% it holds that 
		% \begin{equation} \label{neural_net_representation:eq01}
		% \mathfrak{R}_{2d}(\sW{z}_1 x + \sB{z}_1) 
		% =
		% \begin{pmatrix} 
		% \max\{x_1-z_1,0\} \\
		% \max\{z_1-x_1,0\} \\
		% \max\{x_2-z_2,0\} \\
		% \max\{z_2-x_2,0\} \\
		% \vdots \\
		% \max\{x_d-z_d,0\} \\
		% \max\{z_d-x_d,0\}
		% \end{pmatrix}
		% \end{equation} 
		This and \eqref{neural_net_representation:weights_and_biases_Phi_net} prove that for all 
		$x=(x_1,x_2,\ldots,x_d)\in\R^d$, 
		$z=(z_1,z_2,\ldots,z_d)\in\cM$
		it holds that 
		\begin{equation} \label{neural_net_representation:zwischenschritt}
		\begin{split}
		\sW{z}_2 \bigl( \mathfrak{R}_{2d}(\sW{z}_1 x + \sB{z} ) \bigr)
		& = 
		\underbrace{
			\begin{pmatrix} 
			-L & -L & \cdots & -L 
			\end{pmatrix}}_{\in\R^{1\times(2d)}} 
		\begin{pmatrix} 
		\max\{x_1-z_1,0\} \\
		\max\{z_1-x_1,0\} \\
		\max\{x_2-z_2,0\} \\
		\max\{z_2-x_2,0\} \\
		\vdots \\
		\max\{x_d-z_d,0\} \\
		\max\{z_d-x_d,0\}
		\end{pmatrix}
		\\
		& = 
		-L \!\left[\sum_{i=1}^d \left( \max\{x_i-z_i,0\} + \max\{z_i-x_i,0\} \right) \right]
		= 
		-L \!\left[\sum_{i=1}^d |x_i-z_i|\right]
		\end{split}
		\end{equation} 
		(cf.\ Definition~\ref{def:relu}).
		Moreover, note that \eqref{neural_net_representation:weights_and_biases_Phi_net} implies that for all 
		$x\in\R^d$ 
		it holds that 
		\begin{equation}
		\bW_1 x + \bB_1
		= 
		\begin{pmatrix} 
		\sW{m(1)}_1 x + \sB{m(1)} \\
		\sW{m(2)}_1 x + \sB{m(2)} \\
		\vdots \\
		\sW{m(\card\cM)}_1 x + \sB{m(\card\cM)}
		\end{pmatrix}. 
		\end{equation}
		Therefore, we obtain that for all 
		$x\in\R^d$ 
		it holds that 
		\begin{equation} 
		\mathfrak{R}_{2d\card\cM}( 
		\bW_1 x + \bB_1 )
		= 
		\begin{pmatrix} 
		\mathfrak{R}_{2d}(\sW{m(1)}_1 x + \sB{m(1)}) \\
		\mathfrak{R}_{2d}(\sW{m(2)}_1 x + \sB{m(2)}) \\
		\vdots \\
		\mathfrak{R}_{2d}(\sW{m(\card\cM)}_1 x + \sB{m(\card\cM)})
		\end{pmatrix}.  
		\end{equation} 
		This, \eqref{neural_net_representation:weights_and_biases_Phi_net}, 
		%\eqref{neural_net_representation:eq01}, 
		and 
		\eqref{neural_net_representation:zwischenschritt} imply that for all 
		$x=(x_1,x_2,\dots,x_d)\in\R^d$ 
		it holds that 
		\begin{equation} 
		\begin{split}
		\bigl(\cR_{\mf r}(\Psi)\bigr)(x) 
		&= 
		\bW_2 
		\bigl(\mathfrak{R}_{2d\card\cM}( \bW_1 x + \bB_1 )\bigr)
		+ 
		\bB_2 
		\\
		& = 
		\begin{pmatrix} 
    \sW{m(1)}_2 & 0 & \cdots & 0 \\  
    0 & \sW{m(2)}_2 & \cdots & 0  \\
    \vdots & \vdots & \ddots & \vdots \\
    0 & 0 & \cdots &   
    \sW{m(\card\cM)}_2
    \end{pmatrix} 
		% \begin{pmatrix} 
		% \sW{m(1)}_2 & 0_{\R^{1\times(2d)}} & \ldots & 0_{\R^{1\times(2d)}} \\  
		% 0_{\R^{1\times(2d)}} & \sW{m(2)}_2 & \ldots & 0_{\R^{1\times(2d)}}  \\
		% \vdots & \vdots & \ddots & \vdots \\
		% 0_{\R^{1\times(2d)}} & 0_{\R^{1\times(2d)}} & \ldots &   
		% \sW{m(\card\cM)}_2
		% \end{pmatrix} 
    \begin{pmatrix} 
		\mathfrak{R}_{2d}(\sW{m(1)}_1 x + \sB{m(1)}) \\
		\mathfrak{R}_{2d}(\sW{m(2)}_1 x + \sB{m(2)}) \\
		\vdots \\
		\mathfrak{R}_{2d}(\sW{m(\card\cM)}_1 x + \sB{m(\card\cM)})
		\end{pmatrix}
		+ 
		\begin{pmatrix} 
		f(m(1)) \\
		f(m(2)) \\
		\vdots \\
		f(m(\card\cM)) 
		\end{pmatrix}
		\\
		& = 
		\begin{pmatrix} 
		\sW{m(1)}_2 
		\bigl(\mathfrak{R}_{2d}(\sW{m(1)}_1 x + \sB{m(1)})  \bigr)
		\\
		\sW{m(2)}_2 
		\bigl(\mathfrak{R}_{2d}(\sW{m(2)}_1 x + \sB{m(2)})  \bigr)
		\\
		\vdots \\
		\sW{m(\card\cM)}_2 
		\bigl(\mathfrak{R}_{2d}(\sW{m(\card\cM)}_1 x + \sB{m(\card\cM)})  \bigr)
		\end{pmatrix}
		+ 
		\begin{pmatrix} 
		f(m(1)) \\
		f(m(2)) \\
		\vdots \\
		f(m(\card\cM)) 
		\end{pmatrix}
		\\
		& = 
		\begin{pmatrix} 
		f(m(1)) - L \bigl[\sum_{i=1}^d |x_i-\lm 1i|\bigr] \\
		f(m(2)) - L \bigl[\sum_{i=1}^d |x_i-\lm 2i|\bigr] \\
		\vdots \\
		f(m(\card\cM)) - L \bigl[\sum_{i=1}^d |x_i-\lm{\card\cM}i| \bigr]
		\end{pmatrix}
		\end{split}
		\end{equation} 
		(cf.\ \cref{def:relu1,def:ANNrealization}).
    This,
    the fact that 
    $\Phi=\phi_{\card\cM}\bullet\Psi$,
    item~\ref{max_net:item3} in \cref{lem:max_net},
    and item (v) in \cite[Proposition~2.6]{Grohs2019ANNCalculus} ensure that for all 
		$x=(x_1,x_2,\dots,x_d)\in\R^d$ 
		it holds that 
		\begin{equation} 
		\begin{split}
		\bigl(\functionANN{\mf r}(\Phi)\bigr)(x) 
		& = 
		\left(\bigl[\functionANN{\mf r}(\phi_{\card\cM})\bigr]\circ\bigl[\functionANN{\mf r}(\Psi)\bigr]\right)\!(x)
		= 
		\max_{i\in\{1,2,\ldots,\card\cM\}} 
		\left[ 
		f(m(i)) - L\!\left(\sum_{j=1}^d |x_j-\lm ij|\right) 
		\right]
		\\
		& = 
		\max_{y=(y_1,y_2\ldots,y_d)\in\cM}
		\left[ 
		f(z) - L\!\left(\sum_{i=1}^d |x_i-y_i|\right)
		\right]\!.
		\end{split}
		\end{equation} 
		This establishes item~\ref{neural_net_representation:item5}. The proof of \cref{lem:neural_net_representation} is thus completed. 
	\end{proof}

	\subsubsection{Explicit approximations through DNNs}
	
	\begin{prop} \label{prop:neural_net_approximation}
		Let 
% 		let 
% 		$\mathfrak{I}_k\in\bN$, $k\in\N$, 
% 		satisfy for all 
% 		$k\in\N$ 
% 		that
% 		\begin{equation} 
% 		\mathfrak{I}_1
% 		= 
% 		\Biggl( 
% 		\left(
% 		\begin{pmatrix} 
% 		1 \\ -1 
% 		\end{pmatrix}, 
% 		\begin{pmatrix} 
% 		0 \\ 0 
% 		\end{pmatrix} 
% 		\right), 
% 		\left(
% 		\begin{pmatrix} 
% 		1 & -1
% 		\end{pmatrix} 
% 		, 
% 		0
% 		\right)
% 		\Biggr) 
% 		\qandq 
% 		\mathfrak{I}_k 
% 		= 
% 		\bP_k(\mathfrak{I}_1,\mathfrak{I}_1,\ldots,\mathfrak{I}_1),
% 		\end{equation} 
		$\phi_k\in\bN$, $k\in\{2,3,\ldots\}$, 
		satisfy for all 
		$k\in\{2,3,\ldots\}$ 
    that 
    $\inDimANN(\phi_k)=\outDimANN(\bP_2(\phi_2,\mf I_{k-1}))$,
		$\phi_{k+1} = \phi_{k} \bullet (\bP_{2}(\phi_2, \mathfrak{I}_{k-1}))$,
		and 
		\begin{equation} 
		\phi_2 
		= 
		\left( \rule{0cm}{1cm}
		\left(
		\begin{pmatrix} 
		1 & -1 \\
		0 & 1 \\
		0 & -1 
		\end{pmatrix}, 
		\begin{pmatrix} 
		0 \\ 0 \\ 0 	
		\end{pmatrix} 
		\right), 
		\left(
		\begin{pmatrix} 
		1 & 1 & -1
		\end{pmatrix}, 
		0  
		\right) 
    \right)
    \in
    \bigl((\R^{3\times 2}\times\R^{3})\times(\R^{1\times 3}\times\R)\bigr),
		\end{equation} 
		%let 
		%	$\norm{\cdot}\colon\R^d\to [0,\infty)$ 
    %be the standard norm on $\R^d$, 
    let
		$d\in\N$, 
		$L\in\R$, 
		let 
		$D\subseteq\R^d$ be a set, 
		let 
		$f\colon D \to \R$ 
		satisfy for all 
		$x=(x_1,x_2,\ldots,x_d)$,
		$y=(y_1,y_2,\ldots,y_d)\in D$ 
		that 
		$|f(x)-f(y)|\leq L\bigl[\sum_{i=1}^d |x_i-y_i|\bigr]$, 
		let 
    $\cM\subseteq D$
    satisfy
		$\card\cM \in \{2,3,\ldots\}$, 
		let 
		$m\colon\{1,2,\ldots,\card\cM\} \to \cM$ 
		be bijective,
		let
		$ \sW{z}_1 \in \R^{(2d)\times d} $, 
		$ \sW{z}_2 \in \R^{1\times (2d)} $,
		and $ \sB{z}\in\R^{2d}$, $z\in\cM$, 
		satisfy for all $z=(z_1,z_2,\dots,z_d)\in\cM$ that 
		\begin{equation} \label{neural_net_approximation:subnets}
		\sW{z}_1 = 
		\begin{pmatrix} 
      1 &  0 & \cdots &  0 \\
      -1 &  0 & \cdots &  0 \\
      0 &  1 & \cdots &  0 \\
      0 & -1 & \cdots &  0 \\
      \vdots  & \vdots &  \ddots & \vdots \\
      0 &  0 & \cdots &  1 \\ 
      0 &  0 & \cdots & -1
    \end{pmatrix}, 
    \qquad 
		\sB{z}
		= 
		\begin{pmatrix} 
		-z_1 \\
		z_1 \\
		-z_2 \\
		z_2 \\
		\vdots \\
		-z_d \\
		z_d
		\end{pmatrix}, 
		\qquad\text{and}\qquad
		\sW{z}_2 
		= 
		\begin{pmatrix} 
		-L & -L & \cdots & -L 
		\end{pmatrix}, 
		\end{equation} 
		let 
		$\bW_1\in \R^{(2d\card\cM)\times d}$, 
		$\bB_1\in \R^{2d\card\cM}$, 
		$\bW_2\in \R^{\card\cM \times (2d\card\cM)}$, 
		$\bB_2\in \R^{\card\cM}$ 
		satisfy 
		\begin{equation} 
		\bW_1 = \begin{pmatrix} 
		\sW{m(1)}_1 \\
		\sW{m(2)}_1 \\
		\vdots \\
		\sW{m(\card\cM)}_1
		\end{pmatrix}, 
		\;\;
		\bB_1 = \begin{pmatrix} 
		\sB{m(1)} \\
		\sB{m(2)} \\
		\vdots \\
		\sB{m(\card\cM)}
		\end{pmatrix}, 
		\;\;
		\bW_2 = \begin{pmatrix} 
    \sW{m(1)}_2 & 0 & \cdots & 0 \\  
    0 & \sW{m(2)}_2 & \cdots & 0  \\
    \vdots & \vdots & \ddots & \vdots \\
    0 & 0 & \cdots &   
    \sW{m(\card\cM)}_2
    \end{pmatrix},  
		% \bW_2 = \begin{pmatrix} 
		% \sW{m(1)}_2 & 0_{\R^{1\times(2d)}} & \ldots & 0_{\R^{1\times(2d)}} \\  
		% 0_{\R^{1\times(2d)}} & \sW{m(2)}_2 & \ldots & 0_{\R^{1\times(2d)}}  \\
		% \vdots & \vdots & \ddots & \vdots \\
		% 0_{\R^{1\times(2d)}} & 0_{\R^{1\times(2d)}} & \ldots &   
		% \sW{m(\card\cM)}_2
		% \end{pmatrix},  
		\;\;and \;\;
		B_2 = \begin{pmatrix} 
		f(m(1)) \\
		f(m(2)) \\
		\vdots \\
		f(m(\card\cM))
		\end{pmatrix},  
		\end{equation}
		and let 
		$\Phi\in\bN$ 
		satisfy 
		$\Phi=\phi_{\card\cM}\bullet ( (\bW_1,\bB_1) , (\bW_2,\bB_2) )$
		(cf.\ \cref{def:ANN,def:ReLu:identity,def:simpleParallelization,def:ANNcomposition} and \cref{lem:max_net}).
		Then 
		\begin{enumerate}[label=(\roman{*})]
      \item \label{nna:it1}
        it holds that $\dims(\Phi)=(d,2d\card{\mc M},2\card{\mc M}-1,2\card{\mc M}-3,\dots,3,1)\in\N^{\card\cM+2}$,
      \item \label{nna:it2}
        it holds that $\infnorm{\MappingStructuralToVectorized(\Phi)}\leq\max\{1,L, \sup_{z\in\cM} \infnorm{z} , 2[\sup_{z\in\cM} |f(z)|]\}$, and
      \item \label{nna:it3}
        it holds that
        \begin{equation} \label{neural_net_approximation:claim}
          \sup_{x\in D} \left| f(x) - \bigl(\functionANN{\mf r}(\Phi)\bigr)(x) \right| 
          \leq 
          2L\!
          \left[ 
          \sup_{x=(x_1,x_2,\ldots,x_d)\in D} 
          \left( 
          \inf_{y=(y_1,y_2,\ldots,y_d)\in\cM}  \sum_{i=1}^{d} |x_i-y_i|
          \right)
          \right]
        \end{equation}
		\end{enumerate}
    (cf.\ \cref{def:TranslateStructuredIntoVectorizedDescription,def:infnorm,def:relu1,def:ANNrealization}%
    %, Definition~\ref{def:relu1}, and Definition~\ref{def:ANNrealization}
    ).
	\end{prop}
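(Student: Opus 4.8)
The plan is to deduce this proposition directly from the two results preceding it, namely from \cref{lem:neural_net_representation}, which both pins down the architecture and the maximum-norm bound for $\Phi$ and identifies $\functionANN{\rect}(\Phi)$ with the $\ell^1$-type Lipschitz extension of $f$ from $\cM$, and from \cref{lem:lipschitz_extension}, which bounds the distance between such an extension and $f$ itself.

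First I would note that the hypotheses force $L\in[0,\infty)$: since $\card\cM\in\{2,3,\dots\}$, the set $\cM$ contains two distinct points $y=(y_1,\dots,y_d)$ and $z=(z_1,\dots,z_d)$, and the assumed estimate $\abs{f(y)-f(z)}\leq L[\sum_{i=1}^d\abs{y_i-z_i}]$ together with $\sum_{i=1}^d\abs{y_i-z_i}>0$ yields $L\geq0$. Next I would introduce the function $F\colon\R^d\to\R$ which satisfies for all $x=(x_1,\dots,x_d)\in\R^d$ that $F(x)=\max_{y=(y_1,\dots,y_d)\in\cM}[f(y)-L(\sum_{i=1}^d\abs{x_i-y_i})]$. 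With this notation all hypotheses of \cref{lem:neural_net_representation} are satisfied (with $f$ replaced by its restriction to $\cM$), and \cref{lem:neural_net_representation} therefore establishes items~\ref{nna:it1} and \ref{nna:it2} of this proposition as well as the identity $F=\functionANN{\rect}(\Phi)$.

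It then only remains to prove item~\ref{nna:it3}. For this I would apply \cref{lem:lipschitz_extension} with the metric space $(E,\delta)$ given by $E=D$ and $\delta\colon D\times D\to[0,\infty)$ satisfying $\delta(x,y)=\sum_{i=1}^d\abs{x_i-y_i}$, and with $\cM\is\cM$, $L\is L$, $f\is f|_D$; the function constructed in \cref{lem:lipschitz_extension}, namely $x\mapsto\sup_{y\in\cM}[f(y)-L\delta(x,y)]$, then coincides with the restriction $F|_D$ because $\cM$ is a finite nonempty set. Item~\ref{lipschitz_extension:item4} in \cref{lem:lipschitz_extension} hence shows that for all $x=(x_1,\dots,x_d)\in D$ it holds that $\abs{F(x)-f(x)}\leq 2L[\inf_{y=(y_1,\dots,y_d)\in\cM}\sum_{i=1}^d\abs{x_i-y_i}]$. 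Combining this with the identity $F=\functionANN{\rect}(\Phi)$ and taking the supremum over all $x\in D$ establishes \eqref{neural_net_approximation:claim}.

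I do not expect any serious obstacle here; the only points requiring a little care are (a) checking that $L\geq0$, so that \cref{lem:neural_net_representation} and \cref{lem:lipschitz_extension}, both of which presuppose a nonnegative Lipschitz constant, are applicable, and (b) bookkeeping that the identity $F=\functionANN{\rect}(\Phi)$ holds on all of $\R^d$ while the estimate $\abs{F(x)-f(x)}\leq 2L[\inf_{y\in\cM}\delta(x,y)]$ is available only for $x\in D$, so that the supremum in \eqref{neural_net_approximation:claim} must be taken over $D$ rather than over $\R^d$.
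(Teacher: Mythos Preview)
Your proof is correct and follows essentially the same route as the paper: invoke \cref{lem:neural_net_representation} to obtain items~\ref{nna:it1}, \ref{nna:it2} and the identification $\functionANN{\rect}(\Phi)=F$, then apply \cref{lem:lipschitz_extension} (with $E=D$ and the $\ell^1$-metric) for item~\ref{nna:it3}. Your explicit verification that $L\geq 0$---needed because both lemmas assume $L\in[0,\infty)$ while the proposition only states $L\in\R$---is a point of care that the paper's own proof leaves implicit.
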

	\begin{proof}[Proof of \cref{prop:neural_net_approximation}]
		Throughout this proof let 
		$F\colon \R^d\to\R$ 
		satisfy for all 
		$x=(x_1,x_2,\dots,x_d)\in \R^d$ 
		that 
		\begin{equation} 
		F(x) = \max_{y=(y_1,y_2\ldots,y_d)\in\cM} 
		\left[ 
		f(y) - L\!\left(\sum_{i=1}^d |x_i-y_i|\right)
		\right]\!.
		\end{equation} 
		Observe that \cref{lem:neural_net_representation} establishes that
		\begin{enumerate}[label=(\Alph{*})]
      \item \label{nnr:it1}
        it holds that $\dims(\Phi)=(d,2d\card{\mc M},2\card{\mc M}-1,2\card{\mc M}-3,\dots,3,1)\in\N^{\card\cM+2}$,
      \item \label{nnr:it2}
        it holds that $\infnorm{\MappingStructuralToVectorized(\Phi)}\leq\max\{1,L, \sup_{z\in\cM} \infnorm{z} , 2[\sup_{z\in\cM} |f(z)|]\}$, and
      \item \label{nnr:it3}
        it holds for all $x\in D$ that
      $\bigl(\functionANN{\mf r}(\Phi)\bigr)(x)=F(x)$
    \end{enumerate}
    (cf.\ \cref{def:TranslateStructuredIntoVectorizedDescription,def:infnorm,def:relu1,def:ANNrealization}).
    Note that 
      \cref{nnr:it1,nnr:it2} 
    prove
      \cref{nna:it1,nna:it2}.
    Next observe that
      item~\ref{nnr:it3}
      and \cref{lem:lipschitz_extension} 
      (with
        $E\is D$,
        $\delta\is (D\times D\ni((x_1,x_2,\dots,x_d),(y_1,y_2,\dots,y_d))\mapsto \sum_{i=1}^d\abs{x_i-y_i}\in[0,\infty))$,
        $\mc M\is\mc M$,
        $L\is L$,
        $f\is f$,
        $F\is (D\ni x\mapsto F(x)\in\R\cup\{\infty\})$
      in the notation of Lemma~\ref{lem:lipschitz_extension})
    ensure that 
		\begin{equation} 
		\begin{split}
		&\sup_{x\in D} \left|f(x)-\bigl(\functionANN{\mf r}(\Phi)\bigr)(x)\right| 
		= 
		\sup_{x\in D} 
		\left| f(x) - F(x) \right| 
		\\&\leq 
		2L\!
		\left[ 
		\sup_{x=(x_1,x_2,\ldots,x_d)\in D} 
		\left( 
		\inf_{y=(y_1,y_2,\ldots,y_d)\in\cM}  \sum_{i=1}^d |x_i-y_i| 
		\right) 
		\right]\!.
		\end{split}
		\end{equation} 
    % This establishes \eqref{neural_net_approximation:claim}. 
    The proof of \cref{prop:neural_net_approximation} is thus completed. 
	\end{proof}
	
	\subsubsection{Implicit approximations through DNNs}
	
	\begin{cor} \label{cor:existence_of_neural_net_approximation_vectorized_description} 
		Let 
		$d,\mathfrak{d}\in\N$, 
		$L\in \R$, 
		let
		$D\subseteq\R^d$ be a set, 
		let 
		$f\colon D \to \R$ 
		satisfy for all 
		$x=(x_1,x_2,\ldots,x_d)$,
		$y=(y_1,y_2,\ldots,y_d)\in D$ 
		that 
		$|f(x)-f(y)|\leq L\bigl[\sum_{i=1}^d |x_i-y_i|\bigr]$, 
		let 
    $\cM\subseteq D$
    satisfy
    $\card\cM \in \{2,3,\ldots\}$, 
    and let $l=(l_0,l_1,\dots,l_{\card\cM+1})\in\N^{\card\cM+2}$
		  satisfy $l=(d,2d\card\cM,2\card\cM-1,2\card\cM-3,\ldots,3,1)$
		  and	$\sum_{k=1}^{\card\cM+1}l_k(l_{k-1}+1)\leq \mathfrak{d}$. 
Then there exists 
	$\theta\in\R^{\mathfrak{d}}$
	such that
$\infnorm{\theta} \leq \max\{1,L, \sup_{z\in\cM} \infnorm{z} , 2[\sup_{z\in\cM} |f(z)|] \}$ 
and
\begin{equation} \label{existence_of_neural_net_approximation_vectorized_description:claim}
	\sup_{x\in D} 
	\,\bigl\lvert f(x) - (\UnclippedRealV{\theta}{l})(x) \bigr\rvert
	\leq 
	2L\! \left[ 
	\sup_{x=(x_1,x_2,\ldots,x_d)\in D} 
	\left( 
	\inf_{y=(y_1,y_2,\ldots,y_d)\in\cM}  \sum_{i=1}^d |x_i-y_i| 
	\right) 
	\right]
  \end{equation}
  (cf.\ \cref{def:infnorm,def:rectclippedFFANN}).
\end{cor}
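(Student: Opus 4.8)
The plan is to reduce this implicit, vectorized statement to the explicit structured construction already carried out in \cref{prop:neural_net_approximation}. First I would introduce precisely the objects appearing in the hypotheses of \cref{prop:neural_net_approximation} for the present data $d$, $L$, $D$, $f$, $\cM$: the networks $\phi_k\in\bN$, $k\in\{2,3,\dots\}$, built recursively from the $2$-layer max network $\phi_2$ via parallelizations with the identity networks $\mathfrak{I}_{k-1}$ and compositions, the explicit weight matrices $W^z_1,W^z_2$ and bias vectors $B^z$ for $z\in\cM$ from \eqref{neural_net_approximation:subnets}, the block weight matrices and block bias vectors assembled from them, and the network $\Phi\in\bN$ obtained as $\phi_{\card\cM}$ composed with the resulting two-layer network. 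Since all of this matches the hypotheses of \cref{prop:neural_net_approximation} verbatim, that proposition then supplies $(\mathrm i)$ $\dims(\Phi)=(d,2d\card\cM,2\card\cM-1,2\card\cM-3,\dots,3,1)=l\in\N^{\card\cM+2}$, $(\mathrm{ii})$ $\infnorm{\MappingStructuralToVectorized(\Phi)}\leq\max\{1,L,\sup_{z\in\cM}\infnorm{z},2[\sup_{z\in\cM}\abs{f(z)}]\}$, and $(\mathrm{iii})$ $\sup_{x\in D}\abs{f(x)-(\functionANN{\rect}(\Phi))(x)}\leq 2L[\sup_{x=(x_1,\dots,x_d)\in D}(\inf_{y=(y_1,\dots,y_d)\in\cM}\sum_{i=1}^d\abs{x_i-y_i})]$.

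Next I would pass from the structured network $\Phi$ to a parameter vector of the prescribed length. Put $\vartheta=\MappingStructuralToVectorized(\Phi)$; by \cref{def:TranslateStructuredIntoVectorizedDescription}, together with $\dims(\Phi)=l$ (hence $\lengthANN(\Phi)=\card\cM+1$ and $\paramANN(\Phi)=\sum_{k=1}^{\card\cM+1}l_k(l_{k-1}+1)$), we have $\vartheta\in\R^{\paramANN(\Phi)}$, and by the hypothesis $\sum_{k=1}^{\card\cM+1}l_k(l_{k-1}+1)\leq\mathfrak{d}$ of the corollary we get $\paramANN(\Phi)\leq\mathfrak{d}$. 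I would then define $\theta=(\theta_1,\theta_2,\dots,\theta_{\mathfrak{d}})\in\R^{\mathfrak{d}}$ by requiring $(\theta_1,\theta_2,\dots,\theta_{\paramANN(\Phi)})=\vartheta$ and $\theta_i=0$ for all $i\in\N\cap(\paramANN(\Phi),\mathfrak{d}]$. Because the bound in $(\mathrm{ii})$ is at least $1>0$, appending zeros does not increase the maximum norm, so $\infnorm{\theta}=\infnorm{\vartheta}=\infnorm{\MappingStructuralToVectorized(\Phi)}\leq\max\{1,L,\sup_{z\in\cM}\infnorm{z},2[\sup_{z\in\cM}\abs{f(z)}]\}$, which is the asserted norm estimate.

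It then remains to identify $\UnclippedRealV{\theta}{l}$ with $\functionANN{\rect}(\Phi)$ on $D$. By \cref{cor:structvsvect} we have $(\UnclippedRealV{\MappingStructuralToVectorized(\Phi)}{\dims(\Phi)})(x)=(\functionANN{\rect}(\Phi))(x)$ for all $x\in\R^{\inDimANN(\Phi)}=\R^d$, i.e.\ $\UnclippedRealV{\vartheta}{l}$ and $\functionANN{\rect}(\Phi)$ agree on $\R^d$. Inspecting \cref{def:affine,def:FFNN,def:rectclippedFFANN} shows that the map $\UnclippedRealV{\,\cdot\,}{l}$ reads only the first $\sum_{k=1}^{\card\cM+1}l_k(l_{k-1}+1)=\paramANN(\Phi)$ entries of its parameter argument, and these coincide for $\theta$ and $\vartheta$ by construction; hence $\UnclippedRealV{\theta}{l}=\UnclippedRealV{\vartheta}{l}=\functionANN{\rect}(\Phi)$ on $\R^d\supseteq D$. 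Combining this with $(\mathrm{iii})$ yields $\sup_{x\in D}\abs{f(x)-(\UnclippedRealV{\theta}{l})(x)}=\sup_{x\in D}\abs{f(x)-(\functionANN{\rect}(\Phi))(x)}\leq 2L[\sup_{x=(x_1,\dots,x_d)\in D}(\inf_{y=(y_1,\dots,y_d)\in\cM}\sum_{i=1}^d\abs{x_i-y_i})]$, which is exactly \eqref{existence_of_neural_net_approximation_vectorized_description:claim}.

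The only mildly delicate point is the zero-padding step: one must check both that it leaves the realized function unchanged (since $\UnclippedRealV{\,\cdot\,}{l}$ depends only on the first $\paramANN(\Phi)\leq\mathfrak{d}$ coordinates of the parameter vector) and that it does not enlarge the maximum norm (which uses that the asserted bound is $\geq 1$). Everything else is a direct appeal to \cref{prop:neural_net_approximation} and \cref{cor:structvsvect}, and there is no genuinely hard step.
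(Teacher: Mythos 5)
Your proof is correct and follows essentially the same route as the paper: invoke \cref{prop:neural_net_approximation} (together with \cref{max_net:item0} in \cref{lem:max_net} for the existence of the $\phi_k$) to obtain a structured $\Phi$ with the right dimensions, maximum-norm bound, and approximation error, then pass to the vectorized description via \cref{cor:structvsvect}. The only difference is that you make explicit the zero-padding of $\MappingStructuralToVectorized(\Phi)\in\R^{\paramANN(\Phi)}$ to a vector $\theta\in\R^{\mathfrak d}$, verifying that this changes neither $\UnclippedRealV{\,\cdot\,}{l}$ (which reads only the first $\paramANN(\Phi)$ coordinates) nor the maximum norm; the paper leaves this step implicit in the phrase ``combining this with \cref{cor:structvsvect}.'' A minor remark: appending zeros never increases the maximum norm, so your appeal to the bound being $\geq 1$ is unnecessary, though harmless.
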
 
	\begin{proof}[Proof of \cref{cor:existence_of_neural_net_approximation_vectorized_description}] 
		Observe that \enum{
		  \cref{prop:neural_net_approximation} ;
		  item~\ref{max_net:item0} in \cref{lem:max_net} ;
    }[ensure] that there exists 
		$\Phi\in\ANNs$ 
		such that
    \begin{enumerate}[label=(\Alph{*})]
      \item 
        it holds that $\dims(\Phi)=l$,
      \item 
        it holds that $\infnorm{\MappingStructuralToVectorized(\Phi)}\leq\max\{1,L, \sup_{z\in\cM} \infnorm{z} , 2[\sup_{z\in\cM} |f(z)|]\}$, and
      \item 
        it holds that
        \begin{equation} 
          \sup_{x\in D} \left| f(x) - \bigl(\functionANN{\mf r}(\Phi)\bigr)(x) \right|
          \leq 
          2L\!
          \left[ 
          \sup_{x=(x_1,x_2,\ldots,x_d)\in D} 
          \left( 
          \inf_{y=(y_1,y_2,\ldots,y_d)\in\cM}  \sum_{i=1}^{d} |x_i-y_i|
          \right)
          \right]
        \end{equation}
    \end{enumerate}
    (cf.\ \cref{def:ANN,def:TranslateStructuredIntoVectorizedDescription,def:infnorm,def:relu1,def:ANNrealization}).
    Combining this with
    \cref{cor:structvsvect}
		establishes \eqref{existence_of_neural_net_approximation_vectorized_description:claim}. 
		The proof of \cref{cor:existence_of_neural_net_approximation_vectorized_description} is thus completed. 
	\end{proof} 
\begin{cor} \label{cor:existence_of_clipped_neural_net_approximation_vectorized_description} 
Let 
	$d,\mathfrak{d}\in\N$, 
	$L\in \R$, 
	$u\in[-\infty,\infty)$,
	$v\in(u,\infty]$,
let
	$D\subseteq\R^d$ be a set, 
let 
	$f\colon D \to [u,v]$ 
satisfy for all 
	$x=(x_1,x_2,\ldots,x_d)$,
	$y=(y_1,y_2,\ldots,y_d)\in D$ 
that 
% 	$u\leq f(x)\leq v$ 
% and 
	$|f(x)-f(y)|\leq L\bigl[\sum_{i=1}^d |x_i-y_i|\bigr]$, 
let 
  $\cM\subseteq D$
  satisfy
	$\card\cM \in \{2,3,\ldots\}$, 
let $l=(l_0,l_1,\dots,l_{\card\cM+1})\in\N^{\card\cM+2}$
  satisfy $l=(d,2d\card\cM,2\card\cM-1,2\card\cM-3,\ldots,3,1)$
  and $\mathfrak{d}\geq\sum_{k=1}^{\card\cM+1}l_k(l_{k-1}+1)$. 
Then there exists 
	$\theta\in\R^{\mathfrak{d}}$ 
such that 
$\infnorm{\theta} \leq \max\{1,L, \sup_{z\in\cM} \infnorm{z} ,\allowbreak 2[\sup_{z\in\cM} |f(z)|] \}$
and
	\begin{equation} \label{existence_of_clipped_neural_net_approximation_vectorized_description:claim}
	\sup_{x\in D} 
	\left| f(x) - \ClippedRealV{\theta}{l}uv(x) \right| 
	\leq 
	2L\!\left[ 
	\sup_{x=(x_1,x_2,\ldots,x_d)\in D} 
	\left( 
	\inf_{y=(y_1,y_2,\ldots,y_d)\in\cM}  \sum_{i=1}^d |x_i-y_i| 
	\right) 
	\right]
	\end{equation} 
(cf.\ \cref{def:infnorm,def:rectclippedFFANN}). 	
\end{cor}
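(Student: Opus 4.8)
The plan is to deduce Corollary~\ref{cor:existence_of_clipped_neural_net_approximation_vectorized_description} directly from its unclipped counterpart, Corollary~\ref{cor:existence_of_neural_net_approximation_vectorized_description}, together with the facts that the terminal clipping layer is $1$-Lipschitz (\cref{lem:clipcontr}) and that it fixes every point of the interval $[u,v]$, which is the codomain of $f$. First I would apply Corollary~\ref{cor:existence_of_neural_net_approximation_vectorized_description} (with $d\is d$, $\mathfrak d\is\mathfrak d$, $L\is L$, $D\is D$, $f\is f$, $\cM\is\cM$, $l\is l$ in its notation; this is legitimate since $f(D)\subseteq[u,v]\subseteq\R$ and the hypotheses imposed here on $l$ and $\mathfrak d$ coincide with those of Corollary~\ref{cor:existence_of_neural_net_approximation_vectorized_description}) to obtain some $\theta\in\R^{\mathfrak d}$ with $\infnorm\theta\leq\max\{1,L,\sup_{z\in\cM}\infnorm z,2[\sup_{z\in\cM}|f(z)|]\}$ and
\begin{equation}
\sup_{x\in D}\bigl\lvert f(x)-(\UnclippedRealV\theta l)(x)\bigr\rvert
\leq
2L\biggl[\sup_{x=(x_1,x_2,\ldots,x_d)\in D}\Bigl(\inf_{y=(y_1,y_2,\ldots,y_d)\in\cM}\sum_{i=1}^d|x_i-y_i|\Bigr)\biggr].
\end{equation}
This very $\theta$ will be the one asserted to exist, so the required bound on $\infnorm\theta$ is already in hand.

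It then remains to transfer the supremum estimate from $\UnclippedRealV\theta l$ to $\ClippedRealV\theta l uv$. Here I would argue exactly as in the proof of \cref{cor:ClippedRealNNLipsch}: since $l=(d,2d\card\cM,2\card\cM-1,\ldots,3,1)$ has last entry $l_{\card\cM+1}=1$, Definitions~\ref{def:FFNN} and~\ref{def:rectclippedFFANN} together with $\Clip{-\infty}\infty1=\id_\R$ give $\ClippedRealV\theta l uv(x)=\clip uv\bigl((\UnclippedRealV\theta l)(x)\bigr)$ for all $x\in\R^d$. Because $f(D)\subseteq[u,v]$ we have $\clip uv(f(x))=f(x)$ for every $x\in D$, so that, invoking the one-dimensional case of \cref{lem:clipcontr} (equivalently, \cref{lem:max_estimate} and \cref{cor:min_estimate}), for each $x\in D$ it holds that
\begin{equation}
\bigl\lvert f(x)-\ClippedRealV\theta l uv(x)\bigr\rvert
=
\bigl\lvert\clip uv(f(x))-\clip uv\bigl((\UnclippedRealV\theta l)(x)\bigr)\bigr\rvert
\leq
\bigl\lvert f(x)-(\UnclippedRealV\theta l)(x)\bigr\rvert.
\end{equation}
Taking the supremum over $x\in D$ and combining with the displayed bound obtained from Corollary~\ref{cor:existence_of_neural_net_approximation_vectorized_description} yields \eqref{existence_of_clipped_neural_net_approximation_vectorized_description:claim}, which completes the argument.

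No genuine difficulty arises in this proof. The only points that need a moment's care are the bookkeeping identity $\ClippedRealV\theta l uv=\clip uv\circ\UnclippedRealV\theta l$ (read off from the definition of the rectified clipped DNN, using that the last hidden-to-output affine map precedes the terminal activation and that the unclipped version uses $\id_\R$ in that slot) and the verification that the architecture-dimension hypothesis $\mathfrak d\geq\sum_{k=1}^{\card\cM+1}l_k(l_{k-1}+1)$ assumed here is precisely what is needed both to form $\ClippedRealV\theta l uv$ and to invoke Corollary~\ref{cor:existence_of_neural_net_approximation_vectorized_description}; everything else reduces to the single-line contraction estimate above.
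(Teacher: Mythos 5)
Your proposal is correct and follows essentially the same route as the paper's own proof: invoke Corollary~\ref{cor:existence_of_neural_net_approximation_vectorized_description} to obtain $\theta$ with the stated norm bound and the unclipped supremum estimate, then use that $f(D)\subseteq[u,v]$ fixes $f$ under clipping together with the $1$-Lipschitz contraction property of the clipping function (Lemma~\ref{lem:clipcontr}) to transfer the bound to $\ClippedRealV{\theta}{l}uv$. The only cosmetic difference is that you spell out the bookkeeping identity $\ClippedRealV{\theta}{l}uv=\clip uv\circ\UnclippedRealV{\theta}{l}$ explicitly, which the paper leaves implicit.
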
 
\begin{proof}[Proof of \cref{cor:existence_of_clipped_neural_net_approximation_vectorized_description}]
First, observe that \cref{cor:existence_of_neural_net_approximation_vectorized_description} (with 
	$ d \is d $, 
	$ \mf d \is \mf d $, 
	$ L \is L $, 
	$ D \is D $, 
	$ f \is (D\ni x\mapsto f(x)\in\R) $, 
	$ \mc M \is \mc M $,
	$l\is l$
in the notation of \cref{cor:existence_of_neural_net_approximation_vectorized_description}) ensures that there exists 
	$\theta\in\R^{\mathfrak{d}}$
which satisfies 
$\infnorm{\theta} \leq \max\{1,L, \sup_{z\in\cM} \infnorm{z} , 2[\sup_{z\in\cM} |f(z)|] \}$  
and
	\begin{equation} 
	\sup_{x\in D} 
	\,\bigl\lvert f(x) - (\UnclippedRealV{\theta}{l})(x) \bigr\rvert 
	\leq 
	2L\!\left[ 
	\sup_{x=(x_1,x_2,\ldots,x_d)\in D} 
	\left( 
	\inf_{y=(y_1,y_2,\ldots,y_d)\in\cM}  \sum_{i=1}^d |x_i-y_i| 
	\right) 
  \right]
	\end{equation} 
(cf.\ \cref{def:infnorm,def:rectclippedFFANN}).
The assumption that for all 
	$x\in D$ 
it holds that 
	$u \leq f(x) \leq v$
and \cref{lem:clipcontr}
hence imply that 
	\begin{equation} 
	\begin{split}
	& 
	\sup_{x\in D} 
	\,\babs{ f(x) - \ClippedRealV{\theta}{l}uv(x) }
%	\\
%	& 
% 	= 
% 	\sup_{x\in A} 
% 	\left| f(x) -  \clip uv\bigl(\bigl(\UnclippedRealV{\vartheta}{(l_0,l_1,\dots,l_{\card\cM+1})}\bigr)(x)\bigr) \right|
% 	\\[1ex] 
    = 
	\sup_{x\in D} 
	\,\babs{ \Clip uv1(f(x)) -  \Clip uv1\bigl((\UnclippedRealV{\theta}{l})(x)\bigr) }
	\\
	& \leq 
	\sup_{x\in D} 
	\,\babs{ f(x) - (\UnclippedRealV{\theta}{l})(x) }
%	\\
%	& 
	\leq 
	2L\!\left[ 
	\sup_{x=(x_1,x_2,\ldots,x_d)\in D} 
	\left( 
	\inf_{y=(y_1,y_2,\ldots,y_d)\in\cM}  \sum_{i=1}^d |x_i-y_i| 
	\right) 
  \right]
	\end{split}
	\end{equation} 	
	(cf.\ \cref{def:clip}).
%This %establishes \eqref{existence_of_clipped_neural_net_approximation_vectorized_description:claim}. 
The proof of \cref{cor:existence_of_clipped_neural_net_approximation_vectorized_description}
is thus completed.
\end{proof} 

\begin{cor} \label{cor:existence_of_clipped_neural_net_approximation_vectorized_description2} 
Let 
  $d,\mathfrak{d},\mf L\in\N$, 
  $L\in \R$, 
  $u\in[-\infty,\infty)$,
  $v\in(u,\infty]$,
let
  $D\subseteq\R^d$ be a set, 
let 
  $f\colon D \to ([u,v]\cap\R)$ 
satisfy for all 
  $x=(x_1,x_2,\ldots,x_d)$,
  $y=(y_1,y_2,\ldots,y_d)\in D$ 
that 
%   $u\leq f(x)\leq v$ 
% and 
  $|f(x)-f(y)|\leq L\bigl[\sum_{i=1}^d |x_i-y_i|\bigr]$, 
let 
  $\cM\subseteq D$
  satisfy
  $\card\cM \in \{2,3,\ldots\}$, 
let $l=(l_0,l_1,\dots,l_{\mf L})\in\N^{\mf L+1}$
  satisfy for all
    $k\in\{2,3,\dots,\card\cM\}$
  that
    $\mf L\geq \card\cM+1$,
    $\sum_{i=1}^{\mf L}l_i(l_{i-1}+1)\leq\mf d$,
    $l_0=d$,
    $l_{\mf L}=1$,
    $l_1\geq 2d\card\cM$,
    and $l_k\geq 2\card\cM-2k+3$,
and assume for all
  $i\in\N\cap(\card\cM,\mf L)$
that 
  $l_i\geq 2$. 
Then there exists 
  $\theta\in\R^{\mathfrak{d}}$
such that 
  $\infnorm{\theta} \leq \max\{1,L, \sup_{z\in\cM} \infnorm{z} , 2[\sup_{z\in\cM} |f(z)|] \}$
  and
  \begin{equation} \label{existence_of_clipped_neural_net_approximation_vectorized_description2:claim}
  \sup_{x\in D} 
  \,\babs{ f(x) - \ClippedRealV{\theta}{l}uv(x) }
  \leq 
  2L\!\left[ 
  \sup_{x=(x_1,x_2,\ldots,x_d)\in D} 
  \left( 
  \inf_{y=(y_1,y_2,\ldots,y_d)\in\cM}  \sum_{i=1}^d |x_i-y_i| 
  \right) 
  \right]
  \end{equation} 
(cf.\ \cref{def:infnorm,def:rectclippedFFANN}).   
\end{cor}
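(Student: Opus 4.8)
The plan is to deduce the statement from \cref{cor:existence_of_clipped_neural_net_approximation_vectorized_description}, which already provides such a $\theta$ for the minimal architecture, and then to widen and lengthen the resulting network using \cref{lem:embednet_vectorized}. To this end, set $\tilde l = (d, 2d\card\cM, 2\card\cM-1, 2\card\cM-3, \ldots, 3, 1) \in \N^{\card\cM+2}$ (so that the $k$-th entry of $\tilde l$ equals $2\card\cM - 2k + 3$ for every $k \in \{2, 3, \ldots, \card\cM+1\}$), and let $\mf d_0 = \sum_{k=1}^{\card\cM+1}\tilde l_k(\tilde l_{k-1}+1) \in \N$.

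First I would apply \cref{cor:existence_of_clipped_neural_net_approximation_vectorized_description} with $d \is d$, $\mf d \is \mf d_0$, $L \is L$, $u \is u$, $v \is v$, $D \is D$, $f \is f$, $\cM \is \cM$, and with the forced architecture of that corollary being $\tilde l$ (the required inequality $\mf d_0 \geq \sum_{k=1}^{\card\cM+1}\tilde l_k(\tilde l_{k-1}+1)$ holds with equality). This yields $\vartheta \in \R^{\mf d_0}$ with $\infnorm{\vartheta} \leq \max\{1, L, \sup_{z\in\cM}\infnorm{z}, 2[\sup_{z\in\cM}\abs{f(z)}]\}$ and $\sup_{x\in D}\babs{f(x) - \ClippedRealV{\vartheta}{\tilde l}{u}{v}(x)} \leq 2L[\sup_{x=(x_1,\ldots,x_d)\in D}(\inf_{y=(y_1,\ldots,y_d)\in\cM}\sum_{i=1}^d\abs{x_i - y_i})]$.

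Next I would apply \cref{lem:embednet_vectorized} with the source architecture taken to be $\tilde l$ (so that the role of ``$L$'' there is played by $\card\cM + 1$) and the target architecture taken to be $l$, i.e.\ with $u \is u$, $v \is v$, $L \is \card\cM+1$, $\mf L \is \mf L$, $\theta \is \vartheta$, the source parameter dimension equal to $\mf d_0$, and the target parameter dimension equal to $\mf d$. All hypotheses of \cref{lem:embednet_vectorized} are precisely the standing assumptions of the corollary: $\mf L \geq \card\cM+1$; $l_0 = d = \tilde l_0$ and $l_{\mf L} = 1 = \tilde l_{\card\cM+1}$; $l_1 \geq 2d\card\cM = \tilde l_1$ and $l_k \geq 2\card\cM - 2k+3 = \tilde l_k$ for $k \in \{2, \ldots, \card\cM\}$ (giving the required domination of the first $\card\cM$ hidden widths); $l_i \geq 2 = 2l_{\mf L}$ for $i \in \N \cap (\card\cM, \mf L)$; $\mf d \geq \sum_{i=1}^{\mf L}l_i(l_{i-1}+1)$; and $\mf d_0 = \sum_{i=1}^{\card\cM+1}\tilde l_i(\tilde l_{i-1}+1)$. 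Hence there is $\theta \in \R^{\mf d}$ with $\infnorm{\theta} \leq \max\{1, \infnorm{\vartheta}\}$ and $\ClippedRealV{\theta}{l}{u}{v} = \ClippedRealV{\vartheta}{\tilde l}{u}{v}$.

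Finally, since $\max\{1, L, \sup_{z\in\cM}\infnorm{z}, 2[\sup_{z\in\cM}\abs{f(z)}]\} \geq 1$, the estimate $\infnorm{\theta} \leq \max\{1, \infnorm{\vartheta}\}$ together with the bound on $\infnorm{\vartheta}$ gives the asserted norm estimate, and the identity $\ClippedRealV{\theta}{l}{u}{v} = \ClippedRealV{\vartheta}{\tilde l}{u}{v}$ together with the approximation estimate for $\vartheta$ gives \eqref{existence_of_clipped_neural_net_approximation_vectorized_description2:claim}. I expect the only non-routine point to be the bookkeeping of indices in the two invocations — in particular, recognizing that $\tilde l \in \N^{\card\cM+2}$ describes a DNN of length $\card\cM+1$ in the sense of \cref{def:rectclippedFFANN}, so that ``$L$'' in \cref{lem:embednet_vectorized} must be taken to be $\card\cM+1$, and verifying that the width hypotheses imposed on $l$ are exactly what \cref{lem:embednet_vectorized} requires for the embedding to go through.
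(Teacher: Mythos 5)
Your proposal is correct and matches the paper's own proof essentially line for line: the paper also introduces the minimal architecture $\mf l=(d,2d\card\cM,2\card\cM-1,\dots,3,1)\in\N^{\card\cM+2}$, applies \cref{cor:existence_of_clipped_neural_net_approximation_vectorized_description} with parameter dimension $\sum_{k=1}^{\card\cM+1}\mf l_k(\mf l_{k-1}+1)$ to obtain a parameter vector, and then invokes \cref{lem:embednet_vectorized} with $L\is\card\cM+1$ to embed it into the target architecture $l$. The index bookkeeping you flag (treating $\tilde l\in\N^{\card\cM+2}$ as a length-$(\card\cM+1)$ DNN) is handled identically in the paper.
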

\begin{proof}[Proof of \cref{cor:existence_of_clipped_neural_net_approximation_vectorized_description2}]
  Throughout this proof let
    $\mf l=(\mf l_0,\mf l_1,\dots,\mf l_{\card\cM+1})\in\N^{\card\cM+2}$
  satisfy $\mf l=(d,2d\card\cM,\allowbreak 2\card\cM-1,2\card\cM-3,\ldots,3,1)$.
  First, note that
    \cref{cor:existence_of_clipped_neural_net_approximation_vectorized_description}
    (with
      $d\is d$,
      $\mf d\is\sum_{k=1}^{\card\cM+1}\mf l_k(\mf l_{k-1}+1)$,
      $L\is L$,
      $u\is u$,
      $v\is v$,
      $D\is D$,
      $f\is f$,
      $\cM\is\cM$,
      $l\is\mf l$
    in the notation of \cref{cor:existence_of_clipped_neural_net_approximation_vectorized_description})
  establishes that there exists
    $\eta\in\R^{\sum_{k=1}^{\card\cM+1}\mf l_k(\mf l_{k-1}+1)}$
  which satisfies
  $\infnorm{\eta} \leq \max\{1,L, \sup_{z\in\cM} \infnorm{z} , 2[\sup_{z\in\cM} |f(z)|] \}$
  and
  \begin{equation}
    \label{eq:ecnnav.1}
    \sup_{x\in D} 
    \left| f(x) - \ClippedRealV{\eta}{\mf l}uv(x) \right| 
    \leq 
    2L\!\left[ 
    \sup_{x=(x_1,x_2,\ldots,x_d)\in D} 
    \left( 
    \inf_{y=(y_1,y_2,\ldots,y_d)\in\cM}  \sum_{i=1}^d |x_i-y_i| 
    \right) 
    \right]
  \end{equation} 
  (cf.\ \cref{def:infnorm,def:rectclippedFFANN}). 	
  Next observe that
    \cref{lem:embednet_vectorized}
    (with
      $u\is u$,
      $v\is v$,
      $L\is\card\cM+1$,
      $\mf L\is \mf L$,
      $d\is \sum_{k=1}^{\card\cM+1}\mf l_k(\mf l_{k-1}+1)$,
      $\mf d\is \mf d$,
      $\theta\is\eta$,
      $(l_0,l_1,\dots,l_L)\is (\mf l_0,\mf l_1,\dots,\mf l_{\card\cM+1})$,
      $(\mf l_0,\mf l_1,\dots,\mf l_{\mf L})\is (l_0,l_1,\dots,l_{\mf L})$,
    in the notation of \cref{lem:embednet_vectorized})
  shows that there exists $\theta\in\R^d$ such that
  \begin{equation}
    \infnorm{\theta}\leq\max\{1,\infnorm{\eta}\}
    \qandq
    \ClippedRealV \theta{l}uv=\ClippedRealV \eta{\mf l}uv.
  \end{equation}
  % Note that 
  %   \cref{cor:existence_of_clipped_neural_net_approximation_vectorized_description}
  %   and \cref{lem:embednet_vectorized}
  % establish
  %   \eqref{existence_of_clipped_neural_net_approximation_vectorized_description2:claim}.
  Combining 
    this 
  with 
    \eqref{eq:ecnnav.1} 
  proves 
    \eqref{existence_of_clipped_neural_net_approximation_vectorized_description2:claim}.
  The proof of \cref{cor:existence_of_clipped_neural_net_approximation_vectorized_description2} is thus completed.
\end{proof}

\begin{cor} \label{cor:concrete_clipped_neural_net_approximation}
Let 
	$d,\mathfrak{d},N\in\N$, 	
	$L\in \R$,
	$u\in[-\infty,\infty)$,
	$v\in(u,\infty]$
satisfy 
	$\mathfrak{d}\geq 2d^2(N+1)^d + 5d(N+1)^{2d} + \frac43 (N+1)^{3d}$, 
let 
	$\norm{\cdot}\colon\R^d\to [0,\infty)$ 
be the standard norm, 
let 
	$p=(p_1,p_2,\ldots,p_d)$,
	$q=(q_1,q_2,\ldots,q_d)\in \R^d$ 
satisfy for all 
	$i\in\{1,2,\ldots,d\}$ 
that 
	$p_i \leq q_i$
	and $\max_{j\in\{1,2,\dots,d\}}(q_j-p_j)>0$,
let 	
	$D = \prod_{i=1}^d [p_i,q_i]$, 
let 
	$\cM\subseteq D$ 
satisfy
	\begin{equation} 
	\cM = \left\{
	y=(y_1,y_2,\ldots,y_d)\in\R^d\colon
	\begin{pmatrix} 
	\Exists k_1,k_2,\ldots,k_d\in\{0,1,\ldots,N\}\colon
	\\
	\Forall i\in\{1,2,\ldots,d\}\colon y_i=p_i + \frac{k_i}{N}(q_i-p_i) 
	\end{pmatrix}
	\right\}\!, 
	\end{equation}  
and let 
	$f\colon D \to ([u,v]\cap\R)$ 
satisfy for all 
	$x,y\in D$ 
that 
	$|f(x)-f(y)|\leq L\norm{x-y}$. 
Then there exist 
  $\theta\in\R^{\mf d}$,
	$\mf L\in\N$, 
	$l=(l_0,l_1,\ldots,l_{\mf L})\in\N^{\mf L+1}$ 
such that
  $\infnorm{\theta} \leq \max\{1,L, \infnorm{p}, \infnorm{q} , 2[\sup_{z\in D} |f(z)|] \}$,
	$\sum_{k=1}^{\mf L} l_k(l_{k-1}+1)\leq \mf d$,
and  
	\begin{equation} \label{concrete_clipped_neural_net_approximation:claim}
	\sup_{x\in D} 
	\left| f(x) - \ClippedRealV{\theta}{l}uv(x) \right| 
	\leq 
	\frac{L}{N}\Biggl[ \sum_{i=1}^d |q_i-p_i| \Biggr]
	\end{equation}
	(cf.\ \cref{def:infnorm,def:rectclippedFFANN}).
\end{cor}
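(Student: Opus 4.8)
The plan is to obtain \cref{cor:concrete_clipped_neural_net_approximation} as a specialization of \cref{cor:existence_of_clipped_neural_net_approximation_vectorized_description2}, using the regular grid $\cM$ simultaneously as the interpolation set and as the covering set and choosing for $l$ exactly the architecture produced by the interpolation construction in \cref{prop:neural_net_approximation}.

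\textbf{Step 1 (from the Euclidean to the $\ell^1$ Lipschitz condition).} Since $\norm{x-y}\le\sum_{i=1}^d\abs{x_i-y_i}$ for all $x,y\in\R^d$, the hypothesis $\abs{f(x)-f(y)}\le L\norm{x-y}$ implies $\abs{f(x)-f(y)}\le L[\sum_{i=1}^d\abs{x_i-y_i}]$, i.e.\ $f$ satisfies the Lipschitz hypothesis of \cref{cor:existence_of_clipped_neural_net_approximation_vectorized_description2}. I would also record that $\cM\subseteq D$, that $\sup_{z\in\cM}\infnorm z\le\sup_{z\in D}\infnorm z\le\max\{\infnorm p,\infnorm q\}$, and that $\sup_{z\in\cM}\abs{f(z)}\le\sup_{z\in D}\abs{f(z)}$. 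Moreover, since $\max_j(q_j-p_j)>0$ there is a coordinate $j_0$ with $q_{j_0}>p_{j_0}$, so the $N+1\ge2$ numbers $p_{j_0}+\tfrac kN(q_{j_0}-p_{j_0})$, $k\in\{0,1,\dots,N\}$, are pairwise distinct; hence $\card\cM\in\{2,3,\dots\}$ and $\card\cM\le(N+1)^d$.

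\textbf{Step 2 (choice of architecture and the parameter count).} Write $\mathfrak{c}=\card\cM$ and take $\mathfrak{L}=\mathfrak{c}+1$ and $l=(l_0,l_1,\dots,l_{\mathfrak{L}})=(d,2d\mathfrak{c},2\mathfrak{c}-1,2\mathfrak{c}-3,\dots,3,1)\in\N^{\mathfrak{c}+2}$. With this choice every hypothesis of \cref{cor:existence_of_clipped_neural_net_approximation_vectorized_description2} about $l$ except the parameter bound holds trivially (the relevant inequalities become equalities and $\N\cap(\mathfrak{c},\mathfrak{L})=\emptyset$). For the parameter bound a direct summation — grouping the layers $k\ge3$, for which $l_k(l_{k-1}+1)=a_k(a_k+3)$ with $a_k=2\mathfrak{c}-2k+3$ and the $a_k$ running through the odd integers $3,5,\dots,2\mathfrak{c}-3$ — gives
\[
  \sum_{i=1}^{\mathfrak{L}}l_i(l_{i-1}+1)
  =
  \tfrac43\mathfrak{c}^3+(4d-1)\mathfrak{c}^2+2d^2\mathfrak{c}-\tfrac13\mathfrak{c}+1 .
\]
Because $\mathfrak{c}\ge2$ one has $(4d-1)\mathfrak{c}^2-\tfrac13\mathfrak{c}+1\le4d\mathfrak{c}^2$, and because $\mathfrak{c}\le(N+1)^d$ this yields $\sum_{i=1}^{\mathfrak{L}}l_i(l_{i-1}+1)\le\tfrac43(N+1)^{3d}+5d(N+1)^{2d}+2d^2(N+1)^d\le\mathfrak{d}$, as required. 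This arithmetic (in particular evaluating the sum of the products $a_k(a_k+3)$) is the one step needing care, but it is purely mechanical.

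\textbf{Step 3 (invoking \cref{cor:existence_of_clipped_neural_net_approximation_vectorized_description2} and estimating the covering radius).} Applying \cref{cor:existence_of_clipped_neural_net_approximation_vectorized_description2} with these data produces $\theta\in\R^{\mathfrak{d}}$ with $\infnorm\theta\le\max\{1,L,\sup_{z\in\cM}\infnorm z,2[\sup_{z\in\cM}\abs{f(z)}]\}$, which by Step 1 is $\le\max\{1,L,\infnorm p,\infnorm q,2[\sup_{z\in D}\abs{f(z)}]\}$, together with
\[
  \sup_{x\in D}\babs{f(x)-\ClippedRealV{\theta}{l}{u}{v}(x)}
  \le
  2L\Bigl[\sup_{x=(x_1,\dots,x_d)\in D}\bigl(\inf_{y=(y_1,\dots,y_d)\in\cM}\textstyle\sum_{i=1}^d\abs{x_i-y_i}\bigr)\Bigr].
\]
To close the argument, for $x=(x_1,\dots,x_d)\in D$ I would choose in each coordinate $i$ the grid value nearest to $x_i$ (which lies within $\tfrac{q_i-p_i}{2N}$ of $x_i$, trivially so also when $q_i=p_i$), obtaining some $y\in\cM$ with $\sum_{i=1}^d\abs{x_i-y_i}\le\sum_{i=1}^d\tfrac{q_i-p_i}{2N}$; hence the right-hand side above is $\le2L\cdot\tfrac1{2N}\sum_{i=1}^d(q_i-p_i)=\tfrac LN[\sum_{i=1}^d\abs{q_i-p_i}]$, which is \eqref{concrete_clipped_neural_net_approximation:claim}. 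The main obstacle is the parameter-count arithmetic of Step 2; everything else reduces to chaining \cref{cor:existence_of_clipped_neural_net_approximation_vectorized_description2} with the two elementary estimates above.
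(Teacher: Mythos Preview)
Your proposal is correct and follows essentially the same route as the paper: choose $\mathfrak L=\card\cM+1$ with the architecture $(d,2d\card\cM,2\card\cM-1,\dots,3,1)$, verify the parameter count against $\mathfrak d$, invoke the preceding approximation corollary, and bound the grid covering radius by $\tfrac{1}{2N}\sum_i(q_i-p_i)$. The only cosmetic differences are that the paper cites \cref{cor:existence_of_clipped_neural_net_approximation_vectorized_description} directly (your use of the ``2'' variant with this specific $l$ collapses to the same thing) and organizes the parameter-count arithmetic by separating weights and biases rather than via your $a_k(a_k+3)$ grouping; note also that your $a_k$ actually run through $1,3,\dots,2\mathfrak c-3$ (not starting at $3$), though your closed-form sum is correct.
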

\begin{proof}[Proof of \cref{cor:concrete_clipped_neural_net_approximation}]
  Throughout this proof
    let $l=(l_0,l_1,\dots,l_{\card\cM+1})\in\N^{\card\cM+2}$
      satisfy $l\allowbreak=(d,\allowbreak 2d\card\cM,\allowbreak2\card\cM-1,2\card\cM-3,\ldots,3,1)$.
  Observe that \enum{
    the fact that $\card\cM\leq(N+1)^d$ ;
    the fact that for all $n\in\N$ it holds that
      $\sum_{i=1}^n (2i-1)=n^2$ ;
    the fact that for all $n\in\N$ it holds that 
      $\sum_{i=1}^ni^2=\frac{n(n+1)(2n+1)}{6}\leq \frac{(n+1)^3}{3}$ ;
  }[ensure] that
  \begin{equation}
  \label{eq:ccnna3}
  \begin{split}
    &\sum_{k=1}^{\card\cM+1}l_k(l_{k-1}+1)
    \\&= 
    \underbrace{d(2d\card\cM)+2d\card\cM(2\card\cM-1)
      + 
      \Biggl[\sum_{i=1}^{\card\cM-1} (2i+1)(2i-1) \Biggr]}_{\text{number of weights}}
    +
    \underbrace{2d\card\cM
      + 
      \Biggl[\sum_{i=1}^{\card\cM} (2i-1) \Biggr]}_{\text{number of biases}}
    \\
    & = 
    2d^2\card\cM+4d\card\cM^2-2d\card\cM
    +\Biggl[\sum_{i=1}^{\card\cM-1} (4i^2-1)\Biggr]
    +2d\card\cM
    +\card\cM^2
    \\ &=
    2d^2\card\cM 
    + 
    (4d+1)\card\cM^2
    + 
    4 \Biggl[\sum_{i=1}^{\card\cM-1} i^2 \Biggr]
    -
    (\card\cM-1)
    \\&\leq 
    2d^2\card\cM 
    + 
    5d\card\cM^2 
    + 
    \tfrac43 \card\cM^3
    \leq
    2d^2(N+1)^d+5d(N+1)^{2d}+\tfrac43(N+1)^{3d}
    \leq
    \mf d
    .
  \end{split}
  \end{equation}
  In addition, note that
    the hypothesis that for all
      $x,y\in D$
    it holds that
      $\abs{f(x)-f(y)}\leq L\norm{x-y}$
  implies that for all
    $x=(x_1,x_2,\dots,x_d),\, y=(y_1,y_2,\dots,y_d)\in D$
  it holds that
  \begin{equation}
  \label{eq:ccnna2}
    \abs{f(x)-f(y)}
    \leq 
    L\!\left[\sum_{i=1}^d\abs{x_i-y_i}\right]
    \!.
  \end{equation}
  Furthermore, observe that
    the hypothesis that $\max_{j\in\{1,2,\dots,d\}}(q_j-p_j)>0$
  ensures that
    $\card\cM\geq 2$.
%   In the next step we note that
%     the fact that for all
%       $N\in\N$,
%       $x\in[0,N]$
%     there exists a
%       $y\in\{0,1,\dots,N\}$
%     such that $\abs{x-y}\leq\frac12$
%   shows that for all
%     $r\in\R$,
%     $s\in(r,\infty)$,
%     $x\in[r,s]$
%   there exists
%     $k\in\{0,1,\dots,N\}$
%   such that
%   \begin{equation}
%     \abs{x-(r+\tfrac kN(s-r))}
%     =
%     \abs{\frac{N(x-r)}{s-r}-k}
%     \leq
%     \frac12
%     .
%   \end{equation}
  Combining
    this,
    \eqref{eq:ccnna3},
    and \eqref{eq:ccnna2}
  with
    \cref{cor:existence_of_clipped_neural_net_approximation_vectorized_description}
  establishes that there exists 
    $\theta\in\R^{\mf d}$ 
  such that 
  $\infnorm{\theta} \leq \max\{1,L, \sup_{z\in\cM} \infnorm{z} , 2[\sup_{z\in\cM} |f(z)|] \}$
  and
  \begin{equation}
    \label{eq:ccnna7}
    \sup_{x\in D} 
    \left| f(x) - \ClippedRealV{\theta}{l}uv(x) \right| 
    \leq 
    2L\!\left[ 
    \sup_{x=(x_1,x_2,\ldots,x_d)\in D} 
    \left( 
    \inf_{y=(y_1,y_2,\ldots,y_d)\in\cM}  \sum_{i=1}^d |x_i-y_i| 
    \right) 
    \right]
  \end{equation} 
  (cf.\ \cref{def:infnorm,def:rectclippedFFANN}).
  Next note that 
    the hypothesis that $\cM\subseteq D=\prod_{i=1}^d[p_i,q_i]$
  implies that for all 
    $z\in\cM$
  it holds that
  \begin{equation}
  \label{eq:ccnna1}
    \infnorm{z}
    \leq
    \max\{\infnorm p,\infnorm q\}
    .
  \end{equation}
  Therefore, we obtain that
  \begin{equation}
    \label{eq:ccnna6}
    \infnorm{\theta} 
    \leq
    \max \left\{1,L, \infnorm{p}, \infnorm q , 2\!\left[\sup_{z\in\cM} |f(z)|\right] \right\}
    \leq
    \max \left\{1,L, \infnorm{p}, \infnorm q , 2\!\left[\sup_{z\in D} |f(z)|\right] \right\}
    \!.
  \end{equation}
  In the next step we note that
	the fact that for all
	  $N\in\N$,
	  $r\in\R$,
	  $s\in[r,\infty)$,
	  $x\in[r,s]$
	there exists
	  $k\in\{0,1,\dots,N\}$
	such that
	  $\abs{x-(r+\tfrac kN(s-r))}\leq \tfrac{s-r}{2N}$
  ensures that for all
	$x=(x_1,x_2,\dots,x_d)\in D$
  there exists
	$y=(y_1,y_2,\dots,y_d)\in\mc M$
  such that
  \begin{equation}
    \label{eq:ccnnaa5}
    \sum_{i=1}^d\abs{x_i-y_i}
    \leq
    \frac{1}{2N}\!\left[\sum_{i=1}^d \abs{q_i-p_i}\right]\!
    .
  \end{equation}
  Combining
    this,
    \eqref{eq:ccnna3},
    \eqref{eq:ccnna7},
    and \eqref{eq:ccnna6}
  establishes \eqref{concrete_clipped_neural_net_approximation:claim}.
	The proof of \cref{cor:concrete_clipped_neural_net_approximation} is thus completed. 
\end{proof}
	
\subsection{Analysis of the generalization error}
\label{subsec:generalization_error}

\subsubsection{Hoeffding's concentration inequality}
\label{subsec:hoeffding}

\begingroup
\newcommand{\vmcF}{\mc F}
\begin{prop}
\label{prop:Hoeffding2}
Let $ ( \Omega, \vmcF, \P ) $ be a probability space, 
let $ N \in \N $, 
$\eps\in[0,\infty)$,
$ a_1, a_2, \dots, a_N \in \R $, 
$ b_1 \in [a_1,\infty) $, 
$ b_2 \in [a_2, \infty) $, 
$ \dots $, 
$ b_N \in [ a_N, \infty) $,
assume $\sum_{n=1}^N(b_n-a_n)^2\neq 0$,
and let $ X_n \colon \Omega \to [ a_n, b_n ] $, $ n \in \{ 1, 2, \dots, N \} $, 
be independent random variables. 
Then
\begin{equation}
\label{eq:Hoeffding_inequality3}
  \P\!\left(
    \frac{ 1 }{ N }
    \left|
        \sum_{ n = 1 }^N
        \big(
          X_n
          -
          \E[ X_n ]
        \big)
    \right|
    \geq 
    \varepsilon
  \right)
  \leq
  2 
  \exp\!\left(
    \frac{
      - 2 \varepsilon^2 
      N^2
    }{
      \sum_{ n = 1 }^N
      ( b_n - a_n )^2
    }
  \right)
  \!.
\end{equation}
\end{prop}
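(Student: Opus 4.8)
The plan is to run the classical exponential moment bound. First I would dispose of the case $\eps = 0$: there the right-hand side of \eqref{eq:Hoeffding_inequality3} equals $2$ and the inequality is trivial, so from now on assume $\eps \in (0,\infty)$. The two ingredients are (a) a one-variable sub-Gaussian estimate for bounded, centered random variables (Hoeffding's lemma) and (b) Markov's inequality applied to an exponential, combined with independence.

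The key auxiliary estimate I would prove is the following: if $Y \colon \Omega \to \R$ is a random variable with $\E[Y] = 0$ and $\P(a \le Y \le b) = 1$ for some $a,b \in \R$ with $a \le b$, then for all $t \in \R$ it holds that $\E[\exp(tY)] \le \exp(t^2 (b-a)^2 / 8)$. Since $\E[Y] = 0$ forces $a \le 0 \le b$, this is clear when $a = b$ (then $Y = 0$ a.s.); when $a < b$ I would use convexity of $x \mapsto e^{tx}$ to get $e^{tY} \le \tfrac{b-Y}{b-a} e^{ta} + \tfrac{Y-a}{b-a} e^{tb}$, take expectations, use $\E[Y] = 0$, and rewrite the resulting bound as $\exp(L(t(b-a)))$ where $L(h) = -ph + \ln\!\big((1-p) + p e^h\big)$ with $p = \tfrac{-a}{b-a} \in [0,1]$. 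A direct computation gives $L(0) = L'(0) = 0$ and $L''(h) = s(h)\big(1 - s(h)\big) \le \tfrac14$ for $s(h) = \tfrac{p e^h}{(1-p) + p e^h} \in [0,1]$, so Taylor's theorem with Lagrange remainder yields $L(h) \le h^2/8$, which is the claim.

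Next I would apply Markov's inequality to the nonnegative random variable $\exp\!\big(t \smallsum_{n=1}^N (X_n - \E[X_n])\big)$ together with independence of the $X_n$: for every $t \in (0,\infty)$,
\begin{equation}
  \P\!\left( \smallsum_{n=1}^N (X_n - \E[X_n]) \ge N \eps \right)
  \le e^{-tN\eps} \prod_{n=1}^N \E\!\left[ \exp\!\big( t (X_n - \E[X_n]) \big) \right].
\end{equation}
Applying Hoeffding's lemma to each factor with $Y = X_n - \E[X_n]$, which almost surely lies in the interval $[a_n - \E[X_n], b_n - \E[X_n]]$ of length $b_n - a_n$, bounds the product by $\exp\!\big( \tfrac{t^2}{8} \smallsum_{n=1}^N (b_n - a_n)^2 \big)$, and choosing $t = 4 N \eps \big( \smallsum_{n=1}^N (b_n - a_n)^2 \big)^{-1}$ (well-defined and positive by the hypothesis $\smallsum_{n=1}^N (b_n-a_n)^2 \ne 0$) makes the exponent equal to the claimed value, giving
\begin{equation}
  \P\!\left( \smallsum_{n=1}^N (X_n - \E[X_n]) \ge N \eps \right)
  \le \exp\!\left( \frac{-2 \eps^2 N^2}{\smallsum_{n=1}^N (b_n - a_n)^2} \right).
\end{equation}
Running the identical argument with each $X_n$ replaced by $-X_n$ (which takes values in $[-b_n, -a_n]$, again of length $b_n - a_n$) yields the same bound for $\P\big( \smallsum_{n=1}^N (X_n - \E[X_n]) \le -N\eps \big)$; a union bound over the two one-sided events, together with the elementary identity $\{\tfrac1N|\smallsum_{n=1}^N (X_n - \E[X_n])| \ge \eps\} = \{\smallsum_{n=1}^N (X_n - \E[X_n]) \ge N\eps\} \cup \{\smallsum_{n=1}^N (X_n - \E[X_n]) \le -N\eps\}$, then establishes \eqref{eq:Hoeffding_inequality3}. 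The only genuinely delicate step is Hoeffding's lemma, specifically verifying $L'' \le \tfrac14$; everything else is routine. (Alternatively, one may simply cite Hoeffding's original Theorem~2, but the self-contained argument above is short.)
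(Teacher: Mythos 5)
Your proof is correct, but note that the paper does not actually prove \cref{prop:Hoeffding2} at all: it simply cites Hoeffding's original article, stating that ``\cref{prop:Hoeffding2} is, e.g., proved as Theorem~2 in Hoeffding~\cite{Hoeffding}.'' Your argument is the standard self-contained Chernoff--Hoeffding proof: disposing of $\eps=0$ trivially, establishing Hoeffding's one-variable lemma $\E[e^{tY}] \le \exp(t^2(b-a)^2/8)$ via convexity and the bound $L''=s(1-s)\le\tfrac14$, applying Markov's inequality to the exponential moment, using independence to factor, optimizing over $t$, and finishing with symmetry and a union bound. All the steps check out (including the careful handling of the degenerate case $a=b$ and the role of the hypothesis $\sum_n(b_n-a_n)^2\neq 0$ in making the optimal $t$ well-defined). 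What your approach buys is self-containment; what the paper's citation buys is brevity and deference to a canonical reference. Neither is wrong, and you even flag the citation alternative yourself at the end, which is exactly what the paper does.
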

\endgroup

\subsubsection{Covering number estimates}

\begingroup
\newcommand{\vX}{E}
\newcommand{\vd}{\delta}
\begin{definition}[Covering number]
  \label{def:coveringnumber}
  Let $(\vX,\vd)$ be a metric space
  and let $r\in[0,\infty]$.
  Then we denote by $\CovNum{(\vX,\vd),r}\in\N_0\cup\{\infty\}$ 
  (we denote by $\CovNum{\vX,r}\in\N_0\cup\{\infty\}$)
  the extended real number given by
  \begin{equation}
    \CovNum{(\vX,\vd),r}
    =
    \inf\Bigl(
      \Bigl\{ 
        n \in \N_0 \colon
        \left(
          \Exists A\subseteq \vX \colon \bigl[
          (\card A\leq n)
          \land
          (
            \Forall x\in \vX\colon
            \Exists a\in A\colon
            \vd(a,x)\leq r
          )
          \bigr]
        \right)
      \Bigr\}
      \cup \{ \infty \}
    \Bigr)
      % \textstyle
    % \inf\Bigl(
    %   \Bigl\{
    %     n\in\N \colon \bigl(
    %       \exists\, v_1,v_2,\dots,v_n\in \vX\colon \left[
    %         \vX\subseteq\bigcup\nolimits_{i=1}^n \{g\in \vX\colon \vd(v_i,g)\leq r\}
    %       \right]
    %     \bigr)
    %   \Bigr\}\cup\{\infty\}
    % \Bigr)
    .
  \end{equation}
%   \begin{equation}
%     \mf N_{(\mc H,d),r}
%     =
%     \inf\Bigl(
%       \Bigl\{
%         n\in\N \colon \bigl(
%           \exists\, v_1,v_2,\dots,v_n\in\mc H\colon
%             \forall\, g\in \mc H\colon
%               \exists\, i\in\{1,2,\dots,n\}\colon
%                 d(v_i,g)\leq r
%         \bigr)
%       \Bigr\}\cup\{\infty\}
%     \Bigr).
%   \end{equation}
\end{definition}
\endgroup

\begingroup
\newcommand{\vX}{X}
\newcommand{\vd}{\delta}
\begin{prop}
  \label{prop:covnum}
  Let
    $(\vX,\norm\cdot)$ be a finite-dimensional Banach space,
  let
    $R,r\in(0,\infty)$, 
    $B=\{\theta\in \vX\colon \norm{\theta}\leq R\}$,
  and let
    $\vd\colon B\times B\to [0,\infty)$ satisfy for all 
      $\theta,\vartheta\in B$
    that
      $\vd(\theta,\vartheta)=\norm{\theta-\vartheta}$.
  Then
  \begin{equation}
    \CovNum{(B,\vd),r}
    \leq 
    \begin{cases}
      1 & \colon r\geq R \\
    \left[
      \frac{4R}r
    \right]^{\dim(\vX)} & \colon r<R
    \end{cases}
  \end{equation}
  (cf.\ Definition~\ref{def:coveringnumber}).
\end{prop}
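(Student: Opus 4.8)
The plan is to split along the two cases of the claimed bound. If $r \ge R$, I would simply take $A = \{0\} \subseteq B$: since $\card{A} = 1$ and every $\theta \in B$ satisfies $\delta(0,\theta) = \norm{\theta} \le R \le r$, the definition of the covering number (cf.\ \cref{def:coveringnumber}) immediately yields $\CovNum{(B,\delta),r} \le 1$. The substance is the case $r < R$, which I would handle by a volume--packing argument. Writing $n = \dim(X)$, identifying $X$ with $\R^n$, letting $\lambda$ denote the associated $n$-dimensional Lebesgue measure, and setting $U = \{\theta \in X \colon \norm{\theta} \le 1\}$, I would first record that $U$ is compact with nonempty interior, so $\lambda(U) \in (0,\infty)$, and that, by translation invariance and homogeneity of Lebesgue measure, the closed ball $\{\theta \in X \colon \norm{\theta - c} \le \rho\}$ has $\lambda$-measure $\rho^n \lambda(U)$ for all $c \in X$ and $\rho \in (0,\infty)$.

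Next I would choose $A \subseteq B$ to be a subset of maximal cardinality among all subsets of $B$ whose distinct points are pairwise at distance strictly greater than $r$; such a maximum exists because the volume estimate below bounds the cardinality of every such subset. Indeed, for distinct $a, a' \in A$ the closed balls of radius $r/2$ about $a$ and about $a'$ are disjoint (otherwise $\norm{a - a'} \le r$), and each of them is contained in $\{\theta \in X \colon \norm{\theta} \le R + r/2\}$; comparing $\lambda$-measures therefore gives
\begin{equation*}
  \card{A}\,(r/2)^n\,\lambda(U) \le (R + r/2)^n\,\lambda(U).
\end{equation*}
Dividing by $(r/2)^n \lambda(U)$ and using $r < R$ (hence $R/r > 1$) I would conclude $\card{A} \le (1 + 2R/r)^n \le (4R/r)^n$. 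Finally, by the maximality of $A$, every $\theta \in B$ lies within distance $r$ of some point of $A$ --- for otherwise $A \cup \{\theta\}$ would be a strictly larger subset of $B$ with all pairwise distances exceeding $r$ --- so $A$ witnesses $\CovNum{(B,\delta),r} \le \card{A}$, and combining this with the previous inequality establishes the claim.

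The one genuine step here is the packing--covering duality, i.e.\ that a maximal $r$-separated subset of $B$ is automatically an $r$-net of $(B,\delta)$, together with the disjointness of the half-radius balls that powers the counting; everything else --- finiteness and positivity of $\lambda(U)$, translation invariance and the $\rho^n$-scaling of Lebesgue measure, and the elementary inequality $1 + 2R/r \le 4R/r$ for $r < R$ --- is routine bookkeeping. Accordingly I do not anticipate a real obstacle, the only bit of care being to set up the maximal separated set so that its cardinality is a priori finite, which the same volume bound supplies.
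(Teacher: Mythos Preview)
Your argument is correct. The paper does not actually supply its own proof of this proposition; it simply refers the reader to Cucker \& Smale~\cite[Proposition~5]{CuckerSmale2002FoundationsLearning} (and Berner et al.~\cite[Proposition~4.3]{BernerGrohsJentzen2018}). The volume--packing argument you outline is precisely the standard one used in those references: a maximal $r$-separated subset of $B$ is an $r$-net, and comparing the Lebesgue measure of the disjoint half-radius balls against that of the enlarged ball of radius $R + r/2$ yields the bound $(1 + 2R/r)^n \le (4R/r)^n$. So there is nothing to contrast---your proof simply fills in what the paper outsources.
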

\endgroup

% \begin{lemma}
%   Let 
%     $u\in\R$, 
%     $v\in(u,\infty)$, 
%     $d\in\N$, 
%     $R\in[1,\infty)$,
%     $r\in(0,1)$,
%     $L\in\{2,3,\dots\}$,
%     $\mf m=\max(\{1,\abs u,\abs v\})$,
%     $\mathbf l=(l_0,l_1,\dots,l_{L-1},1)\in\N^{L+1}$.
%   Then it holds that
%   \begin{equation}
%     \ln(\CovNum{\classRectANN u v {\mathbf l} R {\mf D},r})
%     \leq
%     \left(\sum_{k=1}^Ll_k(l_{k-1}+1)\right)\left[
%       \ln\!\left(\frac{2\mf m}r\right)
%       +
%       (L-1)\ln(3R\infnorm{\mathbf l}+3R)
%     \right]
%   \end{equation}
%   (cf.\ Definition~\ref{def:ClassRectANN} and Definition~\ref{def:coveringnumber}).
% \end{lemma}

\subsubsection{Measurability properties for suprema}

\begingroup
\newcommand{\vX}{E}
\newcommand{\vY}{\mathbf E}
\newcommand{\vmcF}{\mc F}
\begin{lemma}
  \label{lem:meas}
  Let $(\vX,\mathscr E)$ be a topological space,
  assume $\vX\neq\emptyset$,
  let $\vY\subseteq \vX$ be an at most countable set,
  assume that $\vY$ is dense in $\vX$,
  let $(\Omega,\vmcF)$ be a measurable space,
  let $f_x\colon\Omega\to\R$, $x\in \vX$, be $\vmcF$/$\mc B(\R)$-measurable functions,
  assume 
    for all 
      $\omega\in\Omega$ 
    that 
      $\vX\ni x\mapsto f_x(\omega)\in\R$ is a continuous function,
  and let $F\colon\Omega\to\R\cup\{\infty\}$ satisfy
    for all
      $\omega\in\Omega$
    that
      $F(\omega)=\sup_{x\in \vX} f_x(\omega)$.
  Then
  \begin{enumerate}[label=(\roman{*})]
    \item \label{it:meas1}
      it holds for all 
        $\omega\in\Omega$ 
      that 
        $F(\omega)=\sup_{x\in \vY} f_x(\omega)$ 
      and
    \item \label{it:meas2}
      it holds that 
        $F$ is an $\vmcF$/$\mc B(\R\cup\{\infty\})$-measurable function.
  \end{enumerate}
\end{lemma}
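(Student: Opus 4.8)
The plan is to establish item~\ref{it:meas1} first, by a density argument exploiting the continuity hypothesis, and then to deduce item~\ref{it:meas2} from item~\ref{it:meas1} together with the standard fact that an at most countable supremum of real-valued measurable functions is measurable with values in $\R\cup\{\infty\}$.

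For item~\ref{it:meas1}, I would fix $\omega\in\Omega$. Since $\mathbf E\subseteq E$, it is immediate that $\sup_{x\in\mathbf E}f_x(\omega)\leq\sup_{x\in E}f_x(\omega)=F(\omega)$. For the reverse inequality, fix an arbitrary $x\in E$ and an arbitrary $\eps\in(0,\infty)$. The hypothesis that $E\ni y\mapsto f_y(\omega)\in\R$ is continuous yields an open set $U\subseteq E$ with $x\in U$ and $\abs{f_y(\omega)-f_x(\omega)}\leq\eps$ for all $y\in U$. Since $\mathbf E$ is dense in $E$ and $U$ is a nonempty open set, we have $U\cap\mathbf E\neq\emptyset$ (note that, since $E\neq\emptyset$, this in particular shows $\mathbf E\neq\emptyset$), so there is $y\in\mathbf E$ with $f_y(\omega)\geq f_x(\omega)-\eps$, and hence $\sup_{z\in\mathbf E}f_z(\omega)\geq f_x(\omega)-\eps$. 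Letting $\eps\downarrow 0$ and then taking the supremum over $x\in E$ gives $\sup_{z\in\mathbf E}f_z(\omega)\geq F(\omega)$, which combined with the trivial inequality establishes item~\ref{it:meas1}.

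For item~\ref{it:meas2}, I would use that by item~\ref{it:meas1} it holds for all $\omega\in\Omega$ that $F(\omega)=\sup_{x\in\mathbf E}f_x(\omega)$ with $\mathbf E$ a nonempty at most countable set, so in particular $F(\omega)\in\R\cup\{\infty\}$ because each $f_x$ is real-valued. Since $\mathcal B(\R\cup\{\infty\})$ is generated by $\{(c,\infty]\colon c\in\R\}$, it suffices to verify $F^{-1}((c,\infty])\in\mc F$ for every $c\in\R$; this follows from the identity $F^{-1}((c,\infty])=\bigcup_{x\in\mathbf E}f_x^{-1}((c,\infty))$ together with the measurability of each $f_x$ and the fact that $\mathbf E$ is at most countable, so this is an at most countable union of elements of $\mc F$.

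The only mild subtlety — hence the main point to be careful about — is that $E$ is merely a topological space and need not be first countable, so the density step in item~\ref{it:meas1} must be phrased via open neighborhoods rather than convergent sequences (and one must record that $\mathbf E\neq\emptyset$, which is forced by $E\neq\emptyset$); everything else is routine.
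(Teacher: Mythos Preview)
Your proof is correct and follows essentially the same approach as the paper's own proof. The paper is terser---it simply invokes the fact that for a continuous function the supremum over a dense subset agrees with the supremum over the whole space, and then that a countable supremum of measurable functions is measurable---whereas you spell out both of these standard facts explicitly; your careful remark that the density argument must use open neighborhoods (since $E$ need not be first countable) is a nice touch that the paper leaves implicit.
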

\begin{proof}[Proof of Lemma~\ref{lem:meas}]
  Note that
    the hypothesis that $\vY$ is dense in $\vX$ 
  implies that for all
    $g\in C(\vX,\R)$
  it holds that
  \begin{equation}
    \sup_{x\in \vX} g(x)
    =
    \sup_{x\in \vY} g(x)
    .
  \end{equation}
    This
    and the hypothesis that
      for all
        $\omega\in\Omega$
      it holds that
        $\vX\ni x\mapsto f_x(\omega)\in\R$ is a continuous function
  show that for all
    $\omega\in\Omega$
  it holds that
  \begin{equation}
    \label{eq:meas.1}
    F(\omega)
    =
    \sup_{x\in \vX}f_x(\omega)
    =
    \sup_{x\in \vY}f_x(\omega)
    .
  \end{equation}
  This establishes item~\ref{it:meas1}.
  Next note that \enum{
    item~\ref{it:meas1} ;
    the hypothesis that for all
      $x\in \vX$
    it holds that
      $f_x\colon\Omega\to\R$ is an $\vmcF$/$\mc B(\R)$-measurable function
  }[demonstrate] item~\ref{it:meas2}.
  The proof of Lemma~\ref{lem:meas} is thus completed.
\end{proof}
\endgroup

\begingroup
\newcommand{\ZZ}[1]{Z_{#1}}
\newcommand{\vX}{E}
\newcommand{\vd}{\delta}
\newcommand{\vmcF}{\mc F}
\begin{lemma}
  \label{lem:meas2}
  Let $(\vX,\vd)$ be a separable metric space,
  assume $\vX\neq\emptyset$,
  let $(\Omega,\vmcF,\P)$ be a probability space,
  let $L\in\R$,
  and let $\ZZ x\colon\Omega\to\R$, $x\in \vX$, be random variables which satisfy
    for all $x,y\in \vX$ that
      $\E[\abs{\ZZ x}]<\infty$
      and $\abs{\ZZ x-\ZZ y}\leq L\vd(x,y)$.
  Then
  \begin{enumerate}[label=(\roman{*})]
    \item \label{it:meas2.1}
      it holds for all 
        $x,y\in \vX$,
        $\eta\in\Omega$
      that 
        $\abs{(\ZZ x(\eta)-\E[\ZZ x])-(\ZZ y(\eta)-\E[\ZZ y])}\leq 2L\vd(x,y)$
      and
    \item \label{it:meas2.2}
      it holds that 
        $\Omega\ni\eta\mapsto\sup_{x\in \vX}\abs{\ZZ x(\eta)-\E[\ZZ x]}\in[0,\infty]$
      is an $\vmcF$/$\Borel([0,\infty])$-measurable function.
  \end{enumerate}
\end{lemma}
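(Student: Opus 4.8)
The plan is to prove item~\ref{it:meas2.1} by a direct triangle-inequality estimate and then to obtain item~\ref{it:meas2.2} as an application of \cref{lem:meas} to the functions $\eta\mapsto\abs{\ZZ x(\eta)-\E[\ZZ x]}$.

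For item~\ref{it:meas2.1}, I would first note that, since $\E[\abs{\ZZ x}]<\infty$ for all $x\in\vX$, the expectations $\E[\ZZ x]$, $x\in\vX$, are well-defined real numbers. Next, I would observe that the hypothesis that $\abs{\ZZ x-\ZZ y}\le L\vd(x,y)$ (an inequality between functions on $\Omega$, the right-hand side being constant) together with the monotonicity of the expectation yields, for all $x,y\in\vX$, that $\abs{\E[\ZZ x]-\E[\ZZ y]}=\abs{\E[\ZZ x-\ZZ y]}\le\E[\abs{\ZZ x-\ZZ y}]\le L\vd(x,y)$. Combining this with the triangle inequality then gives, for all $x,y\in\vX$ and all $\eta\in\Omega$, that
\[
  \abs{(\ZZ x(\eta)-\E[\ZZ x])-(\ZZ y(\eta)-\E[\ZZ y])}
  \le
  \abs{\ZZ x(\eta)-\ZZ y(\eta)}+\abs{\E[\ZZ x]-\E[\ZZ y]}
  \le
  2L\vd(x,y),
\]
which is item~\ref{it:meas2.1}.

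For item~\ref{it:meas2.2}, I would apply \cref{lem:meas} with the topological space taken to be $\vX$ equipped with the topology induced by $\vd$, with the at most countable dense subset supplied by the hypothesis that $(\vX,\vd)$ is separable, with the measurable space $(\Omega,\vmcF)$, and with $f_x\colon\Omega\to\R$, $x\in\vX$, given by $f_x(\eta)=\abs{\ZZ x(\eta)-\E[\ZZ x]}$. The hypotheses of \cref{lem:meas} are then verified as follows: (a) each $f_x$ is $\vmcF$/$\Borel(\R)$-measurable since $\ZZ x$ is a random variable, $\E[\ZZ x]\in\R$, and $\abs\cdot\colon\R\to\R$ is continuous; (b) for each fixed $\eta\in\Omega$ the map $\vX\ni x\mapsto f_x(\eta)\in\R$ is continuous because the reverse triangle inequality and item~\ref{it:meas2.1} give $\abs{f_x(\eta)-f_y(\eta)}\le\abs{(\ZZ x(\eta)-\E[\ZZ x])-(\ZZ y(\eta)-\E[\ZZ y])}\le 2L\vd(x,y)$, i.e. $x\mapsto f_x(\eta)$ is $2L$-Lipschitz; and (c) $\vX\neq\emptyset$ and the chosen dense set is at most countable. \cref{lem:meas} hence shows that $\Omega\ni\eta\mapsto\sup_{x\in\vX}f_x(\eta)\in\R\cup\{\infty\}$ is $\vmcF$/$\Borel(\R\cup\{\infty\})$-measurable. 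Since $f_x\ge 0$ for every $x\in\vX$, this supremum actually takes values in $[0,\infty]$, and I would finish by remarking that, $[0,\infty]$ being a Borel subset of $\R\cup\{\infty\}$ carrying the trace Borel $\sigma$-algebra, a $\Borel(\R\cup\{\infty\})$-measurable function with values in $[0,\infty]$ is automatically $\vmcF$/$\Borel([0,\infty])$-measurable. This establishes item~\ref{it:meas2.2}.

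I do not expect any genuine obstacle: the argument is essentially a triangle-inequality estimate plus an invocation of the already-established \cref{lem:meas}. The only points needing a word of care are that the expectations $\E[\ZZ x]$ are well-defined (guaranteed by the integrability hypothesis) and the harmless passage between the codomains $\R\cup\{\infty\}$ and $[0,\infty]$ in the final measurability conclusion.
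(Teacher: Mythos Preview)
Your proposal is correct and follows essentially the same route as the paper: item~\ref{it:meas2.1} via the triangle inequality together with the bound $\abs{\E[\ZZ x]-\E[\ZZ y]}\le\E[\abs{\ZZ x-\ZZ y}]\le L\vd(x,y)$, and item~\ref{it:meas2.2} by noting that item~\ref{it:meas2.1} yields continuity of $x\mapsto\abs{\ZZ x(\eta)-\E[\ZZ x]}$ and then invoking \cref{lem:meas} with the separability hypothesis. The paper is somewhat terser (it does not spell out the codomain adjustment from $\R\cup\{\infty\}$ to $[0,\infty]$), but the argument is the same.
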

\begin{proof}[Proof of \cref{lem:meas2}]
  Note that \enum{
    the hypothesis that for all
      $x,y\in \vX$
    it holds that
      $\abs{\ZZ x -\ZZ y}\leq L\vd(x,y)$
  }[show] that for all 
    $x,y\in \vX$,
    $\eta\in\Omega$
  it holds that
  \begin{equation}
  \begin{split}
      &\abs{(\ZZ x(\eta)-\E[\ZZ x])-(\ZZ y(\eta)-\E[\ZZ y])}
      =
      \abs{(\ZZ x(\eta)-\ZZ y(\eta))+(\E[\ZZ y]-\E[\ZZ x])}
      \\&\leq
      \abs{\ZZ x(\eta)-\ZZ y(\eta)}+\abs{\E[\ZZ x]-\E[\ZZ y]}
      \leq
      L\vd(x,y)+\abs{\E[\ZZ x]-\E[\ZZ y]}
      \\&=
      L\vd(x,y)+\abs{\E[\ZZ x-\ZZ y]}
      \leq
      L\vd(x,y)+\E[\abs{\ZZ x-\ZZ y}]
      \leq
      L\vd(x,y)+L\vd(x,y)
      =
      2L\vd(x,y)
      .
  \end{split}  
  \end{equation}
  This proves item~\ref{it:meas2.1}.
  Next observe that
    item~\ref{it:meas2.1} 
  implies that
    for all $\eta\in\Omega$ it holds that
      $\vX\ni x\mapsto \abs{\ZZ x(\eta)-\E[\ZZ x]}\in\R$ is a continuous function.
  Combining \enum{
    this ;
    the hypothesis that $\vX$ is separable
  } with
    \cref{lem:meas}
  establishes
    item~\ref{it:meas2.2}.
  The proof of \cref{lem:meas2} is thus completed.
\end{proof}

\endgroup

\subsubsection{Concentration inequalities for random fields}

\begingroup
\newcommand{\ZZ}[1]{Z_{#1}}
\newcommand{\vX}{E}
\newcommand{\vd}{\delta}
\newcommand{\vmcF}{\mc F}
\begin{lemma}
  \label{lem:cov0}
  Let $(\vX,\vd)$ be a separable metric space,
  let 
    $\eps,L\in\R$,
    $N\in\N$,
    $z_1,z_2,\dots,z_N\in \vX$
    satisfy
    $\vX\subseteq\bigcup_{i=1}^N\{x\in \vX\colon 2L\vd(x,z_i)\leq \eps\}$,
    % satisfy for all
    %   $x\in \vX$
    % that
    %   $\inf_{i\in\{1,2,\dots,N\}} \vd(x,z_i)\leq \frac\eps{2L}$,
  let $(\Omega,\vmcF,\P)$ be a probability space,
  and let $\ZZ x\colon\Omega\to\R$, $x\in \vX$, be random variables 
    which satisfy for all
      $x,y\in \vX$
    that
      $\abs{\ZZ x-\ZZ y}\leq L\vd(x,y)$.
  Then
  \begin{equation}
    \P\!\left(\textstyle \sup_{x\in \vX}\abs{\ZZ x}\geq\eps\right)
    \leq
    \sum_{i=1}^N\P\!\left(\abs{\ZZ{z_i}}\geq\tfrac\eps2\right)
  \end{equation}
  (cf.\ Lemma~\ref{lem:meas}).
\end{lemma}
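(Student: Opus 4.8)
The plan is to reduce the supremum over all of $\vX$ to a finite maximum over the centers $z_1,\dots,z_N$ by exploiting the Lipschitz continuity of $x\mapsto\ZZ x$, and then to apply a union bound. First I would invoke \cref{lem:meas2} (or \cref{lem:meas} directly) with the random field $(\Omega\ni\eta\mapsto\ZZ x(\eta))_{x\in\vX}$ to guarantee that $\Omega\ni\eta\mapsto\sup_{x\in\vX}\abs{\ZZ x(\eta)}\in[0,\infty]$ is $\vmcF/\Borel([0,\infty])$-measurable, so that the probability on the left-hand side is well defined; here I would use that $\vX$ is separable and that the hypothesis $\abs{\ZZ x-\ZZ y}\leq L\vd(x,y)$ makes $x\mapsto\ZZ x(\eta)$ continuous for each fixed $\eta$.

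The key deterministic observation is the following: for every $\eta\in\Omega$ with $\sup_{x\in\vX}\abs{\ZZ x(\eta)}\geq\eps$ there is (by definition of the supremum, passing to a point where the value exceeds $\eps-\tfrac{\eps}{2}\cdot 0$... more carefully, for any $\delta>0$ a point $x$ with $\abs{\ZZ x(\eta)}>\eps-\delta$) a point $x\in\vX$ with $\abs{\ZZ x(\eta)}$ arbitrarily close to (or exceeding) $\eps$, and the covering hypothesis $\vX\subseteq\bigcup_{i=1}^N\{x\in\vX\colon 2L\vd(x,z_i)\leq\eps\}$ then yields an index $i$ with $2L\vd(x,z_i)\leq\eps$, hence $L\vd(x,z_i)\leq\tfrac\eps2$, hence by the Lipschitz bound
\begin{equation}
  \abs{\ZZ{z_i}(\eta)}
  \geq
  \abs{\ZZ x(\eta)}-\abs{\ZZ x(\eta)-\ZZ{z_i}(\eta)}
  \geq
  \abs{\ZZ x(\eta)}-L\vd(x,z_i)
  \geq
  \abs{\ZZ x(\eta)}-\tfrac\eps2
  .
\end{equation}
To make this robust (so that one gets $\abs{\ZZ{z_i}(\eta)}\geq\tfrac\eps2$ and not merely $>\tfrac\eps2-\delta$), I would argue at the level of events: $\{\sup_{x\in\vX}\abs{\ZZ x}\geq\eps\}\subseteq\bigcup_{i=1}^N\{\sup_{x\in\vX,\,2L\vd(x,z_i)\leq\eps}\abs{\ZZ x}\geq\eps\}$, and on each set in this union the bound $\abs{\ZZ{z_i}}\geq\abs{\ZZ x}-\tfrac\eps2$ holds for the relevant $x$, so taking suprema gives $\abs{\ZZ{z_i}}\geq\eps-\tfrac\eps2=\tfrac\eps2$; thus $\{\sup_{x\in\vX}\abs{\ZZ x}\geq\eps\}\subseteq\bigcup_{i=1}^N\{\abs{\ZZ{z_i}}\geq\tfrac\eps2\}$.

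Combining this set inclusion with subadditivity of $\P$ yields
\begin{equation}
  \P\!\left(\textstyle\sup_{x\in\vX}\abs{\ZZ x}\geq\eps\right)
  \leq
  \P\!\left(\textstyle\bigcup_{i=1}^N\{\abs{\ZZ{z_i}}\geq\tfrac\eps2\}\right)
  \leq
  \sum_{i=1}^N\P\!\left(\abs{\ZZ{z_i}}\geq\tfrac\eps2\right)
  ,
\end{equation}
which is the claim. I expect the only mildly delicate point to be the measurability bookkeeping and making the ``$\geq$'' version of the covering argument airtight (as opposed to a strict-inequality version that would follow immediately from the pointwise computation); taking suprema over the restricted sets $\{x\colon 2L\vd(x,z_i)\leq\eps\}$ rather than over individual near-maximizers is what closes that gap cleanly. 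Everything else is routine: the Lipschitz estimate, the reverse triangle inequality, and the union bound.
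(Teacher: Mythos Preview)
Your proposal is correct and follows essentially the same route as the paper: restrict the supremum to the covering balls $B_i=\{x\in\vX\colon 2L\vd(x,z_i)\leq\eps\}$, use the Lipschitz bound to obtain $\abs{\ZZ{z_i}}\geq\sup_{x\in B_i}\abs{\ZZ x}-\tfrac\eps2$ (equivalently $\abs{\ZZ x}\leq\tfrac\eps2+\abs{\ZZ{z_i}}$ for $x\in B_i$, which is how the paper phrases it), and then apply the union bound. One small remark: for measurability you should cite \cref{lem:meas} rather than \cref{lem:meas2}, since the latter concerns the centered field $\ZZ x-\E[\ZZ x]$ and carries an integrability hypothesis not assumed here.
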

\begin{proof}[Proof of Lemma~\ref{lem:cov0}]
  Throughout this proof
  let $B_1,B_2,\dots,B_N\subseteq \vX$ satisfy
    for all $i\in\{1,2,\dots,N\}$ that
      $B_i=\{x\in \vX\colon 2L\vd(x,z_i)\leq \eps\}$.
  Observe that \enum{
    the triangle inequality ;
    the hypothesis that for all
      $x,y\in \vX$
    it holds that
      $\abs{\ZZ x-\ZZ y}\leq L\vd(x,y)$ ;
  }[show]
  that for all
    $i\in\{1,2,\dots,N\}$,
    $x\in B_i$
  it holds that
  \begin{equation}
  \begin{split}
    \abs{\ZZ x}
    =
    \abs{\ZZ x-\ZZ{z_i}+\ZZ{z_i}}
    \leq
    \abs{\ZZ x-\ZZ{z_i}} + \abs{\ZZ{z_i}}
    \leq
    L\vd(x,z_i) + \abs{\ZZ{z_i}}
    \leq \tfrac\eps2+\abs{\ZZ{z_i}}
    .
  \end{split}
  \end{equation}
  Combining
    this
  with 
    \cref{lem:meas}
%     and the hypothesis that for all
%       $x\in \vX$
%     it holds that
%       $\P(\abs{\ZZ x-\E[\ZZ x]}\geq\tfrac\eps2)\leq\delta$
  proves that for all
    $i\in\{1,2,\dots,N\}$
  it holds that
  \begin{equation}
  \label{eq:cov0.1}
    \P\!\left(\sup\nolimits_{x\in B_i}\abs{\ZZ x}\geq\eps\right)
    \leq
    \P\!\left(\tfrac\eps2+\abs{\ZZ{z_i}}\geq\eps\right)
    =
    \P\bigl(\abs{\ZZ{z_i}}\geq\tfrac\eps2\bigr)
%     \leq
%     \delta
    .
  \end{equation}
  % Next note that
  %   the hypothesis that for all 
  %     $x\in \vX$ 
  %   it holds that 
  %     $\inf_{i\in\{1,2,\dots,N\}}\vd(x,z_i)\leq\frac\eps{2L}$
  % ensures that $\bigcup_{i=1}^NB_i=\vX$.
  \enum{
    This ;
    \cref{lem:meas}
  }[establish] that
  \begin{equation}
  \begin{split}
    \textstyle \P\!\left(\sup_{x\in \vX}\abs{\ZZ x}\geq\eps\right)
    &=
    \textstyle \P\!\left(\sup_{x\in\left(\bigcup_{i=1}^N B_i\right)}\abs{\ZZ x}\geq\eps\right)
    =
    \P\!\left(\textstyle\bigcup_{i=1}^N\left\{\sup_{x\in B_i}\abs{\ZZ x}\geq\eps\right\}\right)
    \\&\leq
    \sum_{i=1}^N\P\!\left(\sup\nolimits_{x\in B_i}\abs{\ZZ x}\geq\eps\right)
    \leq
    \sum_{i=1}^N\P\bigl(\abs{\ZZ{z_i}}\geq\tfrac\eps2\bigr)
%     \leq
%     \sum_{i=1}^N\P\!\left(\abs{Z^{(x_i)}-\E[Z^{(x_i)}]}\geq\tfrac\eps2\right)
%     \leq
%     N\delta
    .
  \end{split}  
  \end{equation}
  This completes the proof of Lemma~\ref{lem:cov0}.
\end{proof}
\endgroup

\begingroup
\newcommand{\ZZ}[1]{Z_{#1}}
\newcommand{\vX}{E}
\newcommand{\vd}{\delta}
\newcommand{\vmcF}{\mc F}
\begin{lemma}
  \label{lem:cov02}
  Let $(\vX,\vd)$ be a separable metric space,
  assume $\vX\neq\emptyset$,
  let 
    $\eps,L\in(0,\infty)$,
  let $(\Omega,\vmcF,\P)$ be a probability space,
  and let $\ZZ x\colon\Omega\to\R$, $x\in \vX$, be random variables 
    which satisfy for all
      $x,y\in \vX$
    that \enum{
      $\abs{\ZZ x-\ZZ y}\leq L\vd(x,y)$
    }.
  Then
  \begin{equation}
  \label{eq:cov02con}
    \bigl[\CovNum{(\vX,\vd),\frac\eps{2L}}\bigr]^{-1}\textstyle \P\!\left(\sup_{x\in \vX}\abs{\ZZ x}\geq\eps\right)
    \leq
    \sup_{x\in \vX}\P\bigl(\abs{\ZZ x}\geq\tfrac\eps2\bigr)
    .
  \end{equation}
  (cf.\ \cref{def:coveringnumber,lem:meas}).
\end{lemma}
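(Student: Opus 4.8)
The plan is to deduce \eqref{eq:cov02con} from \cref{lem:cov0} by feeding it an explicit finite cover read off from \cref{def:coveringnumber}. Throughout I would abbreviate $n := \CovNum{(E,\delta),\tfrac{\eps}{2L}}$. If $n=\infty$, then, with the convention $\infty^{-1}=0$, the left-hand side of \eqref{eq:cov02con} is $0$, which is $\leq\sup_{x\in E}\P(\abs{Z_x}\geq\tfrac\eps2)$ since probabilities are nonnegative; so this case is immediate. Moreover, since $E\neq\emptyset$ the empty subset of $E$ cannot be a cover, so in every case $n\geq1$; in particular, in the remaining case $n<\infty$ we have $n\in\N$, which is what makes the final division by $n$ legitimate.

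So assume $n<\infty$. Since $n$ is then a finite value of the infimum in \cref{def:coveringnumber}, the set over which that infimum is taken contains $n$ itself, i.e.\ there exists $A\subseteq E$ with $\card A\leq n$ such that for every $x\in E$ there is $a\in A$ with $\delta(a,x)\leq\tfrac{\eps}{2L}$. Let $m:=\card A$; then $1\leq m\leq n$ (the lower bound again using $E\neq\emptyset$), and I would fix an enumeration $A=\{z_1,z_2,\dots,z_m\}$. Since $L\in(0,\infty)$, the covering property rewrites as $E\subseteq\bigcup_{i=1}^m\{x\in E\colon 2L\delta(x,z_i)\leq\eps\}$, which is precisely the hypothesis needed to apply \cref{lem:cov0} with $N\is m$ and $z_1,\dots,z_m$ as above (the separability of $(E,\delta)$ and the Lipschitz bound $\abs{Z_x-Z_y}\leq L\delta(x,y)$ are inherited verbatim from the present lemma).

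Then \cref{lem:cov0} gives $\P(\sup_{x\in E}\abs{Z_x}\geq\eps)\leq\sum_{i=1}^m\P(\abs{Z_{z_i}}\geq\tfrac\eps2)\leq m\,[\sup_{x\in E}\P(\abs{Z_x}\geq\tfrac\eps2)]\leq n\,[\sup_{x\in E}\P(\abs{Z_x}\geq\tfrac\eps2)]$, and dividing by $n\geq1$ yields \eqref{eq:cov02con}. I do not expect a genuine obstacle: the argument is pure bookkeeping on top of \cref{lem:cov0}. The only points demanding a moment's care are the boundary cases already noted — the reading of $\infty^{-1}$ and the fact that $n\geq1$ (both hinging on $E\neq\emptyset$) — while the measurability of $\omega\mapsto\sup_{x\in E}\abs{Z_x(\omega)}$, needed even to make sense of the left-hand side, is already supplied inside \cref{lem:cov0} via \cref{lem:meas}.
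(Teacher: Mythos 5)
Your proposal is correct and follows essentially the same route as the paper: read off a finite cover of $E$ from Definition~\ref{def:coveringnumber}, feed it to Lemma~\ref{lem:cov0}, bound the resulting sum by $\CovNum{(E,\delta),\frac{\eps}{2L}}$ times the supremum, and divide. You spell out the edge cases (the $\infty$ covering number, the lower bound $n\geq1$, and the distinction between $\card A$ and $n$) that the paper handles tersely via "without loss of generality," but the substance is identical.
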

\begin{proof}[Proof of Lemma~\ref{lem:cov02}]
  Throughout this proof
    let $N\in\N\cup\{\infty\}$ satisfy $N=\CovNum{(\vX,\vd),\frac\eps{2L}}$,
    assume without loss of generality that $N<\infty$,
    and let $z_1,z_2,\dots,z_N\in \vX$ satisfy
      $\vX\subseteq \bigcup_{i=1}^N\{x\in \vX\colon \vd(x,z_i)\leq \frac\eps{2L}\}$
  (cf.\ \cref{def:coveringnumber}).
  Observe that \enum{
    \cref{lem:meas}
    ;
    \cref{lem:cov0}
  }[establish] that
  \begin{equation}
  \begin{split}
    \P\!\left(\textstyle \sup_{x\in \vX}\abs{\ZZ x}\geq\eps\right)
    \leq
    \sum_{i=1}^N\P\!\left(\abs{\ZZ{z_i}}\geq\tfrac\eps2\right)
    \leq
    N\bigl[\sup\nolimits_{x\in \vX}\P\big(\abs{\ZZ x}\geq\tfrac\eps2\big)\bigr]
    .
  \end{split}
  \end{equation}
  This completes the proof of Lemma~\ref{lem:cov02}.
\end{proof}
\endgroup

\begingroup
\newcommand{\ZZ}[1]{Z_{#1}}
\newcommand{\vX}{E}
\newcommand{\vd}{\delta}
\newcommand{\vmcF}{\mc F}
\begin{lemma}
  \label{lem:cov2}
  Let $(\vX,\vd)$ be a separable metric space,
  assume $\vX\neq\emptyset$,
  let 
    $\eps,L\in(0,\infty)$,
  let $(\Omega,\vmcF,\P)$ be a probability space,
  and let $\ZZ x\colon\Omega\to\R$, $x\in \vX$, be random variables 
    which satisfy for all
      $x,y\in \vX$
    that
      $\E[\abs{\ZZ x}]<\infty$
%       $\P(\abs{\ZZ x-\E[\ZZ x]}\geq\tfrac\eps2)\leq\delta$,
      and $\abs{\ZZ x-\ZZ y}\leq L\vd(x,y)$.
  Then
  \begin{equation}
  \label{eq:cov2con}
    \bigl[\CovNum{(\vX,\vd),\frac\eps{4L}}\bigr]^{-1}\textstyle \P\!\left(\sup_{x\in \vX}\abs{\ZZ x-\E[\ZZ x]}\geq\eps\right)
    \leq
    \sup_{x\in \vX}\P\bigl(\abs{\ZZ x-\E[\ZZ x]}\geq\tfrac\eps2\bigr)
    .
  \end{equation}
  (cf.\ \cref{def:coveringnumber,lem:meas2}).
\end{lemma}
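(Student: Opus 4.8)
The plan is to derive \cref{lem:cov2} by applying \cref{lem:cov02} to the centered random field $Z_x-\E[Z_x]$, $x\in E$. First I would observe that the hypothesis that $\E[\abs{Z_x}]<\infty$ for all $x\in E$ guarantees that $\E[Z_x]\in\R$ for every $x\in E$, so that the random variables $\Omega\ni\eta\mapsto Z_x(\eta)-\E[Z_x]\in\R$, $x\in E$, are well defined.

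Next I would invoke item~\ref{it:meas2.1} in \cref{lem:meas2} (whose hypotheses --- $(E,\delta)$ a separable metric space, $Z_x$ random variables with $\E[\abs{Z_x}]<\infty$ and $\abs{Z_x-Z_y}\leq L\delta(x,y)$ --- are precisely the hypotheses of \cref{lem:cov2}) to obtain that for all $x,y\in E$ and all $\eta\in\Omega$ it holds that
\[
\babs{(Z_x(\eta)-\E[Z_x])-(Z_y(\eta)-\E[Z_y])}\leq 2L\delta(x,y),
\]
that is, the centered field is Lipschitz in $x$ with respect to $\delta$ with Lipschitz constant $2L$. In particular, the measurability of $\Omega\ni\eta\mapsto\sup_{x\in E}\babs{Z_x(\eta)-\E[Z_x]}\in[0,\infty]$, which is implicitly needed for the left-hand probability in \eqref{eq:cov2con} to make sense, is furnished by item~\ref{it:meas2.2} in \cref{lem:meas2}.

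Then I would apply \cref{lem:cov02} with $\eps\is\eps$, $L\is 2L$, $(E,\delta)\is(E,\delta)$, and $(Z_x)_{x\in E}\is(Z_x-\E[Z_x])_{x\in E}$ in the notation of \cref{lem:cov02}; the required hypotheses $E\neq\emptyset$, $E$ separable, $\eps,2L\in(0,\infty)$, and the $2L$-Lipschitz bound just established are all in place. This yields
\[
\bigl[\CovNum{(E,\delta),\frac\eps{2\cdot(2L)}}\bigr]^{-1}\,\P\!\left(\sup_{x\in E}\babs{Z_x-\E[Z_x]}\geq\eps\right)\leq\sup_{x\in E}\P\bigl(\babs{Z_x-\E[Z_x]}\geq\tfrac\eps2\bigr),
\]
and since $\frac\eps{2\cdot(2L)}=\frac\eps{4L}$ this is exactly \eqref{eq:cov2con}.

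I do not expect any genuine obstacle: the whole argument is a single application of \cref{lem:cov02} to a reparametrized field. The only point requiring care is the constant bookkeeping --- recording that centering doubles the Lipschitz constant (via \cref{lem:meas2}), so that the covering-number radius appearing in \cref{lem:cov02} becomes $\frac\eps{2\cdot(2L)}=\frac\eps{4L}$ rather than $\frac\eps{2L}$ --- together with checking that the hypotheses of the cited results transfer verbatim from those of \cref{lem:cov2}.
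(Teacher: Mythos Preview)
Your proposal is correct and matches the paper's proof essentially verbatim: the paper likewise defines the centered field $Y_x=Z_x-\E[Z_x]$, invokes \cref{lem:meas2} to obtain the $2L$-Lipschitz bound, and then applies \cref{lem:cov02} with $L\is 2L$ to conclude.
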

% \begin{proof}[Proof of Lemma~\ref{lem:cov2}]
%   Throughout this proof
%     let $N\in\N\cup\{\infty\}$ satisfy $N=\CovNum{(\vX,d),\frac\eps{4L}}$,
%     assume without loss of generality that $N<\infty$,
%     and let $z_1,z_2,\dots,z_N\in \vX$ satisfy
%       $\vX\subseteq \bigcup_{i=1}^N\{x\in \vX\colon d(x,z_i)\leq \frac\eps{4L}\}$
%   (cf.\ \cref{def:coveringnumber}).
%   Observe that \enum{
%     \cref{lem:meas2}
%     ;
%     \cref{lem:cov1}
%   }[establish] that
%   \begin{equation}
%   \begin{split}
%     \P\!\left(\left[\textstyle \sup_{x\in \vX}\abs{Z^{(x)}-\E[Z^{(x)}]}\right]\geq\eps\right)
%     &\leq
%     \sum_{i=1}^N\P\!\left(\abs{Z^{(z_i)}-\E[Z^{(z_i)}]}\geq\tfrac\eps2\right)
%     \\&\leq
%     N\bigl[\sup\nolimits_{x\in \vX}\P\big(\abs{Z^{(x)}-\E[Z^{(x)}]}\geq\tfrac\eps2\big)\bigr]
%     .
%   \end{split}
%   \end{equation}
%   This completes the proof of Lemma~\ref{lem:cov2}.
% \end{proof}
\begin{proof}[Proof of Lemma~\ref{lem:cov2}]
  \newcommand{\YY}[1]{Y_{#1}}
  Throughout this proof
    let $\YY x\colon\Omega\to\R$, $x\in \vX$, satisfy for all
      $x\in \vX$,
      $\eta\in\Omega$
    that
      $\YY x(\eta)=\ZZ x(\eta)-\E[\ZZ x]$.
  Observe that \enum{
    \cref{lem:meas2}
  }[ensure] that for all
    $x,y\in \vX$
  it holds that
  \begin{equation}
    \abs{\YY x-\YY y}\leq 2L\vd(x,y).
  \end{equation}
  \enum{
    This ;
    \cref{lem:cov02} (with
      $(\vX,\vd)\is(\vX,\vd)$,
      $\eps\is\eps$,
      $L\is 2L$,
      $(\Omega,\vmcF,\P)\is(\Omega,\vmcF,\P)$,
      $(\ZZ x)_{x\in \vX}\is(\YY x)_{x\in \vX}$
    in the notation of \cref{lem:cov02})
  }[establish] \eqref{eq:cov2con}.
  The proof of Lemma~\ref{lem:cov2} is thus completed.
\end{proof}
\endgroup

\newcommand{\lemIcovthreeIvE}[2]{Y_{#1,#2}}
\begingroup
\newcommand{\vE}[2]{\lemIcovthreeIvE{#1}{#2}}
\newcommand{\ZZ}[1]{Z_{#1}}
\newcommand{\vX}{E}
\newcommand{\vd}{\delta}
\newcommand{\vmcF}{\mc F}
\begin{lemma}
  \label{lem:cov3}
  Let $(\vX,\vd)$ be a separable metric space,
  assume $\vX\neq\emptyset$,
  let 
    $M\in\N$,
    $\eps,L,D\in(0,\infty)$,
  let $(\Omega,\vmcF,\P)$ be a probability space,
  for every 
    $x\in \vX$ 
  let 
    $\vE{x}{1},\vE{x}{2},\dots,\vE{x}{M}\colon\Omega\to[0,D]$
  be independent random variables,
  assume
    for all
      $x,y\in \vX$,
      $m\in\{1,2,\dots,M\}$
    that
      $\abs{\vE{x}{m}-\vE{y}{m}}\leq L\vd(x,y)$,
  and let $\ZZ{x}\colon \Omega\to[0,\infty)$, $x\in \vX$, satisfy for all 
    $x\in\vX$ 
  that
  \begin{equation}
    \label{eq:cov3.defZZ}
    \ZZ{x}=\frac1M\Biggl[\sum_{m=1}^M \vE{x}{m}\Biggr].
  \end{equation}
  Then 
  \begin{enumerate}[label=(\roman *)]
    \item \label{it:cov3.1}
      it holds for all 
        $x\in \vX$
      that 
        $\E[\abs{\ZZ{x}}]\leq D<\infty$,
    \item \label{it:cov3.2}
      it holds that 
        $\Omega\ni\eta\mapsto\sup_{x\in \vX} \abs{\ZZ{x}(\eta)-\E[\ZZ{x}]}\in[0,\infty]$
        is an $\vmcF$/$\Borel([0,\infty])$-measurable function, and
    \item \label{it:cov3.3}
      it holds that
      \begin{equation}
        \label{eq:cov3con}
        \P\!\left(\sup\nolimits_{x\in \vX}\abs{\ZZ{x}-\E[\ZZ{x}]}\geq\eps\right)
        \leq
        2\CovNum{(\vX,\vd),\frac\eps{4L}}\exp\!\left(\frac{-\eps^2M}{2D^2}\right)
      \end{equation}
  \end{enumerate}
  (cf.\ Definition~\ref{def:coveringnumber}).
\end{lemma}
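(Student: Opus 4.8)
The plan is to establish the three assertions in turn, reusing the concentration and measurability results already developed above. For \cref{it:cov3.1}, I would simply note that since for every $x\in E$ and every $m\in\{1,2,\dots,M\}$ the random variable $Y_{x,m}$ takes values in $[0,D]$, the convex combination $Z_x=\frac1M[\sum_{m=1}^M Y_{x,m}]$ also takes values in $[0,D]$, and hence $\E[\abs{Z_x}]=\E[Z_x]\le D<\infty$.

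Before treating the remaining items I would record the Lipschitz estimate that holds for the family $(Z_x)_{x\in E}$: for all $x,y\in E$ the triangle inequality together with the hypothesis $\abs{Y_{x,m}-Y_{y,m}}\le L\delta(x,y)$ gives
\[
  \abs{Z_x-Z_y}
  \leq
  \frac1M\left[\sum_{m=1}^M\abs{Y_{x,m}-Y_{y,m}}\right]
  \leq
  L\delta(x,y).
\]
Combining this estimate, \cref{it:cov3.1}, and the hypothesis that $(E,\delta)$ is a separable metric space with \cref{lem:meas2} then yields \cref{it:cov3.2}.

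For \cref{it:cov3.3} I would first dispose of the degenerate case $\CovNum{(E,\delta),\frac\eps{4L}}=\infty$, in which the right-hand side of \eqref{eq:cov3con} equals $\infty$ and the claim is immediate. Assuming henceforth $\CovNum{(E,\delta),\frac\eps{4L}}<\infty$, \cref{lem:cov2} (whose hypotheses $\E[\abs{Z_x}]<\infty$ and $\abs{Z_x-Z_y}\le L\delta(x,y)$ have just been verified) gives
\[
  \P\!\left(\sup\nolimits_{x\in E}\abs{Z_x-\E[Z_x]}\ge\eps\right)
  \leq
  \CovNum{(E,\delta),\tfrac\eps{4L}}\left[\sup\nolimits_{x\in E}\P\!\left(\abs{Z_x-\E[Z_x]}\ge\tfrac\eps2\right)\right]\!.
\]
It then remains to bound the pointwise tail probability uniformly in $x$. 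Fixing $x\in E$, I would apply Hoeffding's inequality \cref{prop:Hoeffding2} with $N\is M$, $\eps\is\eps/2$, $a_n\is 0$, $b_n\is D$, and $X_n\is Y_{x,n}$; here $\sum_{n=1}^M(b_n-a_n)^2=MD^2\ne 0$ because $M\ge 1$ and $D>0$, and $\abs{Z_x-\E[Z_x]}=\frac1M\babs{\sum_{m=1}^M(Y_{x,m}-\E[Y_{x,m}])}$ by linearity of the expectation, so that Hoeffding produces $\P(\abs{Z_x-\E[Z_x]}\ge\tfrac\eps2)\le 2\exp(\frac{-2(\eps/2)^2M^2}{MD^2})=2\exp(\frac{-\eps^2M}{2D^2})$. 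Taking the supremum over $x\in E$ and substituting into the previous display then proves \eqref{eq:cov3con}.

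This argument is essentially bookkeeping; the only points requiring a little care are the exponent simplification $\frac{-2(\eps/2)^2M^2}{MD^2}=\frac{-\eps^2M}{2D^2}$ in the Hoeffding step and the need to assemble the hypotheses of \cref{lem:meas2} and \cref{lem:cov2} — separability, integrability, and a single common Lipschitz constant for $(Z_x)_{x\in E}$ — before invoking them. I do not anticipate any genuine difficulty beyond this.
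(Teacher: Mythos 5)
Your proof is correct and follows essentially the same route as the paper: bound $Z_x\in[0,D]$ for item (i), record the Lipschitz estimate $\abs{Z_x-Z_y}\leq L\delta(x,y)$ and invoke \cref{lem:meas2} for item (ii), and for item (iii) chain \cref{lem:cov2} with Hoeffding's inequality (\cref{prop:Hoeffding2}) applied to $(Y_{x,m})_{m=1}^M$ with threshold $\eps/2$. The explicit handling of the degenerate covering-number case is a reasonable small addition but not a genuinely different argument.
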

\begin{proof}[Proof of Lemma~\ref{lem:cov3}]
  First, observe that \enum{
    the triangle inequality
    ;
    the hypothesis that for all
      $x,y\in \vX$,
      $m\in\{1,2,\dots,M\}$
    it holds that 
      $\abs{\vE{x}{m}-\vE{y}{m}}\leq L\vd(x,y)$
  }[imply] that for all
    $x,y\in \vX$
  it holds that
  \begin{equation}
    \label{eq:cov3lipsch}
  \begin{split}
    \abs{\ZZ{x}-\ZZ{y}}
    &=
    \left\lvert \frac1M\!\left[\sum_{m=1}^M \vE{x}{m}\right] - \frac1M\!\left[\sum_{m=1}^M \vE{y}{m}\right] \right\lvert
    =
    \frac1M\!\left\lvert\sum_{m=1}^M \bigl(\vE{x}{m} - \vE{y}{m}\bigr)\right\lvert
    \\&\leq
    \frac1M\!\left[\sum_{m=1}^M \babs{\vE{x}{m} - \vE{y}{m}}\right]
    \leq
    L\vd(x,y)
    .
  \end{split}
  \end{equation}
  Next note that
    the hypothesis that 
      for all
        $x\in \vX$,
        $m\in\{1,2,\dots,M\}$,
        $\omega\in\Omega$
      it holds that
        $\abs{\vE{x}{m}(\omega)}\in [0,D]$
  ensures that for all
    $x\in \vX$
  it holds that
  \begin{equation}
    \label{eq:Zxint}
    \E\bigl[\abs{\ZZ{x}}\bigr]
    % =
    % \E\!\left[\frac1M\left\lvert\sum_{m=1}^M \vE{x}{m}\right\rvert\right]
    =
    \E\!\left[\frac1M\!\left[\sum_{m=1}^M \vE{x}{m}\right]\right]
    =
    \frac1M\!\left[\sum_{m=1}^M \E\bigl[\vE{x}{m}\bigr]\right]
    \leq
    D
    <
    \infty
    .
  \end{equation}
  This proves \cref{it:cov3.1}.
  Furthermore, note that \enum{
    \cref{it:cov3.1} ;
    \eqref{eq:cov3lipsch} ;
    \cref{lem:meas2}
  }[establish]
  \cref{it:cov3.2}.
  Next observe that \enum{
    \eqref{eq:cov3.defZZ}
  }[show] that for all
    $x\in \vX$
  it holds that
  \begin{equation}
    \abs{\ZZ{x}-\E[\ZZ{x}]}
    =
    \left\lvert \frac1M\!\left[\sum_{m=1}^M \vE{x}{m}\right] - \E\!\left[\frac1M\!\left[\sum_{m=1}^M \vE{x}{m}\right]\right] \right\rvert
    =
    \frac1M\left\lvert \sum_{m=1}^M \bigl(\vE{x}{m} - \E\bigl[\vE{x}{m}\bigr]\bigr) \right\rvert
    .
  \end{equation}
  Combining
    this
  with
    Proposition~\ref{prop:Hoeffding2}
      (with 
        $(\Omega,\vmcF,\P)\is (\Omega,\vmcF,\P)$,
        $N\is M$,
        $\eps\is\frac\eps2$,
        $(a_1,a_2,\dots,a_N)\is (0,0,\dots,0)$,
        $(b_1,b_2,\dots,b_N)\is (D,D,\dots,D)$,
        $(X_n)_{n\in\{1,2,\dots,N\}}\is (\vE{x}{m})_{m\in\{1,2,\dots,M\}}$
        for $x\in \vX$
        in the notation of \cref{prop:Hoeffding2})
  ensures that for all 
    $x\in \vX$
  it holds that
  \begin{equation}
    \P\bigl(\abs{\ZZ{x}-\E[\ZZ{x}]}\geq\tfrac\eps2\bigr)
    \leq
    2\exp\!\left(\frac{-2\bigl[\frac{\eps}2\bigr]^2M^2}{MD^2}\right)
    =
    2\exp\!\left(\frac{-\eps^2M}{2D^2}\right)
    \!.
  \end{equation}
  Combining
    this,
    \eqref{eq:cov3lipsch},
    and~\eqref{eq:Zxint}
  with
    Lemma~\ref{lem:cov2}
  establishes~\cref{it:cov3.3}.
  The proof of Lemma~\ref{lem:cov3} is thus completed.
\end{proof}
\endgroup

\subsubsection{Uniform estimates for the statistical learning error}

\begingroup
\newcommand{\vA}{E}
\newcommand{\vd}{\delta}
\newcommand{\vX}[2]{X_{#1,#2}}
\newcommand{\vmfE}[1]{\mf E_{#1}}
\newcommand{\vmcE}[1]{\mc E_{#1}}
\newcommand{\vmcF}{\mc F}
\begin{lemma}
  \label{lem:cov4}
  Let $(\vA,\vd)$ be a separable metric space,
  assume $\vA\neq\emptyset$,
  let 
    $M\in\N$,
    $\eps,L,D\in(0,\infty)$,
  let $(\Omega,\vmcF,\P)$ be a probability space,
  let $\vX xm\colon \Omega\to\R$, $x\in \vA$, $m\in\{1,2,\dots,M\}$,
    and $Y_m\colon \Omega\to\R$, $m\in\{1,2,\dots,M\}$,
    be functions,
  assume for all $x\in \vA$ that
    $(\vX{x}{m},Y_m)$, $m\in\{1,2,\dots,M\}$, are i.i.d.\ random variables,
  assume for all
    $x,y\in \vA$,
    $m\in\{1,2,\dots,M\}$
  that
    $\abs{\vX{x}{m}-\vX{y}{m}}\leq L\vd(x,y)$
    and $\lvert \vX{x}{m}-Y_m\rvert\leq D$,
  let $\vmfE{x}\colon \Omega\to[0,\infty)$, $x\in \vA$,
    satisfy for all
      $x\in \vA$
    that
    \begin{equation}
      \label{eq:cov4.defmfE}
      \vmfE{x}=\frac1M\!\left[\sum_{m=1}^M \abs{\vX{x}{m}-Y_m}^2\right]\!,
    \end{equation}
  and let $\vmcE{x}\in[0,\infty)$, $x\in \vA$, 
    satisfy for all
      $x\in \vA$
    that
      $\vmcE{x}=\E[\abs{\vX{x}{1}-Y_1}^2]$.
  Then
    $\Omega\ni\omega\mapsto \sup_{x\in \vA}\abs{\vmfE{x}(\omega)-\vmcE{x}}\in [0,\infty]$
    is an $\vmcF$/$\Borel([0,\infty])$-measurable function
  and
  \begin{equation}
    \label{eq:cov4con}
    \P\!\left(\sup\nolimits_{x\in \vA}\abs{\vmfE{x}-\vmcE{x}}\geq\eps\right)
    \leq
    2\CovNum{(\vA,\vd),\frac\eps{8LD}}\exp\!\left(\frac{-\eps^2M}{2D^4}\right)
  \end{equation}
  (cf.\ \cref{def:coveringnumber}).
\end{lemma}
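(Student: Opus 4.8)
The plan is to reduce the assertion to \cref{lem:cov3} by passing from the residuals $\vX xm-Y_m$ to their squares. First I would introduce, for every $x\in\vA$ and $m\in\{1,2,\dots,M\}$, the function $G_{x,m}\colon\Omega\to\R$ given by $G_{x,m}=\abs{\vX xm-Y_m}^2$, so that for all $x\in\vA$ it holds that $\vmfE x=\frac1M[\sum_{m=1}^M G_{x,m}]$. Since for every $x\in\vA$ the tuple $(\vX xm,Y_m)$, $m\in\{1,2,\dots,M\}$, consists of i.i.d.\ random variables and $G_{x,m}$ is a Borel measurable function of $(\vX xm,Y_m)$, the random variables $G_{x,1},G_{x,2},\dots,G_{x,M}$ are independent for every fixed $x\in\vA$. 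Moreover, the hypothesis that $\abs{\vX xm-Y_m}\leq D$ ensures that $G_{x,m}$ takes values in $[0,D^2]$, and, in particular, that $\vmcE x=\E[\abs{\vX x1-Y_1}^2]<\infty$; combining this with the fact that $(\vX xm,Y_m)$, $m\in\{1,2,\dots,M\}$, are identically distributed shows that for all $x\in\vA$ it holds that $\E[\vmfE x]=\frac1M[\sum_{m=1}^M\E[G_{x,m}]]=\E[\abs{\vX x1-Y_1}^2]=\vmcE x$.

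Next I would establish the Lipschitz estimate needed to invoke \cref{lem:cov3}. For $x,y\in\vA$ and $m\in\{1,2,\dots,M\}$, writing $a=\abs{\vX xm-Y_m}$ and $b=\abs{\vX ym-Y_m}$, the elementary identity $\abs{a^2-b^2}=\abs{a-b}\,\abs{a+b}$, the fact that $\abs{a-b}\leq\abs{\vX xm-\vX ym}$, the hypothesis that $\abs{\vX xm-\vX ym}\leq L\vd(x,y)$, and the bound $a+b\leq 2D$ together yield
\begin{equation}
  \abs{G_{x,m}-G_{y,m}}
  =
  \abs{a-b}\,\abs{a+b}
  \leq
  2D\,\abs{\vX xm-\vX ym}
  \leq
  2LD\,\vd(x,y)
  .
\end{equation}

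With these preparations in place, I would apply \cref{lem:cov3} (with $D\is D^2$, $L\is 2LD$, $(\Omega,\vmcF,\P)\is(\Omega,\vmcF,\P)$, $Y_{x,m}\is G_{x,m}$ for $x\in\vA$, $m\in\{1,2,\dots,M\}$, and $Z_x\is\vmfE x$ for $x\in\vA$ in the notation of \cref{lem:cov3}). Item~\ref{it:cov3.2} of \cref{lem:cov3}, together with the identity $\E[\vmfE x]=\vmcE x$, then gives the claimed measurability of $\Omega\ni\omega\mapsto\sup_{x\in\vA}\abs{\vmfE x(\omega)-\vmcE x}\in[0,\infty]$, and item~\ref{it:cov3.3} of \cref{lem:cov3}, again using $\E[\vmfE x]=\vmcE x$, yields
\begin{equation}
  \P\!\left(\sup\nolimits_{x\in\vA}\abs{\vmfE x-\vmcE x}\geq\eps\right)
  \leq
  2\CovNum{(\vA,\vd),\frac\eps{8LD}}\exp\!\left(\frac{-\eps^2M}{2D^4}\right)
  ,
\end{equation}
which is the desired inequality. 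The only mildly delicate point is the bookkeeping producing the Lipschitz constant $2LD$ from the factorization $\abs{a^2-b^2}=\abs{a-b}\abs{a+b}$ and the uniform bound $D$ on the residuals (and hence the covering radius $\tfrac\eps{8LD}$ and the exponent involving $D^4$); everything else is a direct substitution into \cref{lem:cov3}.
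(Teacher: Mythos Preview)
Your proposal is correct and follows essentially the same route as the paper: introduce the squared residuals, verify their independence and $[0,D^2]$-valuedness, compute $\E[\vmfE x]=\vmcE x$, establish the $2LD$-Lipschitz bound via the factorization of a difference of squares, and then invoke \cref{lem:cov3} with $L\is 2LD$ and $D\is D^2$. The only cosmetic difference is that the paper uses the identity $(x_1-y)^2-(x_2-y)^2=(x_1-x_2)\bigl((x_1-y)+(x_2-y)\bigr)$ directly rather than passing through absolute values and the reverse triangle inequality, but the resulting bound and the rest of the argument are identical.
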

\begin{proof}[Proof of Lemma~\ref{lem:cov4}]
\begingroup
\newcommand{\vE}[2]{\mathscr{E}_{#1,#2}}
  Throughout this proof let 
    $\vE{x}{m}\colon\Omega\to[0,D^2]$, $x\in \vA$, $m\in\{1,2,\dots,M\}$,
  satisfy for all
    $x\in \vA$,
    $m\in\{1,2,\dots,M\}$
  that
  \begin{equation}
    \vE{x}{m}
    =
    \lvert \vX{x}{m}-Y_m\rvert^2
    .
  \end{equation}
  % Observe that 
  %   the hypothesis that for all
  %     $x\in \vA$,
  %     $m\in\{1,2,\dots,M\}$
  %   it holds that
  %     $\bigl\lvert \vX{x}{m}-Y_m\bigr\rvert\leq D$
  % ensures that for all
  %   $x\in \vA$,
  %   $m\in\{1,2,\dots,M\}$,
  %   $\omega\in\Omega$
  % it holds that
  % \begin{equation}
  %   \label{eq:Ebnd}
  %   \vE{x}{m}(\omega)\in[0,D^2].
  % \end{equation}
  % Next note that for all
  %   $x_1,x_2,y\in\R$
  % it holds that
  % \begin{equation}
  %   (x_1-y)^2-(x_2-y)^2
  %   =
  %   (x_1-x_2)((x_1-y)+(x_2-y))
  %   .
  % \end{equation}
  %
  Observe that \enum{
    the fact that for all 
      $x_1,x_2,y\in\R$ 
    it holds that 
      $(x_1-y)^2-(x_2-y)^2=(x_1-x_2)((x_1-y)+(x_2-y))$ ;
    the hypothesis that
      for all
        $x\in \vA$,
        $m\in\{1,2,\dots,M\}$
      it holds that
        $\abs{\vX{x}{m}-Y_m}\leq D$ ;
    the hypothesis that
      for all
        $x,y\in \vA$,
        $m\in\{1,2,\dots,M\}$
      it holds that
        $\abs{\vX{x}{m}-\vX{y}{m}}\leq L\vd(x,y)$
  }[imply] that for all
    $x,y\in \vA$,
    $m\in\{1,2,\dots,M\}$
  it holds that
  \begin{equation}
    \label{eq:cov4lipsch}
  \begin{split}
    &\abs{\vE{x}{m}-\vE ym}
    =
    \babs{(\vX{x}{m}-Y_m)^2-(\vX{y}{m}-Y_m)^2}
%     \\&=
%     \babs{(\vX{x}{m}-\vX{y}{m})(\vX{x}{m}+\vX{y}{m}-2Y_m)}
    =
    \abs{\vX{x}{m}-\vX{y}{m}}\babs{(\vX{x}{m}-Y_m)+(\vX{y}{m}-Y_m)}
    \\&\leq
    \abs{\vX{x}{m}-\vX{y}{m}}\bigl(\abs{\vX{x}{m}-Y_m}+\abs{\vX{y}{m}-Y_m}\bigr)
    \leq
    2D\abs{\vX{x}{m}-\vX{y}{m}}
    \leq
    2LD\vd(x,y)
    .
  \end{split}
  \end{equation}
  In addition, note that 
    \eqref{eq:cov4.defmfE}
    and the hypothesis that
      for all 
        $x\in\vA$ 
      it holds that
        $(X_{x,m},Y_m)$, $m\in\{1,2,\dots,M\}$, are i.i.d.\ random variables
  show that for all
    $x\in \vA$
  it holds that
  \begin{equation}
  \label{eq:cov4.1}
    \E\bigl[\vmfE{x}\bigr]
    =
    \frac1M\!\left[\sum_{m=1}^M\E\bigl[\abs{\vX{x}{m}-Y_m}^2\bigr]\right]\!
    =
    \frac1M\!\left[\sum_{m=1}^M\E\bigl[\abs{\vX{x}{1}-Y_1}^2\bigr]\right]\!
    =
    \frac1M\!\left[\sum_{m=1}^M\vmcE{x}\right]\!
    =
    \vmcE{x}
    .
  \end{equation}
  Furthermore, observe that
    the hypothesis that for all
      $x\in \vA$
    it holds that
      $(\vX{x}{m},Y_m)$, $m\in\{1,2,\dots,M\}$,
    are i.i.d.\ random variables
  ensures that for all
    $x\in \vA$
  it holds that
    $\vE xm$, $m\in\{1,2,\dots,M\}$,
  are i.i.d.\ random variables.
  Combining \enum{
    this ;
    % \eqref{eq:Ebnd}, ;
    \eqref{eq:cov4lipsch} ;
    \eqref{eq:cov4.1} ;
  } with
    Lemma~\ref{lem:cov3}
    (with
      $(E,\delta)\is(\vA,\vd)$,
      $M\is M$,
      $\eps\is\eps$,
      $L\is 2LD$,
      $D\is D^2$,
      $(\Omega,\vmcF,\P)\is(\Omega,\vmcF,\P)$,
      $(\lemIcovthreeIvE xm)_{x\in E,\,m\in\{1,2,\dots,M\}}\is (\vE xm)_{x\in \vA,\,m\in\{1,2,\dots,M\}}$,
      $(Z_x)_{x\in E}=(\vmfE{x})_{x\in \vA}$
    in the notation of Lemma~\ref{lem:cov3})
  establishes~\eqref{eq:cov4con}.
  % that
  % \begin{equation}
  %   \P\!\left(\left[\sup\nolimits_{x\in X}\abs{\mf E^{(x)}-\vmcE{x}}\right]>\eps\right)
  %   \leq
  %   2\CovNum{X,\frac\eps{8LD}}\exp\!\left(\frac{-\eps^2}{2MD^4}\right)
  %   .
  % \end{equation}
  The proof of Lemma~\ref{lem:cov4} is thus completed.
\endgroup
\end{proof}
\endgroup

\begingroup
\newcommand{\vmcF}{\mc F}
\begin{lemma}
  \label{lem:cov5}
  Let 
    $d,\mf d,M\in\N$,
    $R,L,\mc R,\eps\in(0,\infty)$,
  let $D\subseteq\R^d$ be a compact set,
  let $(\Omega,\vmcF,\P)$ be a probability space,
  let $X_m\colon\Omega\to D$, $m\in\{1,2,\dots,M\}$,
    and $Y_m\colon \Omega\to\R$, $m\in\{1,2,\dots,M\}$,
    be functions,
  assume that $(X_m,Y_m)%=(X_{m,1},X_{m,2},\dots,X_{m,d},Y_m)
      $, $m\in\{1,2,\dots,M\}$,
    are i.i.d.\ random variables,
  let $H=(H_\theta)_{\theta\in [-R,R]^{\mf d}}\colon [-R,R]^{\mf d}\to C(D,\R)$ satisfy
    for all
      $\theta,\vartheta\in [-R,R]^{\mf d}$,
      $x\in D$
    that
      $\abs{H_\theta(x)-H_\vartheta(x)}\leq L\infnorm{\theta-\vartheta}$,
  assume for all 
      $\theta\in [-R,R]^{\mf d}$,
      $m\in\{1,2,\dots,M\}$
    that
      $\abs{H_\theta(X_m)-Y_m}\leq\mc R$
      and $\E[\abs{Y_1}^2]<\infty$,
  let $\mc E\colon C(D,\R)\to[0,\infty)$ satisfy 
    for all
      $f\in C(D,\R)$
    that
      $\mc E(f)=\E[\abs{f(X_1)-Y_1}^2]$,
  and let $\mf E\colon [-R,R]^{\mf d}\times\Omega\to[0,\infty)$ satisfy
    for all
      $\theta\in [-R,R]^{\mf d}$,
      $\omega\in\Omega$
    that
    \begin{equation}
      \mf E(\theta,\omega)
      =
      \frac1M\!\left[\sum_{m=1}^M\abs{H_\theta(X_m(\omega))-Y_m(\omega)}^2\right]
    \end{equation}
  (cf.\ \cref{def:infnorm}).
  Then
    $\Omega\ni\omega\mapsto \sup_{\theta\in[-R,R]^{\mf d}}\abs{\mf E(\theta,\omega)-\mc E(H_\theta)}\in [0,\infty]$ is an $\vmcF$/$\Borel([0,\infty])$-measurable function and
  \begin{equation}
    \label{eq:cov5con}
    \begin{split}
    \P\bigl(\sup\nolimits_{\theta\in [-R,R]^{\mf d}}\abs{\mf E(\theta)-\mc E(H_\theta)}\geq\eps\bigr)
    % &\leq
    % 2\exp\!\left(\mf d\ln\!\left(\max\biggl\{1,\frac{32LR\mc R}{\eps}\biggr\}\right)-\frac{\eps^2M}{2\mc R^4}\right)
    % \!.
    % \\
    &\leq
    2\max\biggl\{1,\biggl[
    \frac{32LR\mc R}{\eps}
    \biggr]^{\mf d}\biggr\}
    \exp\!\left(\frac{-\eps^2M}{2\mc R^4}\right).
    \end{split}
  \end{equation}
  %(cf.\ \cref{lem:meas}).
\end{lemma}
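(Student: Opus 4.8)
The plan is to deduce \cref{lem:cov5} by specializing \cref{lem:cov4} to the parameter set $E=[-R,R]^{\mf d}$, viewed as a metric space with the metric $\delta$ given by $\delta(\theta,\vartheta)=\infnorm{\theta-\vartheta}$, and then to bound the covering number that appears by means of \cref{prop:covnum}. As a preliminary I would note that $([-R,R]^{\mf d},\delta)$ is a nonempty separable metric space (it is a subset of the separable space $\R^{\mf d}$) and that, since every $H_\theta$ is continuous and hence Borel measurable and every $X_m$ is measurable, the map $\Omega\ni\omega\mapsto H_\theta(X_m(\omega))\in\R$ is a random variable for every $\theta\in[-R,R]^{\mf d}$ and every $m\in\{1,2,\dots,M\}$.

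Next I would introduce the random variables $X_{\theta,m}\colon\Omega\to\R$, $\theta\in[-R,R]^{\mf d}$, $m\in\{1,2,\dots,M\}$, given by $X_{\theta,m}=H_\theta\circ X_m$, and check the hypotheses of \cref{lem:cov4}. The assumption that $(X_m,Y_m)$, $m\in\{1,2,\dots,M\}$, are i.i.d.\ entails that, for every fixed $\theta\in[-R,R]^{\mf d}$, the pairs $(X_{\theta,m},Y_m)$, $m\in\{1,2,\dots,M\}$, are i.i.d.; the hypothesis on $H$ gives $\abs{X_{\theta,m}-X_{\vartheta,m}}=\abs{H_\theta(X_m)-H_\vartheta(X_m)}\leq L\infnorm{\theta-\vartheta}=L\delta(\theta,\vartheta)$ for all $\theta,\vartheta\in[-R,R]^{\mf d}$ and all $m$; and the boundedness hypothesis gives $\abs{X_{\theta,m}-Y_m}=\abs{H_\theta(X_m)-Y_m}\leq\mc R$ for all $\theta\in[-R,R]^{\mf d}$ and all $m$. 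I would also record that, in the notation of \cref{lem:cov4}, one has $\mf E(\theta,\cdot)=\tfrac1M[\sum_{m=1}^M\abs{X_{\theta,m}-Y_m}^2]$ and $\mc E(H_\theta)=\E[\abs{X_{\theta,1}-Y_1}^2]$, so the random field $(\mf E(\theta,\cdot)-\mc E(H_\theta))_{\theta\in[-R,R]^{\mf d}}$ is exactly the one studied in \cref{lem:cov4}.

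Applying \cref{lem:cov4} (with $E\is[-R,R]^{\mf d}$, the metric equal to $\delta$, $M\is M$, $\eps\is\eps$, $L\is L$, $D\is\mc R$, $(X_{x,m})\is(X_{\theta,m})$, and $(Y_m)\is(Y_m)$ in its notation) then yields at once the $\mathcal F$/$\Borel([0,\infty])$-measurability of $\Omega\ni\omega\mapsto\sup_{\theta\in[-R,R]^{\mf d}}\abs{\mf E(\theta,\omega)-\mc E(H_\theta)}\in[0,\infty]$ together with the estimate
\[
  \P\bigl(\sup\nolimits_{\theta\in[-R,R]^{\mf d}}\abs{\mf E(\theta)-\mc E(H_\theta)}\geq\eps\bigr)
  \leq
  2\,\CovNum{([-R,R]^{\mf d},\delta),\frac{\eps}{8L\mc R}}\exp\!\left(\frac{-\eps^2M}{2\mc R^4}\right)\!.
\]
To conclude I would invoke \cref{prop:covnum} for the $\mf d$-dimensional Banach space $(\R^{\mf d},\infnorm\cdot)$ with ball $B=[-R,R]^{\mf d}=\{\theta\in\R^{\mf d}\colon\infnorm\theta\leq R\}$ and radius $r=\frac{\eps}{8L\mc R}$, which gives $\CovNum{([-R,R]^{\mf d},\delta),\frac{\eps}{8L\mc R}}\leq\max\{1,[\frac{32LR\mc R}{\eps}]^{\mf d}\}$ because $4R/r=\frac{32LR\mc R}{\eps}$; substituting this into the preceding display establishes \eqref{eq:cov5con}. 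I do not anticipate a genuine obstacle: the argument is essentially pure bookkeeping, the only points requiring a little care being the verification that the composed pairs $(H_\theta(X_m),Y_m)_m$ retain the i.i.d.\ property and the correct identification of the dimension ($\mf d$) and the radius ($R$) when instantiating \cref{prop:covnum}.
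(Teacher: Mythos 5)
Your proposal is correct and follows essentially the same route as the paper's proof: define the metric $\delta(\theta,\vartheta)=\infnorm{\theta-\vartheta}$ on $B=[-R,R]^{\mf d}$, check that the family $(H_\theta(X_m),Y_m)_m$ inherits the i.i.d.\ property and the Lipschitz and boundedness hypotheses, apply \cref{lem:cov4} to get measurability and the bound with $\CovNum{(B,\delta),\frac{\eps}{8L\mc R}}$, then invoke \cref{prop:covnum} with $r=\frac{\eps}{8L\mc R}$ and $R$ to finish. The instantiation details you flagged (dimension $\mf d$, radius $R$, $4R/r=32LR\mc R/\eps$) match the paper's exactly.
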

\begin{proof}[Proof of Lemma~\ref{lem:cov5}]
  Throughout this proof 
  let $B\subseteq\R^{\mf d}$ satisfy $B=[-R,R]^{\mf d}=\{\theta\in\R^{\mf d}\colon \infnorm\theta\leq R\}$
  and let
    $\delta\colon B\times B\to[0,\infty)$
      satisfy for all
        $\theta,\vartheta\in B$
      that
      \begin{equation}
        \delta(\theta,\vartheta)
        =
        \infnorm{\theta-\vartheta}
        .
      \end{equation}
  Observe that \enum{
    the hypothesis that $(X_m,Y_m)$, $m\in\{1,2,\dots,M\}$, are i.i.d.\ random variables
    ;
    the hypothesis that for all $\theta\in [-R,R]^{\mf d}$ it holds that $H_\theta$ is a continuous function
  }[imply] that for all 
    $\theta\in B$
  it holds that
    $(H_\theta(X_m),Y_m)$, $m\in\{1,2,\dots,M\}$, are i.i.d.\ random variables.
  Combining \enum{
    this
    ;
    the hypothesis that
    for all
      $\theta,\vartheta\in B$,
      $x\in D$
    it holds that
      $\abs{H_\theta(x)-H_\vartheta(x)}\leq L\infnorm{\theta-\vartheta}$
    ;
    the hypothesis that for all 
      $\theta\in B$,
      $m\in\{1,2,\dots,M\}$
    it holds that
      $\abs{H_\theta(X_m)-Y_m}\leq\mc R$
  } with Lemma~\ref{lem:cov4} 
    (with
      $(E,\delta)\is (B,\delta)$,
      $M\is M$,
      $\eps\is\eps$,
      $L\is L$,
      $D\is \mc R$,
      $(\Omega,\vmcF,\P)\is(\Omega,\vmcF,\P)$,
      $(X_{x,m})_{x\in E,\,m\in\{1,2,\dots,M\}}\is (H_\theta(X_m))_{\theta\in B,\,m\in\{1,2,\dots,M\}}$,
      $(Y_m)_{m\in\{1,2,\dots,M\}}\is(Y_m)_{m\in\{1,2,\dots,M\}}$,
      $(\mf E_x)_{x\in E}\is\bigl((\Omega\ni\omega\mapsto\mf E(\theta,\omega)\in[0,\infty))\bigr)_{\theta\in B}$,
      $(\mc E_x)_{x\in E}\is(\mc E(H_\theta))_{\theta\in B}$
   in the notation of Lemma~\ref{lem:cov4}) establishes that
     $\Omega\ni\omega\mapsto \sup_{\theta\in B}\abs{\mf E(\theta,\omega)-\mc E(H_\theta)}\in [0,\infty]$ is an $\vmcF$/$\Borel([0,\infty])$-measurable function and
   \begin{equation}
    \label{eq:cov5bnd1}
    \P\bigl(\sup\nolimits_{\theta\in B}\abs{\mf E(\theta)-\mc E(H_\theta)}\geq\eps\bigr)
    \leq
    2\CovNum{(B,\delta),\frac\eps{8L\mc R}}\exp\!\left(\frac{-\eps^2M}{2\mc R^4}\right)
  \end{equation}
  (cf.\ \cref{def:coveringnumber}).
  % In the next step we note that
  %   the fact that for all
  %     $r\in[R,\infty)$
  %   it holds that
  %     $B\subseteq\{\theta\in B\colon \delta(0,\theta)\leq r\}$
  % ensures that for all
  %   $r\in[8LR\mc R,\infty)$
  % it holds that
  % \begin{equation}
  %   \label{eq:cov5.2}
  %   \CovNum{(B,\delta),\frac{r}{8L\mc R}}=1.
  % \end{equation}
  % Furthermore, observe that
  %   Proposition~\ref{prop:covnum}
  %   (with
  %     $X\is\R^{\mf d}$,
  %     $\norm\cdot\is(\R^{\mf d}\ni x\mapsto \infnorm x\in[0,\infty))$,
  %     $R\is R$,
  %     $r\is\tfrac{r}{8L\mc R}$,
  %     $B\is B$,
  %     $\delta\is \delta$
  %     for $r\in(0,32LR\mc R]$
  %   in the notation of \cref{prop:covnum})
  % shows that for all
  %   $r\in(0,32LR\mc R]$
  % it holds that
  % \begin{equation}
  %   \CovNum{(B,\delta),\frac{r}{8L\mc R}}
  %   % \leq
  %   % \ln(\CovNum{(\mc B,\delta|_{\mc B\times\mc B}),\frac{\eps}{8L\mc R}})
  %   \leq
  %   \left[\frac{32LR\mc R}{r}\right]^{\mf d}
  %   .
  % \end{equation}
  Moreover, note that
    \cref{prop:covnum}
      (with
        $X\is\R^{\mf d}$,
        $\norm\cdot\is(\R^{\mf d}\ni x\mapsto \infnorm x\in[0,\infty))$,
        $R\is R$,
        $r\is\tfrac{\eps}{8L\mc R}$,
        $B\is B$,
        $\delta\is \delta$
      in the notation of \cref{prop:covnum})
  demonstrates that
  \begin{equation}
    \CovNum{(B,\delta),\frac{\eps}{8L\mc R}}
    % \leq
    % \ln(\CovNum{(\mc B,\delta|_{\mc B\times\mc B}),\frac{\eps}{8L\mc R}})
    \leq
    \max\biggl\{1,
    \left[\frac{32LR\mc R}{\eps}\right]^{\mf d}\biggr\}
    % =
    % \biggl[\max\biggl\{1,
    % \frac{32LR\mc R}{\eps}
    % \biggr\}\biggr]^{\mf d}
    .
  \end{equation}
    This
    and \eqref{eq:cov5bnd1}
  prove
    \eqref{eq:cov5con}.
  % Hence, we obtain that
  % \begin{equation}
  %   \P\!\left(\left[\sup\nolimits_{\eta\in B}\abs{\mf E(\eta)-\mc E(H_\eta)}\right]>\eps\right)
  %   \leq
  %   2\exp\!\left(\mf d\ln\!\left(\frac{32L\mf D^2}{\eps}\right)-\frac{\eps^2M}{2\mf D^4}\right)
  %   .
  % \end{equation}
   The proof of Lemma~\ref{lem:cov5} is thus completed.
\end{proof}
\endgroup

\begingroup
\newcommand{\vmcF}{\mc F}
\begin{lemma}
  \label{lem:cov6}
  Let 
    $\mf d,M,L\in\N$,
    % $L\in\{2,3,\dots,\}$,
    $u\in\R$,
    $v\in(u,\infty)$,
    $R\in[1,\infty)$,
    $\eps,b\in(0,\infty)$,
    $l=(l_0,l_1,\dots,l_L)\in\N^{L+1}$
    % $B\subseteq\{\theta\in\R^{\mf d}\colon \infnorm{\theta}\leq R\}$
    satisfy 
      $l_L=1$ 
      and $\sum_{k=1}^Ll_k(l_{k-1}+1) \leq \mf d$,
  let $D\subseteq[-b,b]^{l_0}$ be a compact set,
  % assume $\eps\leq 32L\max\{1,b\}(\infnorm l+1)^LR^L(v-u)$,
  let $(\Omega,\vmcF,\P)$ be a probability space,
  let $X_m\colon\Omega\to D$, $m\in\{1,2,\dots,M\}$,
    and $Y_m\colon\Omega\to [u,v]$, $m\in\{1,2,\dots,M\}$,
    be functions,
  assume that
  $(X_m,Y_m)%=(X_{m,1},X_{m,2},\dots,X_{m,l_0},Y_m)
     %\colon\Omega\to D\times[u,v]
     $, $m\in\{1,2,\dots,M\}$,
    are i.i.d.\ random variables,
  let $\mc E\colon C(D,\R)\to[0,\infty)$ satisfy 
    for all
      $f\in C(D,\R)$
    that
      $\mc E(f)=\E[\abs{f(X_1)-Y_1}^2]$,
  and let $\mf E\colon [-R,R]^{\mf d}\times\Omega\to[0,\infty)$ satisfy
    for all
      $\theta\in [-R,R]^{\mf d}$,
      $\omega\in\Omega$
    that
    \begin{equation}
      \mf E(\theta,\omega)
      =
      \frac1M\!\left[\sum_{m=1}^M\abs{\ClippedRealV{\theta}{l} uv(X_m(\omega))-Y_m(\omega)}^2\right]
    \end{equation}
  (cf.\ \cref{def:infnorm,def:rectclippedFFANN}).
  Then
    $\Omega\ni\omega\mapsto \sup\nolimits_{\theta\in [-R,R]^{\mf d}}\babs{\mf E(\theta,\omega)-\mc E\bigl(\ClippedRealV\theta{l}uv|_{D}\bigr)}\in[0,\infty]$ is an $\vmcF$/$\Borel([0,\infty])$-measurable function and
  \begin{equation}
    \label{eq:cov6con}
    \begin{split}
    &\P\!\left(\sup\nolimits_{\theta\in [-R,R]^{\mf d}}\babs{\mf E(\theta)-\mc E\bigl(\ClippedRealV\theta{l}uv|_{D}\bigr)}\geq\eps\right)
    \\&\leq
    2\max\biggl\{1,\biggl[\frac{32L\max\{1,b\}(\infnorm l+1)^LR^{L}(v-u)}{\eps}\biggr]^{\mf d}\biggr\}
    \exp\!\left(\frac{-\eps^2M}{2(v-u)^4}\right)
    % \\&\leq
    % 2\exp\!\left(\mf d\ln\!\left(
    %   \max\biggl\{1,\frac{32L\max\{1,b\}(\infnorm l+1)^LR^{L}(v-u)}{\eps}\biggr\}
    % \right)-\frac{\eps^2M}{2(v-u)^4}\right)
    \!.
    \end{split}
  \end{equation}
\end{lemma}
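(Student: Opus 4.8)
The plan is to obtain \cref{lem:cov6} as a special case of \cref{lem:cov5}, applied with the parametrized hypothesis class $H=(H_\theta)_{\theta\in[-R,R]^{\mf d}}$ given by $H_\theta=\ClippedRealV{\theta}{l}uv|_D$ for $\theta\in[-R,R]^{\mf d}$.

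First I would verify that this $H$ meets the hypotheses of \cref{lem:cov5}. By \cref{def:rectclippedFFANN} the map $\ClippedRealV{\theta}{l}uv$ is a composition of affine functions, multidimensional rectifier functions, and, since $l_L=1$, the one-dimensional clipping function $\Clip uv1$; hence $H_\theta\in C(D,\R)$, and, as $\Clip uv1(y)=\max\{u,\min\{y,v\}\}\in[u,v]$, also $\ClippedRealV{\theta}{l}uv(D)\subseteq[u,v]$. Combining the latter with the hypothesis $Y_m(\Omega)\subseteq[u,v]$ yields $\abs{H_\theta(X_m)-Y_m}\leq v-u$ for all $\theta\in[-R,R]^{\mf d}$, $m\in\{1,2,\dots,M\}$, and $Y_1(\Omega)\subseteq[u,v]$ with $u,v\in\R$ yields $\E[\abs{Y_1}^2]\leq(\max\{\abs u,\abs v\})^2<\infty$.

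The substantive input is the Lipschitz dependence on $\theta$. Because $\sum_{k=1}^Ll_k(l_{k-1}+1)\leq\mf d$, \cref{cor:ClippedRealNNLipsch} (with $a\is -b$, $b\is b$, $u\is u$, $v\is v$, $d\is\mf d$, $L\is L$, $l\is l$ in its notation) gives, for all $\theta,\vartheta\in[-R,R]^{\mf d}$ and all $x\in D\subseteq[-b,b]^{l_0}$,
\begin{equation}
  \abs{\ClippedRealV{\theta}{l}uv(x)-\ClippedRealV{\vartheta}{l}uv(x)}
  \leq
  L\max\{1,b\}(\infnorm l+1)^L(\max\{1,\infnorm\theta,\infnorm\vartheta\})^{L-1}\infnorm{\theta-\vartheta}
  \leq
  \bigl[L\max\{1,b\}(\infnorm l+1)^LR^{L-1}\bigr]\infnorm{\theta-\vartheta},
\end{equation}
where the last inequality uses $\infnorm\theta,\infnorm\vartheta\leq R$ together with $R\geq1$. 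Thus $H$ is Lipschitz with constant $\mc L:=L\max\{1,b\}(\infnorm l+1)^LR^{L-1}\in(0,\infty)$.

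Finally I would invoke \cref{lem:cov5} with $d\is l_0$, $\mf d\is\mf d$, $M\is M$, $R\is R$, $L\is\mc L$, $\mc R\is v-u$, $\eps\is\eps$, $D\is D$, $(X_m,Y_m)\is(X_m,Y_m)$, and $(H_\theta)_{\theta}\is(\ClippedRealV{\theta}{l}uv|_D)_\theta$, noting that, since $X_m$ takes values in $D$, the functionals $\mc E$ and $\mf E$ appearing in \cref{lem:cov5} coincide with those in \cref{lem:cov6}. \cref{lem:cov5} then yields both the asserted $\mc F$/$\Borel([0,\infty])$-measurability and the estimate $\P(\sup_{\theta\in[-R,R]^{\mf d}}\abs{\mf E(\theta)-\mc E(\ClippedRealV{\theta}{l}uv|_D)}\geq\eps)\leq 2\max\{1,[\tfrac{32\mc LR(v-u)}{\eps}]^{\mf d}\}\exp(\tfrac{-\eps^2M}{2(v-u)^4})$; since $\mc LR=L\max\{1,b\}(\infnorm l+1)^LR^L$, this is precisely \eqref{eq:cov6con}. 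I expect no genuine obstacle here: the only points requiring care are keeping the layer count $L$ of \cref{lem:cov6} distinct from the Lipschitz constant denoted $L$ in \cref{lem:cov5}, and the harmless weakening of $R^{L-1}$ to $R^L$ via $R\geq1$.
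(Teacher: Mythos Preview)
Your proposal is correct and follows essentially the same route as the paper: define the Lipschitz constant $\mc L=L\max\{1,b\}(\infnorm l+1)^LR^{L-1}$ via \cref{cor:ClippedRealNNLipsch}, verify the range bound $\abs{H_\theta(X_m)-Y_m}\leq v-u$, and then feed everything into \cref{lem:cov5} with $L\is\mc L$ and $\mc R\is v-u$. The paper's proof is virtually identical, including the final simplification $\mc LR=L\max\{1,b\}(\infnorm l+1)^LR^{L}$.
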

\begin{proof}[Proof of Lemma~\ref{lem:cov6}]
  Throughout this proof
    let $\mf L\in(0,\infty)$ satisfy
    \begin{equation}
      \mf L
      =
      L 
      \max\{ 1, b \}
      \, 
      ( \infnorm{ l } + 1 )^L 
      R^{ L - 1 }
        % \frac{(\max\{1,\abs{a},\abs{b}\})(3\infnorm{\mathbf l}+3)^{L}R^{L-1}}2
      .
    \end{equation}
  Observe that
    \cref{cor:ClippedRealNNLipsch}
      (with
        $a\is -b$,
        $b\is b$,
        $u\is u$,
        $v\is v$,
        $d\is \mf d$,
        $L\is L$,
        $l\is l$
      in the notation of \cref{cor:ClippedRealNNLipsch})
    and the hypothesis that $D\subseteq[-b,b]^{l_0}$
  show that for all
    $\theta,\vartheta\in [-R,R]^{\mf d}$
  it holds that
  \begin{equation}
  \begin{split}
    \label{eq:cov6lipsch}
    \sup_{x\in D}\,\abs{\ClippedRealV{\theta}{l}uv(x)-\ClippedRealV\vartheta{l}uv(x)}
    &\leq
    \sup_{x\in[-b,b]^{l_0}}\abs{\ClippedRealV{\theta}{l}uv(x)-\ClippedRealV\vartheta{l}uv(x)}
    \\&\leq
    L 
    \max\{ 1, b \}
    \, 
    ( \infnorm{ l } + 1 )^L 
    \,
    ( \max\{ 1, \infnorm\theta,\infnorm\vartheta\} )^{ L - 1 }
    \infnorm{ \theta - \vartheta }
%    \left[\frac{(\max\{1,\abs{a},\abs{b}\})(3\infnorm{l}+3)^{L}}2\right](\max\{1,\infnorm\theta,\infnorm\vartheta\})^{L-1}\infnorm{\theta-\vartheta}
    \\&\leq
    L 
    \max\{ 1, b \}
    \, 
    ( \infnorm{ l } + 1 )^L 
    R^{ L - 1 }
    \infnorm{ \theta - \vartheta }
    =
    \mf L\infnorm{ \theta - \vartheta }
    % \left[\frac{(\max\{1,\abs{a},\abs{b}\})(3\infnorm{l}+3)^{L}R^{L-1}}2\right]\infnorm{\theta-\vartheta}
    .
  \end{split}
  \end{equation}
  Furthermore, observe that \enum{
    the fact that for all 
      $\theta\in\R^{\mf d}$,
      $x\in\R^{l_0}$
    it holds that
      $\ClippedRealV{\theta}{l}uv(x)\in[u,v]$ ;
    the hypothesis that for all 
      $m\in\{1,2,\dots,M\}$,
      $\omega\in\Omega$
    it holds that
      $Y_m(\omega)\in[u,v]$
  }[demonstrate] that for all
    $\theta\in [-R,R]^{\mf d}$,
    $m\in\{1,2,\dots,M\}$
  it holds that
  \begin{equation}
    \abs{\ClippedRealV\theta{l}uv(X_m)-Y_m}
%     \leq
%     \abs{\ClippedRealV\theta{l}uv(X_m)}+\abs{Y_m}
    \leq 
    v-u
    .
  \end{equation}
  Combining
    this
    and~\eqref{eq:cov6lipsch}
  with
    Lemma~\ref{lem:cov5}
      (with
        $d\is l_0$,
        $\mf d\is\mf d$,
        $M\is M$,
        $R\is R$,
        $L\is \mf L$,
        $\mc R\is v-u$,
        $\eps\is\eps$,
        % $B\is [-R,R]^{\mf d}$,
        $D\is D$,
        $(\Omega,\vmcF,\P)\is(\Omega,\vmcF,\P)$,
        $(X_m)_{m\in\{1,2,\dots,M\}}\is(X_m)_{m\in\{1,2,\dots,M\}}$,
        $(Y_m)_{m\in\{1,2,\dots,M\}}\is((\Omega\ni\omega\mapsto Y_m(\omega)\in\R))_{m\in\{1,2,\dots,M\}}$,
        $H\is([-R,R]^{\mf d}\ni\theta\mapsto \ClippedRealV\theta{l}uv|_{D}\in C(D,\R))$,
        $\mc E\is\mc E$,
        $\mf E\is\mf E$
      in the notation of Lemma~\ref{lem:cov5})
  establishes that
    $\Omega\ni\omega\mapsto \sup\nolimits_{\theta\in [-R,R]^{\mf d}}\babs{\mf E(\theta,\omega)-\mc E\bigl(\ClippedRealV\theta{l}uv|_{D}\bigr)}\in[0,\infty]$ is an $\vmcF$/$\Borel([0,\infty])$-measurable function and
  \begin{equation}
    \P\!\left(\sup\nolimits_{\theta\in [-R,R]^{\mf d}}\babs{\mf E(\theta)-\mc E\bigl(\ClippedRealV\theta{l}uv|_{D}\bigr)}\geq\eps\right)
    \leq
    2\max\biggl\{1,\biggl[\frac{32\mf LR(v-u)}{\eps}\biggr]^{\mf d}\biggr\}
    \exp\!\left(\frac{-\eps^2M}{2(v-u)^4}\right)
    % 2\exp\!\left(\mf d\ln\!\left(\frac{32\mf LR(v-u)}{\eps}\right)-\frac{\eps^2M}{2(v-u)^4}\right)
    \!.
  \end{equation}
  The proof of Lemma~\ref{lem:cov6} is thus completed.
\end{proof}
\endgroup

% Note that
% \begin{equation}
%   \P\!\left(
%     \bigl[
%       \sup\nolimits_{ 
%         f \in 
%         ( \cup_{ n = 1 }^N F_n )
%       }
%       E( f )
%     \bigr]
%     > \varepsilon
%   \right) 
%   =
%   \P\!\left(
%     \cup_{ n = 1 }^N
%     \left\{
%       \left[
%         \sup\nolimits_{ 
%           f \in F_n
%         }
%         E( f )
%       \right]
%       > \varepsilon
%     \right\}
%   \right)
%   \leq 
%   \sum_{ n = 1 }^N
%   \P\!\left(
%       \left[
%         \sup\nolimits_{ 
%           f \in F_n
%         }
%         E( f )
%       \right]
%       > \varepsilon
%   \right)
% \end{equation}

\subsection{Analysis of the optimization error}
\label{subsec:optimization_error}

\subsubsection{Convergence rates for the minimum Monte Carlo method}

\begingroup
\newcommand{\vmcA}{\mathcal{F}}
\begin{lemma}
\label{lem:estimate_opt_error0}
Let $ ( \Omega, \vmcA, \P ) $ be a probability space, 
let $ \mathfrak{d}, N \in \N $, 
let $ \left\| \cdot \right\| \colon \R^{ \mathfrak{d} } \to [0,\infty) $ be a norm, 
let $ \mathfrak{H} \subseteq \R^{ \mathfrak{d} } $ be a set, 
let 
$ \vartheta \in \mathfrak{H} $, 
$ L,\eps \in (0,\infty) $, 
let 
$
  \mathfrak{E} \colon \mathfrak{H} \times \Omega \to \R
$
be a $(\Borel(\mf H)\otimes \vmcA)$/$\Borel(\R)$-measurable function, 
assume for all $ x, y \in \mathfrak{H} $, $ \omega \in \Omega $ that
% $
%   \mathfrak{E}( \vartheta, \omega ) 
%   = 
%   \inf\nolimits_{ \theta \in \mathfrak{H} } \mathfrak{E}( \theta , \omega ) 
% $
% and 
$
  | \mathfrak{E}(x, \omega) - \mathfrak{E}( y, \omega ) | \leq L \lVert x - y \rVert
$,
and let 
$ \Theta_n \colon \Omega \to \mathfrak{H} $, 
$ n \in \{ 1, 2, \dots, N \} $, 
be i.i.d.\ random variables.
Then 
\begin{equation}
\begin{split}
&
  \P\bigl(
    \bigl[
    \min\nolimits_{ n \in \{ 1, 2, \dots, N \} } 
      \mathfrak{E}( \Theta_n )
    \bigr]
    -
    \mathfrak{E}( \vartheta )
    > \varepsilon
  \bigr)
% \\ &
\leq 
  \left[
    \P\bigl(
      \|
        \Theta_1
        -
        \vartheta
      \|
      >
      \tfrac{ \varepsilon }{ L }
    \bigr)
  \right]^N
\leq 
  \exp\bigl(
    - N
    \,
    \P\bigl(
      \|
        \Theta_1
        -
        \vartheta
      \|
      \leq
      \tfrac{ \varepsilon }{ L }
    \bigr)
  \bigr)
  .
\end{split}
\end{equation}
\end{lemma}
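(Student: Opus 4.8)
The plan is to establish the two inequalities in turn. For the first inequality, the key observation is that the event $\{[\min_{n\in\{1,2,\dots,N\}}\mathfrak E(\Theta_n)]-\mathfrak E(\vartheta)>\varepsilon\}$ is contained in the event that \emph{every} one of the samples $\Theta_1,\Theta_2,\dots,\Theta_N$ satisfies $\mathfrak E(\Theta_n)-\mathfrak E(\vartheta)>\varepsilon$. First I would use the Lipschitz hypothesis on $\mathfrak E(\cdot,\omega)$: for each $\omega\in\Omega$ and each $n$, if $\|\Theta_n(\omega)-\vartheta\|\leq\varepsilon/L$, then $\mathfrak E(\Theta_n(\omega),\omega)-\mathfrak E(\vartheta,\omega)\leq L\|\Theta_n(\omega)-\vartheta\|\leq\varepsilon$, so the event $\{\mathfrak E(\Theta_n)-\mathfrak E(\vartheta)>\varepsilon\}$ is a subset of $\{\|\Theta_n-\vartheta\|>\varepsilon/L\}$. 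Hence
\begin{equation}
\bigl\{[\min\nolimits_{n}\mathfrak E(\Theta_n)]-\mathfrak E(\vartheta)>\varepsilon\bigr\}
\subseteq
\bigcap_{n=1}^N\bigl\{\|\Theta_n-\vartheta\|>\tfrac\varepsilon L\bigr\}.
\end{equation}
Since $\Theta_1,\Theta_2,\dots,\Theta_N$ are i.i.d., the probability of the right-hand side factorizes as $[\P(\|\Theta_1-\vartheta\|>\varepsilon/L)]^N$, which gives the first inequality. One subtlety to address is measurability: I should note that $\mathfrak E(\vartheta,\cdot)$ and each $\mathfrak E(\Theta_n,\cdot)$ are random variables (using the joint measurability of $\mathfrak E$ together with measurability of $\Theta_n$), so that the events in question lie in $\mathcal F$; this is where the $(\Borel(\mathfrak H)\otimes\mathcal F)$/$\Borel(\R)$-measurability hypothesis is used.

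For the second inequality, I would write $p=\P(\|\Theta_1-\vartheta\|\leq\varepsilon/L)$, so that $\P(\|\Theta_1-\vartheta\|>\varepsilon/L)=1-p$, and then invoke the elementary bound $1-p\leq e^{-p}$ (valid for all $p\in\R$, in particular for $p\in[0,1]$). Raising to the $N$th power yields $(1-p)^N\leq e^{-Np}$, which is exactly $\exp(-N\,\P(\|\Theta_1-\vartheta\|\leq\varepsilon/L))$.

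The argument is essentially routine; the only mild obstacle is being careful about measurability of the relevant events (so that the probabilities are well-defined) and ensuring $\vartheta\in\mathfrak H$ is used so that $\mathfrak E(\vartheta,\cdot)$ makes sense. Beyond that, the proof is just the subset inclusion, independence, and the inequality $1-p\leq e^{-p}$.
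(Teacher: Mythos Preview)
Your proposal is correct and follows essentially the same approach as the paper: the paper also uses the Lipschitz hypothesis to bound $[\min_n\mathfrak E(\Theta_n)]-\mathfrak E(\vartheta)\leq L[\min_n\|\Theta_n-\vartheta\|]$ pointwise (which is equivalent to your event inclusion), then applies the i.i.d.\ assumption to factorize the probability, and finally invokes $1-x\leq e^{-x}$ for the second inequality. Your extra remark on measurability is a nice addition that the paper leaves implicit.
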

\begin{proof}[Proof of Lemma~\ref{lem:estimate_opt_error0}]
Note that \enum{
  the hypothesis that for all
    $x,y\in\mf H$,
    $\omega\in\Omega$
  it holds that
    $\abs{\mf E(x,\omega)-\mf E(y,\omega)}\leq L\norm{x-y}$
}[imply] that
\begin{equation}
\begin{split}
&
%   \left|
%     \left[
%       \min\nolimits_{ n \in \{ 1, 2, \dots, N \} } \mathfrak{E}( \Theta_n )
%     \right]
%     -
%     \mathfrak{E}( \vartheta )
%   \right|
% \\ & =
  \left[
    \min\nolimits_{ n \in \{ 1, 2, \dots, N \} } \mathfrak{E}( \Theta_n )
  \right]
  -
  \mathfrak{E}( \vartheta )
=
  \min\nolimits_{ n \in \{ 1, 2, \dots, N \} } 
  \left[ 
    \mathfrak{E}( \Theta_n )
    -
    \mathfrak{E}( \vartheta )
  \right]
\\ &
\leq 
  \min\nolimits_{ n \in \{ 1, 2, \dots, N \} } 
  \left|
    \mathfrak{E}( \Theta_n )
    -
    \mathfrak{E}( \vartheta )
  \right|
\leq
  \min\nolimits_{ n \in \{ 1, 2, \dots, N \} } 
  \bigl[
    L
    \|
      \Theta_n 
      -
      \vartheta
    \|
  \bigr]
\\ & =
  L
  \bigl[
    \min\nolimits_{ n \in \{ 1, 2, \dots, N \} } 
    \|
      \Theta_n 
      -
      \vartheta
    \|
  \bigr]
  .
\end{split}
\end{equation} 
The hypothesis that $\Theta_n$, $n\in\{1,2,\dots,N\}$, are i.i.d.\ random variables
and the fact that $ \forall \, x \in \R \colon 1 - x \leq e^{-x} $ 
hence show that
\begin{equation}
\begin{split}
&
  \P\Big(
    \bigl[
    \min\nolimits_{ n \in \{ 1, 2, \dots, N \} } 
    \mathfrak{E}( \Theta_n )
    \bigr]
    -
    \mathfrak{E}( \vartheta )
    > \varepsilon
  \Big)
\leq 
  \P\Big(
    L
    \bigl[
      \min\nolimits_{ n \in \{ 1, 2, \dots, N \} } 
      \|
        \Theta_n 
        -
        \vartheta
      \|
    \bigr]
    >
    \varepsilon
  \Big)
\\ & 
=
  \P\big(
      \min\nolimits_{ n \in \{ 1, 2, \dots, N \} } 
      \|
        \Theta_n 
        -
        \vartheta
      \|
    >
    \tfrac{ \varepsilon }{ L }
  \big)
=
  \left[
    \P\big(
      \|
        \Theta_1
        -
        \vartheta
      \|
      >
      \tfrac{ \varepsilon }{ L }
    \big)
  \right]^N
\\ & =
  \left[
    1 -
    \P\big(
      \|
        \Theta_1
        -
        \vartheta
      \|
      \leq
      \tfrac{ \varepsilon }{ L }
    \big)
  \right]^N
\leq 
  \exp\bigl(
    - N
    \,
    \P\bigl(
      \|
        \Theta_1
        -
        \vartheta
      \|
      \leq
      \tfrac{ \varepsilon }{ L }
    \bigr)
  \bigr)
  .
\end{split}
\end{equation}
The proof of Lemma~\ref{lem:estimate_opt_error0} is thus completed.
\end{proof}
\endgroup

\subsubsection{Continuous uniformly distributed samples}

\begin{lemma}
\label{lem:estimate_opt_error1}
Let $ ( \Omega, \mathcal{F}, \P ) $ be a probability space, 
let $ \mathfrak{d}, N \in \N $, $ a \in \R $, $ b \in (a,\infty) $, 
$ \vartheta \in [a,b]^{ \mathfrak{d} } $, 
$ L,\eps \in (0,\infty) $, 
% let $ \left\| \cdot \right\| \colon \R^{ \mathfrak{d} } \to [0,\infty) $ satisfy 
% for all $ v = ( v_1, v_2, \dots, v_{ \mathfrak{d} } ) \in \R^{ \mathfrak{d} } $ that
% $
%   \left\| v \right\| = \max\{ \abs{v_1}, \abs{v_2}, \dots, \abs{v_{ \mathfrak{d} } } \}
% $, 
let 
$
  \mathfrak{E} \colon [a,b]^{ \mathfrak{d} } \times \Omega \to \R
$
be a $(\Borel([a,b]^{\mf d})\otimes\mathcal{F})$/$\Borel(\R)$-measurable function, 
assume for all $ x, y \in [a,b]^{ \mathfrak{d} } $, $ \omega \in \Omega $ that
% $
%   \mathfrak{E}( \vartheta, \omega ) 
%   = 
%   \inf\nolimits_{ \theta \in [a,b]^{ \mathfrak{d} } } \mathfrak{E}( \theta , \omega ) 
% $
% and 
$
  | \mathfrak{E}(x, \omega) - \mathfrak{E}( y, \omega ) | \leq L \infnorm{x - y}
$,
let 
$ \Theta_n \colon \Omega \to [a,b]^{\mf d} $, 
$ n \in \{ 1, 2, \dots, N \} $, 
be i.i.d.\ random variables, 
and assume that
$ \Theta_1 $ is continuous uniformly distributed on $ [a,b]^{ \mathfrak{d} } $
(cf.\ \cref{def:infnorm}).
Then 
\begin{equation}
\label{eq:estimate_opt_error1_con}
\begin{split}
&
  \P\bigl(
    \bigl[
      \min\nolimits_{ n \in \{ 1, 2, \dots, N \} } 
        \mathfrak{E}( \Theta_n )
    \bigr]
    -
    \mathfrak{E}( \vartheta )
    > \varepsilon
  \bigr)
\leq 
  \exp\biggl(
    - N
    \min\biggl\{ 
      1 ,
        \frac{
          \varepsilon^{ \mathfrak{d} }
        }{
          L^{ \mathfrak{d} } ( b - a )^{ \mathfrak{d} }
        }
    \biggr\}
  \biggr)
  .
\end{split}
\end{equation}
\end{lemma}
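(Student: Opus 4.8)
The plan is to combine \cref{lem:estimate_opt_error0} with an explicit lower bound for the probability that a single continuous uniformly distributed sample on $[a,b]^{\mf d}$ falls into a small $\infnorm{\cdot}$-ball around $\vartheta$. First I would apply \cref{lem:estimate_opt_error0} with $\mf H\is[a,b]^{\mf d}$, $\norm\cdot\is\infnorm\cdot$, and with $\mf E$, $\vartheta$, $L$, $\eps$, $N$ as given (the Lipschitz and measurability hypotheses transfer verbatim, noting $\Borel(\mf H)=\Borel([a,b]^{\mf d})$), which yields
\begin{equation}
  \P\bigl(\bigl[\min\nolimits_{n\in\{1,2,\dots,N\}}\mf E(\Theta_n)\bigr]-\mf E(\vartheta)>\eps\bigr)
  \leq
  \exp\bigl(-N\,\P\bigl(\infnorm{\Theta_1-\vartheta}\leq\tfrac\eps L\bigr)\bigr).
\end{equation}
It then suffices to prove that $\P(\infnorm{\Theta_1-\vartheta}\leq\tfrac\eps L)\geq\min\{1,\eps^{\mf d}L^{-\mf d}(b-a)^{-\mf d}\}$, since inserting this into the previous line and using monotonicity of $\exp$ gives \eqref{eq:estimate_opt_error1_con}.

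For the lower bound on the ball probability I would use that, because $\Theta_1$ is continuous uniformly distributed on $[a,b]^{\mf d}$, for every $r\in(0,\infty)$ one has $\P(\infnorm{\Theta_1-\vartheta}\leq r)=(b-a)^{-\mf d}\lambda_{\mf d}\bigl(\{x\in[a,b]^{\mf d}\colon\infnorm{x-\vartheta}\leq r\}\bigr)$, where $\lambda_{\mf d}$ denotes the $\mf d$-dimensional Lebesgue measure. Writing $\vartheta=(\vartheta_1,\vartheta_2,\dots,\vartheta_{\mf d})$, the set in question factorizes as $\prod_{i=1}^{\mf d}\bigl([\vartheta_i-r,\vartheta_i+r]\cap[a,b]\bigr)$, and since $\vartheta_i\in[a,b]$ each factor is an interval of length at least $\min\{r,b-a\}$. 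Hence $\lambda_{\mf d}$ of the product is at least $[\min\{r,b-a\}]^{\mf d}$, so that
\begin{equation}
  \P\bigl(\infnorm{\Theta_1-\vartheta}\leq r\bigr)
  \geq
  \left[\frac{\min\{r,b-a\}}{b-a}\right]^{\mf d}
  =
  \min\!\left\{1,\frac{r^{\mf d}}{(b-a)^{\mf d}}\right\}\!,
\end{equation}
and specializing to $r=\eps/L$ completes the argument.

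The only genuinely substantive point — and the step I would be most careful about — is the uniform per-coordinate estimate $\lvert[\vartheta_i-r,\vartheta_i+r]\cap[a,b]\rvert\geq\min\{r,b-a\}$, which must be checked for all $\vartheta_i\in[a,b]$, including the boundary cases $\vartheta_i\in\{a,b\}$; one way is to note that this intersection contains $[\vartheta_i-\min\{r,\vartheta_i-a\},\vartheta_i+\min\{r,b-\vartheta_i\}]$, whose length $\min\{r,\vartheta_i-a\}+\min\{r,b-\vartheta_i\}$ is readily seen to be at least $\min\{r,b-a\}$ (distinguish the cases according to whether $r$ is below or above $\vartheta_i-a$ and $b-\vartheta_i$). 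The remaining ingredients, namely the change-of-measure identity for the continuous uniform distribution and the factorization of the $\infnorm{\cdot}$-ball into a product of intervals, are routine.
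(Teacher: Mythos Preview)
Your proposal is correct and follows essentially the same approach as the paper: apply \cref{lem:estimate_opt_error0} and then lower-bound $\P(\infnorm{\Theta_1-\vartheta}\leq \eps/L)$ by $\min\{1,(\eps/L)^{\mf d}(b-a)^{-\mf d}\}$. The only cosmetic difference is in how that lower bound is obtained: the paper reduces to the corner point via $\P(\infnorm{\Theta_1-\vartheta}\leq r)\geq \P(\infnorm{\Theta_1-(a,\dots,a)}\leq r)$ and computes the corner probability exactly, whereas you factorize and bound each coordinate interval directly---both yield the same estimate.
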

\begin{proof}[Proof of Lemma~\ref{lem:estimate_opt_error1}]
Note that \enum{
  the hypothesis that $\Theta_1$ is continuous uniformly distributed on $[a,b]^{\mf d}$
}[ensure] that
\begin{equation}
\begin{split}
    \P\bigl(
      \infnorm{
        \Theta_1
        -
        \vartheta
      }
      \leq
      \tfrac{ \varepsilon }{ L }
    \bigr) 
&
  \geq 
    \P\bigl(
      \infnorm{
        \Theta_1
        -
        ( a, a, \dots, a )
      }
      \leq
      \tfrac{ \varepsilon }{ L }
    \bigr) 
  = 
    \P\bigl(
      \infnorm{
        \Theta_1
        -
        ( a, a, \dots, a )
      }
      \leq
      \min\{ \tfrac{ \varepsilon }{ L }, b - a \}
    \bigr) 
\\ & 
=
  \left[
    \frac{
      \min\{ \tfrac{ \varepsilon }{ L }, b - a \}
    }{
      \left( b - a \right)
    }
  \right]^{ \mathfrak{d} }
  =
  \min\!\left\{ 
    1 ,
    \left[ 
      \frac{
        \varepsilon
      }{
        L \, ( b - a )
      }
    \right]^{ \mathfrak{d} }
  \right\}
  \!.
\end{split}
\end{equation}
Combining
  this
with
  \cref{lem:estimate_opt_error0}
proves
  \eqref{eq:estimate_opt_error1_con}.
The proof of Lemma~\ref{lem:estimate_opt_error1} is thus completed. 
\end{proof}

\section{Overall error analysis}
\label{sec:overall_error_analysis}

In this section we combine the separate error analyses of the approximation error, the generalization error, and the optimization error in \cref{sec:error_sources} %\cref{subsec:approximation_error,subsec:generalization_error,subsec:optimization_error} 
to obtain an overall analysis (cf.~\cref{prop:error_decomposition} below). 
We note that, e.g., \cite[Lemma~2.4]{kolmogorov2018solving} ensures that the integral appearing on the left-hand side of
\eqref{error_decomposition:claim} in \cref{prop:error_decomposition} and
subsequent results (cf.\ \eqref{unspecific_architecture_no_noise:claim} in \cref{cor:unspecific_architecture_no_noise},
\eqref{generic_constants:claim} in \cref{cor:generic_constants}, \eqref{l1_norm:claim} in \cref{cor:l1_norm},
and \eqref{lp_error:claim} in \cref{cor:lp_error})
is indeed measurable.
In \cref{lem:decomposition} below we present the well-known bias-variance decomposition result. 
To formulate this bias-variance decomposition lemma 
we observe that for 
every probability space $ ( \Omega, \mathcal{F}, \P ) $, 
every measurable space $ ( S, \mathcal{S} ) $, 
every random variable $ X \colon \Omega \to S $, 
and every $ A \in \mathcal{S} $
it holds that
$
\P_X( A ) 
=
\P( X \in A ) 
$. 
Moreover, note that for every probability space $ ( \Omega, \mathcal{F}, \P ) $, 
every measurable space $ ( S, \mathcal{S} ) $, 
every random variable $ X \colon \Omega \to S $, 
and every $ \mathcal{S} $/$ \mathcal{B}( \R ) $-measurable 
function $ f \colon S \to \R $ it holds that
$ 
\int_{ S } | f |^2 \, d\P_X
=
\int_{ S } | f( x ) |^2 \, \P_X( dx )
=
\int_{ \Omega } | f( X( \omega ) ) |^2 \, \P( d\omega )
=
\int_{ \Omega } | f( X ) |^2 \, d\P
=
\E\big[ | f(X) |^2 \big]
$.
A result related to \cref{lem:decomposition,lem:error_decomposition} can, e.g., be found in Berner et al.~\cite[Lemma 2.8]{BernerGrohsJentzen2018}.

\subsection{Bias-variance decomposition}

\begin{lemma}[Bias-variance decomposition]
\label{lem:decomposition}
Let $ ( \Omega, \mathcal{F}, \P ) $ be a probability space, 
let $ ( S, \mathcal{S} ) $ be a measurable space, 
let $ X \colon \Omega \to S $
and $ Y \colon \Omega \to \R $
be random variables with $ \E[ | Y |^2 ] < \infty $, 
and 
let $ \mathcal{E} \colon \mathcal{L}^2( \P_X ; \R ) \to [0,\infty) $ 
satisfy 
for all $ f \in \mathcal{L}^2( \P_X ; \R ) $ that
$
  \mathcal{E}( f )
  =
  \E\!\left[ 
    | f( X ) - Y |^2
  \right]
$. 
Then 
\begin{enumerate}[label=(\roman{*})]
\item 
\label{item:dec_i}
it holds for all $ f \in \mathcal{L}^2( \P_X ; \R ) $ that
\begin{equation}
  \mathcal{E}( f ) 
  =
  \E\!\left[
    \left|
      f( X )
      -
      \E\!\left[ Y | X \right]
    \right|^2
  \right]
  +
  \E\bigl[ 
    \left|
      Y
      -
      \E\!\left[ Y | X \right]
    \right|^2
  \bigr]
  ,
\end{equation}
\item 
\label{item:dec_ii}
it holds for all $ f, g \in \mathcal{L}^2( \P_X ; \R ) $ that
\begin{equation}
  \mathcal{E}( f ) - \mathcal{E}( g ) 
  =
  \E\!\left[ 
    \left| f( X ) - \E[ Y | X ] \right|^2
  \right]
  -
  \E\bigl[ 
    \left| g( X ) - \E[ Y | X ] \right|^2
  \bigr]
  ,
\end{equation}
and 
\item 
\label{item:dec_iii}
it holds for all $ f, g \in \mathcal{L}^2( \P_X ; \R ) $ that
\begin{equation}
  \E\!\left[ 
    \left| f( X ) - \E[ Y | X ] \right|^2
  \right]
=
  \E\bigl[ 
    \left| g( X ) - \E[ Y | X ] \right|^2
  \bigr]
  +
  \bigl(
    \mathcal{E}( f ) - \mathcal{E}( g ) 
  \bigr)
  .
\end{equation}
\end{enumerate}
\end{lemma}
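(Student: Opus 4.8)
The plan is to prove the bias--variance decomposition in item~\ref{item:dec_i} directly from the definition of $\mathcal{E}$ and the properties of conditional expectation, and then to obtain items~\ref{item:dec_ii} and~\ref{item:dec_iii} as immediate algebraic consequences. First I would fix $f\in\mathcal{L}^2(\P_X;\R)$ and write
\begin{equation}
  f(X)-Y = \bigl(f(X)-\E[Y|X]\bigr) + \bigl(\E[Y|X]-Y\bigr),
\end{equation}
so that, expanding the square,
\begin{equation}
  |f(X)-Y|^2 = |f(X)-\E[Y|X]|^2 + |\E[Y|X]-Y|^2 + 2\bigl(f(X)-\E[Y|X]\bigr)\bigl(\E[Y|X]-Y\bigr).
\end{equation}
Here I would first note that all three terms on the right are integrable: $|f(X)-\E[Y|X]|^2$ is integrable since $f(X)\in\mathcal{L}^2$ and $\E[Y|X]\in\mathcal{L}^2$ (the latter because $\E[|\E[Y|X]|^2]\le\E[|Y|^2]<\infty$ by the conditional Jensen inequality), $|\E[Y|X]-Y|^2$ is integrable for the same reason, and the cross term is integrable by the Cauchy--Schwarz inequality.

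Then I would take expectations and show that the cross term vanishes. The key computation is that, since $f(X)-\E[Y|X]$ is $\sigma(X)$-measurable and integrable against the bounded-in-$L^2$ factor $\E[Y|X]-Y$, the tower property gives
\begin{equation}
  \E\bigl[\bigl(f(X)-\E[Y|X]\bigr)\bigl(\E[Y|X]-Y\bigr)\bigr]
  =
  \E\Bigl[\bigl(f(X)-\E[Y|X]\bigr)\,\E\bigl[\E[Y|X]-Y\,\big|\,X\bigr]\Bigr]
  =
  \E\bigl[\bigl(f(X)-\E[Y|X]\bigr)\cdot 0\bigr]
  =
  0,
\end{equation}
using $\E[\E[Y|X]-Y\,|\,X]=\E[Y|X]-\E[Y|X]=0$. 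This establishes item~\ref{item:dec_i}.

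For item~\ref{item:dec_ii} I would apply item~\ref{item:dec_i} to both $f$ and $g$ and subtract: the common term $\E[|Y-\E[Y|X]|^2]$ cancels, leaving exactly
\begin{equation}
  \mathcal{E}(f)-\mathcal{E}(g)
  =
  \E\bigl[|f(X)-\E[Y|X]|^2\bigr]-\E\bigl[|g(X)-\E[Y|X]|^2\bigr].
\end{equation}
Item~\ref{item:dec_iii} is then just a rearrangement of item~\ref{item:dec_ii}. The only genuinely delicate point is the justification that the cross term is integrable and that the conditional-expectation manipulations above are legitimate, i.e.\ that pulling the $\sigma(X)$-measurable factor $f(X)-\E[Y|X]$ inside the conditional expectation is valid; this follows from the standard "taking out what is known" property of conditional expectation once integrability is in hand, so I expect no substantial obstacle, merely the need to record these integrability remarks carefully.
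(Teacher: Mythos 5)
Your proposal is correct and follows essentially the same route as the paper's proof: decompose $f(X)-Y$ into $(f(X)-\E[Y|X])+(\E[Y|X]-Y)$, expand the square, kill the cross term via the tower property and ``taking out what is known,'' and then deduce items~\ref{item:dec_ii} and~\ref{item:dec_iii} by subtraction and rearrangement. The only difference is that you additionally record the integrability justifications (conditional Jensen, Cauchy--Schwarz), which the paper leaves implicit; this is a harmless and sensible addition.
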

\begin{proof}[Proof of Lemma~\ref{lem:decomposition}]
First, observe that
  the hypothesis that for all
    $f\in\mc L^2(\P_X;\R)$
  it holds that
    $\mathcal{E}( f ) 
    =
      \E[ 
        \left| f( X ) - Y \right|^2
      ]$
shows that for all $ f \in \mathcal{L}^2( \P_X ; \R ) $ 
it holds that
\begin{equation}
\label{eq:lem_dec1}
\begin{split}
  \mathcal{E}( f ) 
& =
  \E\!\left[ 
    \left| f( X ) - Y \right|^2
  \right]
  =
  \E\!\left[ 
    \left| ( f( X ) - \E[ Y | X ] ) + ( \E[ Y | X ] - Y ) \right|^2
  \right]
\\ &  
=
  \E\!\left[ 
    \left| f( X ) - \E[ Y | X ] \right|^2
  \right]
  +
  2 
  \,
  \E\big[ 
    \big( f( X ) - \E[ Y | X ] \big) 
    \big( \E[ Y | X ] - Y \big) 
  \big]
  +
  \E\!\left[
    \left| \E[ Y | X ] - Y \right|^2
  \right]
\\ &  
=
  \E\!\left[ 
    \left| f( X ) - \E[ Y | X ] \right|^2
  \right]
  +
  2 
  \,
  \E\Big[ 
    \E\big[
      \big( f( X ) - \E[ Y | X ] \big) 
      \big( \E[ Y | X ] - Y \big) 
      \big|
      X
    \big]
  \Big]
  +
  \E\!\left[
    \left| \E[ Y | X ] - Y \right|^2
  \right]
\\ &  
=
  \E\!\left[ 
    \left| f( X ) - \E[ Y | X ] \right|^2
  \right]
  +
  2 
  \,
  \E\Big[ 
    \big( f( X ) - \E[ Y | X ] \big) 
    \E\big[
      \big( \E[ Y | X ] - Y \big) 
      \big|
      X
    \big]
  \Big]
  +
  \E\!\left[
    \left| \E[ Y | X ] - Y \right|^2
  \right]
\\ &  
=
  \E\!\left[ 
    \left| f( X ) - \E[ Y | X ] \right|^2
  \right]
  +
  2 
  \,
  \E\big[ 
    \big( f( X ) - \E[ Y | X ] \big) 
    \big( \E[ Y | X ] - \E[ Y | X ] \big) 
  \big]
  +
  \E\!\left[
    \left| \E[ Y | X ] - Y \right|^2
  \right]
\\ &  
=
  \E\!\left[ 
    \left| f( X ) - \E[ Y | X ] \right|^2
  \right]
  +
  \E\bigl[
    \left| \E[ Y | X ] - Y \right|^2
  \bigr]
  .
\end{split}
\end{equation}
This implies that 
for all $ f, g \in \mathcal{L}^2( \P_X ; \R ) $ 
it holds that
\begin{equation}
\label{eq:lem_dec2}
  \mathcal{E}( f ) - \mathcal{E}( g ) 
=
  \E\!\left[ 
    \left| f( X ) - \E[ Y | X ] \right|^2
  \right]
  -
  \E\bigl[ 
    \left| g( X ) - \E[ Y | X ] \right|^2
  \bigr]
  .
\end{equation}
Hence, we obtain that
for all $ f, g \in \mathcal{L}^2( \P_X ; \R ) $ 
it holds that
\begin{equation}
\label{eq:lem_dec3}
  \E\!\left[ 
    \left| f( X ) - \E[ Y | X ] \right|^2
  \right]
=
  \E\!\left[ 
    \left| g( X ) - \E[ Y | X ] \right|^2
  \right]
  +
  \mathcal{E}( f ) - \mathcal{E}( g ) 
  .
\end{equation}
Combining this
with 
\eqref{eq:lem_dec1} and \eqref{eq:lem_dec2} establishes 
\cref{item:dec_i,item:dec_ii,item:dec_iii}.
The proof of Lemma~\ref{lem:decomposition} is thus completed.
\end{proof}

\subsection{Overall error decomposition} \label{subsec:overall_analysis}

\begin{lemma}
\label{lem:error_decomposition}
Let $ ( \Omega, \mathcal{F}, \P ) $ be a probability space, 
let $ d, M \in \N $, 
let $ D \subseteq \R^d $ be a compact set, 
% let $ \mathcal{H} \subseteq C( D, \R ) $ be a set, 
let 
$X_m%=(X_{m,1},X_{m,2},\dots,X_{m,d})
\colon\Omega\to D$, $m\in\{1,2,\dots,M\}$,
and $Y_m\colon\Omega\to \R$, $m\in\{1,2,\dots,M\}$,
be functions,
assume that $ 
%   \mathbb{X}_m = 
  (X_m, Y_m ) 
  % = ( X_{m,1}, X_{m,2}, \dots, X_{m,d}, Y_m ) 
  % \colon \Omega \to  D \times \R 
$, 
$ m \in \{ 1, 2, \dots, M \} 
$,
are i.i.d.\ random variables,
assume $ \E[ | Y_1 |^2 ] < \infty $, 
let $ \mathcal{E} \colon C( D, \R ) \to [0,\infty) $ 
satisfy 
for all $ f \in C( D, \R ) $ that
$
  \mathcal{E}( f )
  =
  \E\!\left[ 
    | f( X_1 ) - Y_1 |^2
  \right]
$,
and let 
$ \mathfrak{E} \colon C( D, \R ) \times \Omega \to [0,\infty) $
satisfy 
for all $ f \in C( D, \R ) $, $ \omega \in \Omega $ that
\begin{equation}
  \mathfrak{E}( f, \omega )
  =
  \frac{ 1 }{ M }
  \Biggl[
    \sum\limits_{ m = 1 }^M
    | f( X_m( \omega ) ) - Y_m( \omega ) |^2
  \Biggr]
  .
\end{equation}
% and let $ \phi \in \mathcal{H}$.
%  $ satisfy 
% $
%   \mathcal{E}( \phi )
%   =
%   \inf_{ f \in \mathcal{H} }
%   \mathcal{E}( f )
% $.
Then it holds for all $ f, \phi \in C(D,\R) $ that
\begin{equation}
\begin{split}
&
  \E\!\left[
    \left|
      f( X_1 )
      -
      \E\!\left[ Y_1 | X_1 \right]
    \right|^2
  \right]
  =
  \E\!\left[
    \left|
      \phi( X_1 )
      -
      \E\!\left[ Y_1 | X_1 \right]
    \right|^2
  \right]
  +
  \mathcal{E}( f )
  - 
  \mathcal{E}( \phi )
\\ &
  \leq
  \E\!\left[
    \left|
      \phi( X_1 )
      -
      \E\!\left[ Y_1 | X_1 \right]
    \right|^2
  \right]
  +
  \big[
    \mathfrak{E}( f )
    -
    \mathfrak{E}( \phi )
  \big]
  +
  2
  \biggl[
    \max_{ 
      v
      \in 
      \{f,\phi\}
    }
    |
      \mathfrak{E}( v )
      - 
      \mathcal{E}( v )
    |
  \biggr]
  .
\end{split}
\end{equation}
\end{lemma}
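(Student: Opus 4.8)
The plan is to read the asserted \emph{equality} directly off the bias-variance decomposition in \cref{lem:decomposition} and then to obtain the \emph{inequality} by a telescoping identity combined with the triangle inequality. First I would note that, since $D\subseteq\R^d$ is compact, every $g\in C(D,\R)$ is bounded and $\Borel(D)$/$\Borel(\R)$-measurable, whence $g\in\mc L^2(\P_{X_1};\R)$; thus $C(D,\R)\subseteq\mc L^2(\P_{X_1};\R)$ and the function $\mc E$ in the statement is the restriction to $C(D,\R)$ of the function $\mc E$ appearing in \cref{lem:decomposition} when that lemma is applied with the measurable space $(S,\mathcal S)\is(D,\Borel(D))$, the random variable $X\is X_1$, and the random variable $Y\is Y_1$ (the required integrability $\E[\abs{Y_1}^2]<\infty$ being assumed). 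Invoking item~\ref{item:dec_iii} in \cref{lem:decomposition} with $f\is f$ and $g\is\phi$ then gives, for all $f,\phi\in C(D,\R)$,
\begin{equation}
  \E\!\left[\abs{f(X_1)-\E[Y_1|X_1]}^2\right]
  =
  \E\!\left[\abs{\phi(X_1)-\E[Y_1|X_1]}^2\right]
  +
  \mc E(f)-\mc E(\phi)
  ,
\end{equation}
which is the claimed equality.

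For the inequality I would fix $\omega\in\Omega$ and decompose the deterministic quantity $\mc E(f)-\mc E(\phi)$ as the telescoping sum
\begin{equation}
  \mc E(f)-\mc E(\phi)
  =
  \bigl(\mc E(f)-\mf E(f,\omega)\bigr)
  +
  \bigl(\mf E(f,\omega)-\mf E(\phi,\omega)\bigr)
  +
  \bigl(\mf E(\phi,\omega)-\mc E(\phi)\bigr)
  ,
\end{equation}
and then bound the first and the third summand by $\max_{v\in\{f,\phi\}}\babs{\mf E(v,\omega)-\mc E(v)}$ each, using $\mc E(f)-\mf E(f,\omega)\leq\babs{\mf E(f,\omega)-\mc E(f)}$ and $\mf E(\phi,\omega)-\mc E(\phi)\leq\babs{\mf E(\phi,\omega)-\mc E(\phi)}$. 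Substituting these two bounds into the equality from the first paragraph yields, for every $\omega\in\Omega$, the asserted upper bound for $\E[\abs{f(X_1)-\E[Y_1|X_1]}^2]$.

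The whole argument is elementary, so I do not expect a genuine obstacle. The only points deserving a word of care are (i) that the left-hand side of the inequality is a deterministic real number whereas its right-hand side depends on $\omega$ (since $\mf E(f)$, $\mf E(\phi)$, and $\mf E(v)$ abbreviate the random variables $\mf E(f,\cdot)$, $\mf E(\phi,\cdot)$, and $\mf E(v,\cdot)$), so the inequality is to be read as valid for every $\omega\in\Omega$, and (ii) that compactness of $D$ is precisely what is needed for the inclusion $C(D,\R)\subseteq\mc L^2(\P_{X_1};\R)$ that makes \cref{lem:decomposition} applicable.
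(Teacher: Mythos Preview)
Your proposal is correct and follows essentially the same route as the paper: invoke the bias-variance decomposition (\cref{lem:decomposition}, item~\ref{item:dec_iii}) for the equality, then telescope $\mc E(f)-\mc E(\phi)$ through $\mf E(f)$ and $\mf E(\phi)$ and bound the two cross terms by the maximum. Your explicit remark that compactness of $D$ gives $C(D,\R)\subseteq\mc L^2(\P_{X_1};\R)$ and your clarification of the $\omega$-dependence are welcome additions that the paper leaves implicit.
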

\begin{proof}[Proof of Lemma~\ref{lem:error_decomposition}] 
Note that Lemma~\ref{lem:decomposition} ensures 
that for all $ f, \phi \in C( D, \R ) $ it holds that
\begin{equation}
\begin{split}
&
  \E\!\left[
    \left|
      f( X_1 )
      -
      \E\!\left[ Y_1 | X_1 \right]
    \right|^2
  \right]
\\ & 
  =
  \E\!\left[
    \left|
      \phi( X_1 )
      -
      \E\!\left[ Y_1 | X_1 \right]
    \right|^2
  \right]
  +
  \mathcal{E}( f )
  - 
  \mathcal{E}( \phi )
\\ & =
  \E\!\left[
    \left|
      \phi( X_1 )
      -
      \E\!\left[ Y_1 | X_1 \right]
    \right|^2
  \right]
  +
  \mathcal{E}( f )
  -
  \mathfrak{E}( f )
  +
  \mathfrak{E}( f )
  -
  \mathfrak{E}( \phi )
  +
  \mathfrak{E}( \phi )
  - 
  \mathcal{E}( \phi )
\\ & =
%\underbrace{
  \E\!\left[
    \left|
      \phi( X_1 )
      -
      \E\!\left[ Y_1 | X_1 \right]
    \right|^2
  \right]
% }_{
%   \text{Approximation error}
% }
  +
%\underbrace{
  \big[ 
    \big(
      \mathcal{E}( f )
      -
      \mathfrak{E}( f )
    \big)
    +
    \big(
      \mathfrak{E}( \phi )
      - 
      \mathcal{E}( \phi )
    \big)
  \big]
% }_{
%   \text{Statistical error}
% }
  +
%\underbrace{
  \bigl[
    \mathfrak{E}( f )
    -
    \mathfrak{E}( \phi )
  \bigr]
% }_{
%   \text{Optimization error}
% }
\\ & \leq 
  \E\!\left[
    \left|
      \phi( X_1 )
      -
      \E\!\left[ Y_1 | X_1 \right]
    \right|^2
  \right]
  +
  \Biggl[
    \sum_{ 
      v
      \in 
      \left\{ 
        f, \phi
      \right\}
    }
    |
      \mathfrak{E}( v )
      - 
      \mathcal{E}( v )
    |
  \Biggr]
  +
  \big[
    \mathfrak{E}( f )
    -
    \mathfrak{E}( \phi )
  \big]
\\ & \leq 
  \E\!\left[
    \left|
      \phi( X_1 )
      -
      \E\!\left[ Y_1 | X_1 \right]
    \right|^2
  \right]
  +
  2
  \biggl[
    \max_{ 
      v
      \in 
      \left\{ 
        f, \phi
      \right\}
    }
    |
      \mathfrak{E}( v )
      - 
      \mathcal{E}( v )
    |
  \biggr]
  +
  \big[
    \mathfrak{E}( f )
    -
    \mathfrak{E}( \phi )
  \big]
  .
\end{split}
\end{equation}
The proof of Lemma~\ref{lem:error_decomposition} 
is thus completed.
\end{proof}

	\begin{lemma} \label{lem:error_decomposition_parametrization}
		Let $ ( \Omega, \mathcal{F}, \P ) $ be a probability space, 
		let $ d, \mathfrak{d}, M \in \N $, 
		let $ D \subseteq \R^d $ be a compact set, 
		let $ B \subseteq \R^{\mathfrak{d}} $ be a set, 
		let $ H = (H_\theta)_{\theta\in B} \colon B \to C(D,\R) $ be a function, 
    let 
    $X_m%=(X_{m,1},X_{m,2},\dots,X_{m,d})
    \colon\Omega\to D$, $m\in\{1,2,\dots,M\}$,
    and $Y_m\colon\Omega\to \R$, $m\in\{1,2,\dots,M\}$,
      be functions,
    assume that $ 
    %   \mathbb{X}_m = 
      (X_m, Y_m ) 
      % = ( X_{m,1}, X_{m,2}, \dots, X_{m,d}, Y_m ) 
      % \colon \Omega \to  D \times \R 
    $, 
    $ m \in \{ 1, 2, \dots, M \} 
    $,
    are i.i.d.\ random variables,
    assume $ \E[ | Y_1 |^2 ] < \infty $, 
    let $\varphi\colon D\to\R$ 	be a $\Borel(D)$/$\Borel(\R)$-measurable function,  
		assume that it holds $\P$-a.s.~that 
		$\varphi(X_1) = \E\!\left[ Y_1 | X_1 \right]$,
		let $ \mathcal{E} \colon C(D,\R) \to [0,\infty) $ satisfy for all 
		$ f \in C( D, \R ) $ 
		that 
		$ \mathcal{E}(f) = \E[ | f(X_1) - Y_1 |^2 ] $, 
		and let 
		$ \mathfrak{E} \colon B \times \Omega \to [0,\infty) $ 
		satisfy for all 
		$ \theta \in B $, 
		$ \omega \in \Omega $ 
		that 
		\begin{equation} 
		\mathfrak{E}( \theta, \omega ) 
		= 
		\frac{1}{M} \Biggl[ 
		\sum_{m = 1}^M | H_{\theta}(X_m(\omega)) - Y_m(\omega) |^2 
    \Biggr]
    .
		\end{equation}  
		% and let 
		% $ \vartheta \in B $ 
		% satisfy 
		% $ \mathcal{E}(H_{\vartheta}) 
		% = 
		% \inf_{\theta\in B} \mathcal{E}(H_{\theta})
		% $. 
		Then it holds for all 
		$ \theta,\vartheta \in B $ 
		that 
		\begin{equation} \label{error_decomposition_parametrization:claim}
		\begin{split}
		& 
		\int_D \left| H_{\theta}(x) - \varphi(x) \right|^2 \,\P_{X_1}(dx)
		%		\E\!\left[ \left| H_{\theta}(X_1) - \E\!\left[ Y_1|X_1\right] \right|^2 \right]
		=
		\int_D \left| H_{\vartheta}(x) - \varphi(x) \right|^2 \,\P_{X_1}(dx) 
		+ 
		\mc E(H_{\theta}) - \mc E(H_{\vartheta}) 
		%		\E\!\left[ \left| H_{\vartheta}(X_1) - \E\!\left[Y_1|X_1\right] \right|^2 \right] 
		%		+ 
		%		\mathcal{E}(H_{\theta}) 
		%		- 
		%		\mathcal{E}(H_{\vartheta}) 
		%		\\
		%		& \leq 
		%		\E\!\left[ \left| H_{\vartheta}(X_1) - \E\!\left[ Y_1 | X_1 \right] \right|^2  \right]
		%		+ 
		%		\mathfrak{E}(\theta) 
		%		- 
		%		\mathfrak{E}(\vartheta) 
		%		+ 
		%		2 \left[ \sup_{\eta\in B} 
		%		\left| \mathfrak{E}(\eta) - \mathcal{E}(H_{\eta}) \right| \right]
		\\
		& 
		\leq 
		\int_D \left| H_{\vartheta}(x) - \varphi(x) \right|^2 \,\P_{X_1}(dx) 
    + 
    \bigl[		
      \mathfrak{E}(\theta) 
  		- 
      \mathfrak{E}(\vartheta) 
    \bigr]
    + 
		2 \biggl[ \sup_{\eta\in B} 
		\left| \mathfrak{E}(\eta) - \mathcal{E}(H_{\eta}) \right| \biggr].  
		\end{split}
		\end{equation} 
	\end{lemma}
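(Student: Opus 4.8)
The plan is to deduce this immediately from \cref{lem:error_decomposition} applied to the two continuous functions $H_\theta$ and $H_\vartheta$, after translating the integral on the left-hand side into an expectation. First I would use the change-of-variables identity for push-forward measures recalled at the start of \cref{sec:overall_error_analysis} --- together with the compactness of $D$ (so that $H_\theta\in C(D,\R)$ is bounded and hence $\P_{X_1}$-square-integrable) and the $\Borel(D)$/$\Borel(\R)$-measurability of $\varphi$ --- to write, for every $\theta\in B$,
\begin{equation}
  \int_D \abs{H_\theta(x)-\varphi(x)}^2\,\P_{X_1}(dx)
  =
  \E\bigl[\abs{H_\theta(X_1)-\varphi(X_1)}^2\bigr]
  .
\end{equation}
Invoking the hypothesis that $\varphi(X_1)=\E[Y_1\,|\,X_1]$ holds $\P$-a.s., the right-hand side equals $\E[\abs{H_\theta(X_1)-\E[Y_1\,|\,X_1]}^2]$, and the same rewriting holds with $\vartheta$ in place of $\theta$.

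Next I would invoke \cref{lem:error_decomposition} (with $d\is d$, $M\is M$, $D\is D$, $(X_m)_m\is(X_m)_m$, $(Y_m)_m\is(Y_m)_m$, $\mc E\is\mc E$, and the empirical risk on $C(D,\R)$ given by the same averaging formula as $\mf E$ in the notation of \cref{lem:error_decomposition}) with $f\is H_\theta$ and $\phi\is H_\vartheta$; the hypotheses of \cref{lem:error_decomposition} are exactly those assumed here (i.i.d.\ data, $\E[\abs{Y_1}^2]<\infty$, and $H_\theta,H_\vartheta\in C(D,\R)$ since $H$ maps into $C(D,\R)$). This yields both the identity
\begin{equation}
  \E\bigl[\abs{H_\theta(X_1)-\E[Y_1\,|\,X_1]}^2\bigr]
  =
  \E\bigl[\abs{H_\vartheta(X_1)-\E[Y_1\,|\,X_1]}^2\bigr]
  +
  \mc E(H_\theta)-\mc E(H_\vartheta)
\end{equation}
and the upper bound of the left-hand side by $\E[\abs{H_\vartheta(X_1)-\E[Y_1\,|\,X_1]}^2] + [\mf E(H_\theta)-\mf E(H_\vartheta)] + 2\max_{v\in\{H_\theta,H_\vartheta\}}\abs{\mf E(v)-\mc E(v)}$.

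Finally I would carry out the (purely notational) bookkeeping: by the very definitions of the empirical risk on functions and on parameters, $\mf E(H_\eta,\omega)=\frac1M[\sum_{m=1}^M\abs{H_\eta(X_m(\omega))-Y_m(\omega)}^2]=\mf E(\eta,\omega)$ for all $\eta\in B$, $\omega\in\Omega$, so that $\mf E(H_\theta)=\mf E(\theta)$, $\mf E(H_\vartheta)=\mf E(\vartheta)$, and the quantity $\abs{\mf E(v)-\mc E(v)}$ appearing above reads $\abs{\mf E(\eta)-\mc E(H_\eta)}$ for $v=H_\eta$. Since $\theta,\vartheta\in B$, one then has $\max_{v\in\{H_\theta,H_\vartheta\}}\abs{\mf E(v)-\mc E(v)}\leq\sup_{\eta\in B}\abs{\mf E(\eta)-\mc E(H_\eta)}$, and combining this with the two displays above (and the integral-to-expectation rewriting from the first step) gives \eqref{error_decomposition_parametrization:claim}. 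There is no genuine mathematical obstacle here; the only points requiring care are applying the almost-sure substitution $\varphi(X_1)=\E[Y_1\,|\,X_1]$ inside the expectation before calling \cref{lem:error_decomposition}, and keeping the overloaded notation $\mf E$, $\mc E$ (acting on functions versus on parameters) straight.
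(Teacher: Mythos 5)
Your proposal is correct and follows essentially the same route as the paper's proof: apply \cref{lem:error_decomposition} with $f\is H_\theta$, $\phi\is H_\vartheta$ (and the empirical risk on $C(D,\R)$ defined by the same averaging formula), use the $\P$-a.s.\ identity $\varphi(X_1)=\E[Y_1\,|\,X_1]$ to pass between the integral against $\P_{X_1}$ and the expectation involving the conditional expectation, and replace the max over $\{\theta,\vartheta\}$ by the sup over $B$. The only cosmetic difference is the order of operations --- the paper invokes \cref{lem:error_decomposition} first and does the $\varphi$-substitution afterwards, while you substitute first --- but the mathematical content is identical.
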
 
  \begin{proof}[Proof of Lemma~\ref{lem:error_decomposition_parametrization}]
    First, observe that 
      \cref{lem:error_decomposition} (with 
        $(\Omega,\mathcal{F},\P)\is(\Omega,\mathcal{F},\P)$,
        $d\is d$, 
        $M\is M$, 
        $D\is D$,  
        $(X_m)_{m\in\{1,2,\dots,M\}}\is (X_m)_{m\in\{1,2,\dots,M\}}$, 
        $(Y_m)_{m\in\{1,2,\dots,M\}}\is (Y_m)_{m\in\{1,2,\dots,M\}}$, 
        $\mc{E}\is \mc{E}$, 
        $\mf{E} \is  \bigl(C(D,\R)\times\Omega\ni (f,\omega)\mapsto \frac{1}{M}\bigl[\sum_{m=1}^{M} |f(X_m(\omega))-Y_m(\omega)|^2\bigr] \in [0,\infty)\bigr)$
      in the notation of \cref{lem:error_decomposition})
    shows that for all
      $\theta,\vartheta\in B$
    it holds that
    \begin{equation}
      \label{eq:edp.1}
      \begin{split}
      &
      \E\!\left[
        \left|
          H_\theta( X_1 )
          -
          \E\!\left[ Y_1 | X_1 \right]
        \right|^2
      \right]
      =
      \E\!\left[
        \left|
          H_\vartheta( X_1 )
          -
          \E\!\left[ Y_1 | X_1 \right]
        \right|^2
      \right]
      +
      \mathcal{E}( H_\theta )
      - 
      \mathcal{E}( H_\vartheta )
      \\ &
      \leq
      \E\!\left[
        \left|
          H_\vartheta( X_1 )
          -
          \E\!\left[ Y_1 | X_1 \right]
        \right|^2
      \right]
      +
      \big[
        \mathfrak{E}( \theta )
        -
        \mathfrak{E}( \vartheta )
      \big]
      +
      2
      \biggl[
        \max_{ 
          \eta
          \in 
          \{\theta,\vartheta\}
        }
        |
          \mathfrak{E}( \eta )
          - 
          \mathcal{E}( H_\eta )
        |
      \biggr]
      \\ &
      \leq
      \E\!\left[
        \left|
          H_\vartheta( X_1 )
          -
          \E\!\left[ Y_1 | X_1 \right]
        \right|^2
      \right]
      +
      \big[
        \mathfrak{E}( \theta )
        -
        \mathfrak{E}( \vartheta )
      \big]
      +
      2
      \biggl[
        \sup_{ 
          \eta
          \in 
          B
        }
        |
          \mathfrak{E}( \eta )
          - 
          \mathcal{E}( H_\eta )
        |
      \biggr]
      .
      \end{split}
    \end{equation}
		In addition, note that the hypothesis that it holds $\P$-a.s.~that 
		$\varphi(X_1) = \E\!\left[ Y_1 | X_1 \right]$		
		ensures that for all 
		$\eta\in B$ 
		it holds that 
		\begin{equation}
      \E\bigl[ \left| H_{\eta}(X_1) - \E\!\left[ Y_1 | X_1 \right] \right|^2 \bigr]
	  	= 
		  \E\!\left[ \left| H_{\eta}(X_1) - \varphi(X_1) \right|^2 \right] 
		  =
      \int_D \left| H_{\eta}(x) - \varphi(x) \right|^2 \,\P_{X_1}(dx)
      .
    \end{equation}
		Combining this with \eqref{eq:edp.1}
    establishes \eqref{error_decomposition_parametrization:claim}. 
    The proof of Lemma~\ref{lem:error_decomposition_parametrization} is thus completed. 
	\end{proof} 
	
\subsection{Analysis of the convergence speed}
\subsubsection{Convergence rates for convergence in probability} 

	\begin{lemma}\label{lem:existence_of_optimal_network_weights} 
	Let 
		$ ( \Omega, \mathcal{F}, \P ) $ 
	be a probability space, 
	let 
    $ u \in \R $, 
    $ v \in (u,\infty) $, 
    $ \mf{d},L \in \N $,
  let
    $l=(l_0,l_1,\dots,l_L)\in\N^{L+1}$
    satisfy 
      $l_L=1$
      and $\sum_{i=1}^L l_i(l_{i-1}+1)\leq\mf d$,
	let 
		$ B \subseteq \R^{\mf d} $ 
	be a non-empty compact set, 
	% let 
	% 	$ \mf l \in \bigl(\bigcup_{s\in\N\cap [2,\infty)}\N^{s}\bigr) $, 
  and let 
    $X\colon \Omega\to\R^{l_0}$
    and $Y\colon\Omega\to [u,v]$
		%$ (X,Y) \colon \Omega \to \R^{l_0} \times [u,v]$ 
	be random variables. Then 
    \begin{enumerate}[label=(\roman{*})]
      \item \label{existence_of_optimal_network_weights:bounded}
      it holds for all $\theta\in B$, $\omega\in\Omega$ that
      $\abs{\ClippedRealV{\theta}{ l}{u}{v}(X(\omega)) - Y(\omega)}^2\in[0,(v-u)^2]$,
			\item \label{existence_of_optimal_network_weights:continuity}
			it holds that 
      $ B \ni \theta \mapsto \E\!\left[|\ClippedRealV{\theta}{ l}{u}{v}(X) - Y|^2 \right] \in [0,\infty)$ is continuous,
       and 
			\item 
			\label{existence_of_optimal_network_weights:existence_statement}
			there exists 
				$\vartheta\in B$ 
			such that 
				$\E\!\left[|\ClippedRealV{\vartheta}{ l}{u}{v}(X) - Y|^2 \right] 
				= 
				\inf\limits_{\theta \in B}
				\E\!\left[|\ClippedRealV{\theta}{ l}{u}{v}(X) - Y|^2 \right]$
    \end{enumerate} 
    (cf.\ \cref{def:rectclippedFFANN}).
	\end{lemma}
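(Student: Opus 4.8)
The plan is to establish the three items in order, with item~\ref{existence_of_optimal_network_weights:bounded} doing double duty as the uniform bound that powers both the dominated-convergence argument for item~\ref{existence_of_optimal_network_weights:continuity} and the compactness argument for item~\ref{existence_of_optimal_network_weights:existence_statement}.

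For item~\ref{existence_of_optimal_network_weights:bounded} I would recall from \cref{def:clip1,def:clip} that $\Clip uv{l_L}$ takes values in $[u,v]^{l_L}$, so that, since $l_L=1$, the function $\ClippedRealV{\theta}{l}uv$ takes values in $[u,v]$ for every $\theta\in\R^{\mf d}$ (cf.\ \cref{def:rectclippedFFANN,def:FFNN}). Combining this with the hypothesis that $Y(\omega)\in[u,v]$ and the triangle inequality yields $\abs{\ClippedRealV{\theta}{l}uv(X(\omega))-Y(\omega)}\leq v-u$ for all $\theta\in B$, $\omega\in\Omega$, which is item~\ref{existence_of_optimal_network_weights:bounded}. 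In particular the map $\Omega\ni\omega\mapsto\abs{\ClippedRealV{\theta}{l}uv(X(\omega))-Y(\omega)}^2\in[0,(v-u)^2]$ is bounded (and $\mc F$/$\Borel(\R)$-measurable, as a composition of the measurable functions $X$, $Y$ with the continuous function $\ClippedRealV{\theta}{l}uv$), so the expectations appearing in items~\ref{existence_of_optimal_network_weights:continuity} and~\ref{existence_of_optimal_network_weights:existence_statement} are well defined and lie in $[0,(v-u)^2]\subseteq[0,\infty)$.

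For item~\ref{existence_of_optimal_network_weights:continuity} the key point is that for each fixed $x\in\R^{l_0}$ the map $\R^{\mf d}\ni\theta\mapsto\ClippedRealV{\theta}{l}uv(x)\in\R$ is continuous; this I would deduce from \cref{cor:ClippedRealNNLipsch} (applied with $a\is-\infnorm x$, $b\is\infnorm x$, $d\is\mf d$), which shows that $\theta\mapsto\ClippedRealV{\theta}{l}uv(x)$ is even locally Lipschitz. Hence for every $\omega\in\Omega$ the function $B\ni\theta\mapsto\abs{\ClippedRealV{\theta}{l}uv(X(\omega))-Y(\omega)}^2\in\R$ is continuous, and for any sequence $\theta_n\to\theta$ in $B$ one has pointwise convergence $\abs{\ClippedRealV{\theta_n}{l}uv(X)-Y}^2\to\abs{\ClippedRealV{\theta}{l}uv(X)-Y}^2$ on $\Omega$ with the uniform integrable bound $(v-u)^2$ from item~\ref{existence_of_optimal_network_weights:bounded}. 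The bounded convergence theorem then gives $\E[\abs{\ClippedRealV{\theta_n}{l}uv(X)-Y}^2]\to\E[\abs{\ClippedRealV{\theta}{l}uv(X)-Y}^2]$, and since $\R^{\mf d}$ is metrizable, sequential continuity yields continuity, which is item~\ref{existence_of_optimal_network_weights:continuity}. Finally, for item~\ref{existence_of_optimal_network_weights:existence_statement} I would simply invoke that a continuous real-valued function on the non-empty compact set $B$ attains its infimum, and take $\vartheta\in B$ to be any such minimizer.

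The only non-routine step is the continuity (in the parameter $\theta$) of the clipped realization for fixed input; once \cref{cor:ClippedRealNNLipsch} is brought in, this is immediate, and the remaining arguments are a standard bounded-convergence-plus-compactness routine, so I do not expect any serious obstacle.
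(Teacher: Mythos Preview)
Your proof is correct and follows essentially the same route as the paper: item~\ref{existence_of_optimal_network_weights:bounded} from the clipping range and $Y\in[u,v]$, item~\ref{existence_of_optimal_network_weights:continuity} from \cref{cor:ClippedRealNNLipsch} plus dominated (bounded) convergence, and item~\ref{existence_of_optimal_network_weights:existence_statement} from continuity on a compact set. The paper's proof is slightly terser but makes the same moves, so your added details (measurability, the specific instantiation of $a,b$ in \cref{cor:ClippedRealNNLipsch}, sequential continuity) are harmless elaborations rather than a different approach.
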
 
	
\begin{proof}[Proof of \cref{lem:existence_of_optimal_network_weights}] 
  First, note that \enum{
    the fact that for all 
      $\theta\in \R^{\mf d}$, 
      $x\in\R^{l_0}$
    it holds that
      $\ClippedRealV{\theta}{ l}{u}{v}(x)\in[u,v]$ ;
    the hypothesis that for all 
      $\omega\in\Omega$
    it holds that
      $Y(\omega)\in [u,v]$
  }[demonstrate] \cref{existence_of_optimal_network_weights:bounded}.
  Next observe that \cref{cor:ClippedRealNNLipsch} ensures that for all 
    $\omega\in\Omega$ 
  it holds that 
    $B\ni\theta\mapsto |\ClippedRealV{\theta}{\mf l}{u}{v}(X(\omega)) - Y(\omega)|^2\in [0,\infty)$ 
  is a continuous function. 
  Combining 
    this 
    and \cref{existence_of_optimal_network_weights:bounded} 
  with 
    Lebesgue's dominated convergence theorem
  % and the fact that for all 
  %   $\theta\in B$,
  %   $\omega\in\Omega$ 
  % it holds that 
  %   $|\ClippedRealV{\theta}{\mf l}{u}{v}(X(\omega)) - Y(\omega)|^2 
  %   \in [0,(v-u)^2]$ 
  establishes item~\ref{existence_of_optimal_network_weights:continuity}. 
  Furthermore, note that 
    item~\ref{existence_of_optimal_network_weights:continuity} 
    and the assumption that $B\subseteq\R^{\mf d}$ is a non-empty compact set
  prove  item~\ref{existence_of_optimal_network_weights:existence_statement}.  
  The proof of \cref{lem:existence_of_optimal_network_weights} is thus completed. 
\end{proof} 
  
\begingroup
\newcommand{\vN}{N}
\begin{theorem} \label{prop:error_decomposition}
Let $ ( \Omega, \mc{F}, \P ) $ be a probability space, 
let $ d, \mf{d}, K, M \in \N $, 
	$ \varepsilon\in (0,\infty) $, 
	$ L, u \in \R $, 
  $ v \in (u,\infty) $,
let $ D \subseteq \R^d $ be a compact set, 
assume $\card D\geq 2$,
let 
$X_m\colon\Omega\to D$, $m\in\{1,2,\dots,M\}$,
and $Y_m\colon\Omega\to [u,v]$, $m\in\{1,2,\dots,M\}$,
be functions,
assume that $ 
%   \mathbb{X}_m = 
  (X_m, Y_m ) 
  % = ( X_{m,1}, X_{m,2}, \dots, X_{m,d}, Y_m ) 
  % \colon \Omega \to  D \times \R 
$, 
$ m \in \{ 1, 2, \dots, M \} 
$,
are i.i.d.\ random variables,
let $\delta\colon D\times D\to [0,\infty)$
  satisfy for all 
    $x=(x_1,x_2,\dots,x_d),y=(y_1,y_2,\dots,y_d)\in D$
  that
    $\delta(x,y)=\sum_{i=1}^d\abs{x_i-y_i}$,
% let $ \norm{\cdot}\colon \R^d \to [0,\infty) $ be the standard norm on $ \R^d $, 
let $ \varphi \colon D \to [u,v] $ satisfy $ \P $-a.s.\ that
	$\varphi( X_1 ) = \E[ Y_1 | X_1 ]$, 
assume for all 
	$x,y \in D$ 
that 
  % $|\varphi(x) - \varphi(y)| \leq L\norm{x-y}$,
  $|\varphi(x) - \varphi(y)| \leq L\delta(x,y)$,
let $N\in\N\cap[\max\{2,\CovNum{(D,\delta),\frac{\eps}{4L}}\},\infty)$,
let $l\in\N\cap(N,\infty)$,
% let $ \cM \subseteq D $ satisfy
% $\card{\mc M}\geq 2$ and 
%   $4L\bigl[\sup_{x=(x_1,x_2,\ldots,x_d)\in D} 
% 	\bigl[\inf_{y=(y_1,y_2,\ldots,y_d)\in\cM} 
% 	(\sum_{i = 1}^d |x_i-y_i| )\bigr]\bigr]
% 	\leq \varepsilon
%   $,
let $\mathfrak{l} = (\mf l_0, \mf l_1, \ldots, \mf l_l) \in \N^{l+1}$
satisfy for all 
$ i \in \N\cap[2,\vN] $,
$ j\in\N\cap[\vN,l)$
that %N+2 
\enum{
	%$ l \geq \vN+1 $;
	$ \mf l_0 = d $;
  $ \mf l_1 \geq 2d\vN $;
	$ \mf l_i \geq 2\vN-2i+3 $ ;
	$ \mf l_j \geq 2 $ ;
	$ \mf l_l = 1 $;
  $\sum_{k=1}^{l} \mf l_k(\mf l_{k-1}+1)\leq \mathfrak{d}$ %\estimateForNumberOfNeurons$ 
},
% assume for all 
% 	$ k \in \N\cap(\vN,l) $ 
% that 
% $ \mf l_k \geq 2 $, 
%	$\mathfrak{l}=(d,2dN,2N-1,2N-3,\ldots,3,1)$, 
let $R\in [\max\{1,L,\sup_{z\in D} \infnorm{z}, 2[\sup_{z\in D} |\varphi(z)|]\},\infty)$,
let $ B\subseteq\R^{\mf d}$ satisfy $B = [-R,R]^{\mf{d}} $, 
let
	$ \mathfrak{E} \colon B \times \Omega \to [0,\infty) $
satisfy for all 
	$ \theta \in B $, 
	$ \omega \in \Omega $ 
that
	\begin{equation} \label{error_decomposition:empirical_risk}
	\mathfrak{E}( \theta, \omega )
	=
	\frac{ 1 }{ M }
	\Biggl[
	\sum\limits_{ m = 1 }^M
	| \ClippedRealV{\theta}{\mathfrak{l}}{u}{v}( X_m( \omega ) ) - 		Y_m( \omega ) |^2
	\Biggr],
	\end{equation}
% assume 
%   \enum{
%   $\eps^2\leq 128l\max\{1,\sup_{z\in D}\infnorm z\}(\infnorm{\mf l}+1)^lR^{l}(v-u)$
%   },
let 
	$\Theta_k\colon\Omega\to B$, 
	$k\in\{1,2,\ldots,K\}$, 
be i.i.d.~random variables, 
assume that 
	$\Theta_1$ 
is continuous uniformly distributed on 
	$B$, 
% assume that 
% 	$((X_m,Y_m))_{m\in\{1,2,\ldots,M\}}$ 
% and 
% 	$(\Theta_k)_{k\in\{1,2,\ldots,K\}}$ 
% are independent, 
and let 
	$\Xi\colon\Omega\to B$ 
satisfy 
	$\Xi = \Theta_{\min\{k\in\{1,2,\ldots,K\}\colon\mathfrak{E}(\Theta_k) = \min_{l\in\{1,2,\ldots,K\}} \mathfrak{E}(\Theta_l) \}} 
	$ (cf.\ \cref{def:coveringnumber,def:infnorm,def:rectclippedFFANN}).
Then
	\begin{multline} \label{error_decomposition:claim}
		%\begin{split}
		\P\!\left(
		\int_D 
		| 
		\ClippedRealV{ \Xi }{ \mathfrak{l} }{ u }{ v }( x )
		- 
		\varphi( x ) 
		|^2
		\,  
		\P_{ X_1 }( dx )
		> 
		\eps^2 
		\right)
		\leq
%		\exp\!\left(-K\min\left\{1, \tfrac{ \varepsilon^{\mathfrak{2d}}}{
%			[			
%			(b-a) R^{\card\cM+1}(6d\card\cM+3)^{\card\cM+1}
%			]^{\mathfrak{d}} (2R)^{\mathfrak{d}}} \right\}\right)
%		\\
%		& +
%		2\exp\!\left(\mf d\ln\!\left(\frac{64 R^{\card\cM+2}(6d\card\cM+3)^{\card\cM+1}(b-a)}{\eps^2}\right) - \frac{\varepsilon^4 M}{32(b-a)^4} \right). 
%  \\	
\exp\!\left(-K\min\!\left\{1, \frac{ \varepsilon^{2\mathfrak{d}}}{
  (16(v-u) l(\infnorm{\mf l}+1)^{l}R^{l+1})^{\mf d}
  } \right\}\right)
  % \exp\!\left(-K\min\left\{1, \frac{ \varepsilon^{2\mathfrak{d}}}{
% 	[			
% 	3R(\infnorm{l}+1)
% 	]^{(l \mf{d})} [2R(v-u)]^{\mf{d}}} \right\}\right)
\\+
% 2\exp\!\left(\mf d\ln\!\left(\frac{64 R^{l+1}(3\infnorm {l}+3)^{l}(v-u)}{\eps^2}\right) - \frac{\varepsilon^4 M}{32(v-u)^4} \right)
2\exp\!\left(
  \mf d\ln\!\left(
    \max\biggl\{1,\frac{128l(\infnorm{\mf l}+1)^lR^{l+1}(v-u)}{\eps^2}\biggr\}
  \right)
  -
  \frac{\eps^4M}{32(v-u)^4}
\right)
% \exp\!\left(-K\min\left\{1, \frac{ \varepsilon^{2\mathfrak{d}}}	{
% 			[			
% 			3R(\infnorm{\mf l}+1)
% 			]^{(l \mf{d})} [2R(v-u)]^{\mf{d}}} \right\}\right)
% 		\\+
% 		2\exp\!\left(\mf d\ln\!\left(\frac{64 R^{l+1}(3\infnorm {\mf l}+3)^{l}(v-u)}{\eps^2}\right) - \frac{\varepsilon^4 M}{32(v-u)^4} \right)
		\!.
		%\end{split}
		\end{multline}
\end{theorem}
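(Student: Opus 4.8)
The plan is to merge the three separate error analyses of \cref{sec:error_sources} by means of the bias-variance-type decomposition \cref{lem:error_decomposition_parametrization}. Throughout I would write $B=[-R,R]^{\mf d}$, $H_\theta=\ClippedRealV{\theta}{\mf l}{u}{v}|_D\in C(D,\R)$ for $\theta\in B$ (this is real-valued since $\mf l_l=1$), and $\mc E(f)=\E[|f(X_1)-Y_1|^2]$ for $f\in C(D,\R)$, so that $\mf E$ from \eqref{error_decomposition:empirical_risk} is precisely the empirical risk entering \cref{lem:error_decomposition_parametrization,lem:cov6}. I would first record that $(\theta,x)\mapsto\ClippedRealV{\theta}{\mf l}{u}{v}(x)$ is jointly continuous, whence $\mf E$ is jointly measurable and, for each $\omega$, Lipschitz in $\theta$ (by \cref{cor:ClippedRealNNLipsch}); in particular $\Xi$ and the integrand on the left of \eqref{error_decomposition:claim} are measurable, and $\mf E(\Xi)=\min_{k\in\{1,\dots,K\}}\mf E(\Theta_k)$.

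\emph{Approximation error.} Since $\card D\ge 2$ and $N\ge\max\{2,\CovNum{(D,\delta),\tfrac{\eps}{4L}}\}$, I can pick a finite set $\mc M\subseteq D$ that is a $\tfrac{\eps}{4L}$-net of the compact metric space $(D,\delta)$ with $2\le\card\mc M\le N$ (take a minimal net from \cref{def:coveringnumber} and, should it be a singleton, adjoin one further point of $D$). The inequalities $\card\mc M\le N<l$ and the hypotheses $\mf l_0=d$, $\mf l_1\ge 2dN$, $\mf l_i\ge 2N-2i+3$ for $i\in\N\cap[2,N]$, $\mf l_j\ge 2$ for $j\in\N\cap[N,l)$, $\mf l_l=1$, $\sum_{k=1}^l\mf l_k(\mf l_{k-1}+1)\le\mf d$ ensure that $\mf l$ fulfills all size requirements of \cref{cor:existence_of_clipped_neural_net_approximation_vectorized_description2} relative to $\mc M$. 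Applying that corollary to $f\is\varphi$ (which maps into $[u,v]$ and satisfies $|\varphi(x)-\varphi(y)|\le L\,\delta(x,y)=L[\sum_{i=1}^d|x_i-y_i|]$) produces $\theta_0\in\R^{\mf d}$ with $\infnorm{\theta_0}\le\max\{1,L,\sup_{z\in D}\infnorm z,2[\sup_{z\in D}|\varphi(z)|]\}\le R$, hence $\theta_0\in B$, and with $\sup_{x\in D}|\varphi(x)-H_{\theta_0}(x)|\le 2L\sup_{x\in D}\inf_{y\in\mc M}\delta(x,y)\le 2L\cdot\tfrac{\eps}{4L}=\tfrac{\eps}{2}$; thus $\int_D|H_{\theta_0}(x)-\varphi(x)|^2\,\P_{X_1}(dx)\le\tfrac{\eps^2}{4}$.

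\emph{Decomposition and the two tail bounds.} Invoking \cref{lem:error_decomposition_parametrization} with $\vartheta\is\theta_0$ and substituting $\theta\is\Xi(\omega)$ yields, for every $\omega$,
\[
\int_D|H_{\Xi}(x)-\varphi(x)|^2\,\P_{X_1}(dx)\le\tfrac{\eps^2}{4}+\bigl[\mf E(\Xi)-\mf E(\theta_0)\bigr]+2\sup_{\eta\in B}|\mf E(\eta)-\mc E(H_\eta)|,
\]
so (recalling that $H_\Xi$ agrees with $\ClippedRealV{\Xi}{\mf l}{u}{v}$ on $D$) the event on the left of \eqref{error_decomposition:claim} is contained in $\{\mf E(\Xi)-\mf E(\theta_0)>\tfrac{\eps^2}{4}\}\cup\{\sup_{\eta\in B}|\mf E(\eta)-\mc E(H_\eta)|\ge\tfrac{\eps^2}{4}\}$. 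For the first set I would bound the $\theta$-Lipschitz constant of $\mf E(\cdot,\omega)$: by $\bigl||a|^2-|b|^2\bigr|\le|a-b|(|a|+|b|)$, by $|H_\theta(X_m)-Y_m|\le v-u$, and by \cref{cor:ClippedRealNNLipsch} (with $a\is -R$, $b\is R$, depth $l$, architecture $\mf l$, $\infnorm\theta,\infnorm\vartheta\le R$), one gets $|\mf E(\theta,\omega)-\mf E(\vartheta,\omega)|\le 2(v-u)\sup_{x\in[-R,R]^{\mf l_0}}|\ClippedRealV{\theta}{\mf l}{u}{v}(x)-\ClippedRealV{\vartheta}{\mf l}{u}{v}(x)|\le\Lambda\infnorm{\theta-\vartheta}$ with $\Lambda:=2(v-u)\,l\,(\infnorm{\mf l}+1)^lR^l$; then \cref{lem:estimate_opt_error1} (with $N\is K$, $a\is -R$, $b\is R$, $L\is\Lambda$, $\vartheta\is\theta_0$, $\eps\is\tfrac{\eps^2}{4}$) bounds $\P(\mf E(\Xi)-\mf E(\theta_0)>\tfrac{\eps^2}{4})$ by $\exp(-K\min\{1,(\tfrac{\eps^2}{4})^{\mf d}(2R\Lambda)^{-\mf d}\})=\exp(-K\min\{1,\eps^{2\mf d}(16(v-u)l(\infnorm{\mf l}+1)^lR^{l+1})^{-\mf d}\})$, the first summand of \eqref{error_decomposition:claim}. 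For the second set, \cref{lem:cov6} (with depth $l$, architecture $\mf l$, $b\is R$, $R\is R$, $\eps\is\tfrac{\eps^2}{4}$, using $D\subseteq[-R,R]^{\mf l_0}$) bounds $\P(\sup_{\eta\in B}|\mf E(\eta)-\mc E(H_\eta)|\ge\tfrac{\eps^2}{4})$ by $2\max\{1,[\tfrac{128\,l(\infnorm{\mf l}+1)^lR^{l+1}(v-u)}{\eps^2}]^{\mf d}\}\exp(\tfrac{-\eps^4M}{32(v-u)^4})=2\exp(\mf d\ln\max\{1,\tfrac{128\,l(\infnorm{\mf l}+1)^lR^{l+1}(v-u)}{\eps^2}\}-\tfrac{\eps^4M}{32(v-u)^4})$, the second summand; adding the two bounds proves \eqref{error_decomposition:claim}.

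I expect the main difficulty to be organisational rather than conceptual: all the substance sits in \cref{cor:existence_of_clipped_neural_net_approximation_vectorized_description2,lem:cov6,lem:estimate_opt_error1,lem:error_decomposition_parametrization}, and the work lies in (i) checking that the architecture vector $\mf l$ really does satisfy the long list of size conditions of \cref{cor:existence_of_clipped_neural_net_approximation_vectorized_description2} for the net $\mc M$ chosen above — this is exactly where the relations $\card\mc M\le N<l$ and the explicit lower bounds on $\mf l_1,\mf l_i,\mf l_j$ are used — and (ii) propagating the constants so that the exponents match $16(v-u)l(\infnorm{\mf l}+1)^lR^{l+1}$ and $128\,l(\infnorm{\mf l}+1)^lR^{l+1}(v-u)$ exactly. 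The one genuinely technical preliminary is the joint measurability of $\mf E$ (hence of $\Xi$ and of the left-hand side of \eqref{error_decomposition:claim}), which follows from the joint continuity of $(\theta,x)\mapsto\ClippedRealV{\theta}{\mf l}{u}{v}(x)$ and the measurability of the $X_m$.
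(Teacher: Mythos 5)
Your proof is correct and follows essentially the same architecture as the paper's: approximation via \cref{cor:existence_of_clipped_neural_net_approximation_vectorized_description2}, the three-term decomposition via \cref{lem:error_decomposition_parametrization}, \cref{lem:estimate_opt_error1} for the optimization tail, and \cref{lem:cov6} for the generalization tail, with all constants tracked to the exact values appearing in \eqref{error_decomposition:claim}. Your constant-chasing is verified: $8R\Lambda=16(v-u)l(\infnorm{\mf l}+1)^lR^{l+1}$ matches the first exponent, and $32l\max\{1,R\}(\infnorm{\mf l}+1)^lR^l(v-u)\cdot 4 = 128l(\infnorm{\mf l}+1)^lR^{l+1}(v-u)$ matches the second.

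The one genuine (minor) difference is the choice of comparison point $\vartheta$ in \cref{lem:error_decomposition_parametrization}. You use the approximation network $\theta_0$ itself, whereas the paper first invokes \cref{lem:existence_of_optimal_network_weights} to obtain an $\mc E$-optimal $\vartheta\in B$, applies the decomposition with that $\vartheta$, and then uses a second application of \cref{lem:error_decomposition_parametrization} (together with the optimality of $\vartheta$) to bound $\int_D|\ClippedRealV{\vartheta}{\mf l}uv-\varphi|^2\,d\P_{X_1}$ by the approximation network's error $\le\eps^2/4$. Your shortcut is legitimate — \cref{lem:error_decomposition_parametrization} and \cref{lem:estimate_opt_error1} both permit an arbitrary fixed $\vartheta\in B$ — and it dispenses with the appeal to \cref{lem:existence_of_optimal_network_weights} and the second use of \cref{lem:error_decomposition_parametrization}, yielding a slightly leaner argument that arrives at exactly the same bound. (You also substitute $b\is R$ in \cref{lem:cov6} rather than the paper's $b\is\sup_{z\in D}\infnorm{z}$, which only shifts where the inequality $\max\{1,b\}\le R$ is used and changes nothing.)
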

\begin{proof}[Proof of \cref{prop:error_decomposition}] 
	Throughout this proof
    let $\cM\subseteq D$ satisfy 
    $\card\cM=\max\{2,\CovNum{(D,\delta),\frac{\eps}{4L}}\}$ and
    \begin{equation}
      \label{eq:ed.defM}
      4L\biggl[\sup_{x\in D}\biggl(\inf_{y\in\cM}\delta(x,y)\biggr)\biggr]\leq\eps,
      % 4L\!\left(\sup_{x=(x_1,x_2,\dots,x_d)\in D}\Biggl[\inf_{y=(y_1,y_2,\dots,y_d)\in\cM}\sum_{i=1}^d\abs{x_i-y_i}\Biggr]\right)\leq\eps,
    \end{equation}
    let $b\in[0,\infty)$ satisfy $b=\sup_{z\in D}\infnorm{z}$,
    let 
		  $ \mathcal{E} \colon C( D, \R ) \to [0,\infty) $ 
    satisfy for all 
      $ f \in C( D, \R ) $ 
    that
      $
      \mathcal{E}( f )
      =
      \E\!\left[ 
      | f( X_1 ) - Y_1 |^2
      \right]
      $,
    and let 
      $ \vartheta \in B $ 
    satisfy 
      $\mathcal{E}( 	\ClippedRealV{ \vartheta }{ 	\mathfrak{l} }{u}{v}|_D )
      =
      \inf_{ \theta \in B }
      \mathcal{E}( 	\ClippedRealV{ \theta }{ \mathfrak{l} }{u}{v}|_D  )
    $
  (cf.\ \cref{lem:existence_of_optimal_network_weights}).  
  Observe that 
    the hypothesis that for all
      $x,y\in D$
    it holds that
      $\abs{\varphi(x)-\varphi(y)}\leq L\delta(x,y)$
  implies that
    $\varphi$ is a $\mc B(D)$/$\mc B([u,v])$-measurable function.
    \cref{lem:error_decomposition_parametrization} (with 
      $ (\Omega,\mathcal{F},\P) \is (\Omega,\mathcal{F},\P) $,  
      $ d \is d $, 
      $ \mf{d} \is \mf{d} $, 
      $ M \is M $, 
      $ D \is D $, 
      $ B \is B $, 
      $ H \is (B\ni \theta \mapsto 
        % (D \ni x \mapsto  \ClippedRealV{\theta}{\mathfrak{l}}{u}{v}(x) 
        % \in \R) 
        \ClippedRealV{\theta}{\mathfrak{l}}{u}{v}|_D
      \in C(D,\R)) $, 
      $(X_m)_{m\in\{1,2,\dots,M\}}\is (X_m)_{m\in\{1,2,\dots,M\}}$, 
      $(Y_m)_{m\in\{1,2,\dots,M\}}\is ((\Omega\ni\omega\mapsto Y_m(\omega)\in \R))_{m\in\{1,2,\dots,M\}}$,
      $ \varphi \is (D\ni x\mapsto \varphi(x)\in \R) $, 
      $ \cE \is \cE $, 
      $ \mf{E} \is \mf{E} $
    in the notation of \cref{lem:error_decomposition_parametrization}) 
  therefore ensures that for all 
		$\omega\in\Omega$ 
	it holds that 
  \begin{equation}
    \label{eq:ed.1}
		\begin{split}
    &
		\int_D 
		| 
		\ClippedRealV{ \Xi(\omega) }{ \mathfrak{l} }{u}{v}( x )
		- 
		\varphi( x ) 
		|^2
		\,
		\P_{ X_1 }( dx )
		\\ & \leq 
    \underbrace{
		\int_D 
		\abs{
		\ClippedRealV{ \vartheta }{ \mathfrak{l} }{u}{v}( x )
		-
		\varphi( x ) 
    }^2\,\P_{X_1}(dx)
    }_{
      \text{Approximation error}
    }
    +
    \underbrace{
		\bigl[
		\mathfrak{E}( \Xi(\omega), \omega ) 
		-
		\mathfrak{E}( \vartheta, \omega )
    \bigr]
    }_{
      \text{Optimization error}
    }
		\mathbin{+}
    \underbrace{
		2
		\biggl[
		\sup_{ 
			\theta
			\in 
      B	}
		|
		\mathfrak{E}( \theta, \omega )
		- 
		\mathcal{E}( 	\ClippedRealV{ \theta }{ \mathfrak{l} }{u}{v}|_D )
    |
		\biggr]
    }_{\text{Generalization error}}
    \!.
		\end{split}
	\end{equation}
%  This, 
  % the hypothesis that 
  %   $ \mathfrak{d} \geq \sum_{i=1}^{l} \mf l_i(\mf l_{i-1}+1)$,  
  Next observe that
    the assumption that
      $N\geq\max\{2,\CovNum{(D,\delta),\frac{\eps}{4L}}\}=\card\cM$
  shows that for all
    $i\in\N\cap[2,\vN]$
    % $j\in\N\cap[\vN,l)$
  it holds that
  % \begin{equation}
    $l \geq \card\cM+1 $,
    % \mf l_0 = d 
    $\mf l_1 \geq 2d\card\cM$
    and
    $\mf l_i \geq 2\card\cM-2i+3 $.
    % \mf l_l = 1 
    % \sum_{i=1}^{l} \mf l_i(\mf l_{i-1}+1)\leq \mathfrak{d}
  % \end{equation}
  The hypothesis that for all
    $x,y\in D$ 
  it holds that
    $\abs{\varphi(x)-\varphi(y)}\leq L\delta(x,y)$,
  the hypothesis that
    $ R \geq \max\{1,L,\sup_{z\in D} \infnorm{z}, 2[\sup_{z\in D} |\varphi(z)|]\} $,  
  % the hypothesis that
  %   $4L\bigl[\sup_{x=(x_1,x_2,\ldots,x_d)\in D} 
  %   \bigl[\inf_{y=(y_1,y_2,\ldots,y_d)\in\cM} 
  %   (\sum_{i = 1}^d |x_i-y_i| )\bigr]\bigr]
  %   \leq \varepsilon
  %   $,  
  \cref{cor:existence_of_clipped_neural_net_approximation_vectorized_description2} (with 
    $ d \is d $, 
    $ \mf{d} \is \mf{d}$, 
    $ \mf L\is l$,
    $ L\is L$,
    $ u\is u$,
    $ v\is v$,
    $ D\is D$,
    $ f\is \varphi$,
    $ \cM \is \cM $,
    $ l\is\mf l$
  in the notation of \cref{cor:existence_of_clipped_neural_net_approximation_vectorized_description2}),
  and \eqref{eq:ed.defM}
  hence ensure that there exists 
    $\eta\in B$ 
  which satisfies 
  \begin{equation} 
    \begin{split}
    \sup_{x\in D} |\ClippedRealV{\eta}{ \mathfrak{l} }{u}{v}(x) - \varphi(x)| 
    &\leq 
    2L 
    \Biggl[\sup_{x=(x_1,x_2,\ldots,x_d)\in D}  \left(
    \inf_{y=(y_1,y_2,\ldots,y_d)\in\cM} 
    \sum_{i=1}^d |x_i-y_i| 
    \right) 
    \Biggr] 
    \\&=
    2L 
    \biggl[\sup_{x\in D}  \left(
    \inf_{y\in\cM} 
    \delta(x,y)
    \right) 
    \biggr] 
    \leq 
    \frac{\varepsilon}{2}. 
    \end{split}
  \end{equation} 
    \cref{lem:error_decomposition_parametrization} (with 
      $ (\Omega,\mathcal{F},\P) \is (\Omega,\mathcal{F},\P) $, 
      $ d \is d $, 
      $ \mf{d} \is \mf{d}$, 
      $ M \is M $, 
      $ D \is D $, 
      $ B \is B $, 
      $ H \is (B\ni \theta \mapsto 
      % (D \ni x \mapsto  \ClippedRealV{\theta}{\mathfrak{l}}{u}{v}(x) 
      % \in \R) 
        \ClippedRealV{\theta}{\mathfrak{l}}{u}{v}|_D
      \in C(D,\R)) $, 
      $(X_m)_{m\in\{1,2,\dots,M\}}\is (X_m)_{m\in\{1,2,\dots,M\}}$, 
      $(Y_m)_{m\in\{1,2,\dots,M\}}\is ((\Omega\ni\omega\mapsto Y_m(\omega)\in \R))_{m\in\{1,2,\dots,M\}}$,
      $ \varphi \is (D\ni x\mapsto \varphi(x)\in \R) $, 
      $ \mc E \is \mc E $, 
      $ \mf E \is \mf E $
    in the notation of \cref{lem:error_decomposition_parametrization})
    and the assumption that       
      $\mathcal{E}( 	\ClippedRealV{ \vartheta }{ 	\mathfrak{l} }{u}{v}|_D )
      =
      \inf_{ \theta \in B }
      \mathcal{E}( 	\ClippedRealV{ \theta }{ \mathfrak{l} }{u}{v}|_D  )
    $
    therefore 
  prove that 
	\begin{equation} 
	\begin{split} 
	& \int_D 
		\abs{ \ClippedRealV{\vartheta}{\mathfrak{l}}{u}{v} ( x ) - \varphi( x ) }^2 \,\P_{ X_1 }(dx) 
		= 
		\int_D 
		\abs{ \ClippedRealV{\eta}{\mathfrak{l}}{u}{v} (x) - \varphi }^2 \,\P_{ X_1 }(dx)
		+ 
		\underbrace{\mc E(\ClippedRealV{\vartheta}{\mathfrak{l}}{u}{v}|_D ) - 	\mc E(\ClippedRealV{\eta}{\mathfrak{l}}{u}{v}|_D )}_{\leq 0}
		\\
		& \leq 
		\int_D 
		\abs{ \ClippedRealV{\eta}{ \mathfrak{l}}{u}{v}( x ) - \varphi( x ) }^2\,\P_{X_1}(dx) 
		\leq 
		\sup_{x\in D}\,\abs{\ClippedRealV{\eta}{ \mathfrak{l} }{u}{v}(x) - \varphi(x)}^2 
		\leq 
		\frac{\varepsilon^2}{4}. 
		\end{split}
  \end{equation} 
  Combining
    this
  with
    \eqref{eq:ed.1}
  shows that for all
    $\omega\in\Omega$
  it holds that
  \begin{equation}
		\int_D 
		| 
		\ClippedRealV{ \Xi(\omega) }{ \mathfrak{l} }{u}{v}( x )
		- 
		\varphi( x ) 
		|^2
		\,
		\P_{ X_1 }( dx )
		\leq 
		\frac{\eps^2}4
		+
		\bigl[
		\mathfrak{E}( \Xi(\omega), \omega ) 
		-
		\mathfrak{E}( \vartheta, \omega )
		\bigr]
		+
		2
		\biggl[
		\sup_{ 
			\theta
			\in 
			B	}
		|
		\mathfrak{E}( \theta, \omega )
		- 
		\mathcal{E}( 	\ClippedRealV{ \theta }{ \mathfrak{l} }{u}{v}|_D )
		|
    \biggr]
    .    
  \end{equation}
	Hence, we obtain that
    \begin{align} \label{error_decomposition:eq02}
      %\begin{split}
    \nonumber 
    & 
		\P\!\left(
		\int_D 
		| 
		\ClippedRealV{ \Xi }{ \mathfrak{l} }{u}{v}( x )
		- 
		\varphi( x ) 
		|^2
		\,  
		\P_{ X_1 }( dx )
		> 
		\varepsilon^2 
		\right)
		\leq
		\P\!\left( 
		% \int_D 
		% \left|
		% \ClippedRealV{\vartheta}{ \mathfrak{l} }{u}{v} ( x )
		% -
		% \varphi( x )
		% \right|^2
		% \,\P_{X_1}(dx)
		% +
		\bigl[	\mf{E}( \Xi ) - \mf{E}( \vartheta ) \bigr]
		+
		2
		\biggl[
		\sup_{ 
			\theta \in B
		}
		|
		\mf{E}( \theta )
		- 
		\cE( \ClippedRealV{\theta}{\mathfrak{l}}{u}{v}|_D )
		|
		\biggr]
		> 
		\frac{3\varepsilon^2}4
		\right)
		\\% \nonumber
		& \leq
		% \P\biggl(
		% \int_D 
		% \left|
		% \ClippedRealV{\vartheta}{ \mathfrak{l} }{u}{v} ( x )
		% -
		% \varphi( x )
		% \right|^2
		% \,\P_{X_1}(dx) 
		% > 
		% \tfrac{ \varepsilon^2 }{ 4 }
		% \biggr)
		% +
		\P\biggl(
		\mf{E}( \Xi )
		- 
		\mf{E}( \vartheta )
		> \frac{ \varepsilon^2 }{ 4 }
		\biggr)
		+
		\P\biggl(
		\sup_{\theta\in B}
		|
		\mf{E}( \theta )
		- 
		\cE( \ClippedRealV{\theta}{\mathfrak{l}}{u}{v}|_D )
		|
		> \frac{ \varepsilon^2 }{ 4 }
    \biggr)
    .
      %\end{split}
		\end{align}
  % Hence, we obtain that
	% 	\begin{equation} 
	% 	\P\!\left( 
	% 	\int_D 
	% 	\left|\ClippedRealV{ \vartheta }{\mathfrak{l}}{u}{v}( x )-\varphi( x )\right|^2 \,\P_{X_1}( dx ) > \frac{\varepsilon^2}{4}
	% 	\right) 
	% 	= 0
	% 	. 
	% 	\end{equation} 
	% This and \eqref{error_decomposition:eq02} ensure that 
	% 	\begin{equation} \label{error_decomposition:eq05}
	% 	\begin{split}
	% 	&
	% 	\P\!\left(
	% 	\int_D 
	% 	| \ClippedRealV{ \Xi }{ \mathfrak{l} }{u}{v}( x )
	% 	- 
	% 	\varphi( x ) |^2
	% 	\,  
	% 	\P_{ X_1 }( dx )
	% 	> 
	% 	\varepsilon^2 
	% 	\right)
	% 	\\ & 
	% 	\leq
	% 	\P\biggl(
	% 	\mf{E}( \Xi )
	% 	- 
	% 	\mf{E}( \vartheta )
	% 	> \frac{ \varepsilon^2 }{ 4 }
	% 	\biggr)
	% 	%\\ &
	% 	+
	% 	\P\!\left(
	% 	\sup_{ \theta\in B }
	% 	\left|
	% 	\mathfrak{E}( \theta )
	% 	- 
	% 	\mathcal{E}( \ClippedRealV{\theta}{\mathfrak{l}}{u}{v}|_D )
	% 	\right|
	% 	> \frac{ \varepsilon^2 }{ 4 }
	% 	\right)\!.
	% 	\end{split}
	% 	\end{equation}
Next observe that \cref{cor:ClippedRealNNLipsch} (with 
	$ a \is -b $, 
	$ b \is  b $,
	$ u \is u $,
	$ v \is v $,  
	$ d \is \mathfrak{d} $, 
	$ L \is l$, 
	$ l \is \mf l$
in the notation of \cref{cor:ClippedRealNNLipsch}) demonstrates that for all 
	$ \theta, \xi \in B $ 
it holds that 
	\begin{equation} \label{error_decomposition:lipschitz_estimate}
		\begin{split}
		\sup_{x\in D}\,
		\abs{\ClippedRealV{\theta}{\mf l}{u}{v}(x)-\ClippedRealV{\xi}{\mf{l}}{u}{v}(x)} 
		& \leq 
		\sup_{x\in[-b,b]^{d}}\abs{\ClippedRealV{\theta}{\mf l}{u}{v}(x)-\ClippedRealV{\xi}{\mf l}{u}{v}(x)}
		%\colon x\in[-\sup\{\infnorm{z}\colon z\in D\},\sup\{\infnorm{z}\colon z\in D\}]^{d}\bigr\}
		\\
    & \leq
    % l\max\{1,\sup\{\infnorm z\colon z\in D\}\}(\infnorm{\mf l}+1)^l(\max\{\infnorm\theta,\infnorm\xi\})^{l-1}\infnorm{\theta-\xi}
    l\max\{1,b\}(\infnorm{\mf l}+1)^l(\max\{1,\infnorm\theta,\infnorm\xi\})^{l-1}\infnorm{\theta-\xi}
		%\left[\frac{(\max\{1,\sup\{ \infnorm{z}\colon z\in D\})(3\infnorm{\mf l}+3)^{l}}2\right](\max\{1,\infnorm\theta,\infnorm\xi\})^{l-1}\infnorm{\theta-\xi} 
		\\
    & \leq
    lR(\infnorm{\mf l}+1)^lR^{l-1}\infnorm{\theta-\xi}
		% \\& \leq 
		% \left[\frac{(\max\{1,\sup\{ \infnorm{z}\colon z\in D\})(3\infnorm{\mf l}+3)^{l}}2\right] 
		% R^{l-1}\infnorm{\theta-\xi} 
	% \leq 
  % 	\frac{\left[3R(\infnorm{\mf l}+1)\right]^{l}}2\infnorm{\theta-\xi} 
  =
  l(\infnorm{\mf l}+1)^lR^{l}\infnorm{\theta-\xi}
  . 
		\end{split}
		\end{equation}
Combining this with \enum{
the fact that for all 
	$ \theta \in \R^{\mathfrak{d}}$, 
	$ x \in D $ 
it holds that 
	$ \ClippedRealV{\theta}{\mathfrak{l}}{u}{v}(x) \in [u,v]$ ;
the hypothesis that 
  for all 
    $m\in\{1,2,\dots,M\}$,
    $\omega\in\Omega$ 
  it holds that
    $ Y_m(\omega) \in [u,v]$ ;
 the fact that for all 
  $x_1,x_2,y\in\R$ 
it holds that 
  $(x_1-y)^2-(x_2-y)^2=(x_1-x_2)((x_1-y)+(x_2-y))$ ;
\eqref{error_decomposition:empirical_risk}
} ensures that for all 
	$\theta,\xi\in B$,
	$\omega\in\Omega$
it holds that 
	\begin{align} 
% 	\begin{split}
\nonumber
	& 
	|\mf{E}(\theta,\omega) 
	- \mf{E}(\xi,\omega)| 
	\\
    % \nonumber
	& = 
	\left| 
	\frac{1}{M}\!\left[ 
		\sum_{m=1}^M 
		| \ClippedRealV{\theta}{\mathfrak{l}}{u}{v}
		(X_m(\omega)) 
		- Y_m(\omega)|^2
	\right]
		- 
		\frac{1}{M}\!\left[
		\sum_{m=1}^M 
		|
		\ClippedRealV{\xi}{\mathfrak{l}}{u}{v}
		(X_m(\omega))
		- Y_m(\omega)|^2
	\right]
		\right|  
    \\\nonumber
    &=
		\frac{1}{M}\Biggl\lvert
		\sum_{m=1}^M \Bigl(
		\bigl(\ClippedRealV{\theta}{\mathfrak{l} }{u}{v}(X_m(\omega)) 
		- 
		\ClippedRealV{\xi}{ \mathfrak{l} }{u}{v}(X_m(\omega))
		\bigr)
		\left[ 
			\bigl(
      \ClippedRealV{\theta}{\mathfrak{l} }{u}{v}(X_m(\omega)) - Y_m(\omega)
      \bigr)
			+ 
			\bigl(
        \ClippedRealV{\xi}{ \mathfrak{l} }{u}{v}(X_m(\omega))- Y_m(\omega)	
      \bigr)
      \right]
		\Bigr)
		\Biggr\rvert
		% & 
		% \leq 
		% \frac{1}{M}\!\left[
		% \sum_{m=1}^M
		% \bigl(
		% \left|\ClippedRealV{\theta}{\mathfrak{l} }{u}{v}(X_m(\omega)) 
		% - 
		% \ClippedRealV{\xi}{ \mathfrak{l} }{u}{v}(X_m(\omega))
		% \right|
		% \left| 
		% \ClippedRealV{\theta}{\mathfrak{l} }{u}{v}(X_m(\omega)) 
		% + 
		% \ClippedRealV{\xi}{ \mathfrak{l} }{u}{v}(X_m(\omega))
		% -  
		% 2Y_m(\omega)
		% \right|
		% \bigr)
		% \right]
    % \\
    \\
    \nonumber
		& 
		\leq 
		\frac{1}{M}\!\Biggl[
		\sum_{m=1}^M \Bigl(
		\left|\ClippedRealV{\theta}{\mathfrak{l} }{u}{v}(X_m(\omega)) 
		- 
		\ClippedRealV{\xi}{ \mathfrak{l} }{u}{v}(X_m(\omega))
		\right|
		\underbrace{	\left[ 
			\abs{
			\ClippedRealV{\theta}{\mathfrak{l} }{u}{v}(X_m(\omega)) - Y_m(\omega) }
			+ 
			\abs{	\ClippedRealV{\xi}{ \mathfrak{l} }{u}{v}(X_m(\omega))- Y_m(\omega)	}\right]}_{\leq 2(v-u)}
		\Bigr)
		\Biggr]
		\\
    \nonumber
		& 
		\leq 
		2(v-u)l(\infnorm{\mf l}+1)^{l}R^l
%R^{\card\cM+1}(6d\card\cM+3)^{\card\cM+1}  
	\infnorm{\theta-\xi}. 
		%\end{split}
		\end{align} 
\cref{lem:estimate_opt_error1} (with 
	$ (\Omega,\mathcal{F},\P) \is (\Omega,\mathcal{F},\P) $, 
	$ \mathfrak{d} \is \mathfrak{d} $, 
	$ N \is K $, 
	$ a \is -R $, 
	$ b \is R $, 
	$\vartheta \is \vartheta $, 
  $ L \is 2(v-u) l(\infnorm{\mf l}+1)^{l}R^l $, 
  $\eps\is\frac{\eps^2}4$,
	$ \mf E \is \mf E $, 
	$ (\Theta_n)_{n\in\{1,2,\dots,N\}} \is (\Theta_k)_{k\in\{1,2,\dots,K\}} $ 
in the notation of \cref{lem:estimate_opt_error1}) therefore shows that 
    \begin{equation} 
      \label{eq:ed.opterrorestimate}
		\begin{split}
		\P\!\left( 
		\mf{E}( \Xi ) 
		- 
		\mf{E}( \vartheta ) > \frac{\varepsilon^2}{4} \right)  
		& = 
		\P\!\left( 
		\left[ 
		\min_{k\in\{1,2,\ldots,K\}} \mf{E}(\Theta_k) \right] 
		- 
		\mf{E}( \vartheta ) > \frac{\varepsilon^2}{4}
    \right) 
    \\&\leq
    \exp\biggl(
      -K\min\biggl\{1,\frac{\bigl(\frac{\eps^2}4\bigr)^{\mf d}}{[2(v-u) l(\infnorm{\mf l}+1)^{l}R^l]^{\mf d}(2R)^{\mf d}}\biggr\}
    \biggr)
		\\&=
		\exp\biggl(-K\min\!\left\{1, \frac{ \varepsilon^{2\mathfrak{d}}}{
      (16(v-u) l(\infnorm{\mf l}+1)^{l}R^{l+1})^{\mf d}
		% [3R(\infnorm{\mf l}+1)]^{(l \mf{d})}
    % [2R(v-u)]^{\mathfrak{d}}
    } \right\}\biggr)
    . 
		\end{split}
		\end{equation} 
% Combining
%   this
% with
%  \eqref{error_decomposition:eq05}
% ensures that 
%   \begin{equation} 
%     \label{error_decomposition:eq06}
%     \begin{split}
% 	&
% 		\P\!\left(
% 	\int_D 
% 	| 
% 	\ClippedRealV{ \Xi }{ \mathfrak{l} }{u}{v}( x )
% 	- 
% 	\varphi( x ) 
% 	|^2
% 	\,  
% 	\P_{ X_1 }( dx )
% 	> 
% 	\varepsilon^2 
% 	\right)
% 	\\ & \leq
%   \exp\!\left(-K\min\!\left\{1, \frac{ \varepsilon^{2\mathfrak{d}}}{
%     (16(v-u) l(\infnorm{\mf l}+1)^{l}R^{l+1})^{\mf d}
%   % [3R(\infnorm{\mf l}+1)]^{(l \mf{d})}
%   % [2R(v-u)]^{\mathfrak{d}}
%   } \right\}\right)
% % \exp\!\left(-K\min\left\{1,\frac{ \varepsilon^{2\mathfrak{d}}}	
% % 	{
% % 		[			
% % 		 (3R(\infnorm{\mf l}+1)]^{(l \mf d)}
% % 		[2R(v-u)]^{\mathfrak{d}}} \right\}\right)
% 	+
% 		\P\biggl(
% 		\sup_{ \theta\in B }
% 		\left| \mf{E}( \theta ) - \mc{E}( \ClippedRealV{\theta}{\mathfrak{l}}{u}{v}|_D )
% 		\right|
% 		> \frac{ \varepsilon^2 }{ 4 }
%     \biggr).
%     \end{split}
% 		\end{equation} 
%     %
% In the next step we observe that
%   the hypothesis that $\mf l_l=1$ and $\mf l_1\geq 2d\card\cM\geq 4$
% ensures that $l\geq 2$.
		\newcommand{\ConstForClippedHoeffding}{2c R^{\card\cM+1}(6d\card\cM+3)^{\card\cM+1}}%
Moreover, note that \enum{
  % This;
  % the hypothesis that
  % $\tfrac{\eps^2}4\leq 32l \max\{1,b\}(\infnorm{\mf l}+1)^lR^{l}(v-u)$ ; 
  \cref{lem:cov6} (with 
	$ \mf d \is \mf d $,
	$ M \is M $, 
	$ L \is l $, 
	$ u \is u $,
	$ v \is v $,
	$ R \is R $, 
	$ \varepsilon \is \tfrac{\varepsilon^2}4 $, 
  $b\is b$,
  $ l \is \mf l $, 
  $D\is D$,
  $ (\Omega,\mc F,\P) \is (\Omega,\mathcal{F},\P) $, 
  $(X_m)_{m\in\{1,2,\dots,M\}}\is(X_m)_{m\in\{1,2,\dots,M\}}$,
	$ (Y_m)_{m\in\{1,2,\ldots,M\}} \is (Y_m)_{m\in\{1,2,\ldots,M\}} $, 
	$ \mc E \is \mc E $, 
	$ \mf E \is \mf E $ 
% for 
% 	$ m \in \{1,2,\ldots,M\} $ 	
in the notation of \cref{lem:cov6})
}[establish] that 	
	\begin{equation} 
		\begin{split}
		& 
		\P\!\left(\sup\nolimits_{\theta\in B}\abs{\mf E(\theta)-\mc E(\ClippedRealV{\theta}{\mf l}{u}{v}|_D)}\geq \frac{\eps^2}{4}\right)
    \\&\leq
    2\max\biggl\{1,\biggl[\frac{128l\max\{1,b\}(\infnorm{\mf l}+1)^lR^{l}(v-u)}{\eps^2}\biggr]^{\mf d}\biggr\}\exp\biggl(\frac{-\eps^4M}{32(v-u)^4}\biggr)
    \\&\leq
    2\max\biggl\{1,\biggl[\frac{128l(\infnorm{\mf l}+1)^lR^{l+1}(v-u)}{\eps^2}\biggr]^{\mf d}\biggr\}\exp\biggl(\frac{-\eps^4M}{32(v-u)^4}\biggr)
    % \\&\leq
    % 2\exp\!\left(\mf d\ln\!\left(\frac{128l\max\{1,b\}(\infnorm{\mf l}+1)^lR^{l}(v-u)}{\eps^2}\right)-\frac{\eps^4M}{32(v-u)^4}\right)
		% \\
		% & \leq
		% 2\exp\!\left(\mf d\ln\!\left(\frac{64l\max\{1,\sup\{\infnorm{z}\colon z\in D\}\}[3R(\infnorm{l}+1)]^{l}(b-a)}{\eps^2}\right)-\frac{\eps^4M}{32(v-u)^4}\right)
    \\&=
    2\exp\!\left(\mf d\ln\!\left(\max\biggl\{1,\frac{128l(\infnorm{\mf l}+1)^lR^{l+1}(v-u)}{\eps^2}\biggr\}\right)-\frac{\eps^4M}{32(v-u)^4}\right)
		% \\
		% & \leq 
    % 2\exp\!\left(\mf d\ln\!\left(\frac{64 R^{l+1}(3\infnorm{l}+3)^{l}(v-u)}{\varepsilon^2}\right) - \frac{\eps^4 M}{32(v-u)^4} \right)
    \!.
		\end{split}
		\end{equation}	
Combining 
  this 
  and \eqref{eq:ed.opterrorestimate}
with \eqref{error_decomposition:eq02} proves that 
		\begin{multline} 
		%\begin{split}
		%&
		\P\!\left(
		\int_D 
		| 
		\ClippedRealV{ \Xi }{ \mathfrak{l} }{u}{v}( x )
		- 
		\varphi( x ) 
		|^2
		\,  
		\P_{ X_1 }( dx )
		> 
		\eps^2 
		\right)
		\leq
		\exp\!\left(-K\min\!\left\{1, \frac{ \varepsilon^{2\mathfrak{d}}}{
      (16(v-u) l(\infnorm{\mf l}+1)^{l}R^{l+1})^{\mf d}
      } \right\}\right)
      % \exp\!\left(-K\min\left\{1, \frac{ \varepsilon^{2\mathfrak{d}}}{
		% 	[			
		% 	3R(\infnorm{l}+1)
		% 	]^{(l \mf{d})} [2R(v-u)]^{\mf{d}}} \right\}\right)
		\\+
		% 2\exp\!\left(\mf d\ln\!\left(\frac{64 R^{l+1}(3\infnorm {l}+3)^{l}(v-u)}{\eps^2}\right) - \frac{\varepsilon^4 M}{32(v-u)^4} \right)
    2\exp\!\left(\mf d\ln\!\left(\max\biggl\{1,\frac{128l(\infnorm{\mf l}+1)^lR^{l+1}(v-u)}{\eps^2}\biggr\}\right)-\frac{\eps^4M}{32(v-u)^4}\right)
		\!. 
		\end{multline} 
		The proof of \cref{prop:error_decomposition} is thus completed.
	\end{proof}
\endgroup

\begin{cor}\label{cor:unspecific_architecture_no_noise}
	Let 
	$ ( \Omega, \mathcal{F}, \P ) $ 
	be a probability space, 
	let 
	$ d, \mathfrak{d}, K, M, \tau \in \N $, 
	$ \varepsilon \in (0,\infty) $, 
	$ L,a,u \in \R $, 
	$ b \in (a,\infty) $,
	$ v \in (u,\infty) $, 
	$ R \in [\max\{1,L,|a|,|b|, 2|u|, 2|v|\},\infty) $, 
	let 
	$ X_m  \colon \Omega \to [a,b]^d $, 
	$ m \in \{ 1, 2, \dots, M \} $,
	be i.i.d.\ random variables, 
	let 
	$ \norm{\cdot}\colon \R^d \to [0,\infty) $ 
	be the standard norm on $ \R^d $, 
	let 
	$ \varphi \colon [a,b]^d \to [u,v] $ 
	satisfy for all 
	$x,y \in [a,b]^d$ 
	that 
	$|\varphi(x) - \varphi(y)| \leq L\norm{x-y}$,    
  %$ [2d(2dL(b-a)\varepsilon^{-1}+2)^d+2]^3 \leq \tau^3 \leq \mf{d} $, 
  assume \enum{
  $\tau\geq 2d(2dL(b-a)\varepsilon^{-1}+2)^d$ ;
  $\mf d\geq \tau(d+1)+(\tau-3)\tau(\tau+1)+\tau+1$
  },
	let 
	$ \mf{l}\in \N^{\tau} $ 
	satisfy 
	$ \mf{l}=(d,\tau,\tau,\ldots,\tau,1) $, 
  % assume \enum{
  % % ;
  % % $\eps^2\leq 128(\tau-1)(\max\{1,\abs{a},\abs{b}\})(\tau+1)^{\tau-1}R^{\tau-1}(v-u)$
  % },
	let 
	$ B \subseteq \R^{\mf d}$ satisfy $B = [-R,R]^{\mathfrak{d}} $, 
	let
	$ \mathfrak{E} \colon B \times \Omega \to [0,\infty) $
	satisfy for all 
	$ \theta \in B $, 
	$ \omega \in \Omega $ 
	that
	\begin{equation} \label{unspecific_architecture_no_noise:empirical_risk}
	\mathfrak{E}( \theta, \omega )
	=
	\frac{ 1 }{ M }\!
	\left[
	\sum\limits_{ m = 1 }^M
	| \ClippedRealV{\theta}{\mathfrak{l}}{u}{v}( X_m( \omega ) ) - \varphi(X_m( \omega )) |^2
  \right]
  \!,
	\end{equation}
	let 
	$\Theta_k\colon\Omega\to B$, 
	$k\in\{1,2,\ldots,K\}$, 
	be i.i.d.~random variables, 
	assume that 
	$\Theta_1$ 
	is continuous uniformly distributed on 
	$B$, 
	% assume 
	% $(X_m)_{m\in\{1,2,\ldots,M\}}$ 
	% and 
	% $(\Theta_k)_{k\in\{1,2,\ldots,K\}}$ 
	% are independent, 
	and let 
	$\Xi\colon\Omega\to B$ 
	satisfy 
	$\Xi = \Theta_{\min\{k\in\{1,2,\ldots,K\}\colon\mathfrak{E}(\Theta_k) = \min_{l\in\{1,2,\ldots,K\}} \mathfrak{E}(\Theta_l) \}} 
	$
	(cf.\ \cref{def:rectclippedFFANN}).
	Then
	\begin{multline} \label{unspecific_architecture_no_noise:claim}
% 	\begin{split}
% 	&
	\P\!\left(
	\left[ 
	\int_{[a,b]^d}
	| 
	\ClippedRealV{ \Xi }{ \mathfrak{l} }{u}{v}( x )
	- 
	\varphi( x ) 
	|^2
	\,  
	\P_{ X_1 }( dx )
	\right]^{\nicefrac12}
	> 
	\eps 
	\right)
	%\\ & 
	\leq
  \exp\!\left(-K\min\!\left\{1, \frac{ \varepsilon^{2\mathfrak{d}}}{
    (16(v-u) (\tau+1)^{\tau}R^{\tau})^{\mf d}
    } \right\}\right)
  \\+
  2\exp\biggl(\mf d\ln\!\left(\max\biggl\{1,\frac{128(\tau+1)^{\tau}R^{\tau}(v-u)}{\eps^2}\biggr\}\right)-\frac{\eps^4M}{32(v-u)^4}\biggr)
% 	\exp\!\left(-K\min\left\{1, \frac{ \varepsilon^{2\mathfrak{d}}}{
% 		[			
% 		(v-u) R^{\tau}(3\tau+3)^{\tau}
% 		]^{\mathfrak{d}} (2R)^{\mathfrak{d}}} \right\}\right)
% %	\\
% %	& 
% 	\\+
% 	2\exp\!\left(\mf d\ln\!\left(\frac{64 R^{\tau}(3\tau+3)^{\tau}(v-u)}{\eps^2}\right) - \frac{\varepsilon^4 M}{32(v-u)^4} \right). 
% 	\end{split}
.
	\end{multline}
\end{cor}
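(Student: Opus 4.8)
The plan is to obtain \cref{cor:unspecific_architecture_no_noise} directly from \cref{prop:error_decomposition}. First I would take $D=[a,b]^d$ (a compact set with $\card D\ge 2$ since $b>a$), set $Y_m=\varphi(X_m)$ for $m\in\{1,2,\dots,M\}$ — so that $(X_m,Y_m)$, $m\in\{1,2,\dots,M\}$, are i.i.d.\ $D\times[u,v]$-valued random variables with $\E[\abs{Y_1}^2]<\infty$ and $\P$-a.s.\ $\varphi(X_1)=Y_1=\E[Y_1\,|\,X_1]$ — and let $\delta\colon D\times D\to[0,\infty)$ be the $\ell^1$-metric $\delta(x,y)=\sum_{i=1}^d\abs{x_i-y_i}$. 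Because $\norm{z}\le\sum_{i=1}^d\abs{z_i}$ for every $z\in\R^d$, the hypothesis $\abs{\varphi(x)-\varphi(y)}\le L\norm{x-y}$ gives $\abs{\varphi(x)-\varphi(y)}\le L\delta(x,y)$, so $\varphi$ is $L$-Lipschitz for $\delta$. Moreover $\sup_{z\in D}\infnorm z=\max\{\abs a,\abs b\}$ and $2[\sup_{z\in D}\abs{\varphi(z)}]\le 2\max\{\abs u,\abs v\}$, so the hypothesis $R\ge\max\{1,L,\abs a,\abs b,2\abs u,2\abs v\}$ yields $R\ge\max\{1,L,\sup_{z\in D}\infnorm z,2[\sup_{z\in D}\abs{\varphi(z)}]\}$, as needed. (We may and do assume $L\in(0,\infty)$; for $L\le 0$ the Lipschitz hypothesis, together with $\card D\ge 2$, is either vacuous or forces $\varphi$ constant.)

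Next I would control the covering number of $D$ for $\delta$. Partitioning each of the $d$ factors $[a,b]$ of $D$ into $\lceil 2dL(b-a)\eps^{-1}\rceil$ subintervals of equal length and placing one point at each cell-centre produces a finite set such that every point of $D$ lies within $\ell^1$-distance $\tfrac{\eps}{4L}$ of it; hence $\CovNum{(D,\delta),\frac{\eps}{4L}}\le(\lceil 2dL(b-a)\eps^{-1}\rceil)^d\le(2dL(b-a)\eps^{-1}+1)^d$ (cf.\ \cref{def:coveringnumber}). Since $(2dL(b-a)\eps^{-1}+2)^d\ge 2$ for every $d\in\N$, setting $N:=\max\{2,\CovNum{(D,\delta),\frac{\eps}{4L}}\}$ gives $N\le(2dL(b-a)\eps^{-1}+2)^d$, and therefore the hypothesis $\tau\ge 2d(2dL(b-a)\eps^{-1}+2)^d$ implies $2dN\le\tau$. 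In particular $N\le\tfrac{\tau}{2d}\le\tfrac{\tau}{2}$, and since $\tau\ge 2d\,2^d\ge 4$ this also gives $N\le\tau-2$.

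I would then apply \cref{prop:error_decomposition} with this $N$, with $l:=\tau-1$, and with the weight vector $\mf l=(d,\tau,\tau,\dots,\tau,1)\in\N^{\tau}=\N^{l+1}$ of the corollary. Indeed $l\in\N\cap(N,\infty)$ because $N\le\tau-2<l$; one has $\mf l_0=d$; $\mf l_1=\tau\ge 2dN$; for $i\in\N\cap[2,N]$ the identity $\mf l_i=\tau\ge 2dN\ge 2N\ge 2N-2i+3$ holds (using $i\ge 2$); for $j\in\N\cap[N,l)$ one has $\mf l_j=\tau\ge 2$; $\mf l_l=1$; and the parameter count is $\sum_{k=1}^{l}\mf l_k(\mf l_{k-1}+1)=\tau(d+1)+(\tau-3)\tau(\tau+1)+(\tau+1)=\tau(d+1)+(\tau-3)\tau(\tau+1)+\tau+1\le\mf d$ by hypothesis. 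Together with the bound on $R$ from the first paragraph, all hypotheses of \cref{prop:error_decomposition} are met, so it yields the estimate \eqref{error_decomposition:claim} for $\P\bigl(\int_D\abs{\ClippedRealV{\Xi}{\mf l}{u}{v}(x)-\varphi(x)}^2\,\P_{X_1}(dx)>\eps^2\bigr)$ with constants built from $l(\infnorm{\mf l}+1)^lR^{l+1}$.

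Finally I would simplify the constants. Since $\tau\ge 2d\ge d$ and $\tau\ge 1$, one has $\infnorm{\mf l}=\max\{d,\tau,1\}=\tau$, whence $l(\infnorm{\mf l}+1)^lR^{l+1}=(\tau-1)(\tau+1)^{\tau-1}R^{\tau}\le(\tau+1)(\tau+1)^{\tau-1}R^{\tau}=(\tau+1)^{\tau}R^{\tau}$; as the right-hand side of \eqref{error_decomposition:claim} is nondecreasing in the quantity $l(\infnorm{\mf l}+1)^lR^{l+1}$ (it appears only in the denominator of a term inside an $\exp$ and inside a $\max\{1,\cdot\}$ under a $\ln$), replacing it by $(\tau+1)^{\tau}R^{\tau}$ only enlarges the bound and produces precisely the right-hand side of \eqref{unspecific_architecture_no_noise:claim}. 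Since $\int_D\abs{\ClippedRealV{\Xi}{\mf l}{u}{v}(x)-\varphi(x)}^2\,\P_{X_1}(dx)\ge 0$ and $\eps>0$, the event $\bigl\{[\int_D\abs{\ClippedRealV{\Xi}{\mf l}{u}{v}(x)-\varphi(x)}^2\,\P_{X_1}(dx)]^{1/2}>\eps\bigr\}$ coincides with $\bigl\{\int_D\abs{\ClippedRealV{\Xi}{\mf l}{u}{v}(x)-\varphi(x)}^2\,\P_{X_1}(dx)>\eps^2\bigr\}$, which completes the derivation. The main obstacle is not conceptual but bookkeeping: the only genuinely new ingredient is the $\ell^1$-covering estimate for the cube, and the actual work lies in verifying, one by one, that the choices of $N$, $l$, $\mf l$, and $R$ satisfy every numerical constraint in the hypotheses of \cref{prop:error_decomposition}.
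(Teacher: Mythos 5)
Your proposal is correct and follows essentially the same route as the paper: both apply \cref{prop:error_decomposition} with $Y_m=\varphi(X_m)$, $D=[a,b]^d$, the $\ell^1$-metric $\delta$, $l=\tau-1$, and the given $\mathfrak l$, after bounding the $\ell^1$-covering number of the cube to verify the constraints on $N$, $\tau$, and the layer widths, and then simplifying $(\tau-1)(\tau+1)^{\tau-1}R^{\tau}$ to $(\tau+1)^{\tau}R^{\tau}$. The only cosmetic difference is that the paper constructs an explicit covering grid with $(\lceil 2dL(b-a)\eps^{-1}\rceil+1)^d$ points and uses that as the $N$ in the proposition, whereas you bound the covering number directly and plug in $\max\{2,\CovNum{(D,\delta),\frac{\eps}{4L}}\}$; both satisfy the proposition's hypotheses.
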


\begin{proof}[Proof of \cref{cor:unspecific_architecture_no_noise}]
Throughout this proof
let $N\in\N$ satisfy 
	\begin{equation}\label{unspecific_architecture_no_noise:definition_of_N}
	N = \min\biggl\{k\in\N\colon k\geq \frac{2dL(b-a)}{\varepsilon}\biggr\},
  \end{equation}
let 
  $\cM\subseteq [a,b]^d$ satisfy $\cM = \{a,a+\frac{b-a}{N},\ldots,a+\frac{(N-1)(b-a)}{N},b\}^d$,
let $\delta\colon[a,b]^d\times[a,b]^d\to [0,\infty)$ satisfy for all
  $x=(x_1,x_2,\dots,x_d), y=(y_1,y_2,\dots,y_d)\in[a,b]^d$
that
  $\delta(x,y)=\sum_{i=1}^d\abs{x_i-y_i}$,
and let $l_0,l_1,\dots,l_{\tau-1}\in\N$ satisfy
$\mf l=(l_0,l_1,\dots,l_{\tau-1})$.
% Moreover, note that for all
%   $l_0,l_1,\dots,l_{\tau-1}\in\N$
% with $\mf l=(l_0,l_1,\dots,l_{\tau-1})$ it holds that
% 	\begin{equation} \label{unspecific_architecture_no_noise:number_of_weights}
% 	\begin{split}
% 	\sum_{i=1}^{\tau-1} l_i (l_{i-1} + 1) 
% 	& = 
% 	l_1 (l_0 + 1) + \sum_{i=2}^{\tau-2} l_i (l_{i-1} + 1) + l_{\tau-1} (l_{\tau-2} + 1) 
% 	\\
% 	& = 
% 	\tau (d+1) + \sum_{i=2}^{\tau-2} \tau ( \tau + 1 )  
% 	+ \tau + 1
% 	\leq (\tau - 1)\tau (\tau + 1) \leq \tau^3 \leq \mf d. 
% 	\end{split} 
% 	\end{equation}  
Observe that for all
  $x\in[a,b]$
there exists
  $y\in\{a,a+\frac{b-a}{N},\ldots,a+\frac{(N-1)(b-a)}{N},b\}$
such that $\abs{x-y}\leq \tfrac{b-a}{2N}$.
% Observe that for all 
% $ x=(x_1,x_2,\ldots,x_d) \in [a,b]^d $
% there exists 
% $ z=(z_1,z_2,\ldots,z_d)\in \mc M $ 
% such that for all 
% $ i \in \{1,2,\ldots,d\} $ 
% it holds that 
% $ |x_i - z_i| \leq \frac{b-a}{2N} $. 
This demonstrates that 
\begin{equation} \label{unspecific_architecture_no_noise:mesh_density}
	4L\Biggl[\sup_{x=(x_1,x_2,\ldots,x_d)\in [a,b]^d}
	\left(
	\inf_{y=(y_1,y_2,\ldots,y_d)\in \mc M} 
	% \left( 
	\sum_{i=1}^{d} |x_i-y_i| 
	% \right)  
  \right)
  \Biggr]
	\leq 
	\frac{2Ld(b-a)}{N}
	\leq 
	\eps.  
\end{equation} 
Hence, we obtain that
\begin{equation}
  \label{eq:uann.covnum}
  \CovNum{([a,b]^d,\delta),\frac{\eps}{4L}}
  \leq
  \card\cM
  =
  (N+1)^d
  .
\end{equation}
% Next observe that 
% \begin{equation}
%   \label{eq:uann.cardcM}
%   \card{\mc M} = (N+1)^d \geq 2.
% \end{equation} 
Next note that
  \eqref{unspecific_architecture_no_noise:definition_of_N}
implies that
  $N<2dL(b-a)\eps^{-1}+1$.
\enum{
  The hypothesis that 
    $\tau \geq 2d(2dL(b-a)\eps^{-1}+2)^d$ ;
} therefore ensures that 
\begin{equation}
  \label{eq:uann.tau}
  \tau
  >
  2d(N+1)^d
  \geq
  (N+1)^d+2
  % = 
  % 2d\card\cM + 2 
  % \geq 
  % \card\cM+2
  .
\end{equation}
Hence, we obtain that for all 
  $ i \in \{2,3,\ldots,(N+1)^d\} $,
  $ j \in \{(N+1)^d+1,(N+1)^d+2,\dots,\tau-2\} $
it holds that 
\begin{equation}
  \label{eq:uann.l}
	l_0 = d, \quad
	l_1 = \tau \geq 2 d (N+1)^d, \quad
  l_{\tau-1} = 1,\quad
  l_i = \tau 
  %\geq 2\card{\mc M} 
  \geq 2(N+1)^d-2i + 3,
  \quad\text{and}\quad
  l_j = \tau \geq 2
  .  
\end{equation}
Furthermore, observe that the hypothesis that for all 
$x,y\in [a,b]^d$ 
it holds that 
$|\varphi(x)-\varphi(y)|\leq L\norm{x-y}$ 
implies that for all 
$x%=(x_1,x_2,\ldots,x_d)
,y%=(y_1,y_2,\ldots,y_d)
\in [a,b]^d$ 
it holds that 
$
|\varphi(x)-\varphi(y)| 
\leq 
L\delta(x,y)
$. 			
Combining \enum{
  this ;
  % \eqref{unspecific_architecture_no_noise:mesh_density} ;
  %\eqref{eq:uann.cardcM} ;
  \eqref{eq:uann.covnum} ;
  \eqref{eq:uann.tau} ;
  \eqref{eq:uann.l} ;
  the hypothesis that
  $\mf d\geq \tau(d+1)+(\tau-3)\tau(\tau+1)+\tau+1=\sum_{i=1}^{\tau-1}l_i(l_{i-1}+1)$ ;
  % \eqref{unspecific_architecture_no_noise:number_of_weights} ;
} with
\cref{prop:error_decomposition} (with 
	$ (\Omega,\mathcal{F},\P) \is (\Omega,\mathcal{F},\P) $, 
	$ d \is d $, 
	$ \mf d \is \mf d $, 
	$ K \is K $, 
	$ M \is M $, 
	$ \eps \is \eps $, 
	$ L \is L $, 
	$ u \is u $, 
	$ v \is v $, 
	$ D \is [a,b]^d $, 
	$ (X_m)_{m\in\{1,2,\dots,M\}} \is (X_m)_{m\in\{1,2,\dots,M\}} $, 
  $ (Y_m)_{m\in\{1,2,\dots,M\}} \is (\varphi(X_m))_{m\in\{1,2,\dots,M\}} $, 
  $\delta\is \delta$,
  %$\norm\cdot\is\norm\cdot$,
  $ \varphi \is \varphi $, 
  $ N\is (N+1)^d$,
	%$ \mc M \is \mc M $, 
	$ l \is \tau-1 $, 
	$ \mf l \is \mf l $,  
	$ R \is R $, 
  $ B \is B $, 
	$ \mf E \is \mf E $,  
	$ (\Theta_k)_{k\in\{1,2,\dots,K\}} \is (\Theta_k)_{k\in\{1,2,\dots,K\}} $, 
	$ \Xi \is \Xi $ 
in the notation of \cref{prop:error_decomposition}) 
establishes that 
	\begin{equation} \begin{split}
	&%\nonumber
	\P\!\left(
	\left[ 
	\int_{[a,b]^d}
	| 
	\ClippedRealV{ \Xi }{ \mathfrak{l} }{u}{v}( x )
	- 
	\varphi( x ) 
	|^2
	\,  
	\P_{ X_1 }( dx )
	\right]^{\nicefrac12}
	> 
	\eps 
	\right)
  \\&\leq
  \exp\!\left(-K\min\!\left\{1, \frac{ \varepsilon^{2\mathfrak{d}}}{
    (16(v-u) (\tau-1)(\tau+1)^{\tau-1}R^{\tau})^{\mf d}
    } \right\}\right)
  \\&\qquad\qquad+%\nonumber
  2\exp\!\left(\mf d\ln\!\left(\max\biggl\{1,\frac{128(\tau-1)(\tau+1)^{\tau-1}R^{\tau}(v-u)}{\eps^2}\biggr\}\right)-\frac{\eps^4M}{32(v-u)^4}\right)
  % \\&=
  % \exp\!\left(-K\min\!\left\{1, \frac{ \varepsilon^{2\mathfrak{d}}}{
  %   (16(v-u) (\tau-1)(\tau+1)^{\tau-1}R^{\tau})^{\mf d}
  %   } \right\}\right)
  % \\&\qquad+
  % 2\exp\!\left(\mf d\ln\!\left(\frac{128(\tau-1)(\tau+1)^{\tau-1}R^{\tau}(v-u)}{\eps^2}\right)-\frac{\eps^4M}{32(v-u)^4}\right)
  \\&\leq%\nonumber
  \exp\!\left(-K\min\!\left\{1, \frac{ \varepsilon^{2\mathfrak{d}}}{
    (16(v-u) (\tau+1)^{\tau}R^{\tau})^{\mf d}
    } \right\}\right)
  \\&%\nonumber
  \qquad\qquad +
  2\exp\!\left(\mf d\ln\!\left(\max\biggl\{1,\frac{128(\tau+1)^{\tau}R^{\tau}(v-u)}{\eps^2}\biggr\}\right)-\frac{\eps^4M}{32(v-u)^4}\right)
	% \\& \leq
	% \exp\!\left(-K\min\left\{1, \frac{ \varepsilon^{2\mathfrak{d}}}	{
	% 	[			
	% 	3R(\infnorm{\mf l}+1)
	% 	]^{(l \mf{d})} [2R(v-u)]^{\mf{d}}} \right\}\right)
	% \\&\qquad+
	% 2\exp\!\left(\mf d\ln\!\left(\frac{64 R^{l+1}(3\infnorm {\mf l}+3)^{l}(v-u)}{\eps^2}\right) - \frac{\varepsilon^4 M}{32(v-u)^4} \right) 
	% \\
	% & = 
	% \exp\!\left(-K\min\left\{1, \frac{ \varepsilon^{2\mathfrak{d}}}	{
	% 	[			
	% 	3R(\tau+1)
	% 	]^{((\tau-1) \mf{d})} [2R(v-u)]^{\mf{d}}} \right\}\right)
	% \\&\qquad+
	% 2\exp\!\left(\mf d\ln\!\left(\frac{64 R^{\tau}(3\tau+3)^{\tau-1}(v-u)}{\eps^2}\right) - \frac{\varepsilon^4 M}{32(v-u)^4} \right) 
	% \\
	% & \leq 
	% \exp\!\left(-K\min\left\{1, \frac{ \varepsilon^{2\mathfrak{d}}}	{
	% 	[			
	% 	3R(\tau+1)
	% 	]^{(\tau \mf{d})} [2R(v-u)]^{\mf{d}}} \right\}\right)
	% \\&\qquad+
  % 2\exp\!\left(\mf d\ln\!\left(\frac{64 R^{\tau}(3\tau+3)^{\tau}(v-u)}{\eps^2}\right) - \frac{\varepsilon^4 M}{32(v-u)^4} \right) 
  \!.
	\end{split} \end{equation} 
The proof of \cref{cor:unspecific_architecture_no_noise} is thus completed. 
\end{proof} 

\begin{cor}\label{cor:generic_constants}
Let 
	$ ( \Omega, \mathcal{F}, \P ) $ 
be a probability space, 
let 
	$ d \in \N $, 
	$ L,a,u \in \R $, 
	$ b \in (a,\infty) $,
	$ v \in (u,\infty) $, 
	$ R \in [\max\{1, L, |a|, |b|, 2|u|, 2|v|\}, \infty)  $,  
let 
	$ X_m \colon \Omega \to [a,b]^d $, 
	$ m \in \N $,
be i.i.d.\ random variables, 
let 
	$ \norm{\cdot}\colon \R^d \to [0,\infty) $ 
be the standard norm on $ \R^d $, 
let 
	$ \varphi \colon [a,b]^d \to [u,v] $ 
satisfy for all 
	$ x, y \in [a,b]^d $ 
that 
	$| \varphi(x) - \varphi(y) | \leq L\norm{x-y}$,  
let  
  $ \mf{l}_{\tau} \in \N^{\tau} $, $ \tau \in \N $,  
  satisfy for all
    $\tau\in\N\cap[3,\infty)$
  that
  $ \mf{l}_{\tau}=(d,\tau,\tau,\ldots,\tau,1) $,
let
	$ \mathfrak{E}_{\mf d,M,\tau} \colon [-R,R]^{\mf d} \times \Omega \to [0,\infty) $, $ \mf d, M, \tau \in \N $, 
satisfy for all 
	$ \mf d, M \in \N $, 
	$ \tau \in \N\cap[3,\infty) $, 
	$ \theta \in [-R,R]^{\mf d} $,  
  $ \omega \in \Omega $ 
with
  $\mf d\geq \tau(d+1)+(\tau-3)\tau(\tau+1)+\tau+1$
that
\begin{equation} 
	\mathfrak{E}_{ \mf d, M, \tau }( \theta, \omega )
	=
	\frac{ 1 }{ M }
	\left[
	\sum\limits_{ m = 1 }^M
	| \ClippedRealV{\theta}{\mf{l_{\tau}}}{u}{v}( X_m( \omega ) ) - \varphi(X_m( \omega )) |^2
  \right]
  \!,
	\end{equation} 
for every $\mf d \in \N$ let 
	$\Theta_{\mf d,k}\colon\Omega\to [-R,R]^{\mf d}$, 
	$k\in\N$, 
be i.i.d.~random variables, 
assume for all $\mf d\in\N$ that 
	$\Theta_{\mf d,1}$ 
is continuous uniformly distributed on 
	$[-R,R]^{\mf d}$, 
% assume for every $\mf d\in\N$ that 
% 	$(X_m)_{m\in\N}$ 
% and 
% 	$(\Theta_{\mf d,k})_{k\in\N}$ 
% are independent, 
and let 
	$\Xi_{\mf d,K,M,\tau}\colon\Omega\to [-R,R]^{\mf d}$, $\mf d,K,M,\tau\in\N$, 
satisfy for all $ \mf d,K,M,\tau \in \N $ that
	$\Xi_{\mf d,K,M,\tau} = \Theta_{\mf d, \min\{k\in\{1,2,\ldots,K\}\colon\mf{E}_{\mf d,M,\tau}(\Theta_{\mf d,k}) = \min_{l\in\{1,2,\ldots,K\}} \mf{E}_{\mf d,M,\tau}(\Theta_{\mf d,l}) \}} 
  $
  (cf.\ \cref{def:rectclippedFFANN}).
Then there exists $c\in (0,\infty)$ such that for all 
	$\mathfrak{d}, K, M, \tau \in \N$, 
  $ \varepsilon \in (0,\sqrt{v-u}] $  
  with
  $\tau\geq 2d(2dL(b-a)\varepsilon^{-1}+2)^d$
  and
  $\mf d\geq \tau(d+1)+(\tau-3)\tau(\tau+1)+\tau+1$
	it holds that
	\begin{equation} \label{generic_constants:claim}
	\begin{split}
	&
	\P\!\left(
	\left[ 
	\int_{[a,b]^d}
	| 
	\ClippedRealV{ \Xi_{\mf d,K,M,\tau} }{ \mathfrak{l}_{\tau} }{u}{v}( x )
	- 
	\varphi( x ) 
	|^2
	\,  
	\P_{ X_1 }( dx )
	\right]^{\nicefrac12}
	> 
	\eps 
	\right)
	\\
	& 
	\leq 
	\exp\!\left(-K (c\tau)^{-\tau\mf d}\varepsilon^{2\mf{d}} \right)
	% \exp\!\left(-K\min\{1, (c\tau)^{-\tau\mf d}\varepsilon^{2\mf{d}} \}\right)
	+
  2\exp\!\left(\mf d\ln\!\left( (c\tau)^{\tau}\eps^{-2} \right) - c^{-1}\varepsilon^4 M \right)
  \!. 
	% \exp\!\left(-K\min\left\{1, (c\tau)^{-\tau\mf d}\varepsilon^{2\mf{d}} \right\}\right)
	% +
	% \exp\!\left(\mf d\ln\!\left( (c\tau)^{\tau}\eps^{-2} \right) - \tfrac{\varepsilon^4 M}{c} \right). 
	\end{split}
	\end{equation}
	\end{cor}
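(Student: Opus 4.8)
The plan is to derive \cref{generic_constants:claim} from \cref{cor:unspecific_architecture_no_noise} by absorbing all the polynomial and exponential factors appearing in the right-hand side of \eqref{unspecific_architecture_no_noise:claim} into a single constant $c\in(0,\infty)$ depending only on $d$, $u$, $v$, and $R$. First I would apply \cref{cor:unspecific_architecture_no_noise} (with the obvious substitutions $d\is d$, $\mf d\is\mf d$, $K\is K$, $M\is M$, $\eps\is\eps$, $L\is L$, $a\is a$, $b\is b$, $u\is u$, $v\is v$, $R\is R$, $\tau\is\tau$, $\varphi\is\varphi$, $(X_m)_m\is(X_m)_m$, $\mf l\is\mf l_\tau$, $\Theta_k\is\Theta_{\mf d,k}$, $\Xi\is\Xi_{\mf d,K,M,\tau}$) under the two hypotheses $\tau\geq 2d(2dL(b-a)\eps^{-1}+2)^d$ and $\mf d\geq\tau(d+1)+(\tau-3)\tau(\tau+1)+\tau+1$, which are exactly the hypotheses of \cref{cor:generic_constants}. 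This yields
\begin{equation}
\begin{split}
&\P\!\left(\left[\int_{[a,b]^d}|\ClippedRealV{\Xi_{\mf d,K,M,\tau}}{\mf l_\tau}uv(x)-\varphi(x)|^2\,\P_{X_1}(dx)\right]^{\nicefrac12}>\eps\right)
\\&\leq
\exp\!\left(-K\min\!\left\{1,\tfrac{\eps^{2\mf d}}{(16(v-u)(\tau+1)^\tau R^\tau)^{\mf d}}\right\}\right)
+2\exp\!\left(\mf d\ln\!\left(\max\biggl\{1,\tfrac{128(\tau+1)^\tau R^\tau(v-u)}{\eps^2}\biggr\}\right)-\tfrac{\eps^4 M}{32(v-u)^4}\right).
\end{split}
\end{equation}

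Next I would choose $c$ large enough — for instance $c=\max\{32,2\cdot 16(v-u)R,128(v-u)R,32(v-u)^4\}$ or something slightly more generous — so that the following three bounds hold simultaneously for all $\tau\in\N\cap[3,\infty)$ and all $\eps\in(0,\sqrt{v-u}]$: (a) $16(v-u)(\tau+1)^\tau R^\tau\leq(c\tau)^\tau$, using $(\tau+1)^\tau\leq(2\tau)^\tau$ and $16(v-u)R^\tau\leq c^\tau$ when $c$ dominates $2\cdot 16(v-u)R$ and $c\geq 2R$; (b) $\max\{1,128(\tau+1)^\tau R^\tau(v-u)\eps^{-2}\}\leq(c\tau)^\tau\eps^{-2}$, which holds because $\eps\leq\sqrt{v-u}$ forces $(v-u)\eps^{-2}\geq 1$ so the max equals the second argument, and then the same estimate as in (a) with a $128$ instead of $16$ applies; and (c) $\tfrac{1}{32(v-u)^4}\geq c^{-1}$, i.e.\ $c\geq 32(v-u)^4$. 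Also $\min\{1,\eps^{2\mf d}/(16(v-u)(\tau+1)^\tau R^\tau)^{\mf d}\}\geq\eps^{2\mf d}/((c\tau)^\tau)^{\mf d}=(c\tau)^{-\tau\mf d}\eps^{2\mf d}$ once (a) holds and once one checks the $1$ branch is never the smaller one in the relevant regime (or simply drops the $\min$ by the trivial inequality $\min\{1,x\}\geq x/(1+\text{const})$ — more cleanly, note $\eps^{2\mf d}/((c\tau)^\tau)^{\mf d}\leq 1$ whenever $c\tau\geq\eps^2$, which is automatic since $c\geq 1$ and $\eps\leq\sqrt{v-u}$ and one can enlarge $c$ to beat $v-u$).

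Substituting these three bounds into the inequality above gives
\begin{equation}
\begin{split}
&\P\!\left(\left[\int_{[a,b]^d}|\ClippedRealV{\Xi_{\mf d,K,M,\tau}}{\mf l_\tau}uv(x)-\varphi(x)|^2\,\P_{X_1}(dx)\right]^{\nicefrac12}>\eps\right)
\\&\leq
\exp\!\left(-K(c\tau)^{-\tau\mf d}\eps^{2\mf d}\right)+2\exp\!\left(\mf d\ln\!\left((c\tau)^\tau\eps^{-2}\right)-c^{-1}\eps^4 M\right),
\end{split}
\end{equation}
which is precisely \eqref{generic_constants:claim}, completing the proof. The only genuinely delicate point is bookkeeping: one must verify that the chosen $c$ depends on nothing but $d,u,v,R$ (in particular not on $\tau$, $\mf d$, $\eps$, $K$, or $M$), and that the restriction $\eps\in(0,\sqrt{v-u}]$ is exactly what is needed to collapse the $\max\{1,\cdot\}$ and to ensure the first exponent's argument is a true bound after dropping the $\min$. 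None of the individual inequalities is hard; the care lies in assembling a single explicit $c$ that makes all of them hold at once across the full parameter range, and in noting that the hypothesis $\tau\geq 3$ (implicit in the architecture constraint, since $\mf d\geq\tau(d+1)+(\tau-3)\tau(\tau+1)+\tau+1$ forces $\tau\geq 3$ for the formula to be the intended one) lets us use $(\tau+1)\leq 2\tau$ freely.
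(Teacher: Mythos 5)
Your proof follows the same route as the paper: apply \cref{cor:unspecific_architecture_no_noise} verbatim, then choose a single constant $c$ large enough to absorb the $\min\{1,\cdot\}$, the $\max\{1,\cdot\}$, and the factor $32(v-u)^4$, using $\eps\leq\sqrt{v-u}$ to guarantee that the $\min$ takes its nontrivial branch and the $\max$ its nontrivial branch. The structure and all three absorption inequalities (a)--(c) are exactly what the paper does. One concrete slip: your displayed constant
$c=\max\{32,\,32(v-u)R,\,128(v-u)R,\,32(v-u)^4\}$
does \emph{not} satisfy the requirement $c\geq 2R$ that your own derivation of (a) invokes. When $v-u$ is small (so that all the $(v-u)$-terms are small) and $R$ is large, $c$ collapses to $32$, and then the needed inequality $16(v-u)(\tau+1)^\tau R^\tau\leq(c\tau)^\tau$ fails for moderate $\tau$ (e.g.\ $v-u=10^{-3}$, $R=100$, $\tau=3$ gives $0.016\cdot 4^3\cdot 10^6\approx 10^6$ versus $(96)^3\approx 8.8\times 10^5$). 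The paper sidesteps this by bounding $16(v-u)$ via $\bigl(16(v-u+1)\bigr)^\tau$ rather than trying to beat $R^\tau$ directly, and sets $c=\max\{32(v-u)^4,\,256(v-u+1)R\}$; the ``$+1$'' ensures $c\geq 256R\geq 2R$ unconditionally. Adding $2R$ (or, more cleanly, replacing $v-u$ by $v-u+1$ throughout) to your $\max$ repairs the argument; with that correction your proof is the paper's proof.
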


\begin{proof}[Proof of \cref{cor:generic_constants}] Throughout this proof let $c\in (0,\infty)$ satisfy
	\begin{equation}\label{generic_constants:definition_of_c}
		c = 
		\max\{ 
    32 (v-u)^4, 
    256(v-u+1)R
		%6R \max\{2R(b-a),1\}, 
		%12 R \max\{ 64(v-u),1 \} 
		\}. 
	\end{equation} 
Note that \cref{cor:unspecific_architecture_no_noise} establishes that for all 
	$\mathfrak{d}, K, M, \tau \in \N$, 
	$ \varepsilon \in (0,\infty) $  
  with \enum{
    $\tau\geq 2d({2dL(b-a)}\varepsilon^{-1}+2)^d$ ;
    $\mf d\geq \tau(d+1)+(\tau-3)\tau(\tau+1)+\tau+1$
  }
  it holds that
	\begin{multline} \label{generic_constants:estimate}
	%\begin{split}
	%&
	\P\!\left(
	\left[ 
	\int_{[a,b]^d}
	| 
	\ClippedRealV{ \Xi_{\mf d,K,M,\tau} }{ \mathfrak{l}_{\tau} }{u}{v}( x )
	- 
	\varphi( x ) 
	|^2
	\,  
	\P_{ X_1 }( dx )
	\right]^{\nicefrac12}
	> 
	\eps 
	\right)
	%\\ & 
	\leq
  \exp\!\left(-K\min\!\left\{1, \frac{ \varepsilon^{2\mathfrak{d}}}{
    (16(v-u) (\tau+1)^{\tau}R^{\tau})^{\mf d}
    } \right\}\right)
  \\+
  2\exp\biggl(\mf d\ln\!\left(\max\biggl\{1,\frac{128(\tau+1)^{\tau}R^{\tau}(v-u)}{\eps^2}\biggr\}\right)-\frac{\eps^4M}{32(v-u)^4}\biggr)
  .
% 	\exp\!\left(-K\min\left\{1, \frac{ \varepsilon^{2\mathfrak{d}}}{
% 		[			
% 		(v-u) R^{\tau}(3\tau+3)^{\tau}
% ]^{\mathfrak{d}} 2^{\mf d} R^{\mf{d}}} \right\}\right)
% 	\\+
% 	2\exp\!\left(\mf d\ln\!\left(\frac{64 R^{\tau}(3\tau+3)^{\tau}(v-u)}{\eps^2}\right) - \frac{\varepsilon^4 M}{32(v-u)^4} \right). 
	%\end{split}
	\end{multline} 
Next observe that 
  \eqref{generic_constants:definition_of_c} 
ensures that for all
  $\tau\in\N$
it holds that
\begin{equation}
  16(v-u)(\tau+1)^\tau R^\tau
  \leq 
  (16(v-u+1)(\tau+1)R)^\tau
  \leq 
  (32(v-u+1)R\tau)^\tau
  \leq 
  (c\tau)^\tau
  .
\end{equation}
  The fact that for all
    $\eps\in(0,\sqrt{v-u}]$,
    $\tau\in\N$
  it holds that
    $\eps^2\leq 16(v-u)(\tau+1)^\tau R^\tau$
  therefore
shows that for all
  $\eps\in(0,\sqrt{v-u}]$,
  $\tau\in\N$
it holds that
\begin{equation}
  \label{eq:gc.1}
  {-}\min\!\left\{1, \frac{ \varepsilon^{2\mathfrak{d}}}{
    (16(v-u) (\tau+1)^{\tau}R^{\tau})^{\mf d}
    } \right\}
  =
  \frac{-\varepsilon^{2\mathfrak{d}}}{
    (16(v-u) (\tau+1)^{\tau}R^{\tau})^{\mf d}
    }
  \leq 
  \frac{-\varepsilon^{2\mathfrak{d}}}{
    (c\tau)^{\tau\mf d}
    }
    .
\end{equation}
Furthermore, note that
  \eqref{generic_constants:definition_of_c} 
implies that for all
  $\tau\in\N$
it holds that
\begin{equation}
  128(\tau+1)^\tau R^\tau(v-u) 
  \leq 
  128(2\tau)^\tau R^\tau(v-u)
  \leq  
  (256R\tau(v-u+1))^\tau 
  \leq 
  (c\tau)^{\tau} 
  .
\end{equation}
  The fact that for all
    $\eps\in(0,\sqrt{v-u}]$,
    $\tau\in\N$
  it holds that
    $\eps^2\leq 128(\tau+1)^\tau R^\tau(v-u)$
  hence
proves that for all
  $\eps\in(0,\sqrt{v-u}]$,
  $\tau\in\N$
it holds that
\begin{equation}
  \label{eq:gc.2}
  \ln\!\left(\max\biggl\{1,\frac{128(\tau+1)^{\tau}R^{\tau}(v-u)}{\eps^2}\biggr\}\right)
  =
  \ln\!\left(\frac{128(\tau+1)^{\tau}R^{\tau}(v-u)}{\eps^2}\right)
  \leq
  \ln\!\left(\frac{(c\tau)^\tau}{\eps^2}\right)
\end{equation}
In addition, observe that
  \eqref{generic_constants:definition_of_c} 
ensures that
\begin{equation}
  \frac{-1}{32(v-u)^4}
  \leq 
  \frac{-1}{c}
  .
\end{equation}
Combining \enum{
  this ;
  \eqref{eq:gc.1};
  \eqref{eq:gc.2};
} with \eqref{generic_constants:estimate} proves that for all 
	$\mf d, K, M, \tau\in\N$, 
	$\eps \in (0,\sqrt{v-u}]$ 
with \enum{
  $\tau\geq 2d({2dL(b-a)}\varepsilon^{-1}+2)^d$ ;
  $\mf d\geq \tau(d+1)+(\tau-3)\tau(\tau+1)+\tau+1$
}
it holds that
	\begin{equation} 
	\begin{split}
	&
	\P\!\left(
	\left[ 
	\int_{[u,v]^d}
	| 
	\ClippedRealV{ \Xi_{\mf d,K,M,\tau} }{ \mathfrak{l}_{\tau} }{u}{v}( x )
	- 
	\varphi( x ) 
	|^2
	\,  
	\P_{ X_1 }( dx )
	\right]^{\nicefrac12}
	> 
	\eps 
	\right)
	\\ & \leq
  \exp\biggl( \frac{ -K\varepsilon^{2\mathfrak{d}}}{
    (c\tau)^{\tau\mf d}
    } \biggr)
  +
  2\exp\!\left(\mf d\ln\!\left(\frac{(c\tau)^\tau}{\eps^2}\right)-\frac{\eps^4M}{c}\right)
  \!.
	% \exp\!\left(-K\min\left\{1, \frac{ \varepsilon^{2\mathfrak{d}}}{
	% 	[(c\tau)^{\tau}]^{\mathfrak{d}}} \right\} \right)
	% %	\\
	% %	& 
	% +
	% 2\exp\!\left(\mf d\ln\!\left(\frac{(c\tau)^{\tau}}{2\eps^2}\right) - \frac{\varepsilon^4 M}{ c } \right)
	% \\
	% & \leq 
	% \exp\!\left(-K\min\left\{1, \frac{ \varepsilon^{2\mathfrak{d}}}{
	% 	[(c\tau)^{\tau}]^{\mathfrak{d}}} \right\} \right)
	% %	\\
	% %	& 
	% +
	% \exp\!\left(\mf d\ln\!\left(\frac{(c\tau)^{\tau}}{\eps^2}\right) - \frac{\varepsilon^4 M}{ c } \right)
	% . 
	\end{split}
	\end{equation}  
%This establishes \eqref{generic_constants:claim}. 
The proof of \cref{cor:generic_constants} is thus completed. 
\end{proof}

\begin{cor} \label{cor:l1_norm}
  Let 
  $ ( \Omega, \mathcal{F}, \P ) $ 
  be a probability space, 
  let 
$ d \in \N $, 
$ L,a,u \in \R $, 
$ b \in (a,\infty) $,
$ v \in (u,\infty) $, 
$ R \in [\max\{1, L, |a|, |b|, 2|u|, 2|v|\}, \infty)  $,  
let 
$ X_m \colon \Omega \to [a,b]^d $, 
$ m \in \N $,
be i.i.d.\ random variables, 
let 
$ \norm{\cdot}\colon \R^d \to [0,\infty) $ 
be the standard norm on $ \R^d $, 
let 
$ \varphi \colon [a,b]^d \to [u,v] $ 
satisfy for all 
$ x, y \in [a,b]^d $ 
that 
$| \varphi(x) - \varphi(y) | \leq L\norm{x-y}$,  
let  
  $ \mf{l}_{\tau} \in \N^{\tau} $, $ \tau \in \N $,  
  satisfy for all
    $\tau\in\N\cap[3,\infty)$
  that
  $ \mf{l}_{\tau}=(d,\tau,\tau,\ldots,\tau,1) $,
let
	$ \mathfrak{E}_{\mf d,M,\tau} \colon [-R,R]^{\mf d} \times \Omega \to [0,\infty) $, $ \mf d, M, \tau \in \N $, 
satisfy for all 
	$ \mf d, M \in \N $, 
	$ \tau \in \N\cap[3,\infty) $, 
	$ \theta \in [-R,R]^{\mf d} $,  
  $ \omega \in \Omega $ 
with
  $\mf d\geq \tau(d+1)+(\tau-3)\tau(\tau+1)+\tau+1$
that
\begin{equation} 
\mathfrak{E}_{ \mf d, M, \tau }( \theta, \omega )
=
\frac{ 1 }{ M }
\left[
\sum\limits_{ m = 1 }^M
| \ClippedRealV{\theta}{\mf{l_{\tau}}}{u}{v}( X_m( \omega ) ) - \varphi(X_m( \omega )) |^2
\right]
\!,
\end{equation} 
for every $\mf d \in \N$ let 
$\Theta_{\mf d,k}\colon\Omega\to [-R,R]^{\mf d}$, 
$k\in\N$, 
be i.i.d.~random variables, 
assume for all $\mf d\in\N$ that 
$\Theta_{\mf d,1}$ 
is continuous uniformly distributed on 
$[-R,R]^{\mf d}$, 
% assume for all $\mf d\in\N$ that 
% $(X_m)_{m\in\N}$ 
% and 
% $(\Theta_{\mf d,k})_{k\in\N}$ 
% are independent, 
and let 
$\Xi_{\mf d,K,M,\tau}\colon\Omega\to [-R,R]^{\mf d}$, $\mf d,K,M,\tau\in\N$, 
satisfy for all $ \mf d,K,M,\tau \in \N $ that
$\Xi_{\mf d,K,M,\tau} = \Theta_{\mf d, \min\{k\in\{1,2,\ldots,K\}\colon\mf{E}_{\mf d,M,\tau}(\Theta_{\mf d,k}) = \min_{l\in\{1,2,\ldots,K\}} \mf{E}_{\mf d,M,\tau}(\Theta_{\mf d,l}) \}} 
$
(cf.\ \cref{def:rectclippedFFANN}).
Then there exists $c\in (0,\infty)$ such that for all 
$\mathfrak{d}, K, M, \tau \in \N$, 
$ \varepsilon \in (0,\sqrt{v-u}] $  
with
$\tau\geq 2d(2dL(b-a)\varepsilon^{-1}+2)^d$
and
$\mf d\geq \tau(d+1)+(\tau-3)\tau(\tau+1)+\tau+1$
it holds that
\begin{equation} \label{l1_norm:claim}
\begin{split}
&
\P\!\left(
\int_{[a,b]^d}
| 
\ClippedRealV{ \Xi_{\mf d,K,M,\tau} }{ \mathfrak{l}_{\tau} }{u}{v}( x )
- 
\varphi( x ) 
|
\,  
\P_{ X_1 }( dx )
> 
\eps 
\right)
\\
& 
\leq 
\exp\bigl(-K (c\tau)^{-\tau\mf d}\varepsilon^{2\mf{d}}\bigr)
+
2\exp\bigl(\mf d\ln\!\left( (c\tau)^{\tau}\eps^{-2} \right) - c^{-1}\varepsilon^4 M \bigr). 
% \exp\!\left(-K  (c\tau)^{-\tau\mf d}\varepsilon^{2\mf{d}} \right)
% +
% \exp\!\left(\mf d\ln\!\left( (c\tau)^{\tau}\eps^{-2} \right) - \tfrac{\varepsilon^4 M}{c} \right). 
\end{split}
\end{equation}
\end{cor}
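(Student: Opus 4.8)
The plan is to deduce \cref{cor:l1_norm} directly from \cref{cor:generic_constants} by dominating the $L^1$-distance with the $L^2$-distance on the probability space $([a,b]^d,\Borel([a,b]^d),\P_{X_1})$. First I would invoke \cref{cor:generic_constants} with all objects ($d$, $L$, $a$, $b$, $u$, $v$, $R$, $(X_m)_m$, $\varphi$, $(\mf l_\tau)_\tau$, $(\mathfrak E_{\mf d,M,\tau})_{\mf d,M,\tau}$, $(\Theta_{\mf d,k})_{\mf d,k}$, $(\Xi_{\mf d,K,M,\tau})_{\mf d,K,M,\tau}$) instantiated identically to obtain a real number $c\in(0,\infty)$ such that for all $\mf d,K,M,\tau\in\N$ and all $\eps\in(0,\sqrt{v-u}]$ with $\tau\geq 2d(2dL(b-a)\eps^{-1}+2)^d$ and $\mf d\geq \tau(d+1)+(\tau-3)\tau(\tau+1)+\tau+1$ it holds that $\P([\int_{[a,b]^d}|\ClippedRealV{\Xi_{\mf d,K,M,\tau}}{\mf l_\tau}{u}{v}(x)-\varphi(x)|^2\,\P_{X_1}(dx)]^{\nicefrac12}>\eps)$ is bounded above by the right-hand side of \eqref{l1_norm:claim} (note that the right-hand sides of \eqref{generic_constants:claim} and \eqref{l1_norm:claim} coincide, so $c$ need not be modified).

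Next I would use that $X_1$ takes values in $[a,b]^d$, so that $\P_{X_1}$ is a probability measure on $[a,b]^d$, together with Jensen's inequality for the concave function $[0,\infty)\ni t\mapsto\sqrt t\in[0,\infty)$ (equivalently, the Cauchy--Schwarz inequality), to conclude that for every $\Borel([a,b]^d)$/$\Borel(\R)$-measurable function $g\colon[a,b]^d\to\R$ it holds that $\int_{[a,b]^d}|g(x)|\,\P_{X_1}(dx)\leq\bigl[\int_{[a,b]^d}|g(x)|^2\,\P_{X_1}(dx)\bigr]^{\nicefrac12}$. Applying this for every $\omega\in\Omega$ with $g=\ClippedRealV{\Xi_{\mf d,K,M,\tau}(\omega)}{\mf l_\tau}{u}{v}-\varphi$ shows that the event $\{\int_{[a,b]^d}|\ClippedRealV{\Xi_{\mf d,K,M,\tau}}{\mf l_\tau}{u}{v}(x)-\varphi(x)|\,\P_{X_1}(dx)>\eps\}$ is a subset of the event $\{[\int_{[a,b]^d}|\ClippedRealV{\Xi_{\mf d,K,M,\tau}}{\mf l_\tau}{u}{v}(x)-\varphi(x)|^2\,\P_{X_1}(dx)]^{\nicefrac12}>\eps\}$. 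The monotonicity of $\P$ then yields $\P(\int_{[a,b]^d}|\ClippedRealV{\Xi_{\mf d,K,M,\tau}}{\mf l_\tau}{u}{v}(x)-\varphi(x)|\,\P_{X_1}(dx)>\eps)\leq\P([\int_{[a,b]^d}|\ClippedRealV{\Xi_{\mf d,K,M,\tau}}{\mf l_\tau}{u}{v}(x)-\varphi(x)|^2\,\P_{X_1}(dx)]^{\nicefrac12}>\eps)$, and combining this with the bound from the first step establishes \eqref{l1_norm:claim}.

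Finally, I would note that measurability of the map $\Omega\ni\omega\mapsto\int_{[a,b]^d}|\ClippedRealV{\Xi_{\mf d,K,M,\tau}(\omega)}{\mf l_\tau}{u}{v}(x)-\varphi(x)|\,\P_{X_1}(dx)\in[0,\infty)$, which ensures that the probability on the left-hand side of \eqref{l1_norm:claim} is well defined, follows as explained at the beginning of \cref{sec:overall_error_analysis} (cf.\ \cite[Lemma~2.4]{kolmogorov2018solving}). I do not expect any genuine obstacle in this argument; the only point requiring a line of care is that the set inclusion from Jensen's inequality is verified pointwise for \emph{every} $\omega\in\Omega$ rather than merely $\P$-almost surely, which is immediate since $|g(x)|\leq\sup_{y\in[a,b]^d}|g(y)|<\infty$ for the clipped network minus the bounded Lipschitz function $\varphi$, so all integrals involved are finite and the elementary inequality applies unconditionally.
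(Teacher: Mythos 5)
Your proposal is correct and follows essentially the same route as the paper: invoke \cref{cor:generic_constants} and then dominate the $L^1(\P_{X_1})$-norm by the $L^2(\P_{X_1})$-norm via Jensen's inequality. The paper's proof is terser but identical in substance.
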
 
\begin{proof}[Proof of \cref{cor:l1_norm}]
  Note that
    Jensen's inequality
  shows that for all
    $f\in C([a,b]^d,\R)$
  it holds that
  \begin{equation}
    \int_{[a,b]^d}\abs{f(x)}\,\P_{X_1}(dx)
    \leq
    \biggl[
      \int_{[a,b]^d}\abs{f(x)}^2\,\P_{X_1}(dx)
    \biggr]^{\frac12}
    .
  \end{equation}
  Combining
    this
  with
    \cref{cor:generic_constants}
  proves \eqref{l1_norm:claim}.
The proof of \cref{cor:l1_norm} is thus completed. 
\end{proof} 

\subsubsection{Convergence rates for strong convergence}

\begin{lemma} \label{lem:quantitative_lebesgue} 
Let $ ( \Omega, \mathcal{F}, \P ) $ be a probability space, 
let $ c \in [0,\infty) $, 
and 
let $ X \colon \Omega \to [-c, c] $ be a random variable. 
Then it holds for all $ \eps, p \in (0,\infty) $ that 
	\begin{equation} \label{quantitative_lebesgue:claim}
	\E\!\left[|X|^p\right] 
	\leq 
	\eps^p \, \P\!\left( |X| \leq \eps \right) + c^p \, \P\!\left( |X| > \eps \right)
	\leq 
	\eps^p + c^p \, \P( |X| > \eps ). 
	\end{equation}
\end{lemma}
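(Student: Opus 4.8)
The plan is to prove \cref{lem:quantitative_lebesgue} by a direct splitting of the expectation over the events $\{|X|\le\eps\}$ and $\{|X|>\eps\}$. First I would fix arbitrary $\eps,p\in(0,\infty)$ and write $\E[|X|^p]=\E[|X|^p\mathbbm 1_{\{|X|\le\eps\}}]+\E[|X|^p\mathbbm 1_{\{|X|>\eps\}}]$, which is valid since $X$ is bounded and hence $|X|^p$ is integrable. On the first event we use $|X|\le\eps$ to bound $|X|^p\le\eps^p$, so that $\E[|X|^p\mathbbm 1_{\{|X|\le\eps\}}]\le\eps^p\,\P(|X|\le\eps)$. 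On the second event we use the hypothesis that $X$ takes values in $[-c,c]$, hence $|X|\le c$ and $|X|^p\le c^p$, giving $\E[|X|^p\mathbbm 1_{\{|X|>\eps\}}]\le c^p\,\P(|X|>\eps)$. Adding these two bounds yields the first inequality in \eqref{quantitative_lebesgue:claim}.

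For the second inequality I would simply observe that $\P(|X|\le\eps)\le1$, so $\eps^p\,\P(|X|\le\eps)\le\eps^p$, and then $\eps^p\,\P(|X|\le\eps)+c^p\,\P(|X|>\eps)\le\eps^p+c^p\,\P(|X|>\eps)$. Chaining the two estimates completes the proof.

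There is essentially no obstacle here: the statement is an elementary consequence of Markov-type reasoning together with boundedness of $X$, and the only mild care needed is to note that $\eps^p$ and $c^p$ are well-defined nonnegative reals for $p>0$ (with the convention $0^p=0$ when $c=0$) and that the indicator decomposition of the expectation is legitimate because $|X|^p$ is a bounded, hence integrable, random variable. The whole argument fits in a few lines of display math.

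<br>

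\begin{proof}[Proof of \cref{lem:quantitative_lebesgue}]
Throughout this proof let $\eps,p\in(0,\infty)$. Note that the hypothesis that $X\colon\Omega\to[-c,c]$ is a random variable ensures that $|X|^p$ is a bounded random variable and hence that $\E[|X|^p]<\infty$. Observe that
\begin{equation}
\begin{split}
\E\!\left[|X|^p\right]
&=
\E\!\left[|X|^p\,\mathbbm 1_{\{|X|\leq\eps\}}\right]
+
\E\!\left[|X|^p\,\mathbbm 1_{\{|X|>\eps\}}\right]
\\&\leq
\E\!\left[\eps^p\,\mathbbm 1_{\{|X|\leq\eps\}}\right]
+
\E\!\left[c^p\,\mathbbm 1_{\{|X|>\eps\}}\right]
\\&=
\eps^p\,\P\!\left(|X|\leq\eps\right)
+
c^p\,\P\!\left(|X|>\eps\right)
.
\end{split}
\end{equation}
Combining this with the fact that $\P(|X|\leq\eps)\leq1$ establishes \eqref{quantitative_lebesgue:claim}. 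The proof of \cref{lem:quantitative_lebesgue} is thus completed.
\end{proof}
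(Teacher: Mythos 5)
Your proof is correct and takes the same approach as the paper: decompose $\E[|X|^p]$ over the events $\{|X|\le\eps\}$ and $\{|X|>\eps\}$, bound each piece by $\eps^p$ and $c^p$ respectively, and then use $\P(|X|\le\eps)\le 1$. The only difference is cosmetic (you spell out the intermediate step with indicator expectations), so there is nothing to add.
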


\begin{proof}[Proof of \cref{lem:quantitative_lebesgue}]
Observe that 
  the hypothesis that for all
    $\omega\in\Omega$
  it holds that
    $\abs{X(\omega)}\leq c$
ensures that  for all 
  $ \eps, p \in (0,\infty) $ 
it holds that 
	\begin{equation} 
	\E\!\left[|X|^p\right] 
	= 
	\E\!\left[|X|^p\mathbbm{1}_{\{|X|\leq\eps\}}\right]
	+ 
	\E\!\left[|X|^p\mathbbm{1}_{\{|X|>\eps\}}\right]	
	\leq 
	\eps^p \, \P\!\left( |X| \leq \eps \right) 
	+ 
	c^p \, \P( |X| > \eps )
  \leq 
  \eps^p + c^p \, \P\!\left( |X| > \eps \right).  
	\end{equation} 
% This establishes the first inequality in  \eqref{quantitative_lebesgue:claim}. The second inequality in \eqref{quantitative_lebesgue:claim} is immediate. 
The proof of \cref{lem:quantitative_lebesgue} is thus completed. 
\end{proof} 

\begin{cor} \label{cor:lp_error}
Let $ ( \Omega, \mathcal{F}, \P ) $ be a probability space, 
let $ d \in \N $, 
	$ L,a,u \in \R $, 
	$ b \in (a,\infty) $,
	$ v \in (u,\infty) $, 
	$ R \in [\max\{1, L, |a|, |b|, 2|u|, 2|v|\}, \infty)  $,  
let 
	$ X_m 
	\colon \Omega \to [a,b]^d $, 
	$ m \in \N $,
	be i.i.d.\ random variables, 
  let $ \norm{\cdot}\colon \R^d \to [0,\infty) $ be the standard norm on $ \R^d $, 
	let 
	$ \varphi \colon [a,b]^d \to [u,v] $ 
	satisfy for all 
	$ x, y \in [a,b]^d $ 
	that 
	$| \varphi(x) - \varphi(y) | \leq L\norm{x-y}$,  
  let  
  $ \mf{l}_{\tau} \in \N^{\tau} $, $ \tau \in \N $,  
  satisfy for all
    $\tau\in\N\cap[3,\infty)$
  that
  $ \mf{l}_{\tau}=(d,\tau,\tau,\ldots,\tau,1) $,
let
	$ \mathfrak{E}_{\mf d,M,\tau} \colon [-R,R]^{\mf d} \times \Omega \to [0,\infty) $, $ \mf d, M, \tau \in \N $, 
satisfy for all 
	$ \mf d, M \in \N $, 
	$ \tau \in \N\cap[3,\infty)$, 
	$ \theta \in [-R,R]^{\mf d} $,  
  $ \omega \in \Omega $ 
with
  $\mf d\geq \tau(d+1)+(\tau-3)\tau(\tau+1)+\tau+1$
that
	\begin{equation} %\label{generic:empirical_risk}
	\mathfrak{E}_{ \mf d, M, \tau }( \theta, \omega )
	=
	\frac{ 1 }{ M }
	\left[
	\sum\limits_{ m = 1 }^M
	| \ClippedRealV{\theta}{\mf{l_{\tau}}}{u}{v}( X_m( \omega ) ) - \varphi(X_m( \omega )) |^2
	\right]\!,
	\end{equation} 
	for every $\mf d\in\N$ let 
	$\Theta_{\mf d,k}\colon\Omega\to [-R,R]^{\mf d}$, 
	$k\in\N$, 
	be i.i.d.~random variables, 
	assume for all $\mf d\in\N$ that 
	$\Theta_{\mf d,1}$ 
	is continuous uniformly distributed on 
	$[-R,R]^{\mf d}$, 
	% assume for all $\mf d\in\N$ that 
	% $(X_m)_{m\in\N}$ 
	% and 
	% $(\Theta_{\mf d,k})_{k\in\N}$ 
	% are independent, 
	and let 
	$\Xi_{\mf d,K,M,\tau}\colon\Omega\to [-R,R]^{\mf d}$, $\mf d,K,M,\tau\in\N$,
	satisfy for all $\mf d,K,M,\tau\in\N$ that
	$\Xi_{\mf d,K,M,\tau} = \Theta_{\mf d, \min\{k\in\{1,2,\ldots,K\}\colon\mf{E}_{\mf d,M,\tau}(\Theta_{\mf d,k}) = \min_{l\in\{1,2,\ldots,K\}} \mf{E}_{\mf d,M,\tau}(\Theta_{\mf d,l}) \}} 
  $
  (cf.\ \cref{def:rectclippedFFANN}).
	Then there exists $c\in (0,\infty)$ such that for all 
	$\mathfrak{d}, K, M, \tau \in \N$, 
	$ p \in [1,\infty) $, 
	$ \varepsilon \in (0,\sqrt{v-u}] $  
  with 
  $\tau\geq 2d(2dL(b-a)\varepsilon^{-1}+2)^d $
  and
  $\mf d\geq \tau(d+1)+(\tau-3)\tau(\tau+1)+\tau+1$
  it holds that
	\begin{equation} \label{lp_error:claim}
	\begin{split}
	&
	\left( \E\!\left[ \left(
	\int_{[a,b]^d}
	| 
	\ClippedRealV{ \Xi_{\mf d,K,M,\tau} }{ \mathfrak{l}_{\tau} }{u}{v}( x )
	- 
	\varphi( x ) 
	|^2
	\,  
	\P_{ X_1 }( dx ) 
	\right)^{\!\!\nicefrac{p}{2}} 
	\right]\right)^{\!\!\nicefrac{1}{p}}
	\\
	& 
	\leq 
	(v-u) 
	\left[ 
	\exp\!\left(-K (c\tau)^{-\tau\mf d}\varepsilon^{2 \mf{d}} \right)
	+
	2 \exp\!\left(\mf d\ln\!\left( (c\tau)^{\tau}\eps^{-2} \right) - c^{-1}\varepsilon^4 M \right)\right]^{\nicefrac{1}{p}}  
	+ \varepsilon. 
	\end{split}
	\end{equation}
\end{cor}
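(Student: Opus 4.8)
The plan is to combine the convergence-in-probability estimate from \cref{cor:generic_constants} with the elementary moment bound in \cref{lem:quantitative_lebesgue}. First I would observe that since $\varphi$ takes values in $[u,v]$ and $\ClippedRealV{\theta}{\mf l_\tau}{u}{v}$ takes values in $[u,v]$ as well (by the definition of the clipping function in \cref{def:rectclippedFFANN}), for every admissible $\mf d,K,M,\tau$ and every $\omega\in\Omega$ the quantity
\begin{equation*}
  \mathfrak{X}_{\mf d,K,M,\tau}(\omega)
  =
  \left[\int_{[a,b]^d}\abs{\ClippedRealV{\Xi_{\mf d,K,M,\tau}(\omega)}{\mf l_\tau}{u}{v}(x)-\varphi(x)}^2\,\P_{X_1}(dx)\right]^{\!\nicefrac12}
\end{equation*}
satisfies $0\le\mathfrak{X}_{\mf d,K,M,\tau}(\omega)\le v-u$, since the integrand is bounded by $(v-u)^2$ and $\P_{X_1}$ is a probability measure. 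Thus $\mathfrak{X}_{\mf d,K,M,\tau}$ is a $[0,v-u]$-valued (in particular $[-(v-u),v-u]$-valued) random variable, and the measurability of the integral is the one noted at the beginning of \cref{sec:overall_error_analysis}.

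Next I would invoke \cref{cor:generic_constants} to obtain a constant $c\in(0,\infty)$ such that for all $\mf d,K,M,\tau\in\N$ and $\varepsilon\in(0,\sqrt{v-u}]$ with $\tau\ge 2d(2dL(b-a)\varepsilon^{-1}+2)^d$ and $\mf d\ge\tau(d+1)+(\tau-3)\tau(\tau+1)+\tau+1$ one has
\begin{equation*}
  \P\bigl(\mathfrak{X}_{\mf d,K,M,\tau}>\varepsilon\bigr)
  \le
  \exp\bigl(-K(c\tau)^{-\tau\mf d}\varepsilon^{2\mf d}\bigr)
  +
  2\exp\bigl(\mf d\ln((c\tau)^\tau\varepsilon^{-2})-c^{-1}\varepsilon^4 M\bigr).
\end{equation*}
Then I would apply \cref{lem:quantitative_lebesgue} with $X\is\mathfrak{X}_{\mf d,K,M,\tau}$, $c\is v-u$, $\eps\is\varepsilon$, and the given $p$, which yields $\E[\mathfrak{X}_{\mf d,K,M,\tau}^p]\le\varepsilon^p+(v-u)^p\,\P(\mathfrak{X}_{\mf d,K,M,\tau}>\varepsilon)$. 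Taking $p$-th roots and using the subadditivity of $t\mapsto t^{\nicefrac1p}$ on $[0,\infty)$ (i.e.\ $(s+t)^{\nicefrac1p}\le s^{\nicefrac1p}+t^{\nicefrac1p}$ for $s,t\ge0$ when $p\ge1$) gives
\begin{equation*}
  \bigl(\E[\mathfrak{X}_{\mf d,K,M,\tau}^p]\bigr)^{\nicefrac1p}
  \le
  \varepsilon + (v-u)\bigl[\P(\mathfrak{X}_{\mf d,K,M,\tau}>\varepsilon)\bigr]^{\nicefrac1p},
\end{equation*}
and substituting the probability bound from \cref{cor:generic_constants} produces exactly \eqref{lp_error:claim}.

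This is essentially a bookkeeping argument; there is no genuine obstacle. The only points requiring a little care are: verifying that $\mathfrak{X}_{\mf d,K,M,\tau}\le v-u$ holds pointwise (so that \cref{lem:quantitative_lebesgue} applies with $c=v-u$), noting that $\varepsilon\in(0,\sqrt{v-u}]$ is precisely the range in which \cref{cor:generic_constants} was stated so no extra restriction is needed, and correctly applying the elementary inequality $(s+t)^{\nicefrac1p}\le s^{\nicefrac1p}+t^{\nicefrac1p}$ to split the two exponential terms plus the $\varepsilon^p$ term. One should also remark that the constant $c$ obtained here is the one furnished by \cref{cor:generic_constants}, so no new constant needs to be constructed.
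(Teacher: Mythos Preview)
Your proposal is correct and follows essentially the same route as the paper: invoke \cref{cor:generic_constants} for the probability bound, apply \cref{lem:quantitative_lebesgue} with $X$ the $[0,v-u]$-valued random variable $\mathfrak{X}_{\mf d,K,M,\tau}$ and $c\leftarrow v-u$, then take $p$-th roots using the subadditivity of $t\mapsto t^{1/p}$ for $p\ge 1$. The paper's proof is identical in structure; your additional remarks on why $\mathfrak{X}_{\mf d,K,M,\tau}\le v-u$ and on measurability are the same verifications the paper either carries out or points to.
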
 

\begin{proof}[Proof of \cref{cor:lp_error}] 
First, observe that \cref{cor:generic_constants} ensures that there exists $c\in (0,\infty)$ which satisfies for all 
	$ \mf d, K, M, \tau \in \N $, 
	$ \eps \in (0,\sqrt{v-u}] $
  with \enum{
    $\tau\geq 2d({2dL(b-a)}\varepsilon^{-1}+2)^d$ ;
    $\mf d\geq \tau(d+1)+(\tau-3)\tau(\tau+1)+\tau+1$
  }
that 
	\begin{equation}
	\begin{split}
	& 
	\P\!\left(
	\left[ \int_{[a,b]^d}
	| 
	\ClippedRealV{ \Xi_{\mf d,K,M,\tau} }{ \mathfrak{l}_{\tau} }{u}{v}( x )
	- 
	\varphi( x ) 
	|^2
	\,  
	\P_{ X_1 }( dx ) 
	\right]^{\nicefrac12} 
	> \eps 
	\right)
	\\
	& 
	\leq 
	\exp\!\left(-K (c\tau)^{-\tau \mf d}		
	\varepsilon^{2 \mf{d}}\right)
	+
	2\exp\bigl(\mf d\ln\!\left( (c\tau)^{\tau}\eps^{-2} \right) - c^{-1}\varepsilon^4 M \bigr). 
	\end{split}
	\end{equation} 	
\cref{lem:quantitative_lebesgue} (with 
	$ (\Omega, \mathcal{F}, \P ) \is ( \Omega, \mathcal{F}, \P ) $, 
	$ c \is v-u $, 
	$ X \is (\Omega \ni \omega \mapsto \bigl[
	\int_{[a,b]^d}
	| 
	\ClippedRealV{ \Xi_{\mf d,K,M,\tau}(\omega) }{ \mathfrak{l}_{\tau} }{u}{v}( x )
	- 
	\varphi( x ) 
	|^2
	\,  
	\P_{ X_1 }( dx ) 
	\bigr]^{\smash{\nicefrac12}} \in [u-v,v-u] ) $
in the notation of \cref{lem:quantitative_lebesgue}) hence ensures that for all 
	$ \mf d, K, M, \tau \in \N $, 
  $ \eps\in(0,\sqrt{v-u}]$, 
  $p \in (0,\infty) $ 
  with \enum{
    $\tau\geq 2d({2dL(b-a)}\varepsilon^{-1}+2)^d$ ;
    $\mf d\geq \tau(d+1)+(\tau-3)\tau(\tau+1)+\tau+1$
  }
it holds that 
	\begin{equation} 
	\begin{split}
	&
	\E\!\left[ \left(
	\int_{[a,b]^d}
	| 
	\ClippedRealV{ \Xi_{\mf d,K,M,\tau} }{ \mathfrak{l}_{\tau} }{u}{v}( x )
	- 
	\varphi( x ) 
	|^2
	\,  
	\P_{ X_1 }( dx ) 
	\right)^{\!\!\nicefrac{p}{2}} 
	\right]
	\\
%	& 
% 	\leq 
%	\eps^{p}\, \P\!\left(
%	\left[ \int_{[u,v]^d}
%	| 
%	\ClippedRealV{ \Xi_{\mf d,K,M,\tau} }{ \mathfrak{l}_{\tau} }{u}{v}( x )
%	- 
%	\varphi( x ) 
%	|^2
%	\,  
%	\P_{ X_1 }( dx ) 
%	\right]^{\nicefrac12} 
%	\leq \eps 
%	\right)
%	\\
%	& + 
%	(b-a)^p \,  
%	\P\!\left(
%	\left[ \int_{[u,v]^d}
%	| 
%	\ClippedRealV{ \Xi_{\mf d,K,M,\tau} }{ \mathfrak{l}_{\tau} }{u}{v}( x )
%	- 
%	\varphi( x ) 
%	|^2
%	\,  
%	\P_{ X_1 }( dx ) 
%	\right]^{\nicefrac12} 
%	> \eps 
%	\right)
%	\\
	& 
	\leq 
	\eps^p + 
	(v-u)^p 
	\Bigl[ 
	\exp\!\left(-K (c\tau)^{-\tau\mf d}\varepsilon^{2\mf{d}} \right)
	+
  2\exp\!\left(\mf d\ln\!\left( (c\tau)^{\tau}\eps^{-2} \right) - c^{-1}\varepsilon^4 M \right)
  \Bigr] 
  . 
	\end{split}
	\end{equation}
The fact that for all 
	$ p \in [1,\infty)$, 
	$ x,y \in [0,\infty) $ 
it holds that 
	$ (x+y)^{\nicefrac{1}{p}} \leq x^{\nicefrac{1}{p}} 
 	+ y^{\nicefrac{1}{p}} $ 
therefore establishes \eqref{lp_error:claim}. The proof of \cref{cor:lp_error} is thus completed. 
\end{proof} 

\subsubsection*{Acknowledgements}
This work has been funded by the Deutsche Forschungsgemeinschaft (DFG, German Research Foundation) 
under Germany's Excellence Strategy EXC 2044--390685587, 
Mathematics M\"unster: Dynamics--Geometry--Structure.

\bibliographystyle{acm}
%\bibliography{../Bib/bibfile}
\bibliography{Training_Networks}
\end{document}